\documentclass[11pt]{article}
\textwidth 8in
\textheight 9in
\oddsidemargin -.3in
\topmargin -1.5cm
\linespread{1.3}

\usepackage{fullpage, amssymb}
\usepackage{caption}
\usepackage{subcaption}
\usepackage{graphicx}
\usepackage{amsmath}
\usepackage{float}
\usepackage{listings}
\usepackage{xcolor}
\usepackage{amsthm}
\usepackage{algorithmic}
\usepackage{algorithm}
\usepackage[colorlinks,
linkcolor=blue,
anchorcolor=blue,
citecolor=blue]{hyperref}
\usepackage{natbib}
\usepackage{comment}
\usepackage{bbm}
\usepackage[shortlabels]{enumitem}
\usepackage{romannum}
\AtBeginDocument{\pagenumbering{arabic}}
\usepackage{multirow}
\usepackage{stmaryrd}

\setcitestyle{authoryear,round}

\newtheorem{theorem}{Theorem}
\newtheorem{lemma}{Lemma}
\newtheorem{definition}{Definition}
\newtheorem{proposition}{Proposition}

\newtheorem{assumption}{Assumption}
\newtheorem{remark}{Remark}
\newtheorem{claim}{Claim}


\def\rank{\textrm{rank}}
\def\wt{\widetilde}

\newcommand\fro[1]{\left\lVert #1 \right\rVert_{\rm{F}}}

\newcommand\frorr[1]{\| #1 \|_{\rm{F},\r}}

\newcommand\op[1]{\left\lVert #1 \right\rVert}

\newcommand\lone[1]{\left\lVert #1 \right\rVert_{1}}
\newcommand\ltwo[1]{\left\lVert #1 \right\rVert_{2}}
\newcommand\linft[1]{\left\lVert#1\right\rVert_{\infty}}
\newcommand\ltinf[1]{\left\lVert#1\right\rVert_{2,\infty}}
\newcommand\lph[1]{\left\lVert#1\right\rVert_{\rm{H}_p}}

\newcommand{\inp}[2]{\left\langle #1,#2\right\rangle}

\def\calD{{\mathcal D}}

\def\calN{{\mathcal N}}

\def\calP{{\mathcal P}}

\def\calT{{\mathcal T}}

\def\bSigma{{\boldsymbol{\Sigma}}}
\def\bXi{{\boldsymbol{\Xi}}}

\def\bcalA{{\boldsymbol{\mathcal A}}}
\def\bcalB{{\boldsymbol{\mathcal B}}}
\def\bcalC{{\boldsymbol{\mathcal C}}}
\def\bcalD{{\boldsymbol{\mathcal D}}}
\def\bcalE{{\boldsymbol{\mathcal E}}}

\def\bcalG{{\boldsymbol{\mathcal G}}}
\def\bcalH{{\boldsymbol{\mathcal H}}}

\def\bcalM{{\boldsymbol{\mathcal M}}}

\def\bcalS{{\boldsymbol{\mathcal S}}}
\def\bcalT{{\boldsymbol{\mathcal T}}}

\def\bcalX{{\boldsymbol{\mathcal X}}}
\def\bcalY{{\boldsymbol{\mathcal Y}}}

\def\EE{{\mathbb E}}
\def\FF{{\mathbb F}}

\def\MM{{\mathbb M}}

\def\OO{{\mathbb O}}
\def\PP{{\mathbb P}}

\def\RR{{\mathbb R}}

\def\TT{{\mathbb T}}

\def\e{{\mathbf e}}

\def\r{{\mathbf r}}

\def\u{{\mathbf u}}
\def\v{{\mathbf v}}
\def\w{{\mathbf w}}
\def\x{{\mathbf x}}

\def\A{{\mathbf A}}
\def\B{{\mathbf B}}

\def\G{{\mathbf G}}
\def\H{{\mathbf H}}
\def\I{{\mathbf I}}

\def\M{{\mathbf M}}

\def\Q{{\mathbf Q}}

\def\U{{\mathbf U}}
\def\V{{\mathbf V}}
\def\W{{\mathbf W}}
\def\X{{\mathbf X}}

\def\Z{{\mathbf Z}}

\def\fraM{{\mathfrak M}}

\def\lmax{{l_{\textsf{max}}}}

\def\eps{\varepsilon}
\def\dkm{d_k^{-}}

\def\mins{\underline{\lambda}}
\def\maxs{\overline{\lambda}}
\def\dmax{\bar{d}}

\def\muT{\mu^*}
\def\LS{\textsf{\tiny LS}}

\begin{document}
	\pagenumbering{arabic}
	
	\title{Quantile and pseudo-Huber Tensor Decomposition}
	
\author{Yinan Shen and Dong Xia \footnote{Dong Xia's research was partially supported by Hong Kong RGC Grant GRF 16300121.}	\\
{\small Department of Mathematics, Hong Kong University of Science and Technology}}

\date{(\today)}

	\maketitle
\begin{abstract}
This paper studies the computational and statistical aspects of quantile and pseudo-Huber tensor decomposition. The integrated investigation of computational and statistical issues of robust tensor decomposition poses challenges due to the non-smooth loss functions.  We propose a projected sub-gradient descent algorithm for tensor decomposition, equipped with either the pseudo-Huber loss or the quantile loss. 
In the presence of both heavy-tailed noise and Huber's contamination error,  we demonstrate that our algorithm exhibits a so-called phenomenon of two-phase convergence with a carefully chosen step size schedule.  The algorithm converges linearly and delivers an estimator that is statistically optimal with respect to both the heavy-tailed noise and arbitrary corruptions.  
Interestingly, our results achieve the first minimax optimal rates under Huber's contamination model for noisy tensor decomposition.  Compared with existing literature,  quantile tensor decomposition removes the requirement of specifying a sparsity level in advance,  making it more flexible for practical use.  
We also demonstrate the effectiveness of our algorithms in the presence of missing values.  Our methods are subsequently applied to the food balance dataset and the international trade flow dataset, both of which yield intriguing findings.
\end{abstract}

\section{Introduction}

Data in the form of multi-dimensional arrays, commonly referred to as tensors, have become increasingly prevalent in the era of big data. For instance, the monthly international trade flow \citep{cai2022generalized} of commodities among  countries is representable by a $47 \textrm{\small(countries)}\times 47 \textrm{\small(countries)} \times 97 \textrm{\small(commodities)} \times 12 \textrm{\small(months)}$ fourth-order tensor; the food balance data\footnote{The data is accessible from \url{https://www.fao.org/faostat/en/\#data/FBS}.}  describing the detailed report on the food supply of countries consist of several third-order tensors; the comprehensive climate dataset (CCDS, \cite{chen2020semiparametric}) – a collection of climate records of North America can be represented as a $125 \textrm{\small(locations)}\times 16 \textrm{\small(variables)}\times 156 \textrm{\small(time points)}$ third-order tensor. Tensor decomposition aims to find a low-rank approximation of tensorial data, which is a powerful tool of extracting hidden signal of low-dimensional structure. A tensor is considered low-rank if it can be expressed as the sum of a few rank-one tensors. A formal definition can be found in Section~\ref{sec:tensor-decomp}. Tensor decomposition has a variety of applications, including tensor denoising and dimension reduction \citep{lu2016tensor,zhang2018tensor}, community detection in hypergraph networks \citep{ke2019community}, node embedding in multi-layer networks \citep{jing2021community,cai2022generalized}, imputing missing data through tensor completion \citep{zhang2019cross,cai2019nonconvex,xia2021statistically}, clustering \citep{sun2019dynamic,wang2020learning}, and link prediction in general higher-order networks \citep{lyu2023latent}, among others.

While a tensor can be viewed as a natural extension of a matrix into a multi-dimensional space, finding a ``good" low-rank approximation of a tensor is fundamentally more challenging than finding the best low-rank approximation of a matrix. For any given matrix, its optimal low-rank approximation can be obtained through a singular value decomposition (SVD, \cite{golub2013matrix}), a process facilitated by highly efficient algorithms. In stark contrast, our understanding of the best low-rank approximation of a tensor is relatively limited \citep{kolda2009tensor}. Furthermore, computing the optimal low-rank approximation of a tensor is generally an NP-hard problem \citep{hillar2013most}. Therefore, computational feasibility becomes a crucial factor when we design statistical methods for tensor data analysis, even including the convex ones. To date, a variety of polynomial-time algorithms have been developed to find a good low-rank approximation of a tensor in Euclidean distance, such as the Frobenius norm. These algorithms can be locally or even globally optimal under certain statistical models, provided they are well-initialized. For example, \cite{de2000multilinear} introduced a higher-order singular value decomposition (HOSVD) method for tensor low-rank approximation which solely relies on multiple SVDs of rectangular matrices. They also found that an iterative refinement algorithm, known as Higher-Order Orthogonal Iterations (HOOI), can often enhance the performance in tensor low-rank approximation when applied after HOSVD. The sub-Gaussian tensor PCA model (also referred to as tensor SVD, as defined in Section~\ref{sec:tensor-decomp}) is a useful tool for studying the theoretical performance of tensor low-rank approximation algorithms. \cite{liu2022characterizing}, \cite{xia2019sup},  \cite{zhang2018tensor} and \cite{xia2021statistically} examined HOSVD and HOOI under sub-Gaussian noise, showing that while HOSVD is generally sub-optimal, HOOI achieves minimax optimality. A Burer-Monteiro type gradient descent algorithm, proposed by \cite{han2022optimal}, also achieves a minimax optimal rate under sub-Gaussian noise for tensor decomposition. \cite{cai2019nonconvex} studied a vanilla gradient descent algorithm and derived sharp error rates not only in Frobenius norm but also in sup-norm. A Riemannian gradient descent algorithm was also shown to be minimax optimal under sub-Gaussian noise by \cite{cai2022generalized}. More recently, \cite{lyu2023latent} investigated the Grassmannian gradient descent algorithm and demonstrated its minimax optimality under sub-Gaussian noise.

The technological revolution of recent decades has enabled the collection of vast amounts of information across a wide range of domains. The inherent heterogeneity of these domains can introduce outliers and heavy-tailed noise \citep{crovella1998heavy,rachev2003handbook,roberts2015heavy,sun2020adaptive} into tensorial datasets. Existing tensor decomposition algorithms typically seek a tensor low-rank approximation in the Frobenius norm, utilizing squared error as the loss function. However, the square loss is sensitive to outliers and heavy-tailed noise, which can render these algorithms unreliable in many real-world applications.
For example, when analyzing international trade flow data, a central objective is to study the economic ties between countries and their respective positions in the global supply chain. This structured and interconnected nature of global industries can often be encapsulated by a handful of multi-way principal components. However, outliers may occur if two countries have a substantial amount of trade flow simply due to geographical proximity or because one country is a primary supplier of a particular natural resource. Although such outliers are relatively rare in tensorial data, they can significantly skew the results of tensor low-rank approximation since they do not accurately reflect the countries' positions in the global supply chain.  Figure~\ref{fig:example} highlights the advantage of using absolute loss in handling outliers.  The figure focuses on the trading flow among approximately 50 countries,  specifically for the product `Petroleum oils and oils obtained from bituminous minerals; crude', from 2018 to 2022.
The top two sub-figures represent the node embedding of countries.  Red triangles represent (net) importers and blue circles represent (net) exporters.  A country is considered a (net) importer if it imports more than it exports, as is the case with the U.S.A.  Countries such as Saudi Arabia, Canada, and the Russian Federation, which export significant amounts, dominate the principal components in tensor decomposition using square loss.  Meanwhile, all other countries cluster together, as shown in the top-left sub-figure. The top-right figure represents the node embedding from tensor decomposition using absolute loss.  This is less sensitive to outlier entries caused by those three countries, leading to a more dispersed but better clustered embedding.  The bottom two sub-figures display the embedding results of months,  i.e.,  the third dimension of the tensor data.  Intuitively,  we would expect similar trading patterns for months within the same year.  This is indeed observed in the bottom-right sub-figure, which is produced by absolute-loss tensor decomposition. In contrast,  clusters are much less clear based on node embedding from the square-loss tensor decomposition, as shown in the bottom-left sub-figure.  It's important to note that the trade amount in the two months 202209 and 202210 is significantly smaller,  likely due to incomplete data, causing outlier slices in the tensor data.  The bottom-right sub-figure illustrates that absolute loss is insensitive to these outlier points.

\begin{figure}
	\centering
	\begin{subfigure}[b]{0.45\textwidth}
		\centering
		\includegraphics[width=\textwidth]{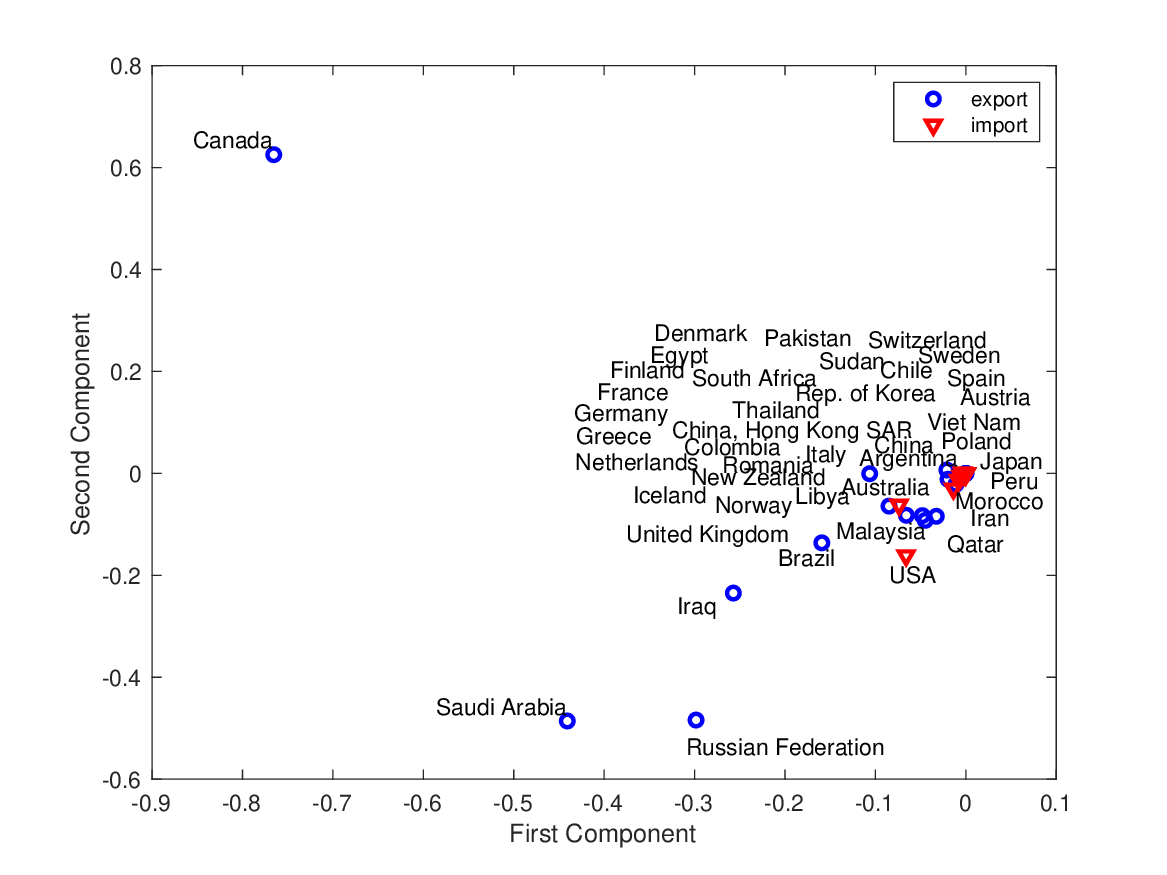}
		\caption{}
		\label{fig:intro1}
	\end{subfigure}
	\hfill
	\begin{subfigure}[b]{0.45\textwidth}
		\centering
		\includegraphics[width=\textwidth]{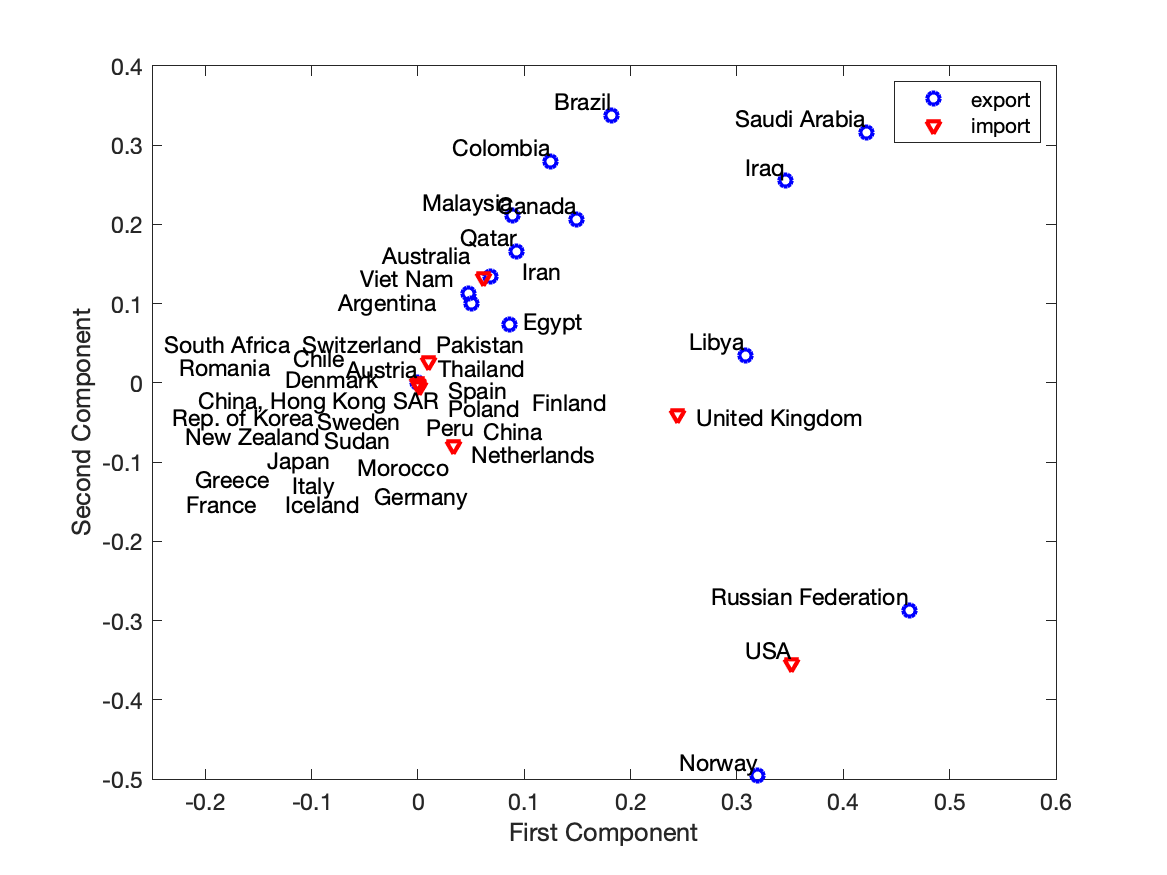}
		\caption{}
		\label{fig:intro2}
	\end{subfigure}
\\
\begin{subfigure}[b]{0.45\textwidth}
	\centering
	\includegraphics[width=\textwidth]{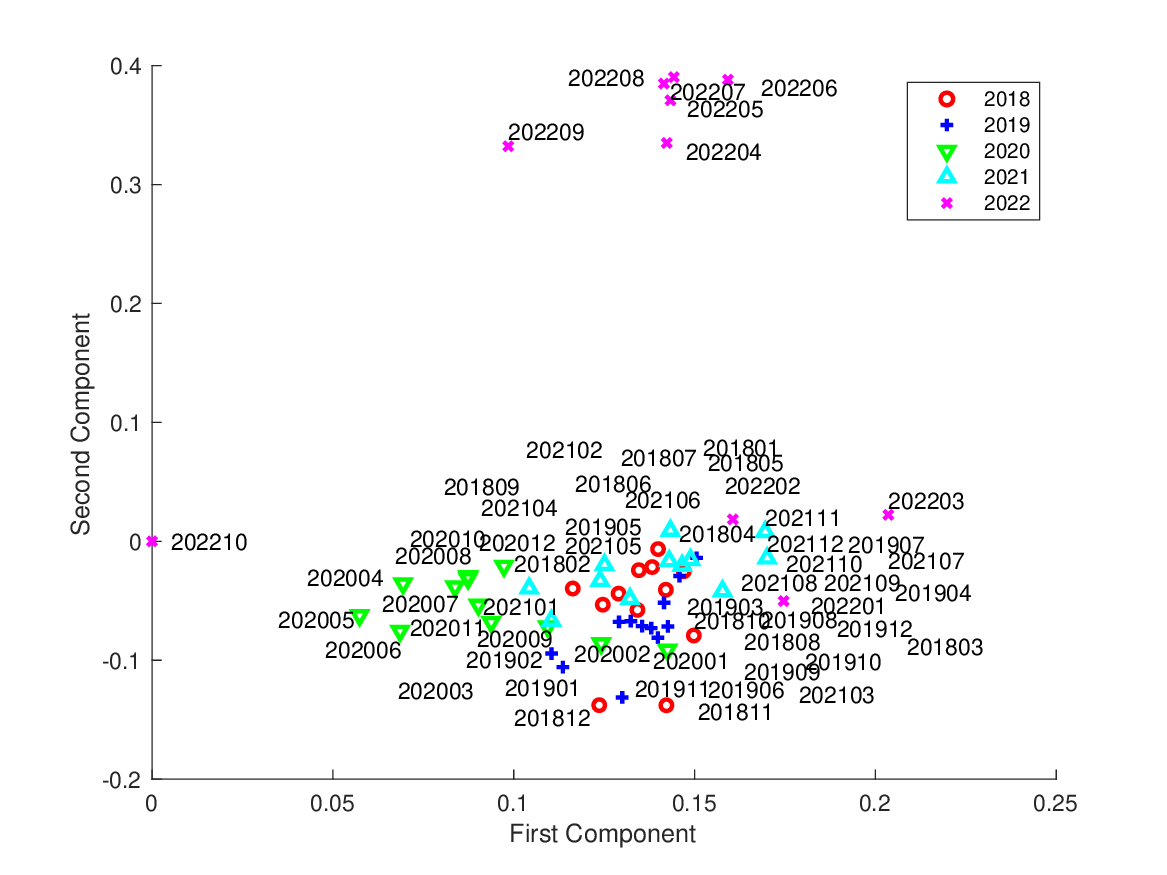}
	\caption{}
	\label{fig:intro3}
\end{subfigure}
\hfill
\begin{subfigure}[b]{0.45\textwidth}
	\centering
	\includegraphics[width=\textwidth]{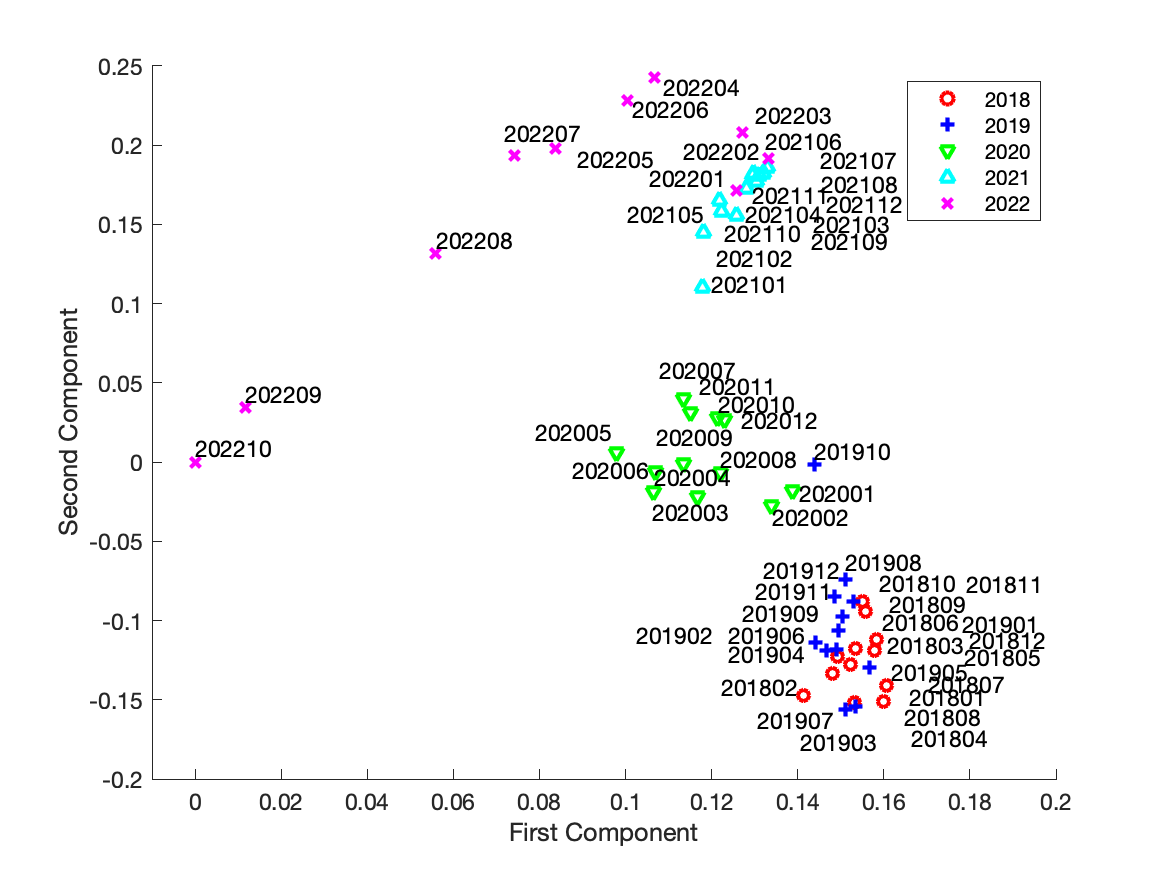}
	\caption{}
	\label{fig:intro4}
\end{subfigure}
	\caption{International trade flow data: node embedding of countries  and months from estimated principal components by tensor decomposition.  Left sub-figures: square-loss tensor decomposition; right sub-figures: absolute-loss tensor decomposition. }
	\label{fig:example}
\end{figure}

The development of statistical methods that are robust to outliers and heavy-tailed noise is garnering increasing significance in today's data-centric world. 
 A variety of these robust methods have been proposed, including the median of means \citep{minsker2015geometric, lecue2020robust,lugosi2019risk,depersin2020robust}, Catoni's method \citep{catoni2016pac,minsker2018sub}, and approaches involving trimming or truncation \citep{fan2016shrinkage,oliveira2019sub,lugosi2021robust}. These methods have proven useful for robust linear regression, mean, and covariance estimation. The issue of robustness against outliers has frequently been examined in theory \citep{depersin2022robust,dalalyan2022all,shen2023computationally,chinot2020robust,thompson2020outlier,minsker2022robust}, often resorting to Huber's contamination model \citep{huber1964robust}. This model posits that a fraction $\alpha\in(0,1)$ of the total samples are corrupted in an arbitrary manner. According to the findings of \cite{chen2016general, chen2018robust}, the minimax optimal error rate for several problems is directly proportional to $\alpha$ under Huber's model. 
Robust methods for matrix data analysis have also been extensively studied in the literature. The seminal work \cite{candes2011robust} examines matrix decomposition in the presence of sparse outliers, a problem known as robust PCA. Several studies \cite{candes2011robust,chandrasekaran2011rank,hsu2011robust,netrapalli2014non,yi2016fast} have demonstrated the possibility of precisely recovering a low-rank matrix corrupted by sparse outliers under specific identifiability conditions. Further, \cite{agarwal2012noisy} and \cite{klopp2017robust} explored the least squares estimator, employing a combination of nuclear norm and $\ell_1$-norm penalties imposing no assumptions over locations of the support, with additional sub-Gaussian noise.  Their derived error rates, proportional to $\alpha^{1/2}$, do not disappear even in the absence of the sub-Gaussian noise. This rate is optimal under arbitrary corruption but sub-optimal under Huber's contamination model where the optimal dependence on the corruption ratio is $\alpha$. A similar sub-optimal rate was exhibited by the non-convex method introduced by \cite{cai2022generalized} and the convex approach based on sorted-Huber loss proposed by \cite{thompson2020outlier}, both with regard to the proportion of corruption. A different perspective was offered by \cite{chen2021bridging}, who presented an alternating minimization algorithm that could attain an optimal error rate under strict conditions: uniformly random location of the outliers, random signs of the outliers, and sub-Gaussian noise. 
Heavy-tailed noise, a common source of outliers, can be treated as a combination of bounded noise and sparse corruption. This approach is generally sub-optimal, as noted by \cite{cai2022generalized}. Fortunately, heavy-tailed noise can usually be handled by robust loss functions including quantile loss, Huber loss, and the absolute loss. For instance, \cite{elsener2018robust,alquier2019estimation,chinot2020robust} showed that statistically optimal low-rank matrix estimators against heavy-tailed noise can be attained by utilizing those robust loss functions. However, all of these methods are based on convex relaxations and the computational aspect of the proposed estimators have not been thoroughly examined. It is important to bear in mind that the optimization process can be quite challenging due to the non-smooth nature of the aforementioned robust loss functions, even when the objective function is convex.

The integrated investigation of the computational and statistical aspects of robust low-rank methods is a somewhat under-explored area.
Both \cite{charisopoulos2021low} and \cite{tong2021low} examined the sub-gradient descent algorithm for matrix decomposition, employing robust loss functions. They demonstrated that the algorithm could achieve linear convergence with a schedule of decaying step sizes. However, the error rates derived from their research are generally sub-optimal, even under Gaussian noise conditions. In their respective works, \cite{cai2022generalized} and \cite{dong2022fast} adopted the square loss and introduced a sparse tensor to accommodate potential outliers resulting from heavy-tailed noise. Although this method ensures rapid computation, it is generally sub-optimal under standard heavy-tailed noise assumptions. The study by \cite{shen2023computationally} revealed that the sub-gradient descent algorithm could be both computationally efficient and statistically optimal for low-rank linear regression under heavy-tailed noise. They observed an intriguing phenomenon termed as ``two-phase convergence". However, it is important to note that the more technically demanding robust tensor decomposition differs significantly from low-rank linear regression, rendering the results of \cite{shen2023computationally} non-transferable. \cite{auddy2022estimating} proposed a one-step power iteration algorithm with Catoni-type initialization for rank-one tensor decomposition under heavy-tailed noise. This method, which only necessitates a finite second moment condition, achieves a near-optimal error rate up to logarithmic factors. The bound remains valid with a probability lower bounded by $1-\Omega(\log^{-1}d)$ for a tensor of size $d\times d\cdots\times d$. However, a strong signal strength condition is also vital for this method. 
Huber matrix completion was studied in \cite{wang2022robust} through the lens of leave-one-out analysis. Due to technical constraints, their analysis framework is not applicable to tensor decomposition, and a significantly large truncate threshold is necessitated by \cite{wang2022robust}. How the methods proposed by \cite{auddy2022estimating} and \cite{wang2022robust} behave in the presence of arbitrary outliers remains unclear.  Robust tensor decomposition in the presence of missing values presents even greater challenges. Shrinkage-based approaches for the matrix case have been studied by \cite{minsker2018sub} and \cite{fan2016shrinkage}.  While their rates are optimal with respect to the dimension and sample size under a minimal second-order moment noise condition, their derived rates are not proportional to the noise level. \cite{wang2022robust} extended the leave-one-out analysis to the vanilla sub-gradient descent algorithm for matrix completion under heavy-tailed noise. However, their entry-wise error rate is still sub-optimal, and it remains unclear whether their method is applicable to tensors and with arbitrary corruptions.  We believe that this sub-optimality is due to technical reasons. We demonstrate this by showing that a simple sample splitting trick can yield statistical optimality for both Frobenius-norm and entry-wise error rates, even in the presence of arbitrary corruptions.

In this paper, we develop computationally fast and statistically optimal methods for tensor decomposition, robust to both heavy-tailed noise and sparse arbitrary corruptions. Our contributions are summarized as follows. 
	\begin{enumerate}[1.]
	\item We propose a tensor decomposition framework that employs quantile loss and pseudo-Huber loss.  Existing works in robust tensor decomposition often falls short in terms of algorithmic development, computational guarantees,  and statistical optimality.  To address this,  we introduce a computationally efficient algorithm grounded in Riemannian (sub-)gradient descent.  We simultaneously explore computational convergence and statistical performance,  demonstrating that our proposed algorithm converges linearly and achieves statistical optimality in handling both heavy-tailed noise and arbitrary corruptions.  Unlike previous works \citep{cai2022generalized, dong2022fast}, our method does not necessitate the specification of a sparsity level in advance.  A phenomenon of two-phase convergence is also observed in the proposed algorithms for robust tensor decomposition.   We apply our methods to the food balance dataset and international trade flow dataset, both of which yield intriguing findings.   
	
	\item Our approach offers several theoretical benefits.  We demonstrate that quantile and pseudo-Huber tensor decomposition can achieve statistical optimality under both dense noise and arbitrary corruptions,  regardless of whether the noise is sub-Gaussian or heavy-tailed. 
Existing works often treat sparse corruptions using heavy-tailed distributions, as seen in \cite{cai2022generalized, fan2016shrinkage,auddy2022estimating,wang2022robust}.  We examine the robustness to sparse corruptions under Huber's contamination model.  Even in the presence of both heavy-tailed noise and Huber's contamination, our approach can still deliver a statistically optimal estimator. 
We are the first to derive the minimax optimal rate of matrix/tensor decomposition under Huber's contamination model.  Previously,  methods by \cite{agarwal2012noisy, klopp2017robust, cai2022generalized} achieved an error rate proportional to $\alpha^{1/2}$,  where $\alpha$ is the proportion of contamination under Huber's model.  We demonstrate that quantile tensor decomposition achieves an error rate proportional to $\alpha$,  which is minimax optimal under Huber's contamination model. 
The left sub-figure in Figure~\ref{fig:Corruption Rate} showcases the achieved error rate by absolute-loss tensor decomposition under Huber's contamination model.  It examines both cases of dense Gaussian noise and Student's t noise.  The plot reveals a linear pattern between the achieved error and the corruption rate.

		\item Robust tensor decomposition poses greater technical challenges than high-dimensional linear regression \citep{shen2023computationally}.  Our key technical contribution lies in demonstrating the so-called two-phase regularity properties of the absolute loss and pseudo-Huber loss.  Particularly noteworthy is the second-phase regularity condition where the size of the projected sub-gradient (namely, the Riemannian sub-gradient of the loss) diminishes as the estimate approaches the true model parameter.  We also prove the first-phase regularity condition that was initially conjectured in \cite{charisopoulos2021low}.  
Robust tensor decomposition becomes even more complex in the presence of missing values, where the powerful leave-one-out framework still yields sub-optimal results. We posit that the sub-optimality is caused by technical difficulty, and demonstrate that a simple sample splitting trick can yield a statistically optimal error rate under missing values and in the presence of arbitrary outliers.
		
	\end{enumerate}

\begin{figure}[t]
	\centering
	\begin{subfigure}[b]{0.45\textwidth}
		\centering
		\includegraphics[width=\textwidth]{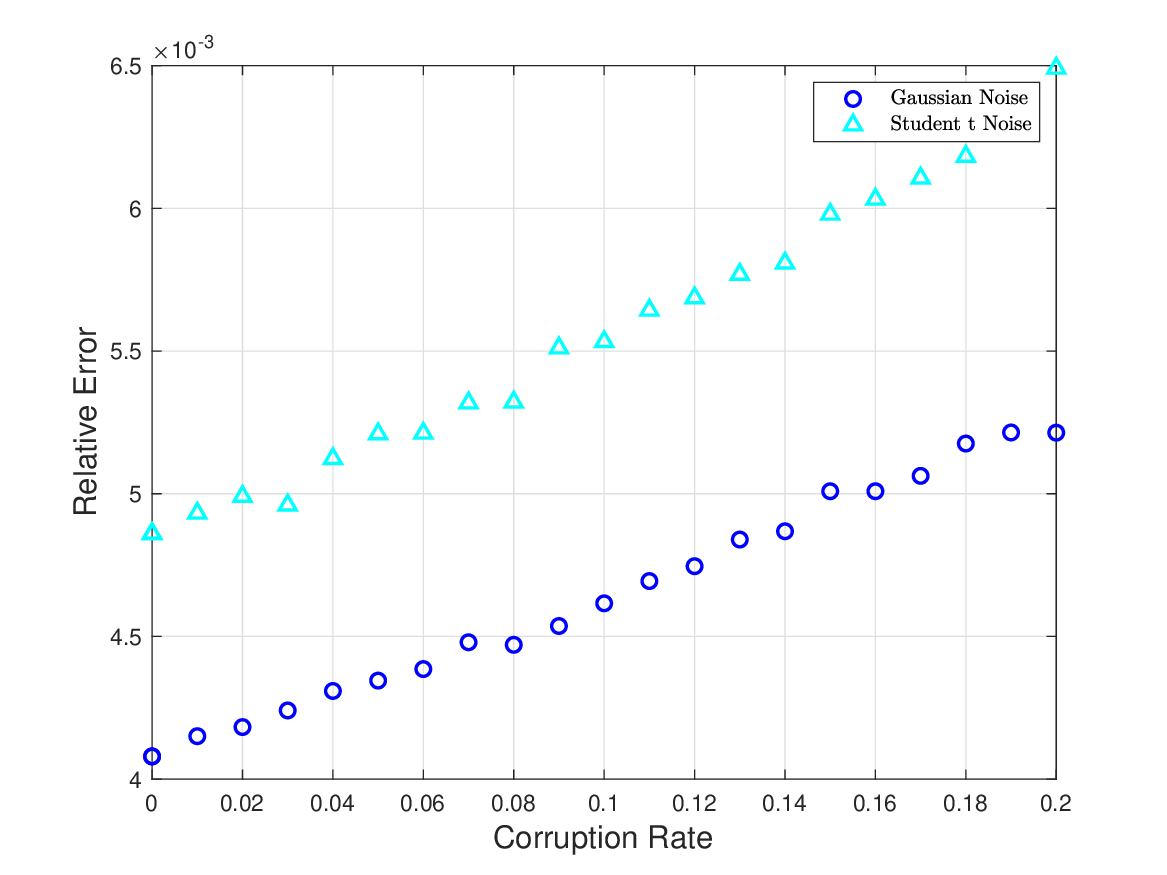}
		\caption{Error against corruption rate}
		\label{fig:Corruption Rate}
	\end{subfigure}
	\hfill
	\begin{subfigure}[b]{0.45\textwidth}
		\centering
		\includegraphics[width=\textwidth]{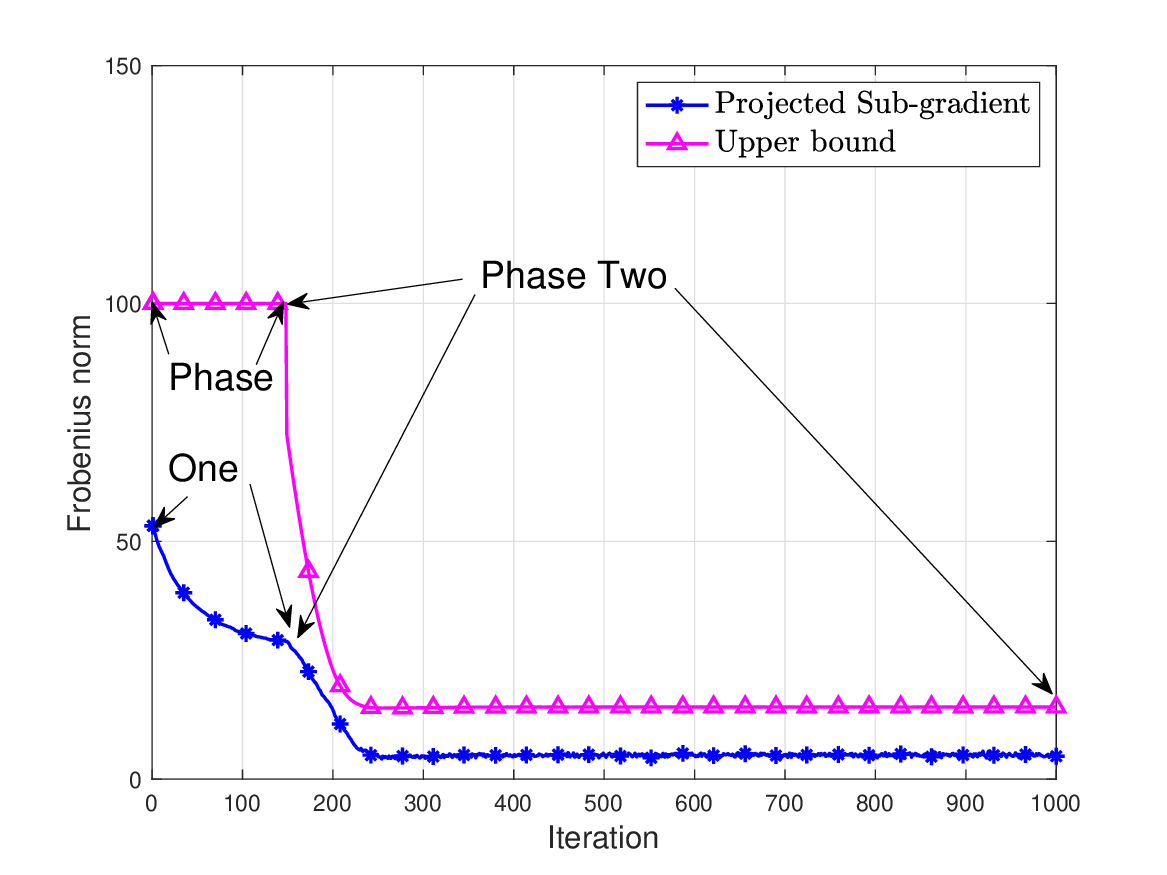}
		\caption{Norm of projected sub-gradient}
		\label{fig:UpperBoundSubGrad}
	\end{subfigure}
	\caption{Optimal rate by and regularity property of absolute loss.  
	Left: relative error $\|\widehat{\bcalT}-\bcalT^*\|_{\rm F}/ \fro{\bcalT^*}$ against the corruption rate $\alpha$ under Huber's contamination model and in the presence of dense Gaussian or Student's t noise.  Plot is based the average over $100$ replications.   Here $\widehat\bcalT$ denotes the estimator produced by our algorithm.  Right: the Frobenius norm of projected sub-gradient of the absolute loss $\|\bcalT_l-\bcalY\|_1$.  Here $\bcalT_l$ denotes the updated estimate after $l$-th iteration.  }
	\label{fig:intro}
\end{figure}

\section{Tensor Decomposition and Robust PCA}\label{sec:tensor-decomp}

We shall write tensors in bold calligraphy font,  such as $\bcalC,\bcalM,\bcalT$ and write matrices in upper-case bold face,  such as $\U,\V,\W$.  Lower-case bold face letters such as $\u, \v,\w$ denote vectors.  
 An $m$-th order tensor $\bcalT\in\RR^{d_1\times\cdots\times d_m}$ is an $m$-dimensional array and $d_j$ is the size in $j$-th dimension.  Denote its mode-$j$ matricization of $\bcalT$ as $\fraM_{j}(\bcalT)\in \RR^{d_j\times d_j^{-}}$, where $d_j^{-}:=\prod_{l\neq j} d_l$. 
	The mode-$j$ marginal multiplication between a tensor $\bcalT$ and a matrix $\U^{\top}\in\RR^{r_j\times d_j}$ results into an $m$-th order tensor of size $d_1\times\cdots d_{j-1}\times r_j\times d_{j+1}\cdots d_m$, whose elements are $(\bcalT\times_j \U^{\top})_{i_1\cdots i_{j-1}l i_{j+1}\cdots i_m}:=\sum_{i_j=1}^{d_j} [\bcalT]_{i_1\cdots i_{j-1} i_{j} i_{j+1}\cdots i_m}\U_{i_jl}.$  A simple and useful fact is $\fraM_{j}\left(\bcalT\times_j\U^{\top}\right)=\U^{\top}\fraM_{j}(\bcalT)$.  
Unlike matrices,  there are multiple definitions of tensor ranks.  Throughout this paper,  tensor ranks are referred to as the Tucker ranks \citep{tucker1966some}.   The $m$-th order tensor $\bcalT$ is said to have Tucker rank $\r:=(r_1,r_2,\cdots, r_m)$ if its mode-$j$ matricization has rank $r_j$, i.e., $r_j=\text{rank}(\fraM_{j}(\bcalT))$.  As a result,  $\bcalT$ admits the so-called Tucker decomposition $\bcalT=\bcalC\cdot \llbracket \U_1,\cdots,\U_m\rrbracket:=\bcalC\times_1\U_1\times_2\cdots\times_m\U_m$ where the core tensor $\bcalC$ is of size $r_1\times\cdots\times r_m$ and $\U_j\in\RR^{d_j\times r_j}$ has orthonormal columns.  Tucker decomposition is conceptually similar to the matrix SVD except that the core tensor is generally not diagonal. 	Interested readers are suggested to refer to \cite{kolda2009tensor,de2008tensor,de2000multilinear} for more details about Tucker ranks and Tucker decomposition.  Tucker decomposition is well-defined and can be fast computed by HOSVD.  
For notational convenience,  we denote $d^*:=d_1\cdots d_m$, $d_k^-:=d^*/d_k$, $r^*:=r_1\cdots r_m$, $r_k^-:= r^*/r_k$ for any $k\in[m]$.  Denote $\r:=(r_1,\cdots,r_m)^{\top}$ and $\MM_{\r}:=\{\bcalT\in\RR^{d_1\times\cdots\times d_m}:\; \text{rank}(\fraM_{k}(\bcalT))\leq r_k\}$ the set of tensors with Tucker rank bounded by $\r$.

Noisy tensor decomposition is concerned with reconstructing a low-rank tensor from noisy observation.  Consider an $m$-th order tensor $\bcalA$ of size $d_1\times\cdots\times d_m$. This could be representative of various types of data, such as international trade flow among countries \citep{cai2022generalized, lyu2023optimal} or a higher-order network \citep{ke2019community,jing2021community}, among others. The fundamental premise of tensor decomposition is the existence of a low-rank ``signal" tensor $\bcalT^{\ast}$ embedded within $\bcalA$.  Here, $\r$ represents the Tucker ranks of $\bcalT^{\ast}$,  satisfying that $r_k\ll d_k$ for all $k\in[m]$.
Throughout this paper, we assume additive noise,  leading to a linear model.  For more context on tensor decomposition in generalized linear models, please refer to \cite{han2022optimal, lyu2023optimal, lyu2023latent}.  With the assumption of additive noise, tensor decomposition strives to find a low-rank approximation for the tensorial data $\bcalA$. If the additive noise is sub-Gaussian, the associated model is often referred to as sub-Gaussian tensor PCA \citep{cai2022generalized} and the signal tensor can be estimated by the least squares estimator
\begin{equation}\label{eq:least-squares}
\widehat\bcalT^{\LS}:=\underset{\bcalT\in\MM_{\r}}{\arg\min}\ \|\bcalT-\bcalA\|_{\rm F}^2:=\sum_{\omega\in[d_1]\times\cdots\times [d_k]} \big([\bcalT]_{\omega}-[\bcalA]_{\omega}\big)^2.
\end{equation}
The optimization problem involved in (\ref{eq:least-squares}) is generally NP-hard. Computationally efficient algorithms have been developed to find locally optimal solutions which are statistically optimal under strong signal-to-noise ratio (SNR) conditions. See, e.g., \cite{zhang2018tensor,liu2022characterizing, cai2022generalized}.

This paper focuses on tensor decomposition in the existence of heavy-tailed noise and arbitrary corruptions/outliers. More specifically, we study the robust tensor PCA model in that the observed tensor data, denoted as $\bcalY$,  consists of three underlying parts:
\begin{equation}\label{eq:model}
\bcalY=\bcalT^{\ast}+\bXi+\bcalS.
\end{equation}
The signal tensor, represented as $\bcalT^{\ast}$, holds a Tucker rank of $\r$. The dense noise tensor, $\bXi$, potentially contains entries with heavy tails, and $\bcalS$ is a sparse tensor that captures arbitrary corruptions or outliers. It's important to note that heavy-tailed noise can result in outliers, and the additional sparse tensor $\bcalS$ accommodates Huber's contamination model. It is possible that $\bcalT^{\ast}$ and $\bcalS$ may be indistinguishable if $\bcalT^{\ast}$ itself also exhibits sparsity. For identifiability, the incoherent condition introduced by \cite{candes2011robust} is often necessary. The set of $\mu$-incoherent rank-$\r$ tensors is denoted by $\MM_{\r,\mu}:=\{\bcalT\in\MM_{\r}: \mu(\bcalT)\leq \mu\}$.

\begin{definition} A tensor $\bcalT=\bcalC\cdot\llbracket\U_1,\dots,\U_m\rrbracket$ with Tucker rank $\r=(r_1,\dots,r_m)$ is said $\mu$-incoherent iff $\mu(\bcalT):=\max_{k=1,\dots,m}\ltinf{\U_k}^2\cdot d_k/r_k\leq\mu,$ or equivalently $\ltinf{\U_k}\leq (\mu r_k/d_k)^{1/2}$ for each $k=1,\dots,m$.
\end{definition}

Heavy-tailed noise and outliers can be handled by robust loss functions. In the following sections, we focus on two specific robust loss functions:
{\it
	\begin{enumerate}[1.]
		\item {\it Pseudo-Huber loss:} $\rho_{H_p,\delta}(x):=(x^2+\delta^2)^{1/2}$ for any $x\in\RR$ where $\delta>0$ is a tuning parameter;
		\item {\it Quantile loss:} $\rho_{Q,\delta}(x):=\delta x\mathbbm{1}(x\geq 0)+(\delta-1)x\mathbbm{1}(x<0)$ for any $x\in \RR$ with $\delta:=\PP(\xi\leq 0)$. Without loss of generality, only the case $\delta=1/2$, i.e, absolute loss $\rho(x)=|x|$,  will be specifically studied. 
	\end{enumerate}
}

A robust low-rank estimator for $\bcalT^{\ast}$ can be achieved through tensor decomposition combined with robust loss functions. More specifically, we define
\begin{align}\label{eq:loss}
		\widehat\bcalT:=\underset{\bcalT\in\MM_{\r,\muT}}{\arg\min}\ f(\bcalT) \quad \textrm{ where } f(\bcalT) := \sum_{\omega\in[d_1]\times\cdots\times[d_m]}\rho\big([\bcalT]_{\omega}-[\bcalY]_{\omega}\big).
\end{align}
Here, $\rho(\cdot)$ can represent either the pseudo-Huber or quantile loss and $\mu^{\ast}$ denotes incoherence parameter of $\bcalT^{\ast}$. The optimization program involved in equation (\ref{eq:loss}) presents a greater challenge than that in equation (\ref{eq:least-squares}) due to the often non-smooth nature of robust loss functions. Our aim is to develop a fast converging algorithm capable of finding a local minimizer for equation (\ref{eq:loss}), which is also statistically optimal w.r.t. the heavy-tailed noise and arbitrary corruptions with high probability.


\section{Pseudo-Huber Tensor Decomposition}\label{sec:warm}
In this section, we study tensor decomposition using the pseudo-Huber loss and demonstrate its robustness to heavy-tailed noise. More specifically, suppose the observed tensor $\bcalY=\bcalT^{\ast}+\bXi$ where $\bXi$ is a noise tensor whose entries are i.i.d. centered random variables.  Denote $\rho_{H_p, \delta}(x):=(x^2+\delta^2)^{1/2}$ the pseudo-Huber loss with a tuning parameter $\delta>0$. The pseudo-Huber loss is a smooth approximation of the absolute loss and Huber loss. We estimate $\bcalT^{\ast}$ by solving the following non-convex program:
\begin{align}
\widehat{\bcalT}= \underset{\bcalT\in\MM_{\r,\mu}}{\arg\min} \lph{\bcalT-\bcalY}:= \sum_{\omega\in[d_1]\times\cdots\times[d_m]} \rho_{H_p,\delta}\big([\bcalT]_{\omega}-[\bcalY]_{\omega}\big).
		\label{eq:loss-pHuber}
\end{align}
Here $\mu$ is some constant larger than the $\mu^{\ast}=\mu(\bcalT^{\ast})$, i.e., the incoherence parameter of the ground truth. Note that \cite{cambier2016robust} has empirically demonstrated the benefit of pseudo-Huber loss in matrix completion. We prove that pseudo-Huber loss is indeed robust to heavy-tailed noise and can deliver a statistically optimal estimator under mild conditions.

\subsection{Projected gradient descent}
Finding the global minimizer of program (\ref{eq:loss-pHuber}) is generally NP-hard. We only intend to find a local minimizer which enjoys statistical optimality. The objective function in  (\ref{eq:loss-pHuber}) is convex, but the feasible set is non-convex. Meanwhile, the set of fixed-rank tensors forms a Riemannian manifold. We apply the projected gradient descent \citep{chen2015fast} algorithm to solving the program (\ref{eq:loss-pHuber}). The vanilla gradient is usually full-rank, rendering the projection step computationally intensive. For computational benefit, we utilize the Riemannian gradient which is also low-rank. This corresponds to the Riemannian gradient descent algorithm extensively studied in the recent decade. See, e.g., \cite{vandereycken2013low,cambier2016robust,wei2016guarantees,cai2022generalized, shen2022computationally} and references therein. The details are in Algorithm~\ref{alg:pseuHuber:RsGrad}. The algorithm consists of two  main steps. First, at the current iterate $\bcalT_l$, Algorithm~\ref{alg:pseuHuber:RsGrad} moves along the Riemannian gradient, which is the projection of the vanilla gradient into the tangent space, denoted as $\TT_l$, of $\MM_{\r}$ at $\bcalT_l$. The second step retracts the updated estimate back to the feasible set $\MM_{\r}$. Although the retraction step seems to require the computation of HOSVD \citep{de2000multilinear} of a $d_1\times\cdots\times d_m$ tensor, which would be rather computational costly, in fact it can be reduced to the HOSVD of a $2r_1\times\cdots\times 2r_m$ tensor. For more details of computation implementation, please refer to \cite{cai2020provable,cai2022generalized,shen2022computationally,luo2022tensor}. Note that Algorithm~\ref{alg:pseuHuber:RsGrad} requires no further steps to ensure the incoherence. Instead, we shall prove that the iterates output by Algorithm~\ref{alg:pseuHuber:RsGrad} maintain the incoherence property if equipped with a good initialization.

\begin{algorithm}[H]
	\caption{Riemannian Gradient Descent for Pseudo-Huber Tensor Decomposition}\label{alg:pseuHuber:RsGrad}
	\begin{algorithmic}
		\STATE{\textbf{Input}: observations $\bcalY$,  max iterations $\lmax$,  step sizes $\{\eta_l\}_{l=0}^{\lmax}$.}
		\STATE{Initialization: $\bcalT_0\in\MM_\r$}
		\FOR{$l = 0,\ldots,\lmax$}
		\STATE{Choose a vanilla gradient:  $\bcalG_l\in\partial \|\bcalT_l -\bcalY\|_{H_p}$}
		\STATE{Compute Riemannian gradient: $\wt\bcalG_l = \calP_{\TT_l}(\bcalG_l)$}
		\STATE{Retraction to $\MM_\r$: $\bcalT_{l+1} = \text{HOSVD}_\r(\bcalT_l - \eta_{l}\wt\bcalG_l)$}
		\ENDFOR
		\STATE{\textbf{Output}: $\widehat\bcalT=\bcalT_{\lmax}$}
	\end{algorithmic}
\end{algorithm}

\subsection{Algorithm convergence and statistical optimality}
Let $\xi$ be a heavy-tailed random variable denote the entrywise error, i.e., the entries of $\bXi$ are i.i.d. and have the same distribution as $\xi$. Denote $h_{\xi}(\cdot)$ and $H_{\xi}(\cdot)$ the density and distribution of  $\xi$, respectively.  Pseudo-Huber tensor decomposition requires the following condition of the noise. 

\begin{assumption}[Noise condition \Romannum{1}]
		There exists an $\eps>0$ such that $\gamma:=\left(\EE|\xi|^{2+\eps}\right)^{1/(2+\eps)}<+\infty$. The density function $h_{\xi}(\cdot)$ is zero symmetric\footnote{The zero-symmetric condition can be slightly relaxed to $\frac{d}{dt}\EE\big(t-\xi)^2+\delta^2\big)^{1/2}\big|_{t=0}=0$, which is equivalent to $\int_{-\infty}^{+\infty} s(s^2+\delta^2)^{-1/2}h_{\xi}(s)\, ds=0$.} in that $h_{\xi}(x)=h_{\xi}(-x)$. There exists $b_0>0$ such that $ h_{\xi}(x)\geq b_0^{-1}$ for all  $|x|\leq C_{m,\muT,r^*}(6\gamma+\delta),$ where $C_{m,\muT,r^*}:=72(5m+1)^23^m\mu^{*m} r^*$ and $\delta$ is the pseudo-Huber loss parameter.
		\label{assu:hub:noise}
\end{assumption}

Basically, Assumption~\ref{assu:hub:noise} requires a finite $2+\eps$ moment bound of noise. The lower bound condition of noise density has appeared in existing literature such as \cite{elsener2018robust,alquier2019estimation,chinot2020robust,wang2020tuning,shen2023computationally}. Note that $b_0$ is only related to the random noise $\xi$ together with pseudo-Huber parameter $\delta$.  Assumption~\ref{assu:hub:noise} also implies a lower bound $b_0\geq C_{m,\muT,r^*}(6\gamma+\delta)$. By choosing a parameter $\delta=O(\gamma)$, the relationship $b_0\asymp \EE|\xi|$ holds for Gaussian noise, Student's t noise, and zero symmetric Pareto noise, etc.

The convergence dynamic of Algorithm~\ref{alg:pseuHuber:RsGrad} and statistical performance are decided by the schedule of step sizes. They are related to regularity properties of the objective function. Interestingly, the following lemma shows that the pseudo-Huber loss exhibits two-phase regularity properties depending on the closeness between $\bcalT$ and the ground truth. Define $\textsf{DoF}_m:=r_1r_2\cdots r_m+\sum_{j=1}^{m}d_jr_j$, reflecting the model complexity. Here the sup-norm $\|\bcalA\|_{\infty}:=\max_{\omega\in[d_1]\times\cdots\times [d_m]} \big|[\bcalA]_{\omega} \big|$ and the $(2,\infty)$-norm of a $d_1\times p_1$ matrix is defined by $\|\A\|_{2,\infty}:=\max_{i\in[d_1]} \|\e_i^{\top}\A\|$ where $\|\cdot\|$ denotes the vector $\ell_2$-norm and $\e_i$ denotes the $i$-th standard basis vector. 

\begin{lemma}[Two-phase regularity properties of pseudo-Huber loss]
Suppose the noise $\bXi$ has i.i.d. entries satisfying Assumption~\ref{assu:hub:noise}. There exist absolute constants $c,c_1,c_2>0$ such that with probability exceeding $1-c\sum_{k=1}^{m} d_k(\dkm)^{-1-\min\{1,\eps\}}-\exp\left(-\textsf{DoF}_m/2\right)$, the following facts hold. 
\begin{enumerate}[(1)]

\item For all $\bcalT\in\RR^{d_1\times\cdots\times d_m}$ and any gradient $\bcalG\in\partial \lph{\bcalT-\bcalY}$, 
$$
\fro{\calP_{\TT}(\bcalG)}\leq (d^*)^{1/2},\quad \lph{\bcalT-\bcalY}-\lph{\bcalT^*-\bcalY}\geq \linft{\bcalT-\bcalT^*}^{-1}\cdot\fro{\bcalT-\bcalT^*}^2-6d^*\gamma-d^*\delta.
$$
Here $\TT$ denotes the tangent space of $\MM_{\r}$ at the point $\bcalT$.  Furthermore, if $\bcalT$ is $\mu$-incoherent, then for each $k\in[m]$ and $j\in[d_k]$, 
$$
\ltinf{\fraM_{k}\left(\calP_{\TT}(\bcalG)\right)}\leq \big(3\mu r_k\cdot \dkm\big)^{1/2},
$$
$$
\lph{\fraM_{k}(\bcalT-\bcalY)_{j,\cdot}}-\lph{\fraM_{k}(\bcalT^*-\bcalY)_{j,\cdot}}\geq\linft{\fraM_{k}(\bcalT-\bcalT^*)_{j,\cdot}}^{-1}\cdot\fro{\fraM_{k}(\bcalT-\bcalT^*)_{j,\cdot}}^2-6\dkm \gamma-\dkm\delta.
$$

\item For all $\bcalT\in\MM_{\r}$ satisfying $\linft{\bcalT-\bcalT^*}\leq C_{m,\muT,r^*}(6\gamma+\delta)$ and $\fro{\bcalT-\bcalT^*}\geq c_1b_0\sqrt{\textsf{DoF}_m}$, 
$$
\fro{\calP_{\TT}(\bcalG)}\leq c_2\delta^{-1}\sqrt{m+1}\cdot \fro{\bcalT-\bcalT^*},\quad \lph{\bcalT-\bcalY}-\lph{\bcalT^*-\bcalY}\geq (4b_0)^{-1}\cdot\fro{\bcalT-\bcalT^*}^2.
$$
\end{enumerate}
		\label{lem:hub:regularity}
\end{lemma}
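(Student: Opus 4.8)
The plan is to establish the four inequalities of part (1) first, since they are deterministic in nature (holding for \emph{all} $\bcalT$ once a single high-probability event on $\bXi$ is secured), and then treat part (2), which requires the density lower bound from Assumption~\ref{assu:hub:noise} and a uniform control over the low-rank manifold.

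\textbf{Part (1), the gradient bounds.} Since $\rho_{H_p,\delta}'(x)=x(x^2+\delta^2)^{-1/2}$ has absolute value at most $1$, every entry of any $\bcalG\in\partial\lph{\bcalT-\bcalY}$ is bounded by $1$ in magnitude, so $\fro{\bcalG}\le(d^*)^{1/2}$; the projection $\calP_{\TT}$ is an orthogonal projection and hence non-expansive in Frobenius norm, giving $\fro{\calP_{\TT}(\bcalG)}\le(d^*)^{1/2}$. For the $(2,\infty)$ bound, I would decompose $\calP_{\TT}(\bcalG)$ into the $m+1$ tangent-space component projections ($m$ ``arm'' terms of the form $\bcalG\times_k(\U_k\U_k^\top)$ composed with the complementary projections on the other modes, plus the core term); matricizing along mode $k$, each row of such a component lives in a space of dimension $O(\mu r_k)$ by incoherence, and a row-wise Cauchy--Schwarz against $\linft{\bcalG}\le1$ delivers the $(3\mu r_k\dkm)^{1/2}$ bound. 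The entry-wise constant $3$ here should absorb the combinatorics of the $m+1$ pieces.

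\textbf{Part (1), the loss lower bounds.} This is the crux of the deterministic half. Write $\lph{\bcalT-\bcalY}-\lph{\bcalT^*-\bcalY}=\sum_\omega\big(\rho_{H_p,\delta}([\bcalT-\bcalT^*]_\omega-[\bXi]_\omega)-\rho_{H_p,\delta}(-[\bXi]_\omega)\big)$. The function $\rho_{H_p,\delta}$ is convex with $\rho_{H_p,\delta}''(x)=\delta^2(x^2+\delta^2)^{-3/2}$, so on $|x|\le M$ one has $\rho_{H_p,\delta}''(x)\ge\delta^2(M^2+\delta^2)^{-3/2}\gtrsim M^{-1}$ when $M\gtrsim\delta$; by a second-order Taylor expansion with remainder, $\rho_{H_p,\delta}(a+t)-\rho_{H_p,\delta}(a)\ge\rho_{H_p,\delta}'(a)t+\tfrac{1}{2}(\text{curvature})t^2$ on the relevant range. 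Summing the linear terms gives $\inp{\partial\lph{\bcalT^*-\bcalY}}{\bcalT-\bcalT^*}$; by the zero-symmetry assumption $\EE\rho_{H_p,\delta}'(-\xi)=0$, so this linear term is a centered empirical process, and the high-probability event I need is precisely $\linft{\sum\text{-type deviation}}$ or rather a bound of the form $|\inp{\bcalE}{\bcalT-\bcalT^*}|\le 3d^*\gamma+\tfrac12 d^*\delta+\ldots$ uniformly, which can be handled entrywise because $|\rho_{H_p,\delta}'(-[\bXi]_\omega)|\le 1$ and a crude bound $|\inp{\bcalE}{\Delta}|\le\fro{\bcalE}\fro{\Delta}$ combined with $\fro{\bcalE}\le(d^*)^{1/2}$ — actually the cleanest route is to bound the whole difference below by $\sum_\omega\big(|[\bcalT-\bcalT^*]_\omega-[\bXi]_\omega|-\delta-|[\bXi]_\omega|\big)$ using $|x|\le\rho_{H_p,\delta}(x)\le|x|+\delta$, then apply the elementary inequality $|u-v|\ge|u|^2\linft{}^{-1}\cdot(\ldots)$; more concretely $|a-b|-|b|\ge |a|-2|b|\ge|a|^2/\linft{\bcalT-\bcalT^*}-2|b|$ when $|a|\le\linft{\bcalT-\bcalT^*}$, and summing with the moment control $\sum_\omega|[\bXi]_\omega|\le 3d^*\gamma$ on a high-probability event (via $\EE|\xi|\le\gamma$ and a Markov/concentration bound whose failure probability is the $d_k(\dkm)^{-1-\min\{1,\eps\}}$ term, using the $2+\eps$ moment). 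The mode-$k$, row-$j$ versions follow verbatim by restricting the sum to the slice $\fraM_k(\cdot)_{j,\cdot}$, which has $\dkm$ entries, and taking a union bound over $k\in[m]$, $j\in[d_k]$ — this is where the $\sum_k d_k(\dkm)^{-1-\min\{1,\eps\}}$ shape of the probability comes from.

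\textbf{Part (2), the sharp local regularity.} Now restrict to $\bcalT\in\MM_\r$ with $\linft{\bcalT-\bcalT^*}\le C_{m,\muT,r^*}(6\gamma+\delta)$ and $\fro{\bcalT-\bcalT^*}\ge c_1 b_0\sqrt{\textsf{DoF}_m}$. For the loss lower bound I would use the $\delta^2(x^2+\delta^2)^{-3/2}$ curvature: on the range $|x|\le C_{m,\muT,r^*}(6\gamma+\delta)$, both $[\bXi]_\omega$ (restricted to its typical range) and $[\bcalT-\bcalT^*]_\omega$ are controlled, so $\rho_{H_p,\delta}$ is $\Omega(b_0^{-1})$-strongly convex there by Assumption~\ref{assu:hub:noise}'s lower-bound relation $b_0\gtrsim C_{m,\muT,r^*}(6\gamma+\delta)$. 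Taking expectations, $\EE[\lph{\bcalT-\bcalY}-\lph{\bcalT^*-\bcalY}]\ge(2b_0)^{-1}\fro{\bcalT-\bcalT^*}^2$, and then a uniform concentration over the set $\{\bcalT\in\MM_\r:\fro{\bcalT-\bcalT^*}\le R\}$ — discretized by an $\eps$-net of the low-rank manifold of cardinality $\exp(O(\textsf{DoF}_m))$, hence the $\exp(-\textsf{DoF}_m/2)$ failure term — upgrades this to the claimed $(4b_0)^{-1}$ lower bound once $\fro{\bcalT-\bcalT^*}\gtrsim b_0\sqrt{\textsf{DoF}_m}$ dominates the fluctuation. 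The gradient upper bound $\fro{\calP_{\TT}(\bcalG)}\le c_2\delta^{-1}\sqrt{m+1}\fro{\bcalT-\bcalT^*}$ is the key new ``shrinking subgradient'' phenomenon: here I would write $[\bcalG]_\omega=\rho_{H_p,\delta}'([\bcalT-\bcalT^*]_\omega-[\bXi]_\omega)$ and use that $\EE\rho_{H_p,\delta}'(-\xi)=0$, so $\EE[\bcalG]_\omega=\EE[\rho_{H_p,\delta}'([\bcalT-\bcalT^*]_\omega-\xi)-\rho_{H_p,\delta}'(-\xi)]$, which by the mean value theorem and $\|\rho_{H_p,\delta}''\|_\infty\le\delta^{-1}$ is bounded by $\delta^{-1}|[\bcalT-\bcalT^*]_\omega|$ in magnitude. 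Thus the ``signal'' part of $\bcalG$ has Frobenius norm $\le\delta^{-1}\fro{\bcalT-\bcalT^*}$, and after projecting onto the $(m+1)$-block tangent space $\TT$ and controlling the mean-zero fluctuation $\calP_{\TT}(\bcalG-\EE\bcalG)$ — again by an $\eps$-net argument on $\TT$, whose fluctuation is $O(\sqrt{\textsf{DoF}_m})=o(\delta^{-1}\fro{\bcalT-\bcalT^*})$ in the regime $\fro{\bcalT-\bcalT^*}\gtrsim b_0\sqrt{\textsf{DoF}_m}$ — one gets the $\sqrt{m+1}$ factor from the $m+1$ tangent blocks and absorbs constants into $c_2$.

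\textbf{Main obstacle.} The genuinely delicate step is the uniform (over the whole manifold $\MM_\r$, or a large ball in it) control of both the first-order empirical process $\inp{\bcalG-\EE\bcalG}{\cdot}$ and the strong-convexity fluctuation in part (2): one must show these concentrate at scale $\sqrt{\textsf{DoF}_m}$ rather than $\sqrt{d^*}$, which forces a covering-number argument over the non-convex fixed-Tucker-rank manifold together with a careful truncation of the heavy-tailed $[\bXi]_\omega$ (splitting into a bounded bulk and a rare large-deviation tail controlled only by the $2+\eps$ moment). Propagating the incoherence into the $(2,\infty)$/row-wise bounds while keeping the net argument valid slice-by-slice, and bookkeeping the constant $C_{m,\muT,r^*}$ so that the ranges where $\rho_{H_p,\delta}$ is strongly convex actually contain the relevant perturbations, is where most of the real work lies.
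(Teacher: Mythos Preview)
Your outline for part~(1) and for the loss lower bound in part~(2) is essentially the paper's argument. After your initial Taylor-expansion detour (which would not work, since the curvature $\delta^2(x^2+\delta^2)^{-3/2}$ vanishes when $|[\bXi]_\omega|$ is large), you correctly land on the elementary bounds $|x|\le\rho_{H_p,\delta}(x)\le|x|+\delta$, reducing to $\lone{\bcalT-\bcalT^*}-2\lone{\bXi}-d^*\delta$ and then $\lone{\bcalA}\ge\linft{\bcalA}^{-1}\fro{\bcalA}^2$; this is exactly what the paper does under the single event $\bcalE_1=\{\lone{\calP_{\Omega_j^{(k)}}(\bXi)}\le 3\dkm\gamma\ \forall k,j\}$, which accounts for the $\sum_k d_k(\dkm)^{-1-\min\{1,\eps\}}$ probability. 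For the part~(2) loss bound, the paper likewise computes $\EE[f(\bcalT)-f(\bcalT^*)]\ge(3b_0)^{-1}\fro{\bcalT-\bcalT^*}^2$ by integrating the second derivative against the density lower bound, and then invokes a uniform concentration event $\bcalE_2$ over $\MM_{2\r}$ proved via symmetrization and contraction.

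Your approach to the part~(2) gradient bound is \emph{genuinely different} from the paper's. You propose to split $\bcalG=\EE\bcalG+(\bcalG-\EE\bcalG)$, bound $\fro{\EE\bcalG}\le\delta^{-1}\fro{\bcalT-\bcalT^*}$ via $\|\rho''\|_\infty\le\delta^{-1}$, and then control the projected fluctuation $\calP_\TT(\bcalG-\EE\bcalG)$ by a separate empirical-process argument. The paper instead reduces the gradient bound to the \emph{already-established} function-value concentration $\bcalE_2$: choosing the test point $\bcalS=\bcalT+\tfrac{\delta}{2}\,\bcalG\times_1\V_1\V_1^\top\times_2\cdots\times_m\V_m\V_m^\top$ (with $\V_k$ realizing $\frorr{\bcalG}$), the subgradient inequality gives $\tfrac{\delta}{2}\frorr{\bcalG}^2=\langle\bcalS-\bcalT,\bcalG\rangle\le f(\bcalS)-f(\bcalT)$, and under $\bcalE_2$ the right side is at most $\EE[f(\bcalS)-f(\bcalT)]+C\sqrt{\textsf{DoF}_m}\fro{\bcalS-\bcalT}$; the expectation is then bounded by the pseudo-Huber smoothness $\EE f(\bcalS)-\EE f(\bcalT)\le(2\delta)^{-1}(\fro{\bcalS-\bcalT^*}^2-\fro{\bcalT-\bcalT^*}^2)$, yielding a quadratic inequality in $\frorr{\bcalG}$. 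The $\sqrt{m+1}$ then appears from $\fro{\calP_\TT(\bcalG)}^2\le\frorr{\bcalG}^2+\sum_k\fro{\cdot}^2$. This trick is cleaner because it needs only the \emph{one} uniform event $\bcalE_2$ rather than a second empirical-process bound tailored to the gradient; your route is workable but you must make the fluctuation bound uniform over both the test direction \emph{and} over $\bcalT\in\MM_\r$ (since $\bcalG-\EE\bcalG$ and $\TT$ both depend on $\bcalT$), which you acknowledge but do not fully resolve.

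One minor correction: the truncation of $[\bXi]_\omega$ that you flag as the main obstacle is \emph{not} needed in part~(2). Because $\rho_{H_p,\delta}$ is $1$-Lipschitz, the increments $\rho(\cdot-\xi_\omega)-\rho(\cdot-\xi_\omega)$ are bounded deterministically regardless of $\xi_\omega$, so symmetrization plus contraction (applied to $\rho$, not $\rho'$) yields the $\sqrt{\textsf{DoF}_m}$ fluctuation directly without any tail surgery. The $(2+\eps)$-moment is used only for the phase-one event $\bcalE_1$.
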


Lemma~\ref{lem:hub:regularity} admits a sharper characterization of the lower bound on the objective function and the upper bound on the Riemannian gradient when $\bcalT$ is closer to the ground truth $\bcalT^{\ast}$. The loose bound in $(1)$ is derived directly by a triangular inequality, while the bound in $(2)$ relies on techniques from empirical processes \citep{boucheron2013concentration,ludoux1991probability,van1996weak}. The lower bound for Lipschitz objective function such as $\lph{\bcalT-\bcalY}-\lph{\bcalT^*-\bcalY}$ is often referred to as the sharpness condition or margin condition in the literature \citep{elsener2018robust,charisopoulos2021low}. \cite{chinot2020robust} generalizes such lower bounds with a \textit{local Bernstein condition}. The upper bound of the Riemannian gradient plays a critical role in the convergence dynamic of Algorithm~\ref{alg:pseuHuber:RsGrad}. Note that a trivial upper bound of $\rho'_{H_p,\delta}(x)$ is one and thus the upper bound of $\|\calP_{\TT}(\bcalG)\|_{\rm F}$ in $(1)$ is just a trivial bound. However, bound in $(2)$ shows that the Riemannian gradient actually shrinks as $\calT$ approaches closer to the ground truth.  This behavior has been visualized in Figure~\ref{fig:UpperBoundSubGrad}. The polynomial probability term $d_k(d_k^-)^{-1-\min\{1,\eps\}}$ appears from bounding the slice sum of absolute value of random noise, while the negligible exponential probability term is a by-product of applying empirical processes technique. In the special case $d_k\equiv d$, the probability guarantee of Lemma~\ref{lem:hub:regularity} becomes $1-\Omega\big(md^{-\min\{1,\eps\}-(m-2)}-\exp(-\textsf{DoF}_m)\big)$. The one-step power iteration method in \cite{auddy2022estimating} only guarantees a log polynomial probability $1-\Omega(\log^{-1}d)$. Two-phase regularity properties of Lipschitz loss functions have been discovered in robust high-dimensional linear regression        \citep{shen2022computationally,shen2023computationally}. 
We emphasize that establishing two-phase regularity property for tensor decomposition is much more challenging. Towards that end, we need to precisely connect the sup-norm error $\|\bcalT-\bcalT^{\ast}\|_{\infty}$ and the Frobenius-norm error $\|\bcalT-\bcalT^{\ast}\|_{\rm F}$.  Characterizing sup-norm error rate in matrix/tensor decomposition is technically challenging.

Two-phase regularity property from Lemma~\ref{lem:hub:regularity} leads to a two-phase convergence dynamic of Algorithm~\ref{alg:pseuHuber:RsGrad}. Basically, phase-one convergence happens when $\bcalT_l$ is far from $\bcalT^{\ast}$ in that $\|\bcalT_l-\bcalT^{\ast}\|_{\rm F}=\Omega_{m,\mu^{\ast},r^{\ast}}\big((\gamma+\delta)\cdot d^{\ast1/2}\big)$. Algorithm~\ref{alg:pseuHuber:RsGrad} then enters phase-two convergence when $\bcalT_l$  gets closer to $\bcalT^{\ast}$. The precise convergence dynamic is presented in the following theorem.  Note that $\mins^{\ast}:=\min_{k\in[m]}\big\{\sigma_{r_k}\big(\fraM_{k}(\bcalT^*)\big)\big\}$ is referred to as the signal strength, where $\sigma_k(\cdot)$ denotes the $k$-th largest singular value of a matrix. 

\begin{theorem} \label{thm:hub:dynamics} Suppose the noise $\bXi$ has i.i.d. entries satisfying Assumption~\ref{assu:hub:noise} and the pseudo-Huber parameter $\delta\leq \gamma (\log d^* )^{-1/2}$. There exist absolute constants $D_0,c, c', c_1,c_2>0$ such that if the initialization satisfies $d^{\ast 1/2}\linft{\bcalT_0-\bcalT^*}\leq D_0 \leq c\mins^{\ast}\delta^2(b_0^2m^4\mu^{*m}r^*)^{-1}$  and initial stepsize $\eta_{0}\in D_0\cdot (5m+1)^{-2}(\mu^{*m}r^*d^*)^{-1/2}\cdot\left[0.125,\ 0.375\right]$, then, with probability at least $1-c'\sum_{k=1}^{m} d_k(\dkm)^{-1-\min\{1,\eps\}}-\exp\left(-\textsf{DoF}_m/2\right)-c_2 (d^{\ast})^{-7}$, Algorithm~\ref{alg:pseuHuber:RsGrad} exhibits the following dynamics:
\begin{enumerate}[(1)]
\item in phase one, namely for the $l$-th iteration satisfying $\left(1-c_{m,\muT,r^*}/32\right)^{l}D_0\geq2 c_{m,\muT,r^*}^{-1/2}d^{*1/2}(6\gamma+\delta)$, by choosing a stepsize $\eta_{l}=\left(1-c_{m,\muT,r^*}/32\right)^{l}\eta_{0}$ where $c_{m,\muT,r^*}:=(5m+1)^{-2}(3^m\mu^{*m} r^*)^{-1}$, we have
$$\fro{\bcalT_{l+1}-\bcalT^*}\leq \left(1-c_{m,\muT,r^*}/32\right)^{l+1}D_0,$$
$$\linft{\bcalT_{l+1}-\bcalT^*}\leq \frac{1}{\sqrt{c_{m,\muT,r^*}d^*}}\cdot \left(1-c_{m,\muT,r^*}/32\right)^{l+1}D_0;$$

\item in phase two, namely for the $l$-th iteration satisfying $\textsf{DoF}_m^{1/2}\cdot b_0\leq\fro{\bcalT_l-\bcalT^*}\leq 2 c_{m,\muT,r^*}^{-1/2}d^{\ast 1/2}(6\gamma+\delta)$, by choosing a constant stepsize $\eta_{l}=\eta$ such that $8c_1^2(m+1)\eta b_0\delta^{-2}\in [1, 3]$, we have
$$\fro{\bcalT_{l+1}-\bcalT^*}\leq \left(1- \frac{(\delta/b_0)^2}{32c_1^2(m+1)}\right)\fro{\bcalT_l-\bcalT^*}.$$
\end{enumerate}
Therefore, after at most $\tilde{l}=O\big(\log(\mins^*/\sqrt{\mu^mr^*d^*}\gamma)+\log(\gamma/b_0)+\log(d^*/\textsf{DoF}_m)\big)$ iterations, Algorithm~\ref{alg:pseuHuber:RsGrad} outputs an estimator achieving the error rate $\|\bcalT_{\tilde{l}}-\bcalT^{\ast}\|_{\rm F}=O\big(\textsf{DoF}_m^{1/2}\cdot b_0\big)$, which holds with the same aforementioned probability. 
\end{theorem}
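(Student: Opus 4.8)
The plan is to prove Theorem~\ref{thm:hub:dynamics} by an induction on $l$ that simultaneously tracks two quantities: the Frobenius-norm error $\fro{\bcalT_l-\bcalT^*}$ and the rescaled sup-norm error $d^{*1/2}\linft{\bcalT_l-\bcalT^*}$. The induction hypothesis at step $l$ is that both errors are controlled by $(1-c_{m,\muT,r^*}/32)^l D_0$ (in phase one) or that the Frobenius error contracts geometrically with the phase-two rate (in phase two), and crucially that $\bcalT_l$ remains $\mu$-incoherent with $\mu$ a fixed multiple of $\muT$ so that Lemma~\ref{lem:hub:regularity} applies at the next step. The base case is exactly the stated initialization condition $d^{*1/2}\linft{\bcalT_0-\bcalT^*}\le D_0$, which together with $D_0\le c\mins^*\delta^2(\cdots)^{-1}$ forces $\fro{\bcalT_0-\bcalT^*}$ to be small relative to $\mins^*$ and hence $\bcalT_0$ incoherent.

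For the inductive step I would proceed in the standard two-part fashion for projected gradient descent on a fixed-rank manifold. First, a one-step descent inequality: write $\bcalT_{l+1}=\mathrm{HOSVD}_\r(\bcalT_l-\eta_l\wt\bcalG_l)$ and compare $\bcalT_l-\eta_l\wt\bcalG_l$ to $\bcalT^*$, using that HOSVD is a quasi-projection onto $\MM_\r$ (it contracts distance to any point of $\MM_\r$ up to a constant factor, say $\sqrt{m+1}$ or similar — this is the standard retraction error bound from \cite{cai2020provable,cai2022generalized}). Expanding $\fro{\bcalT_l-\eta_l\wt\bcalG_l-\bcalT^*}^2 = \fro{\bcalT_l-\bcalT^*}^2 - 2\eta_l\inp{\wt\bcalG_l}{\bcalT_l-\bcalT^*} + \eta_l^2\fro{\wt\bcalG_l}^2$, I bound the cross term from below using the sharpness/margin part of Lemma~\ref{lem:hub:regularity} (after replacing $\inp{\wt\bcalG_l}{\bcalT_l-\bcalT^*}$ by $\inp{\bcalG_l}{\bcalT_l-\bcalT^*}$ up to the tangent-space projection error of $\bcalT^*$, which is $O(\fro{\bcalT_l-\bcalT^*}^2/\mins^*)$ and absorbed by the smallness of $D_0$), and I bound $\fro{\wt\bcalG_l}^2$ from above by the Riemannian-gradient bounds in Lemma~\ref{lem:hub:regularity} — the trivial $(d^*)^{1/2}$ bound in phase one and the shrinking bound $c_2\delta^{-1}\sqrt{m+1}\fro{\bcalT_l-\bcalT^*}$ in phase two. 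In phase one the convexity-type lower bound $\lph{\bcalT-\bcalY}-\lph{\bcalT^*-\bcalY}\ge\linft{\bcalT-\bcalT^*}^{-1}\fro{\bcalT-\bcalT^*}^2 - 6d^*\gamma-d^*\delta$ gives, after using the sup-norm control $\linft{\bcalT_l-\bcalT^*}\le (c_{m,\muT,r^*}d^*)^{-1/2}\fro{\bcalT_l-\bcalT^*}$-type relation, a cross term of order $c_{m,\muT,r^*}^{1/2}(d^*)^{1/2}\fro{\bcalT_l-\bcalT^*}-O(d^*(\gamma+\delta))$; choosing $\eta_l$ proportional to the stated decaying schedule makes the linear-in-$\fro{\cdot}$ gain dominate the quadratic $\eta_l^2(d^*)$ penalty as long as $\fro{\bcalT_l-\bcalT^*}\gtrsim (d^*)^{1/2}(\gamma+\delta)$, which is precisely the phase-one regime, yielding the $(1-c_{m,\muT,r^*}/32)$ contraction. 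In phase two the margin bound $(4b_0)^{-1}\fro{\bcalT_l-\bcalT^*}^2$ gives a cross term of order $b_0^{-1}\fro{\bcalT_l-\bcalT^*}^2$ and the gradient bound is also $O(\fro{\bcalT_l-\bcalT^*})$, so with the constant stepsize $\eta$ satisfying $8c_1^2(m+1)\eta b_0\delta^{-2}\in[1,3]$ the quadratic terms balance and produce the stated contraction factor $1-(\delta/b_0)^2/(32c_1^2(m+1))$.

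The second, and I expect the main obstacle, is propagating the incoherence / sup-norm bound through the HOSVD retraction — that is, showing $d^{*1/2}\linft{\bcalT_{l+1}-\bcalT^*}$ stays bounded by the same decaying quantity. The Frobenius contraction alone does not control the sup-norm, and HOSVD does not obviously preserve incoherence. The strategy is to use the slice-wise (mode-$k$, row-$j$) versions of the regularity bounds in part (1) of Lemma~\ref{lem:hub:regularity}: run the same one-step descent argument restricted to each matricization slice $\fraM_k(\bcalT_l-\bcalT^*)_{j,\cdot}$, using $\ltinf{\fraM_k(\calP_{\TT}(\bcalG))}\le(3\mu r_k\dkm)^{1/2}$ and the slice margin bound, to get a recursion for $\max_{k,j}\ltinf{\cdot}$ of the same geometric form. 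This requires carefully handling how HOSVD acts row-wise — one invokes a perturbation bound for the HOSVD of $\bcalT_l-\eta_l\wt\bcalG_l$ in $\ell_{2,\infty}$ norm, which is where the bulk of the technical work (and the $(d^*)^{-7}$ failure probability, coming from a concentration/leave-one-out-free argument à la \cite{cai2019nonconvex}) lives. Once both the Frobenius and sup-norm inductions close, incoherence of $\bcalT_{l+1}$ follows because $\linft{\bcalT_{l+1}-\bcalT^*}$ small plus $\bcalT^*$ incoherent plus $\fro{\bcalT_{l+1}-\bcalT^*}\ll\mins^*$ implies, via a singular-subspace perturbation (Wedin/Davis–Kahan on each matricization), that the singular vectors of $\bcalT_{l+1}$ are close to those of $\bcalT^*$ in $\ell_{2,\infty}$. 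Finally, concatenating the two phases: phase one runs for $O(\log(\mins^*/((\mu^m r^* d^*)^{1/2}\gamma)))$ steps until $\fro{\bcalT_l-\bcalT^*}$ drops to $O((d^*)^{1/2}(\gamma+\delta))$, then phase two runs for $O(\log(\gamma/b_0)+\log(d^*/\textsf{DoF}_m))$ steps of geometric contraction until $\fro{\bcalT_l-\bcalT^*}$ hits the floor $O(\textsf{DoF}_m^{1/2}b_0)$ below which the phase-two hypothesis $\fro{\bcalT_l-\bcalT^*}\ge\textsf{DoF}_m^{1/2}b_0$ no longer holds and the margin bound stops biting; summing the two iteration counts and taking a union bound over the (finitely many) iterations gives the stated total complexity $\tilde l$ and the final rate, all holding on the intersection of the Lemma~\ref{lem:hub:regularity} event and the retraction-concentration event, hence with the claimed probability.
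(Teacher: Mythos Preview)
Your overall architecture---induct on both $\fro{\bcalT_l-\bcalT^*}$ and $\linft{\bcalT_l-\bcalT^*}$, use the slice-wise bounds of Lemma~\ref{lem:hub:regularity}(1) in phase one, and use the global bounds of Lemma~\ref{lem:hub:regularity}(2) in phase two---matches the paper, and your phase-one plan and phase-two Frobenius plan are essentially correct.

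The gap is in maintaining the sup-norm control during phase two. The phase-two regularity bounds in Lemma~\ref{lem:hub:regularity}(2) require the hypothesis $\linft{\bcalT_l-\bcalT^*}\le C_{m,\muT,r^*}(6\gamma+\delta)$, so you must re-verify this at every phase-two step. Your plan is to reuse the slice-wise bounds from part (1) of the lemma, but those bounds carry additive terms $6\dkm\gamma+\dkm\delta$ and a gradient slice bound $(3\mu r_k\dkm)^{1/2}$ that were tuned to the decaying phase-one step size; with the constant phase-two step size $\eta\asymp\delta^2/b_0$ they do not yield a self-closing recursion at the target scale $O(\gamma+\delta)$. The paper resolves this by introducing leave-one-out auxiliary sequences $\{\bcalT_l^{(k),j}\}$ (built from a modified loss $\hat f_j^{(k)}$ that replaces the $j$-th slice contribution by its expectation), so that the $j$-th slice of $\bcalT_l^{(k),j}$ is independent of the $j$-th slice of $\bXi$. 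This independence is what permits (i) using the expectation-level slice lower bound from Lemma~\ref{teclem:pseudo:function expectation} in place of the crude part-(1) slice bound, and (ii) a Hoeffding-type concentration for the projected difference $\calP_{\Omega_j^{(k)}}\calP_{\TT_l^{(k),j}}(\bcalG_l^{(k),j}-\bar\bcalG_l^{(k),j})$, which is exactly where the $(d^*)^{-7}$ probability in the theorem comes from (Claim~\ref{claim:leave-one-out}). One then controls $\linft{\bcalT_{l+1}-\bcalT^*}$ by triangulating through $\bcalT_{l+1}^{(k),j}$. Your remark that this step is ``leave-one-out-free'' is therefore the concrete obstacle: without the decoupling, there is no visible way to keep $\linft{\bcalT_l-\bcalT^*}$ at the $O(\gamma+\delta)$ level once the step size stops decaying, and hence no way to keep invoking Lemma~\ref{lem:hub:regularity}(2).
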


Theorem~\ref{thm:hub:dynamics} shows, in both phases, Algorithm~\ref{alg:pseuHuber:RsGrad} enjoys fast linear convergence. Due to technical reasons, the initialization condition is imposed w.r.t. the sup-norm which immediately implies the Frobenius norm bound via the simple fact $\fro{\bcalA}\leq d^{*1/2}\linft{\bcalA}$ for any tensor $\bcalA$ of size $d_1\times\cdots\times d_m$. By Theorem~\ref{thm:hub:dynamics}, the phase-one convergence terminates 
after at most $l_1=O(\log(\mins^*/\sqrt{\mu^mr^*d^*}\gamma))$ iterations and Algorithm~\ref{alg:pseuHuber:RsGrad} reaches an estimate with the Frobenius-norm error rate $d^{\ast1/2}\fro{\bcalT_{l_1}-\bcalT^*}\leq 2c_{m,\muT,r^*}^{-1/2}(6\gamma+\delta)$ and sup-norm error rate $\linft{\bcalT_{l_1}-\bcalT^* }\leq 2c_{m,\muT,r^*}^{-1}(6\gamma+\delta)$. Geometrically decaying stepsizes are required during phase-one iterations, which is typical in non-smooth optimization \citep{charisopoulos2021low, tong2021low, shen2023computationally}. After $\ell_1$ iterations, Algorithm~\ref{alg:pseuHuber:RsGrad} enters the second phase and a constant step size suffices to ensure linear convergence.  The phase-two convergence terminates after at most $l_2=O(\log(\gamma/b_0)+\log(d^*/\textsf{DoF}_m))$ iterations and Algorithm~\ref{alg:pseuHuber:RsGrad} outputs an estimator with error rate $\fro{\bcalT_{l_1+l_2}-\bcalT^*}=O_p\big( \textsf{DoF}_m^{1/2}\cdot b_0\big)$. In total, Algorithm~\ref{alg:pseuHuber:RsGrad} converges within a logarithmic-order number of iterations. Note that $b_0$ is same scale as $\EE|\xi|$ for many examples such as Gaussian, Student's t, and zero symmetric Pareto, etc.  The error rate $\textsf{DoF}_m^{1/2}\cdot b_0$ is minimax optimal \citep{zhang2018tensor} in terms of the model complexity.

We note that our analysis can derive sharp upper bounds for the sup-norm error rate during phase-one convergence.  However,  the analysis framework cannot work for phase-two convergence even by the leave-one-out technique \citep{chen2021bridging,chen2021spectral,cai2022nonconvex}.  This is due to technical issues of treating the derivatives of pseudo-Huber loss function.  The challenge is also observed by the  recent work \cite{wang2022robust} on robust matrix completion using Huber loss.  The Huber parameter set by \cite{wang2022robust} is at the order  $\linft{\bcalT^*}+\gamma d^{1/2}$,  while the  pseudo-Huber parameter in our algorithm should be at the order $\gamma$.  Our Theorem~\ref{thm:hub:dynamics} and \cite{wang2022robust} both yield sub-optimal sup-norm error rates.  We believe the sub-optimality is due to technical issue because Section~\ref{sec:splitting} will present that a sample splitting trick can produce nearly optimal sup-norm error rate.

\section{Quantile Tensor Decomposition}
\label{sec:abs:convergence}
This section addresses the more general setting of robust tensor decomposition that allows both heavy-tailed noise and arbitrary corruptions. More specifically, suppose the observed tensor $\bcalY=\bcalT^{\ast}+\bXi+\bcalS$ where the noise tensor $\bXi$ may have heavy tails and the sparse tensor $\bcalS$ can be arbitrary corruptions. We shall assume that $\bcalS$ is $\alpha$-fraction sparse meaning that $\bcalS$ has at most $\alpha$ fraction non-zero entries in each slice. Here $\alpha\in(0,1)$ is understood as the corruption rate in Huber's contamination model. Basically, for each $k\in[m]$ and $j\in [d_k]$, one has $\|\e_j^{\top}\fraM_{k}(\bcalS)\|_0\leq \alpha\dkm$ where $\e_j$ is the $j$-th  canonical basis vector whose dimension may vary at different appearances. The $\alpha$-fraction sparsity model is also called deterministic sparsity model and has appeared in \cite{hsu2011robust,chandrasekaran2011rank,netrapalli2014non,chen2015fast,cai2022generalized}.  This $\alpha$-fraction sparsity model is less stringent than the one considered in \cite{dong2022fast} that imposes sparsity assumption on each fibers of $\bcalS$ and is more general than the random support model studied in existing literature \citep{candes2011robust,lu2016tensor,chen2021bridging}. In contrast, \cite{agarwal2012noisy,klopp2017robust} impose no assumption over locations of the support but their derived minimax optimal error rates are not proportional to noise level meaning that the low-rank matrix cannot be exactly recovered even if the noise part $\bXi$ is absent. Moreover, the foregoing works mostly focused on the matrix case and it is unclear whether their methods are still applicable for tensors, especially in consideration of the computational aspects of tensor-related problems. 

Our approach is based on quantile tensor decomposition, replacing the square loss by quantile loss.  Without loss of generality, we only present the method and theory for absolute loss, a special case of quantile loss. Let $\rho(x)=|x|$ be the absolute loss and we estimate $\bcalT^{\ast}$ by solving the following non-convex program:
\begin{align}\label{eq:loss-abs}
	\widehat{\bcalT}= \underset{\bcalT\in\MM_{\r,\mu}}{\arg\min}\ \lone{\bcalT-\bcalY}:= \sum_{\omega\in[d_1]\times\cdots\times[d_m]} \big|[\bcalT]_{\omega}-[\bcalY]_{\omega} \big|.
\end{align}
The absolute loss has been proved statistically robust for high-dimensional linear regression \citep{elsener2018robust, moon2022high, shen2023computationally}. Its theoretical analysis for tensor decomposition is more challenging because we must simultaneously investigate the computational and statistical aspects of the minimizers of (\ref{eq:loss-abs}).

\subsection{Projected sub-gradient descent with trimming} 
Our algorithm for finding local minimizers of (\ref{eq:loss-abs}) is essentially the same as the Riemannian-type Algorithm~\ref{alg:pseuHuber:RsGrad} except that now sub-gradient is employed because the absolute loss is non-smooth. The algorithm is thus called Riemannian sub-gradient descent, previously studied in \cite{charisopoulos2021low, shen2023computationally} for low-rank regression. Here the algorithm is more involved because one needs to ensure the incoherence property. Unlike the pseudo-Huber loss used in Algorithm~\ref{alg:pseuHuber:RsGrad}, the absolute loss is non-differentiable so that even the leave-one-out technique cannot help prove the incoherent condition during the phase-two iterations. To enforce incoherence and control sup-norm error rate, an additional trimming and truncation step is utilized.

For a given tensor $\bcalB$ and a truncation threshold $\tau_1$, define the operator $\text{Trun}_{\tau_1,\bcalB}(\cdot):\RR^{d_1\times\cdots\times d_m}\to \RR^{d_1\times\cdots\times d_m}$ as
\begin{align}
	[\text{Trun}_{\tau_1,\bcalB}(\bcalT)]_{\omega}:=[\bcalT]_{\omega}+\text{sign}([\bcalT-\bcalB]_{\omega})\cdot\min\big\{0, \tau_1-|[\bcalT-\bcalB]_{\omega}|\big\},
	\label{eq:trun}
\end{align}
The trimming operator \citep{cai2022generalized, cai2022provable} is defined similarly. For any $\tau_2>0$, define
\begin{align}
	\big[\text{Trim}_{\tau_2}(\bcalT)\big]_{\omega}:=[\bcalT]_{\omega}+\text{sign}\left([\bcalT]_{\omega} \right)\cdot\min\Big\{0, (\tau_2/d^{\ast})^{1/2}\fro{\bcalT}-|[\bcalT]_{i_1\cdots i_m}|\Big\}.
	\label{eq:trim}
\end{align}
The truncation operation ensures a uniform upper bound of $\|\bcalT-\bcalT^{\ast}\|_{\infty}$ during phase-two iterations. The parameter $\tau_1$ is chosen such that $\tau_1=\Omega\big(\|\bcalT_{l_1}-\bcalT^{\ast}\|_{\infty}\big)$ w.h.p. where $\bcalT_{l_1}$ is the output after phase-one iterations.  The trimming operator aims to maintain the incoherence property and the parameter $\tau_2$ can be set at the level  $\mu^{\ast m} r^{\ast}$. The detailed implementations can be found in Algorithm~\ref{alg:abs:RsGrad}. Practical guidelines to the selection of $\tau_1$ and $\tau_2$ shall be discussed in Section~\ref{sec:parameter and init}. Compared to existing algorithms in the literature \citep{chen2021bridging,dong2022fast,cai2022generalized}, our approach does not require any robustness parameters such as the sparsity level.

\begin{algorithm}
	\caption{Riemannian Sub-gradient Descent with Trimming}\label{alg:abs:RsGrad}
	\begin{algorithmic}
		\STATE{\textbf{Input}: observations $\bcalY$,  max iterations $l_{\max}$,  step sizes $\{\eta_l\}_{l=0}^{\lmax}$, parameters $\tau_1.\tau_2$.}
		\STATE{Initialization: $\bcalT_0\in\MM_\r$}
		\FOR{$l = 0,\ldots,l_{\max}$}
		\STATE{Choose a vanilla subgradient:  $\bcalG_l\in\partial \|\bcalT_l-\bcalY\|_1$}
		\STATE{Compute Riemannian sub-gradient: $\wt\bcalG_l = \calP_{\TT_l}(\bcalG_l)$}
		\STATE{Retraction to $\MM_\r$: $\bcalT_{l+1} =\left\{
			\begin{array}{lcl}
				\text{HOSVD}_\r(\bcalT_l - \eta_{l}\wt\bcalG_l)&     &\text{ if in phase one} \\
				\text{HOSVD}_\r\big(\text{Trim}_{\tau_2}(\text{Trun}_{\tau_1,\bcalT_{l_1}}(\bcalT_l - \eta\wt\bcalG_l))\big)&     &\text{ if in phase two}
			\end{array}
			\right., $}
		\STATE{where $\bcalT_{l_1}$ is phase one output and $\text{Trun}_{\tau_1,\bcalT_{l_1}}(\cdot)$, $\text{Trim}_{\tau_2}(\cdot)$ are defined in \eqref{eq:trun}  and\eqref{eq:trim}}, respectively.
		\ENDFOR
		\STATE{\textbf{Output}: $\widehat\bcalT=\bcalT_{l_{\max}}$}
	\end{algorithmic}
\end{algorithm}

\subsection{Algorithm convergence and error bound}
Assume that the noise tensor $\bXi$ has i.i.d. entries whose density and distribution functions are denoted as  $h_{\xi}(\cdot)$ and $H_{\xi}(\cdot)$, respectively.  It turns out that absolute loss requires a lightly different condition on the noise,  detailed in the following assumption.  Here the tensor condition number $\kappa$ is defined as $\kappa:=\kappa(\bcalT^{\ast}):=\mins^{\ast -1}\maxs^{\ast}$ where $\maxs^*:=\max_{k=1,\dots,m}\left\{\sigma_{1}\left(\fraM_{k}(\bcalT^*)\right)\right\}$. 
 
\begin{assumption}[Noise condition \Romannum{2}]
There exists an $\eps>0$ such that $\gamma:=\left(\EE|\xi|^{2+\eps}\right)^{1/(2+\eps)}<+\infty$ and the noise term has median zero $H_{\xi}(0)=\frac{1}{2}$. Also, there exist $b_0,b_1>0$ such that\footnote{The lower bound can be slightly relaxed to $|H_\xi(x)-H_\xi(0)|\geq|x|/b_0$ for all $|x|\leq C_{m,\muT,r^^*,\kappa}\gamma$.}
	\begin{align*}
		h_{\xi}(x)\geq b_0^{-1}, &\ \ \  \textrm{ for all } |x|\leq C_{m,\muT,r^*,\kappa}\gamma;\\
		h_{\xi}(x)\leq b_1^{-1}, &\ \ \ \textrm{ for all } x\in \RR,
	\end{align*}
where $C_{m,\muT,r^*,\kappa}:=(5m+1)^26^m\kappa^m\mu^{*m(m+1)/2}(r^*)^{(m+1)/2}$.
	\label{assu:abs:noise}
\end{assumption}

A simple fact of Assumption~\ref{assu:abs:noise} is $b_1\leq b_0$ and $b_0\geq C_{m,\muT,r^*,\kappa}\gamma$.  Compared with the noise condition in Assumption~\ref{assu:hub:noise},  an additional upper bound of the noise density is imposed but the symmetry requirement is waived.  
See \cite{alquier2019estimation,elsener2018robust,shen2023computationally} for comparable noise assumptions for treating various types of loss functions.   
The constant $C_{m,\muT,r^*,\kappa}$ does not depend on the tensor dimensions.  If $m,\muT,r^*,\kappa$ are regarded as constants,  we have $b_0\asymp b_1\asymp\gamma\asymp\EE|\xi|$  for Gaussian, Student's t,  and zero-symmetric Pareto distributions, etc.

The absolute loss also exhibits a two-phase regularity property even in the existence of the additional sparse corruptions.  These properties play an essential role in characterizing the convergence dynamics of Algorithm~\ref{alg:abs:RsGrad}.  Here $\mu$ is any positive constant. 

\begin{lemma}[Two-phase regularity properties of absolute loss] 	\label{lem:abs:regularity}
Suppose $\bXi$ contains i.i.d.  entries satisfying Assumption~\ref{assu:abs:noise} and $\bcalS$ is $\alpha$-fraction sparse with its non-zero entries being arbitrary values.  	Then there exist absolute constants $c, c_1,c_2>0$ such that with probability exceeding $1-c\sum_{k=1}^{m} d_k(\dkm)^{-1-\min\{1,\eps\}}-\exp\left(-\textsf{DoF}_m/2\right)$, the following facts hold.  

\begin{enumerate}[(1)]
\item For all $\bcalT\in\RR^{d_1\times \cdots\times d_m}$ and any sub-gradient $\bcalG\in\partial \|\bcalT-\bcalY\|_1$,  we have
$$
	\fro{\calP_{\TT}(\bcalG)}\leq d^{\ast 1/2},
$$ 
$$
	\lone{\bcalT-\bcalY}-\lone{\bcalT^*-\bcalY}\geq \linft{\bcalT-\bcalT^*}^{-1}\cdot\left(\fro{\bcalT-\bcalT^*}^2-2\alpha d^*\linft{\bcalT-\bcalT^*}^2\right)-6d^*\gamma
$$
Furthermore,  for each $k\in[m]$ and $j\in[d_k]$,  if $\bcalT\in\MM_{\r,\mu}$,  then
\begin{align*}
			&{~~~~~~~~~~~~~~~~~~~~~~~~~~~~~~~~~~}\ltinf{\fraM_{k}\left(\calP_{\TT}(\bcalG)\right)}\leq (3\mu r_k\cdot \dkm)^{1/2},\\
			&\lone{\fraM_{k}(\bcalT-\bcalY)_{j,\cdot}}-\lone{\fraM_{k}(\bcalT^*-\bcalY)_{j,\cdot}}\\&{~~~~~~~~}\geq\linft{\fraM_{k}(\bcalT-\bcalT^*)_{j,\cdot}}^{-1}\left(\fro{\fraM_{k}(\bcalT-\bcalT^*)_{j,\cdot}}^2-2\alpha\dkm \linft{\fraM_{k}(\bcalT-\bcalT^*)_{j,\cdot}}^2\right)-6\dkm \gamma.
\end{align*}
		
\item For all $\bcalT\in\MM_{\r,\mu}$ and any sub-gradient $\bcalG\in\partial \|\bcalT-\bcalY\|_1$ with $\bcalT$ satisfying $\linft{\bcalT-\bcalT^*}\leq C_{m,\muT,r^*,\kappa}\gamma$ and $\fro{\bcalT-\bcalT^*}\geq c_1b_0\cdot\max\big\{\textsf{DoF}_m^{1/2},\ \alpha\big((m+1)(\muT\vee\mu)^mr^*d^*\big)^{1/2}\big\}$,  we have 
$$
\fro{\calP_{\TT}(\bcalG)}\leq c_2 (m+1)^{1/2}\cdot b_1^{-1}\cdot\fro{\bcalT-\bcalT^*},\quad  \lone{\bcalT-\bcalY}-\lone{\bcalT^*-\bcalY}\geq (2b_0)^{-1}\cdot\fro{\bcalT-\bcalT^*}^2.
$$
	\end{enumerate}

\end{lemma}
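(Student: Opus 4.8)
Write $\bcalD:=\bcalT-\bcalT^{\ast}$, so that $\bcalT-\bcalY=\bcalD-\bXi-\bcalS$ and $\bcalT^{\ast}-\bcalY=-\bXi-\bcalS$, and let $\Omega_S:=\mathrm{supp}(\bcalS)$, which is $\alpha$-fraction sparse in every mode-$k$ slice. Item~(1) is elementary. Every entry of a subgradient of $|\cdot|$ lies in $[-1,1]$, so $\fro{\bcalG}\le d^{\ast1/2}$, and since $\calP_{\TT}$ is an orthogonal projection $\fro{\calP_{\TT}(\bcalG)}\le d^{\ast1/2}$; the fiberwise bound $\ltinf{\fraM_{k}(\calP_{\TT}(\bcalG))}\le(3\mu r_k\dkm)^{1/2}$ follows from the explicit parametrization of the tangent space of $\MM_{\r}$ at a $\mu$-incoherent point (each tangent tensor is a sum of at most $m+1$ pieces), bounding each row of the mode-$k$ matricization of each piece via $\ltinf{\U_l}^2\le\mu r_l/d_l$ and $\fro{\bcalG}\le d^{\ast1/2}$. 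For the lower bound, write $\lone{\bcalT-\bcalY}-\lone{\bcalT^{\ast}-\bcalY}=\sum_{\omega}(|[\bcalD-\bXi-\bcalS]_{\omega}|-|[\bXi+\bcalS]_{\omega}|)$; on $\Omega_S$ bound each summand below by $-|[\bcalD]_{\omega}|$ by the reverse triangle inequality, and on $\Omega_S^{c}$ use $|a-b|-|b|\ge|a|-2|b|$, giving $\ge\lone{\bcalD}-2\lone{\bXi}-2\sum_{\omega\in\Omega_S}|[\bcalD]_{\omega}|$; then $\lone{\bcalD}\ge\fro{\bcalD}^2/\linft{\bcalD}$, $\sum_{\omega\in\Omega_S}|[\bcalD]_{\omega}|\le\alpha d^{\ast}\linft{\bcalD}$ by the sparsity, and $\lone{\bXi}\le3d^{\ast}\gamma$ on an event of probability $\ge1-c\sum_{k=1}^{m}d_k(\dkm)^{-1-\min\{1,\eps\}}$ obtained by a Markov/Rosenthal bound on the slice sums $\sum_{i}|[\fraM_{k}(\bXi)]_{j,i}|$ plus a union bound — the source of the polynomial probability term. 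The fiberwise statements follow by repeating the argument on a single fiber.

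For the sharpness bound in item~(2) I would condition on the (arbitrary) values of $\bcalS$ and analyze the randomness in $\bXi$. For a fixed admissible $\bcalT$, Fubini together with $\tfrac{d}{dt}|t-\xi|=\mathrm{sign}(t-\xi)$ gives $\EE_{\bXi}\big[\sum_{\omega\notin\Omega_S}(|[\bcalD]_{\omega}-\xi_{\omega}|-|\xi_{\omega}|)\big]=\sum_{\omega\notin\Omega_S}\int_{0}^{[\bcalD]_{\omega}}2(H_{\xi}(t)-H_{\xi}(0))\,dt\ge b_0^{-1}\sum_{\omega\notin\Omega_S}[\bcalD]_{\omega}^2$, using $H_{\xi}(0)=\tfrac12$ and $h_{\xi}\ge b_0^{-1}$ on $[-C_{m,\muT,r^{\ast},\kappa}\gamma,\,C_{m,\muT,r^{\ast},\kappa}\gamma]$, the hypothesis $\linft{\bcalD}\le C_{m,\muT,r^{\ast},\kappa}\gamma$ keeping every $[\bcalD]_{\omega}$ in this range. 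The $\Omega_S$-contribution is $\ge-\sum_{\omega\in\Omega_S}|[\bcalD]_{\omega}|$ and the omitted diagonal mass is $\sum_{\omega\in\Omega_S}[\bcalD]_{\omega}^2$; I would show both are at most a small constant times $b_0^{-1}\fro{\bcalD}^2$ by combining the $\alpha$-sparsity of $\Omega_S$ with a structural estimate relating $\linft{\bcalD}$, $\fro{\bcalD}$ and the incoherence of $\bcalT,\bcalT^{\ast}$: decompose $\fraM_{k}(\bcalD)$ into its projections onto $\mathrm{col}(\U_k^{\ast})$ and its orthogonal complement; the first inherits the $(2,\infty)$-structure of $\U_k^{\ast}$, the second has small operator norm controlled by a Davis--Kahan/Wedin-type bound (the source of the $\kappa^m$ in $C_{m,\muT,r^{\ast},\kappa}$) and entries further damped by the incoherence of the remaining $m-1$ modes; compounding this across modes produces the exponents $\mu^{\ast m(m+1)/2},(r^{\ast})^{(m+1)/2}$ and matches the Frobenius-norm threshold $c_1b_0\alpha((m+1)(\muT\vee\mu)^mr^{\ast}d^{\ast})^{1/2}$. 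Finally I pass to a uniform bound over the localized set $\{\bcalT\in\MM_{\r,\mu}:\linft{\bcalT-\bcalT^{\ast}}\le C_{m,\muT,r^{\ast},\kappa}\gamma,\ \fro{\bcalT-\bcalT^{\ast}}\le R\}$: the map $\bcalT\mapsto\lone{\bcalT-\bcalY}$ has $1$-Lipschitz entrywise increments and this set has metric entropy $\lesssim\textsf{DoF}_m\log(1/\varepsilon)$, so a chaining/peeling argument \citep{boucheron2013concentration,van1996weak} gives $\sup\big|(\lone{\bcalT-\bcalY}-\lone{\bcalT^{\ast}-\bcalY})-\EE_{\bXi}(\cdot)\big|\lesssim\fro{\bcalD}\,\textsf{DoF}_m^{1/2}+\textsf{DoF}_m$ with probability $1-\exp(-\textsf{DoF}_m/2)$ — the exponential probability term — absorbed into $(2b_0)^{-1}\fro{\bcalD}^2$ via $\fro{\bcalD}\ge c_1b_0\textsf{DoF}_m^{1/2}$. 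Combining the three estimates gives the claimed lower bound.

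For the gradient bound in item~(2), write $\bcalG=\mathrm{sign}(\bcalD-\bXi-\bcalS)$ (off the measure-zero tie set) and split $\bcalG=\EE_{\bXi}\bcalG+(\bcalG-\EE_{\bXi}\bcalG)$. On $\Omega_S^{c}$, $[\EE_{\bXi}\bcalG]_{\omega}=2(H_{\xi}([\bcalD]_{\omega})-H_{\xi}(0))$, so $|[\EE_{\bXi}\bcalG]_{\omega}|\le2b_1^{-1}|[\bcalD]_{\omega}|$ by the mean value theorem and $h_{\xi}\le b_1^{-1}$, whence $\fro{\calP_{\TT}((\EE_{\bXi}\bcalG)\mathbbm{1}_{\Omega_S^{c}})}\le2b_1^{-1}\fro{\bcalD}$; the $\Omega_S$-part of $\EE_{\bXi}\bcalG$ is bounded-entry and $\alpha$-sparse, so $\fro{\calP_{\TT}((\EE_{\bXi}\bcalG)\mathbbm{1}_{\Omega_S})}\le\op{\calP_{\Omega_S}\calP_{\TT}}(\alpha d^{\ast})^{1/2}$, which is of lower order by the same finer incoherence-versus-sparsity argument (comparing $\TT$ with the tangent space at $\bcalT^{\ast}$). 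For the fluctuation, $\fro{\calP_{\TT}(\bcalG-\EE_{\bXi}\bcalG)}=\sup_{\bcalZ\in\TT,\,\fro{\bcalZ}\le1}\langle\bcalG-\EE_{\bXi}\bcalG,\bcalZ\rangle$ is a supremum of a sub-Gaussian process (Hoeffding, as the entries of $\bcalG-\EE_{\bXi}\bcalG$ lie in $[-2,2]$) over the unit ball of a space of dimension $\lesssim\textsf{DoF}_m$; as $\TT$ depends on $\bcalT$, taking a net over the bundle of tangent spaces at incoherent $\bcalT$ near $\bcalT^{\ast}$ (again of entropy $\lesssim\textsf{DoF}_m\log(1/\varepsilon)$) yields $\lesssim\textsf{DoF}_m^{1/2}$ with probability $1-\exp(-\textsf{DoF}_m/2)$. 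The factor $(m+1)^{1/2}$ enters because the tangent space is a sum of $m+1$ orthogonal blocks; collecting terms and absorbing $\textsf{DoF}_m^{1/2}$ into $\fro{\bcalD}$ via the hypothesis gives $\fro{\calP_{\TT}(\bcalG)}\le c_2(m+1)^{1/2}b_1^{-1}\fro{\bcalD}$.

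\textbf{Main obstacle.} Item~(1) and the ``expected sharpness'' are routine; the work concentrates in item~(2). The hardest single point is the interplay between the $\alpha$-fraction sparsity of the corruption support and the fact that $\bcalD=\bcalT-\bcalT^{\ast}$, a difference of two incoherent fixed-rank tensors, need \emph{not} itself be incoherent — one cannot simply use $\linft{\bcalD}\lesssim(r^{\ast}/d^{\ast})^{1/2}\fro{\bcalD}$; instead one splits $\bcalD$ mode-by-mode along $\mathrm{col}(\U_k^{\ast})$ and its complement, invokes tensor perturbation theory for the complementary part, and compounds the factors across all $m$ modes — which is precisely what forces the constant $C_{m,\muT,r^{\ast},\kappa}=(5m+1)^{2}6^{m}\kappa^{m}\mu^{\ast m(m+1)/2}(r^{\ast})^{(m+1)/2}$ and the exact form of the Frobenius-norm threshold. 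A close second is the uniform empirical-process control over the fixed-rank manifold and its tangent bundle (rather than a fixed linear subspace), which genuinely separates robust tensor decomposition from the low-rank regression setting of \cite{shen2023computationally} and is responsible for the $\exp(-\textsf{DoF}_m/2)$ failure probability.
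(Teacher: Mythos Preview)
Your Phase~One argument matches the paper's essentially verbatim, and your Phase~Two sharpness bound (expectation via $h_\xi\ge b_0^{-1}$ on the good interval, plus uniform empirical-process control of the $1$-Lipschitz increments of $\lone{\cdot}$ over low-rank perturbations) is also the paper's route. Two remarks on that part. First, your treatment of the corruption contribution is substantially more complicated than needed: the paper does not split $\bcalD$ mode-by-mode and invoke Davis--Kahan/Wedin. It simply bounds $\lone{\calP_{\Omega_S}(\bcalD)}\le\sqrt{\alpha d^\ast}\fro{\calP_{\Omega_S}(\bcalD)}$ and then uses the incoherence bound $\fro{\calP_{\Omega_S}(\bcalD)}^2\le 4(m+1)\alpha(\muT\vee\mu)^m r^\ast\fro{\bcalD}^2$ (Lemma~B.8 of \cite{cai2022generalized}), which holds for any $\alpha$-fraction sparse support when both $\bcalT,\bcalT^\ast$ are incoherent and close. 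Second, your attribution of the factor $\kappa^m$ inside $C_{m,\muT,r^\ast,\kappa}$ to perturbation theory in this lemma is off: $\kappa$ does not enter the proof of Lemma~\ref{lem:abs:regularity} at all; that constant is imported from Assumption~\ref{assu:abs:noise} and is sized to match the sup-norm error produced by the trimming step of Algorithm~\ref{alg:abs:RsGrad}.

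The substantive divergence is your Phase~Two gradient bound. You split $\bcalG=\EE_{\bXi}\bcalG+(\bcalG-\EE_{\bXi}\bcalG)$ and propose to control the fluctuation $\sup_{\bcalZ\in\TT,\,\fro{\bcalZ}\le1}\langle\bcalG-\EE_{\bXi}\bcalG,\bcalZ\rangle$ uniformly over $\bcalT$ by netting the tangent bundle. The obstacle is that $\bcalG=\mathrm{sign}(\bcalT-\bcalY)$ is \emph{not} Lipschitz in $\bcalT$, so a naive chaining over $\bcalT$ does not go through for the gradient process; you would need a separate VC/bounded-differences argument for the sign class, and it is not clear this yields the sharp $\textsf{DoF}_m^{1/2}$ with the stated probability. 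The paper avoids this entirely by a variational trick: pick $\bcalM=\bcalT+\tfrac12 b_1\,\bcalG\times_1\V_1\V_1^\top\cdots\times_m\V_m\V_m^\top$ where the $\V_k$ realize $\frorr{\bcalG}$, apply the subgradient inequality $\langle\bcalM-\bcalT,\bcalG\rangle\le f(\bcalM)-f(\bcalT)$, bound the right side by the \emph{same} empirical-process event $\bcalE_2$ already used for the sharpness (which involves only the $1$-Lipschitz $|\cdot|$) together with the expectation upper bound from $h_\xi\le b_1^{-1}$, and solve the resulting quadratic in $\frorr{\bcalG}$. This reuses a single concentration event and sidesteps the non-smoothness of $\mathrm{sign}$; it is both cleaner and what actually makes the argument close.
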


Compared with Lemma~\ref{lem:hub:regularity},  the second phase property (2) in Lemma~\ref{lem:abs:regularity} only holds in the restricted subset over $\mu$-incoherent tensors.  This additional restriction comes from dealing with the presence of arbitrary sparse outliers.  We note that the probability can be improved to $1-\Omega\big(\sum_{k=1}^{m}d_k\exp(-d_k)-\exp(-\textsf{DoF}_m/2)\big)$ if the random noise $\xi$ has sub-Gaussian tails. 

 
\begin{theorem}
Suppose $\bXi$ contains i.i.d.  entries satisfying Assumption~\ref{assu:abs:noise} and $\bcalS$ is $\alpha$-fraction sparse with its non-zero entries being arbitrary values. Let $c_{m,\muT,r^*}:=(5m+1)^{-2}(3^m\mu^{\ast m} r^*)^{-1}$ and set $\tau_1\in c_{m,\muT, r^{\ast}}^{-1}\cdot [12, 24]$ and $\tau_2\in \mu^{\ast m}r^{\ast}\cdot [1,\ 2]$. There exist absolute constants $D_0, c, c', c_1, c_2>0$ such that if the initialization satisfies 	$\linft{\bcalT_0-\bcalT^*}\leq D_0/d^{*1/2}\leq c (b_1/b_0)^2(m^43^m\mu^{*m}r^*)^{-1}\mins^{*}/d^{*1/2}$, initial stepsize satisfies $\eta_{0}\in D_0\cdot (5m+1)^{-2}(3^m\mu^mr^*d^*)^{-1/2}\cdot\left[0.125,\ 0.375\right]$ and corruption rate is bounded with $\alpha\leq \big(12(5m+1)^23^m\mu^{*m}r^*\big)^{-1}$, then with probability at least $1-c'\sum_{k=1}^{m} d_k(\dkm)^{-1-\min\{1,\eps\}}-\exp\left(-\textsf{DoF}_m/2\right)$, Algorithm~\ref{alg:abs:RsGrad} exhibits the following dynamics:
\begin{enumerate}[(1)]
		\item in phase one, namely for the $l$-th iteration satisfying $\left(1-c_{m,\muT,r^*}/32\right)^{l}D_0\geq12c_{m,\muT,r^*}^{-1/2}d^{*1/2}\gamma $, by choosing a stepsize $\eta_{l}=\left(1-c_{m,\muT,r^*}/32\right)^{l}\eta_{0}$, we have
$$
\fro{\bcalT_{l+1}-\bcalT^*}\leq \left(1-c_{m,\muT,r^*}/32\right)^{l+1}D_0,
$$
$$
\linft{\bcalT_{l+1}-\bcalT^*}\leq \frac{1}{\sqrt{c_{m,\muT,r^*}d^*}}\cdot \left(1-c_{m,\muT,r^*}/32\right)^{l+1}D_0;
$$
		\item in phase two, namely for the $l$-th iteration satisfying $c_1b_0\cdot\max\big\{ \textsf{DoF}_m^{1/2},\alpha\big((m+1)\mu^{*m}r^*d^*\big)^{1/2} \}\leq\fro{\bcalT_l-\bcalT^*}\leq 12 c_{m,\muT,r^*}^{-1/2}d^{*1/2}\gamma$, by choosing a constant step size $\eta_{l}=\eta\in b_0^2\big(c_1^2b_1(m+1)\big)^{-1} [1,\ 3]$, we have
		$$
		\fro{\bcalT_{l+1}-\bcalT^*}\leq \left(1- \frac{(b_1^2/b_0^2)}{32c_1^2(m+1)}\right)\fro{\bcalT_l-\bcalT^*}.
		$$
\end{enumerate}
Therefore, after at most $\tilde l=O\big(\log(\mins^{\ast}/\sqrt{d^{\ast}}\gamma)+\log(\gamma/b_0)+\min\{\log(d^*/\textsf{DoF}_m),\log(1/\alpha)\}\big)$ iterations, Algorithm~\ref{alg:abs:RsGrad} outputs an estimator achieving the error rate $\|\bcalT_{\tilde{l}}-\bcalT^{\ast}\|_{\rm F}^2=O\big(b_0^2\cdot (\textsf{DoF}_m+\alpha^2d^*)\big)$ if treating $\mu^{\ast}, m$ as constants, holding with the aforementioned probability. 
	\label{thm:abs:dynamics}
\end{theorem}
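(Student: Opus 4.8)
The whole argument runs on the single high-probability event of Lemma~\ref{lem:abs:regularity}; everything after that conditioning is deterministic, which is why the theorem inherits exactly that probability bound. My plan is to mirror the architecture of the proof of Theorem~\ref{thm:hub:dynamics}, the two genuinely new ingredients being the $\alpha$-fraction corruption terms (absorbed using the smallness of $\alpha$) and the phase-two trimming/truncation (needed because the absolute loss is non-smooth, so the leave-one-out devices available in the differentiable case fail). Write $e_l:=\fro{\bcalT_l-\bcalT^*}$ and $\bcalZ_l:=\bcalT_l-\eta_l\wt\bcalG_l$. Since $\bcalZ_l$ lies in the affine tangent space $\bcalT_l+\TT_l$, the second-order retraction estimate for $\text{HOSVD}_{\r}$ gives $\fro{\text{HOSVD}_{\r}(\bcalZ_l)-\bcalT^*}^2\leq\fro{\bcalZ_l-\bcalT^*}^2+C\eta_l^2\fro{\wt\bcalG_l}^2 e_l/\mins^*$; expanding $\fro{\bcalZ_l-\bcalT^*}^2=e_l^2-2\eta_l\inp{\wt\bcalG_l}{\bcalT_l-\bcalT^*}+\eta_l^2\fro{\wt\bcalG_l}^2$ and using the convexity inequality $\inp{\wt\bcalG_l}{\bcalT_l-\bcalT^*}=\inp{\bcalG_l}{\calP_{\TT_l}(\bcalT_l-\bcalT^*)}\geq\big(\lone{\bcalT_l-\bcalY}-\lone{\bcalT^*-\bcalY}\big)-\fro{\bcalG_l}\,\fro{\calP_{\TT_l^{\perp}}(\bcalT^*)}$ together with the curvature bound $\fro{\calP_{\TT_l^{\perp}}(\bcalT^*)}\leq Ce_l^2/\mins^*$, this reduces one iteration to a scalar recursion driven entirely by the margin lower bound and the Riemannian sub-gradient upper bound of Lemma~\ref{lem:abs:regularity}. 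The initialization hypothesis $D_0\lesssim(b_1/b_0)^2(m^43^m\mu^{*m}r^*)^{-1}\mins^*$ and the step-size normalization $\eta_0\asymp D_0(5m+1)^{-2}(3^m\mu^mr^*d^*)^{-1/2}$ are precisely the calibration that keeps the $\mins^*$-dependent curvature and retraction errors of lower order throughout.

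\textbf{Phase one.} Here the retraction is plain $\text{HOSVD}_{\r}$, so I would run the same induction as in Theorem~\ref{thm:hub:dynamics}, with hypotheses $e_l\leq(1-c_{m,\muT,r^*}/32)^l D_0$ and $\mu(\bcalT_l)\leq C\mu^*$; the sup-norm claim then comes for free, because any Tucker-rank-$2\r$ tensor whose matricizations have $C\mu^*$-incoherent column spaces satisfies $\linft{\bcalA}\leq(2^m(C\mu^*)^m r^*/d^*)^{1/2}\fro{\bcalA}\leq(c_{m,\muT,r^*}d^*)^{-1/2}\fro{\bcalA}$, applied to $\bcalA=\bcalT_l-\bcalT^*$. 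Feeding the crude margin bound $\linft{\bcalT_l-\bcalT^*}^{-1}(e_l^2-2\alpha d^*\linft{\bcalT_l-\bcalT^*}^2)-6d^*\gamma$ of Lemma~\ref{lem:abs:regularity}(1) into the recursion, and using the sup-norm bound just noted, the corruption restriction $\alpha\leq(12(5m+1)^23^m\mu^{*m}r^*)^{-1}$, and the phase-one window $e_l\geq 12c_{m,\muT,r^*}^{-1/2}d^{*1/2}\gamma$, the margin is bounded below by a constant multiple of $\sqrt{c_{m,\muT,r^*}d^*}\,e_l$; combined with the trivial $\fro{\wt\bcalG_l}\leq d^{*1/2}$ and the geometrically decaying $\eta_l=(1-c_{m,\muT,r^*}/32)^l\eta_0$, this closes the induction on $e_l$. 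Incoherence is propagated by an $\ell_{2,\infty}$-norm perturbation bound (a Davis--Kahan/Wedin estimate in the two-to-infinity metric) for the updated $\text{HOSVD}$ factors, which consumes the slice-wise sub-gradient control $\ltinf{\fraM_k(\calP_{\TT_l}(\bcalG_l))}\leq(3\mu r_k\dkm)^{1/2}$ of Lemma~\ref{lem:abs:regularity}(1). Phase one terminates after $l_1=O(\log(\mins^*/\sqrt{d^*}\gamma))$ iterations with $e_{l_1}\lesssim c_{m,\muT,r^*}^{-1/2}d^{*1/2}\gamma$, hence $\linft{\bcalT_{l_1}-\bcalT^*}\lesssim c_{m,\muT,r^*}^{-1}\gamma$.

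\textbf{Phase two.} Taking $\tau_1$ at the order $c_{m,\muT,r^*}^{-1}\gamma$ guarantees $\tau_1\geq\linft{\bcalT_{l_1}-\bcalT^*}$, so $\bcalT^*$ lies in the coordinate-wise $\ell_\infty$-ball $\{\bcalT:\linft{\bcalT-\bcalT_{l_1}}\leq\tau_1\}$ and $\text{Trun}_{\tau_1,\bcalT_{l_1}}$ is a projection onto a convex set containing $\bcalT^*$, hence non-expansive in both $\fro{\cdot-\bcalT^*}$ and $\linft{\cdot-\bcalT^*}$; iterating, every phase-two iterate stays inside the slab $\{\linft{\cdot-\bcalT^*}\leq C_{m,\muT,r^*,\kappa}\gamma\}$ required by Lemma~\ref{lem:abs:regularity}(2). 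Since $\bcalT^*$ is $\mu^*$-incoherent, $\linft{\bcalT^*}\leq(\mu^{*m}r^*/d^*)^{1/2}\fro{\bcalT^*}$, so with $\tau_2$ at the order $\mu^{*m}r^*$ the operator $\text{Trim}_{\tau_2}$ leaves $\bcalT^*$ essentially unchanged and, by the standard trimming analysis \citep{cai2022generalized,cai2022provable}, is approximately non-expansive toward $\bcalT^*$ while enforcing $\mu(\bcalT_{l+1})\leq C\mu^*$. These facts verify all three hypotheses of Lemma~\ref{lem:abs:regularity}(2) along the iterates, yielding the \emph{quadratic} margin $\lone{\bcalT_l-\bcalY}-\lone{\bcalT^*-\bcalY}\geq(2b_0)^{-1}e_l^2$ and the shrinking sub-gradient $\fro{\wt\bcalG_l}\leq c_2(m+1)^{1/2}b_1^{-1}e_l$ whenever $e_l\geq c_1 b_0\max\{\textsf{DoF}_m^{1/2},\alpha((m+1)\mu^{*m}r^*d^*)^{1/2}\}$. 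Substituting into the one-step recursion with the prescribed constant step size makes the overshoot quadratic in $e_l$ as well, so the iteration contracts geometrically by the stated factor $1-(b_1/b_0)^2/(32c_1^2(m+1))$ until $e_l$ reaches the floor of order $b_0\max\{\textsf{DoF}_m^{1/2},\alpha(\mu^{*m}r^*d^*)^{1/2}\}$; this takes $l_2=O(\log(\gamma/b_0)+\min\{\log(d^*/\textsf{DoF}_m),\log(1/\alpha)\})$ iterations, and a standard post-convergence stability argument \citep{charisopoulos2021low,shen2023computationally} keeps the final iterate at that floor. Summing $l_1+l_2$ gives the stated $\tilde l$, and reading off the floor gives $\fro{\bcalT_{\tilde l}-\bcalT^*}^2=O(b_0^2(\textsf{DoF}_m+\alpha^2 d^*))$ when $m,\mu^*$ are treated as constants.

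\textbf{Main obstacle.} Given Lemma~\ref{lem:abs:regularity}, the difficulty is not the scalar recursion but the joint propagation of the Frobenius error, the sup-norm error, and the incoherence level across iterations in the presence of \emph{arbitrary} outliers. In phase one this forces the verification that the $\alpha d^*\linft{\cdot}^2$ and $6d^*\gamma$ terms in the crude margin never swamp the $\sqrt{c_{m,\muT,r^*}d^*}\,e_l$ descent---which is what pins down both the corruption cap and the phase-one threshold---together with the $\ell_{2,\infty}$ subspace-perturbation argument showing incoherence survives the $\text{HOSVD}$ retraction. In phase two the crux is that Lemma~\ref{lem:abs:regularity}(2) is valid only on the incoherent slab $\{\linft{\bcalT-\bcalT^*}\leq C_{m,\muT,r^*,\kappa}\gamma\}\cap\MM_{\r,\mu}$, so one must show that the composition $\text{HOSVD}_{\r}\circ\text{Trim}_{\tau_2}\circ\text{Trun}_{\tau_1,\bcalT_{l_1}}$---applied to a non-smooth sub-gradient step contaminated by $\bcalS$---keeps every iterate inside that slab without eroding the contraction, which requires charging the retraction and trimming errors, and the extra $\alpha$-dependent term in the phase-two window, against a descent budget that is only linear, then quadratic, in $e_l$.
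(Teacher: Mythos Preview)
Your overall architecture matches the paper's: condition on the event of Lemma~\ref{lem:abs:regularity}, run a phase-one induction driven by the crude margin and trivial sub-gradient bounds, then in phase two use the quadratic margin and shrinking sub-gradient, with truncation and trimming supplying the sup-norm and incoherence controls that the non-smooth loss cannot provide on its own. The phase-two argument (entrywise non-expansiveness of $\text{Trun}$ and $\text{Trim}$ toward $\bcalT^*$, incoherence enforced by trimming, then contraction from Lemma~\ref{lem:abs:regularity}(2)) is correct and is essentially the paper's; the only omission is that you must still push the sup-norm bound through the final $\text{HOSVD}_{\r}$, which the paper does via an $\ell_{2,\infty}$ HOSVD perturbation lemma combined with an entrywise-norm expansion, but this is routine once truncation has pinned the pre-HOSVD sup-norm error at $2\tau_1$.

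There is, however, a real gap in phase one. You assert that the sup-norm bound ``comes for free'' because $\bcalT_l-\bcalT^*$ is a Tucker-rank-$2\r$ tensor whose matricizations have $C\mu^*$-incoherent column spaces. The second claim is false in general: take $\u^*=d^{-1/2}(1,\dots,1)^\top$ and $\u_l=\cos\theta\,\u^*+\sin\theta\,\w$ with $\w=(\e_1-\e_2)/\sqrt2$ and $\theta\asymp d^{-1/2}$; both vectors are $O(1)$-incoherent, yet $\mathrm{span}\{\u^*,\u_l\}=\mathrm{span}\{\u^*,\w\}$ has incoherence $\Theta(d)$, and correspondingly $\linft{\u_l-\u^*}/\ltwo{\u_l-\u^*}\asymp 1$ rather than $d^{-1/2}$. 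The paper does \emph{not} derive the sup-norm this way. It carries a \emph{joint} induction on five quantities: the Frobenius error $e_l\le D_l$, the slice error $\ltinf{\fraM_k(\bcalT_l-\bcalT^*)}\le 3\sqrt{\muT r_k/d_k}\,D_l$, the aligned-factor error $\ltinf{(\U_k^{(l)}\H_k-\U_k^*)\fraM_k(\bcalC^*)}\le 5\sqrt{\muT r_k/d_k}\,D_l$, the sup-norm bound, and incoherence. The slice error is obtained by running the one-step recursion \emph{slice by slice}, using the slice-wise margin lower bound in Lemma~\ref{lem:abs:regularity}(1)---the inequality for $\lone{\fraM_k(\bcalT-\bcalY)_{j,\cdot}}-\lone{\fraM_k(\bcalT^*-\bcalY)_{j,\cdot}}$, which you have not yet invoked---together with the slice-wise sub-gradient bound you do cite; the aligned-factor error then follows from an $\ell_{2,\infty}$ HOSVD perturbation lemma; and only then does the sup-norm follow, via the expansion
\[
\linft{\bcalT_l-\bcalT^*}\le\sqrt{\tfrac{\mu^{*m}r^*}{d^*}}\,e_l+\sum_{k=1}^m\sqrt{\tfrac{\mu^{*(m-1)}r_k^-}{\dkm}}\,\ltinf{(\U_k^{(l)}\H_k-\U_k^*)\fraM_k(\bcalC^*)}.
\]
Since you already plan to mirror Theorem~\ref{thm:hub:dynamics}'s proof, which does exactly this joint induction, the fix is to replace the ``comes for free'' sentence by that slice-wise analysis.
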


Basically, Algorithm~\ref{alg:abs:RsGrad} enjoys a two-phase linear convergence with the scheduled step sizes. The phase-one convergence terminates after $l_1=O(\log(\mins^*/\sqrt{d^*}\gamma))$ iterations and the output satisfies $\fro{\bcalT_{l_1}-\bcalT^*}\leq 12\big(c_{m,\muT,r^*}^{-1} d^*\big)^{1/2}\gamma$ and $\linft{\bcalT_{l_1}-\bcalT^* }\leq 12c_{m,\muT,r^*}^{-1}\gamma$. The phase-two convergence lasts for at most  $l_2=O(\log(\gamma/b_0)+\min\{\log(d^*/\textsf{DoF}_m),\log(1/\alpha)\})$ iterations and  the algorithm finally outputs an estimator with error rate $\fro{\bcalT_{l_1+l_2}-\bcalT^*}^2= O_p\big(b_0^2\cdot (\textsf{DoF}_m+\alpha^2d^*)\big)$ where $\muT,m,r^*$ are regarded as some constants. The first term $b_0^2\cdot \textsf{DoF}_m$ is sharp in terms of the model complexity.  The model complexity $\textsf{DoF}_m$ dominates $\alpha^2 d^{\ast}$ if the corruption rate $\alpha=O\big((\textsf{DoF}_m/d^{\ast})^{1/2}\big)$, improving the prior work \cite{cai2022generalized}. 
Note that if the random noise $\bXi$ is absent so that $\gamma=0$,  Theorem~\ref{thm:abs:dynamics} implies that Algorithm~\ref{alg:abs:RsGrad} can exactly recovers the ground truth $\bcalT^{\ast}$ after phase-one iterations, enjoying both Frobenius norm and sup norm convergence guarantees. It cannot be achieved by the convex approaches studied in \cite{agarwal2012noisy} and \cite{klopp2017robust}. 

\paragraph*{Optimality w.r.t. corruption rate} The support size of $\bcalS$ is at most $\alpha d^{\ast}$ implying that the associated model complexity is $O(\alpha d^{\ast})$. Thus a seemingly natural outlook on the optimal error rate should emerge as $O_p(b_0^2\cdot \alpha d^{\ast})$. This is indeed what has appeared in the existing literature. See, e.g., \cite{agarwal2012noisy, klopp2017robust, cai2022generalized} and references therein. Intriguingly, Theorem~\ref{thm:abs:dynamics} shows that Algorithm~\ref{alg:abs:RsGrad} achieves an error rate with a faster dependence of the corruption rate, which is $O_p(b_0^2\cdot \alpha^2 d^{\ast})$. This rate turns out to be minimax optimal with a comparable lower bound to be established in the next section. The improvement comes from the benefit of absolute loss,  compared with the square loss used in the foregoing works. Denote $\tilde{\Omega}$ the support of $\bcalS$ and an upper bound for $\|[\bcalT-\bcalT^{\ast}]_{\tilde{\Omega}}\|_{\rm F}$ is often needed for incoherent matrices/tensors $\bcalT$ and $\bcalT^{\ast}$. \cite{cai2022generalized} bounds this term by $\|[\bcalT-\bcalT^{\ast}]_{\tilde{\Omega}}\|_{\rm F}=O\big(\alpha^{1/2}\cdot \|\bcalT-\bcalT^{\ast}\|_{\rm F}\big)$. An additional factor $\alpha^{1/2}$ will appear by considering the absolute loss in that $\|[\bcalT-\bcalT^{\ast}]_{\tilde{\Omega}}\|_1=O\big(\alpha d^{\ast 1/2}\cdot \|\bcalT-\bcalT^{\ast}\|_{\rm F}\big)$.

\subsection{Minimax lower bound}
We now establish the minimax lower bounds of robust tensor decomposition in the existence of both dense noise and sparse corruptions. For simplicity, we assume the dense noise tensor $\bXi$ comprises of i.i.d.  Gaussian entries and the support of $\bcalS$ is randomly sampled with probability $\alpha$,  following the typical scheme used in \cite{candes2011robust,yi2016fast,chen2021bridging}. The proof of Theorem~\ref{thm:lowerbound}  borrows the idea used in studying Huber's contamination model \citep{chen2018robust}.

\begin{theorem}
Suppose the entries of $\bXi$ are i.i.d. with distribution $N(0,\sigma^2)$. Let $\alpha\in(0,1)$,  suppose the entries of $\bcalS$ follow the distribution $[\bcalS]_{\omega}\sim (1-\alpha)\delta_0+\alpha Q_{\omega}$, where $Q_{\omega}$ is an arbitrary distribution and $\delta_0$ is the zero distribution for all $\omega\in[d_1]\times\cdots\times [d_m]$. Then there exists absolute constants $c, C>0$ such that 
	\begin{align*}
		\inf_{\widehat{\bcalT}}\sup_{\bcalT^*\in\MM_{\r,\muT}}\sup_{\{Q_{\omega}\}}\PP\left(\fro{\widehat{\bcalT}-\bcalT^*}^2\geq\sigma^2 \max\left\{ \textsf{DoF}_m,\; C\alpha^2 d^*/(\mu^{*m}r^*)\right\}\right)\geq c,
	\end{align*}
where $\widehat{\bcalT}$ is any estimator of $\bcalT^*$ based on an observation $\bcalY=\bcalT^*+\bXi+\bcalS$.
\label{thm:lowerbound}
\end{theorem}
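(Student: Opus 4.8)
The plan is to lower-bound the two terms inside the maximum separately and then combine them: restricting the adversary to any sub-family of $(\bcalT^*,\{Q_\omega\})$ only weakens it, so a lower bound over a sub-family is a lower bound for the whole problem. The $\sigma^2\textsf{DoF}_m$ term comes from the ``no corruption'' sub-family, and the $\sigma^2\alpha^2 d^*/(\muT^m r^*)$ term from a ``Huber-contaminated two-point'' sub-family.

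For the first term I would take every $Q_\omega=\delta_0$, so that $\bcalS=\mathbf 0$ almost surely and $\bcalY=\bcalT^*+\bXi$ is the Gaussian tensor-PCA model over $\bcalT^*\in\MM_{\r,\muT}$. Here the bound $\inf_{\widehat\bcalT}\sup_{\bcalT^*}\PP(\fro{\widehat\bcalT-\bcalT^*}^2\gtrsim\sigma^2\textsf{DoF}_m)\gtrsim 1$ is the classical minimax lower bound for low-rank tensor denoising: build a local packing of $\MM_{\r,\muT}$ of cardinality $\exp(c\,\textsf{DoF}_m)$, with pairwise Frobenius separation of order $\sigma\sqrt{\textsf{DoF}_m}$ and pairwise KL divergences $\tfrac1{2\sigma^2}\fro{\,\cdot\,}^2=O(\textsf{DoF}_m)$ --- pack the core over an $\epsilon$-net of a ball in $\RR^{r^*}$ (the $r^*$ degrees of freedom) and each factor $\U_k$ over a packing of the Grassmannian $\mathrm{Gr}(r_k,d_k)$ (the $d_kr_k$ degrees of freedom), rescaling so the tensors stay $\muT$-incoherent --- and apply Fano's inequality. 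This part is routine and parallels \cite{zhang2018tensor}.

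For the second term I would follow the idea of \cite{chen2018robust} via a two-point (Le Cam) construction. Fix $\muT$-incoherent orthonormal factors $\U_1,\dots,\U_m$, each having a designated row that saturates $\ltinf{\U_k}=(\muT r_k/d_k)^{1/2}$, a core $\bcalC^\circ$, and a rank-one perturbation $\bd=\beta\,(\U_1\e_{j_1})\otimes\cdots\otimes(\U_m\e_{j_m})$ aligned with those rows, so that $\fro{\bd}=\beta$ while $\linft{\bd}=\beta(\muT^m r^*/d^*)^{1/2}$. Set $\bcalT_0^*=\bcalC^\circ\cdot\llbracket\U_1,\dots,\U_m\rrbracket+\bd$ and $\bcalT_1^*=\bcalC^\circ\cdot\llbracket\U_1,\dots,\U_m\rrbracket-\bd$; for a generic $\bcalC^\circ$ both have full Tucker rank $\r$ and hence lie in $\MM_{\r,\muT}$, with $\fro{\bcalT_0^*-\bcalT_1^*}=2\beta$. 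I would pick $\beta$ so that $\linft{\bd}$ is of order $\alpha\sigma$ (up to the mild slack noted below), which forces $\beta^2\asymp\alpha^2\sigma^2 d^*/(\muT^m r^*)$ and uses the standing assumption that $\alpha$ is below an absolute constant. Given contamination laws $\{Q_\omega^{(0)}\}$ (used under $\bcalT_0^*$) and $\{Q_\omega^{(1)}\}$ (used under $\bcalT_1^*$) making the induced laws $\PP_0,\PP_1$ of $\bcalY$ satisfy $\mathrm{TV}(\PP_0,\PP_1)\le\tfrac12$, Le Cam's two-point lemma yields $\inf_{\widehat\bcalT}\max_{i\in\{0,1\}}\PP_i(\fro{\widehat\bcalT-\bcalT_i^*}^2\ge\beta^2)\ge\tfrac18$, which is the asserted bound since $\beta^2\asymp\alpha^2\sigma^2 d^*/(\muT^m r^*)$.

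The technical heart, and the main obstacle, is producing these contamination laws together with the bound $\mathrm{TV}(\PP_0,\PP_1)\le\tfrac12$. Coordinatewise, writing $\mu_i^\omega=[\bcalT_i^*]_\omega$, the law of $[\bcalY]_\omega$ under $\bcalT_i^*$ is $P_i^\omega=N(0,\sigma^2)*\bigl((1-\alpha)\delta_{\mu_i^\omega}+\alpha\tilde Q_i^\omega\bigr)$, where $\tilde Q_i^\omega$ (the law of $\mu_i^\omega$ plus a $Q_\omega^{(i)}$-draw) is essentially arbitrary. Because the contamination here is \emph{additive} noise rather than a free replacement of the observation, exact matching $P_0^\omega=P_1^\omega$ is impossible whenever $\alpha<\tfrac12$ (inject the Gaussian convolution and compare atom masses at $\mu_0^\omega$), and the naive ``swap'' contamination $Q_\omega^{(0)}=\delta_{[\bd]_\omega}$, $Q_\omega^{(1)}=\delta_{-[\bd]_\omega}$ only delivers $\mathrm{TV}(P_0^\omega,P_1^\omega)\asymp(1-2\alpha)|\mu_0^\omega-\mu_1^\omega|/\sigma$, far too weak to beat the factor $d^*$ that appears upon union bounding. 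Instead I would choose finitely supported $\tilde Q_0^\omega,\tilde Q_1^\omega$, supported in an interval of radius $O(|\mu_0^\omega-\mu_1^\omega|/\alpha)$ around $\mu_i^\omega$, so that the signed measure $\eta_\omega:=(1-\alpha)(\delta_{\mu_0^\omega}-\delta_{\mu_1^\omega})+\alpha(\tilde Q_0^\omega-\tilde Q_1^\omega)$ has vanishing moments of orders $0,\dots,K-1$; Taylor-expanding the Gaussian kernel and using $\|\phi_\sigma^{(K)}\|_1\le\sigma^{-K}(K!)^{1/2}$ gives $\mathrm{TV}(P_0^\omega,P_1^\omega)=\tfrac12\|\phi_\sigma*\eta_\omega\|_1\le (B/\sigma)^K/(K!)^{1/2}$ with $B$ the support radius. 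Choosing $K$ appropriately, this is at most $(2d^*)^{-1}$ once $\linft{\bd}$ is below $\alpha\sigma$ by a factor of order $(\log d^*)^{-1/2}$ (absorbed into $C$), and summing over the $d^*$ coordinates gives $\mathrm{TV}(\PP_0,\PP_1)\le\tfrac12$. The delicate points are verifying that the prescribed moment sequences are realized by genuine probability measures supported on the claimed range, and controlling how that range (and the optimal $K$) grow; the packing argument for the $\textsf{DoF}_m$ term and the final Le Cam/Fano bookkeeping are standard.
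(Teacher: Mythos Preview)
Your reduction of the $\sigma^2\,\textsf{DoF}_m$ term to the corruption-free Gaussian denoising lower bound is exactly what the paper does. The divergence is entirely in the $\alpha^2 d^*/(\mu^{*m} r^*)$ term, where the paper also runs a two-point Le Cam argument but with a key simplification you miss: invoking the Huber-contamination lemma of \cite{chen2018robust}, it shows that whenever the \emph{clean} entrywise laws satisfy $\textsf{TV}\bigl(N(\mu_0^\omega,\sigma^2),N(\mu_1^\omega,\sigma^2)\bigr)\le\alpha'/(1-\alpha')$ for some $\alpha'\le\alpha$ --- which holds once $|\mu_0^\omega-\mu_1^\omega|\lesssim\alpha\sigma$ --- contamination laws can be chosen so that the two observation distributions are \emph{exactly equal}. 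The problem then reduces to the purely deterministic task of exhibiting $\bcalT_1,\bcalT_2\in\MM_{\r,\muT}$ with $\linft{\bcalT_1-\bcalT_2}\lesssim\alpha\sigma$ and $\fro{\bcalT_1-\bcalT_2}^2\gtrsim\alpha^2\sigma^2 d^*/(\mu^{*m} r^*)$, which the paper dispatches by a citation. No moment matching, no $d^*$-fold TV sum, no log factors.

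Your claim that exact matching is impossible hinges on the extra hypothesis $\bcalS\perp\bXi$: only then is the observation law a Gaussian convolution of a mixing measure carrying an atom of mass $1-\alpha$, so that your ``deconvolve, then compare atoms'' argument applies. The theorem specifies the \emph{marginal} of $[\bcalS]_\omega$ but not independence from $\bXi$; in the paper's (Huber-style) reading the adversary may, on the contaminated event, set $s_\omega=V-\mu_i^\omega-\xi_\omega$ with $V\sim\tilde Q$, so that the contaminated observation follows an arbitrary $\tilde Q$ and the Chen--Gao--Ren mixture identity applies directly. If one does insist on $\bcalS\perp\bXi$, then the paper's deconvolution sentence is indeed a gap and your moment-matching route becomes a reasonable repair --- but as you describe it, it delivers coordinatewise TV $\le(2d^*)^{-1}$ only after shrinking $\linft{\bd}$ by $(\log d^*)^{-1/2}$, which costs a full $\log d^*$ in the Frobenius bound and cannot be ``absorbed into $C$'' since $C$ is an absolute constant. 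To match the stated rate under independence you would need to sharpen the argument (for instance, tensorize in Hellinger or $\chi^2$ rather than summing TV over $d^*$ coordinates). A minor aside: your $\bd=\beta\,(\U_1 e_{j_1})\otimes\cdots\otimes(\U_m e_{j_m})$ mixes up columns with rows --- incoherence bounds row norms of $\U_k$, not column sup-norms --- so the claimed identity $\linft{\bd}=\beta(\mu^{*m}r^*/d^*)^{1/2}$ does not follow as written; the cleanest construction uses flat unit vectors $u_k$ with $\linft{u_k}\asymp d_k^{-1/2}$.
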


\section{Algorithmic Parameter Selection and Initialization}
\label{sec:parameter and init}

\paragraph{Algorithmic parameter selection} The initial stepsize and two-phase stepsizes can be selected similarly to \cite{shen2023computationally}.  We only need to discuss the selection of truncation parameters $\tau_1,\tau_2$ in the second phase of Algorithm~\ref{alg:abs:RsGrad}.  It's important to note that $\tau_1,\tau_2$ are determined by the incoherence $\muT$ and the noise level $\gamma$.  We can estimate $\muT$ and $\gamma$ based on the phase-one output $\bcalT_{l_1}$.  
In fact, according to the proof of Theorem~\ref{thm:abs:dynamics},  we have $\muT/2\leq\mu(\bcalT_{l_1})\leq 2\muT$.  This allows us to obtain a satisfactory estimation of the oracle $\muT$.  As for $\gamma$,  we have $\linft{\bcalT_{l_1}-\bcalT^{\ast}}\asymp \gamma$ with high probability.  Thus,  the median $\text{med}(|\bcalT^{\ast}-\bcalY |)$ is a rough estimation of the noise scale $\tau_2$.  
Moreover, in simulations, the sequence $\{\bcalT_{l}\}_{l\geq 1}$ maintains incoherence automatically and in practice, we don't need the truncation or trimming steps. The proof of $\ell_1$-loss maintaining incoherence implicitly is left for future study.

\paragraph{Initialization}
We now present an initialization method that works under both dense noise and sparse arbitrary corruptions.  See model (\ref{eq:model}).  Note that \cite{auddy2022estimating} proposed an initialization method based on Catoni's estimator \citep{minsker2018sub} where only the case of heavy-tailed noise is considered. The robust low-rank matrix work of \cite{wang2022robust,cai2022generalized} uses the truncation method as an initialization, providing the guarantees of heavy-tailed noise case and sub-Gaussian noise plus sparse corruptions respectively. And \cite{dong2022fast} provides the noiseless case initialization guarantees. Our initialization approach is inspired by the truncation method \citep{fan2016shrinkage}.
We begin with truncating the observed tensor $\bcalY$ with a threshold that is selected at the level $\tau\asymp\left(\linft{\bcalT^*}+d^{*1/8}\op{\xi}_4\right)$.   Here we write $\op{\xi}_4:=(\EE\xi^4)^{1/4}$ in short.   The truncation step yields 
$$
[\hat{\bcalY}]_{\omega}:=[\bcalY]_{\omega}\cdot 1_{\left\{|[\bcalY]_{\omega}|\leq\tau\right\}}+\tau\cdot\text{sign}\left([\bcalY]_{\omega}\right)\cdot 1_{\left\{|[\bcalY]_{\omega}|>\tau\right\}},\quad \forall \omega\in[d_1]\times\cdots\times[d_m].
$$
Finally,  we apply spectral initialization and obtain $\bcalT_0:=\text{HOSVD}_{\r}(\hat{\bcalY})$. 

\begin{theorem}\label{thm:init}
Suppose the noise tensor $\bXi$ has i.i.d. entries with a finite $(4+\eps)$ moment for any $\eps>0$ and $\bcalS$ has independent entries with $[\bcalS]_{\omega}\sim (1-\alpha)\delta_0+\alpha Q_{\omega}$ where $Q_{\omega}$ is an arbitrary distribution.  There exist $c_0,c,c_1,C,C_1,C_2,C_3>0$ such that if $d^*\geq \mu^{*m}r^{*}\kappa\dmax\log \dmax$, truncation level $\tau\in(\linft{\bcalT^*}+d^{*1/8}\|\xi\|_4)\cdot[C_1,C_2]$, signal strength $\mins^*/\|\xi\|_4\geq C_3m\kappa\sqrt{r^*} \max\{(\dmax\log\dmax)^{1/2},\ d^{*1/4} (\log\dmax)^{1/4}\}$, and corruption rate $\alpha\leq c_1\min\{(\mins^*/\|\xi\|_4)/d^{*5/8}, 1/(\mu^{*m}r^*)\}/(m\kappa^2\sqrt{r^*})$, then with probability at least $1-cd^{*-\eps/4}-\sum_{k=1}^{m}\dkm\exp(-\alpha d_k)$, we have
\begin{align*}
		\fro{\bcalT_0-\bcalT^*}&\leq C_3m\kappa\sqrt{r^*}\left((\|\xi\|_4+\linft{\bcalT^*})\cdot\left(\sqrt{\dmax\log\dmax}+4d^{*1/4}(\log\dmax)^{1/4} \right)+2\alpha\tau\sqrt{d^*}\right),\\
		\linft{\bcalT_0-\bcalT^*}&\leq C_3m^2\kappa^2\sqrt{r^*}\sqrt{\frac{\mu^{*m}r^*}{d^*}}\left((\|\xi\|_4+\linft{\bcalT^*})\cdot\left(\sqrt{\dmax\log\dmax}+4d^{*1/4}(\log\dmax)^{1/4} \right)+2\alpha\tau\sqrt{d^*}\right).
	\end{align*}
\end{theorem}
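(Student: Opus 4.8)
The plan is to view $\hat\bcalY=\bcalT^*+\bcalE$ with effective noise $\bcalE:=\hat\bcalY-\bcalT^*$ and to reduce the whole statement to the operator-norm behaviour of $\bcalE$ in each of the $m$ modes. Write $\calP_k:=\hat\U_k\hat\U_k^{\top}$ with $\hat\U_k$ the top-$r_k$ left singular subspace of $\fraM_{k}(\hat\bcalY)$, let $\Theta_k$ denote the principal angles between $\hat\U_k$ and $\U_k$, and let $\W_k:=\bigotimes_{j\neq k}\U_j$ be the orthonormal matrix spanning the mode-$k$ row space of $\bcalT^*$. The first step is deterministic HOSVD perturbation: applying Davis--Kahan to $\fraM_{k}(\hat\bcalY)\fraM_{k}(\hat\bcalY)^{\top}$ and using that the ``diagonal'' part of the noise contribution is, up to small corrections, a scalar matrix $\Lambda_k\approx(\dkm\,\EE\xi^{2})\,\I$ that does not move eigenvectors, gives
\[
\op{\sin\Theta_k}\ \lesssim\ \frac{\maxs^{*}\,\op{\fraM_{k}(\bcalE)\W_k}+\op{\fraM_{k}(\bcalE)\fraM_{k}(\bcalE)^{\top}-\Lambda_k}}{\mins^{*2}},
\]
and then, via $\bcalT_0-\bcalT^*=\big(\bcalT^*\times_1\calP_1\cdots\times_m\calP_m-\bcalT^*\big)+\bcalE\times_1\calP_1\cdots\times_m\calP_m$ and a standard HOSVD error bound \citep{zhang2018tensor} (telescoping the first bracket and projecting $\bcalE$ mode by mode onto the true subspaces),
\[
\fro{\bcalT_0-\bcalT^*}\ \lesssim\ \maxs^{*}\sqrt{r^{*}}\sum_{k}\op{\sin\Theta_k}+\sqrt{r^{*}}\,\max_k\op{\fraM_{k}(\bcalE)\W_k},
\]
up to higher-order terms. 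So it suffices to control the per-mode statistics $\op{\fraM_{k}(\bcalE)\W_k}$, $\op{\fraM_{k}(\bcalE)\fraM_{k}(\bcalE)^{\top}-\Lambda_k}$, and, for the sup-norm claim, their $\ltinf{\cdot}$-type analogues.

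The second step is the probabilistic bookkeeping on $\bcalE$. Since the entries of $\bcalS$ are independent and nonzero with probability at most $\alpha$, a Chernoff bound plus a union bound over slices shows that, with probability at least $1-\sum_k\dkm e^{-c\alpha d_k}$, the support $\tilde\Omega$ of $\bcalS$ has at most $2\alpha\dkm$ nonzeros per mode-$k$ row and at most $2\alpha d_k$ per column. On this event write $\bcalE=\bcalE_\xi+\bcalE_S$, where $\bcalE_\xi$ is the entrywise clipping of $\bcalT^*+\bXi$ to $[-\tau,\tau]$ minus $\bcalT^*$ (independent entries, bounded by $2\tau$ since $\linft{\bcalT^*}\le\tau$, variance at most $\EE\xi^{2}$), and $\bcalE_S:=\bcalE-\bcalE_\xi$ is supported on $\tilde\Omega$ with entries bounded by $4\tau$. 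The sparse-matrix estimate $\op{A}\le(\max_{ij}|A_{ij}|)\sqrt{s_{\mathrm{row}}s_{\mathrm{col}}}$ then gives $\op{\fraM_{k}(\bcalE_S)}\lesssim\alpha\tau\sqrt{d^{*}}$, which, via the same bound on $\op{\fraM_{k}(\bcalE_S)\W_k}$ and on the cross term $\fraM_{k}(\bcalE_\xi)\fraM_{k}(\bcalE_S)^{\top}$, accounts for the $\alpha\tau\sqrt{d^{*}}$ contribution. For $\bcalE_\xi$ I would (i) bound the entrywise bias by $|\EE[\bcalE_\xi]_\omega|\le\EE\big[|\xi|\,\mathbbm{1}(|\xi|>\tau/2)\big]\lesssim\|\xi\|_4^{4}/\tau^{3}$, so $\fro{\EE\bcalE_\xi}\lesssim\sqrt{d^{*}}\,\|\xi\|_4^{4}/\tau^{3}$, which is absorbed into the claimed bound because $\tau\gtrsim d^{*1/8}\|\xi\|_4$; and (ii) apply matrix concentration for random matrices with independent, bounded, small-variance entries, together with a truncation of the few large rows/columns, to the centered tensor $\bcalE_\xi-\EE\bcalE_\xi$, obtaining $\op{\fraM_{k}(\bcalE_\xi-\EE\bcalE_\xi)\W_k}\lesssim\|\xi\|_4\big(\sqrt{d_k}+\sqrt{r_k^{-}}\big)+\tau\log\dmax$ and $\op{\fraM_{k}(\bcalE_\xi-\EE\bcalE_\xi)\fraM_{k}(\bcalE_\xi-\EE\bcalE_\xi)^{\top}-\Lambda_k}\lesssim\|\xi\|_4^{2}\sqrt{d^{*}\log\dmax}$ up to lower-order terms. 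Dividing by $\mins^{*2}$, the hypothesised signal strength is exactly calibrated so the linear term forces the $\sqrt{\dmax\log\dmax}$ requirement and the quadratic term the $d^{*1/4}(\log\dmax)^{1/4}$ requirement; each $\op{\sin\Theta_k}$ is then small enough to justify the perturbation expansions, and collecting the pieces gives the Frobenius bound with the $m\kappa\sqrt{r^{*}}$ prefactor.

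For the sup-norm bound I would first upgrade operator-norm control to incoherence of $\bcalT_0$: a row-wise ($\ell_{2,\infty}$) eigenvector perturbation bound for $\hat\U_k$ controls $\ltinf{\hat\U_k-\U_k\O_k}$ (with $\O_k$ the optimal alignment) by the $\ltinf{\cdot}$-analogues of the noise statistics above plus $\ltinf{\U_k}\,\op{\sin\Theta_k}$, so that $\ltinf{\hat\U_k}\le\ltinf{\U_k}+\ltinf{\hat\U_k-\U_k\O_k}$ yields $\mu(\bcalT_0)\lesssim m^{2}\kappa^{2}\muT$. Then, since $\bcalT^*$ and $\bcalT_0$ are both incoherent with Tucker rank at most $\r$, the mode-$k$ column space of $\bcalT_0-\bcalT^*$ lies in $\mathrm{span}(\hat\U_k)+\mathrm{span}(\U_k)$, so $\bcalT_0-\bcalT^*=\bcalD\cdot\llbracket\W_1',\dots,\W_m'\rrbracket$ for orthonormal $\W_k'$ of rank $\le 2r_k$ with $\ltinf{\W_k'}^{2}\lesssim\ltinf{\hat\U_k}^{2}+\ltinf{\U_k}^{2}$ and $\fro{\bcalD}=\fro{\bcalT_0-\bcalT^*}$; hence $\linft{\bcalT_0-\bcalT^*}\le\fro{\bcalD}\prod_k\ltinf{\W_k'}\lesssim m\kappa\sqrt{\mu^{*m}r^{*}/d^{*}}\cdot\fro{\bcalT_0-\bcalT^*}$, which is the asserted sup-norm bound once the Frobenius bound is inserted. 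I expect the main obstacle to be the sharp probabilistic control of the quadratic term $\op{\fraM_{k}(\bcalE_\xi)\fraM_{k}(\bcalE_\xi)^{\top}-\Lambda_k}$: the naive argument using only $\linft{\bcalE_\xi}\le 2\tau$ loses a polynomial factor and would impose a $d^{*5/16}$-type signal-strength condition, so one must instead run a two-level truncation that exploits the finite fourth moment of $\xi$ (bounding the fourth moments of the column norms of $\fraM_{k}(\bcalE_\xi)$ and trimming the rare large columns) to reach the $\|\xi\|_4^{2}\sqrt{d^{*}\log\dmax}$ scaling, equivalently the advertised $d^{*1/4}$ SNR. The remaining delicate points are pushing the $\ell_{2,\infty}$ estimates through to incoherence of $\bcalT_0$ and handling the interaction of the clipping nonlinearity with the arbitrary corruptions.
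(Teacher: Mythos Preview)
Your proposal is a plausible route, but it takes a noticeably harder path than the paper's proof on two points, and the difficulty you flag as ``the main obstacle'' is one the paper sidesteps entirely.

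\textbf{Operator-norm control.} You propose to bound each $\op{\sin\Theta_k}$ via Davis--Kahan applied to $\fraM_{k}(\hat\bcalY)\fraM_{k}(\hat\bcalY)^{\top}$, which forces you to control the quadratic term $\op{\fraM_{k}(\bcalE_\xi)\fraM_{k}(\bcalE_\xi)^{\top}-\Lambda_k}$ under only a fourth-moment assumption; you correctly identify this as the delicate step. The paper avoids this completely: it bounds the \emph{tensor} operator norm $\|\hat\bcalY-\bcalT^*\|$ directly by invoking Theorem~2.1 of \cite{auddy2022estimating}, which already gives the $\sqrt{\dmax\log\dmax}+d^{*1/4}(\log\dmax)^{1/4}$ rate for heavy-tailed random tensors, and then feeds the single quantity $\Lambda:=\|\hat\bcalY-\bcalT^*\|$ into standard HOSVD perturbation (Wedin/Davis--Kahan on each matricization, plus the telescoping expansion of $\bcalT_0-\bcalT^*$). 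The decomposition of $\bcalE$ into a mean-zero heavy-tailed part, a bias part controlled by $\|\xi\|_4^2/\tau^2$, and an $\alpha$-fiber-sparse corruption part (bounded via $\op{\bcalS}\le\alpha\sqrt{d^*}\linft{\bcalS}$) is similar in spirit to yours but lives at the tensor-operator-norm level, so no quadratic term ever appears. Your two-level truncation of column norms would likely work, but it reproduces a result that is already available as a black box.

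\textbf{The $\ell_{2,\infty}$ and sup-norm step.} You propose a direct row-wise eigenvector perturbation bound for $\hat\U_k$. The paper instead uses a \emph{leave-one-out} construction: for each $(k,j)$ it forms $\hat\bcalY^{(k),j}$ by replacing the $j$-th mode-$k$ slice of $\hat\bcalY$ with that of $\bcalT^*$, so the resulting $\U_k^{(0),(k),j}$ is independent of the noise in that slice. One then controls $\ltwo{(\U_k^{(0),(k),j}\H_k^{(0),(k),j}-\U_k^*)_{j,\cdot}}$ via a type-\Romannum{2} perturbation lemma (Lemma~\ref{teclem:yuxinchen} in the paper), and bridges back to $\U_k^{(0)}$ using $\op{\U_k^{(0)}\U_k^{(0)\top}-\U_k^{(0),(k),j}\U_k^{(0),(k),j\top}}\lesssim\sqrt{\muT r_k/d_k}\,\Lambda/\mins^*$. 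This decoupling is what makes the row-wise bound work under heavy-tailed noise; a direct $\ell_{2,\infty}$ perturbation bound without leave-one-out would have to handle the dependence between the $j$-th row of $\fraM_k(\bcalE)$ and $\hat\U_k$, which is not automatic. Once $\ltinf{\U_k^{(0)}\H_k^{(0)}-\U_k^*}$ is in hand, the sup-norm bound follows from the paper's Lemma~\ref{teclem:entrynorm-expansion}, which is essentially your incoherence-to-sup-norm conversion.

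In short: your Frobenius strategy is valid but more laborious than necessary, since the $d^{*1/4}$ rate comes for free from \cite{auddy2022estimating}; and your sup-norm strategy should be made precise via leave-one-out rather than an unspecified row-wise bound.
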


For ease of exposition, suppose that $m,\muT,r^*,\kappa\asymp O(1)$.  Theorem~\ref{thm:init} shows that $\bcalT_0$ satisfies the initialization condition required in Theorem~\ref{thm:abs:dynamics} if the signal strength satisfies $\mins^*/\|\xi\|_4=\Omega\big(\max\{\sqrt{\dmax\log \dmax}, (d^*\log\dmax)^{1/4}\}\big)$ and the corruption rate is bounded as $\alpha=O\big( \min\{(\mins^*/\|\xi\|_4)/d^{*5/8}, \allowbreak 1/(\mu^{*m}r^*)\}\big)$.  The signal-to-noise ratio is near optimal with an extra $\log^{1/2}\dmax$ factor \citep{zhang2018tensor}. The corruption rate requirement is weaker than \cite{cai2022generalized}. Initialization guarantee of Theorem~\ref{thm:hub:dynamics} can be attained in a similar fashion.

\section{Missing Values,  Sample Splitting and Optimality}\label{sec:splitting}
While Theorems~\ref{thm:hub:dynamics} and \ref{thm:abs:dynamics} demonstrate that both pseudo-Huber tensor decomposition and quantile tensor decomposition can yield estimators that are minimax optimal in Frobenius norm, the derived entry-wise error rates are generally sub-optimal. This remains the case even though powerful techniques like leave-one-out have been utilized.
This sub-optimality, which is due to the non-smoothness of loss functions, has also been observed in \cite{wang2022robust}. However, we believe that this sub-optimality is a result of technical difficulty and can be addressed using a simple sample splitting trick.
We hope that the positive insights from this section can inspire future research to tackle this technically unresolved problem.

For technical simplicity,  we focus on the sampling with replacement model,  commonly used in matrix and tensor completion literature  \citep{cai2016matrix,elsener2018robust,xia2021statistically,cai2022provable}.  Let $\{(Y_i,  \bcalX_i)\}_{i=1}^N$ be independent observations where $\bcalX_i$ is uniformly sampled from the set $\bcalX:=\{\e_{\omega}: \omega\in[d_1]\times\cdots\times [d_m]\}$.  Here the tensor $\e_{\omega}$ has value $1$ on its entry $\omega$ and $0$'s everywhere else.   The response $Y_i$ satisfies the trace-regression model
\begin{align*}
	Y_i=\inp{\bcalX_i}{\bcalT^*}+\xi_{i}+s_i,
\end{align*}
where $\xi_{i}$'s are i.i.d.  (potentially) heavy-tailed noise and $s_i\sim (1-\alpha)\delta_0+\alpha Q_{\omega_i}$ represents a potentially arbitrary corruption.  Here $Q_{\omega_i}$ denotes  an arbitrary distribution and $\alpha\in[0,1)$ is the corruption rate,  following the Huber's contamination model \citep{chen2016general,chen2018robust}. 
We split the data into $M+1$ non-overlapping sub-samples and,  without loss of generality,  assume $N=(M+1)n$ for some integer $n$.  Here $M+1$ denotes the total number of iterations of our algorithm.  Denote the $M+1$ sub-samples as $\calD_l=\cup_{i=1}^n\{(Y_i^{(l)},  \bcalX_i^{(l)})\}$ and $\cup_{l=0}^M \calD_l=\{(Y_i,  \bcalX_i)\}_{i=1}^N$.   We still apply the Riemannian sub-gradient descant algorithm to minimize the absolute loss,  but at the $l$-the iteration,  the algorithm is only implemented on the $l$-th sub-sample data.  The sample splitting ensures the independence across iterations.  The detailed implementation can be found in Algorithm~\ref{alg:abs:completion}.

\begin{algorithm}
	\caption{Riemannian Sub-gradient Descent with Sample Splitting}\label{alg:abs:completion}
	\begin{algorithmic}
		\STATE{\textbf{Input}: observations $\{\calD_l\}_{l=0}^M$,  max iterations $M+1$,  step sizes $\{\eta_l\}_{l=0}^{M}$.}
		\STATE{Initialization: $\bcalT_0\in\MM_\r$ is based on $\calD_0$}
		\FOR{$l = 0,\ldots,M-1$}
		\STATE{Choose a vanilla sub-gradient:  $\bcalG_l\in\partial \sum_{i=1}^{n}|Y_i^{(l+1)}-\langle\bcalX_i^{(l+1)},\bcalT_l\rangle|$.}
		\STATE{Compute Riemannian sub-gradient: $\wt\bcalG_l = \calP_{\TT_l}(\bcalG_l)$}
		\STATE{Retraction to $\MM_\r$: $\bcalT_{l+1} =
				\text{HOSVD}_\r(\bcalT_l - \eta_{l}\wt\bcalG_l)$ }
		\ENDFOR
		\STATE{\textbf{Output}: $\widehat\bcalT=\bcalT_{M}$}
	\end{algorithmic}
\end{algorithm}

\begin{assumption}[Noise condition \Romannum{3}]
	There exists an $\eps>0$ such that $\gamma:=\left(\EE|\xi|^{1+\eps}\right)^{1/(1+\eps)}<+\infty$ and the noise term has median zero $H_{\xi}(0)=\frac{1}{2}$. Also, there exist $b_0,b_1>0$ such that the noise density satisfies \footnote{The lower bound can be slightly relaxed to $|H_\xi(x)-H_\xi(0)|\geq|x|/b_0$ for all $|x|\leq C_{m,\muT,r^^*,\kappa}\gamma$.}
	\begin{align*}
		h_{\xi}(x)\geq b_0^{-1}, &\ \ \  \textrm{ for all } |x|\leq C_{m,\muT,r^*}\gamma;\\
		h_{\xi}(x)\leq b_1^{-1}, &\ \ \ \textrm{ for all } x\in \RR,
	\end{align*}
	where $C_{m,\muT,r^*}:=(5m+1)^26^m\mu^{*m}r^*$.
	\label{assu:abs:noise:completion}
\end{assumption}

Compared with Assumption~\ref{assu:hub:noise} and \ref{assu:abs:noise}, here we only require a finite $1+\eps$ moment. The following theorem established the convergence dynamic of Algorithm~\ref{alg:abs:completion}.   Recall that $\bar d$ denotes $\max_{j\in[m]} d_j$.  

\begin{theorem}\label{thm:missing-values}
Suppose Assumption~\ref{assu:abs:noise:completion} holds.  There exist positive constants $D_0,  \big\{c_{m,\muT,r^*}^{(i)}\big\}_{i=1}^5,  \big\{C_{m,\muT,r^*}^{(j)}\big\}_{j=1}^5$ depending only on $m,  \mu^{\ast},  r^{\ast}$ such that 
if $n\geq C_{m,\muT,r^*}^{(1)} \dmax\log \dmax$,  the initialization satisfies $\linft{\bcalT_0-\bcalT^*}\leq D_{0}/d^{*1/2}\leq c_{m,\muT,r^*}^{(1)}(b_1/b_0)^2\mins^*/d^{*1/2}$,  the initial stepsize $\eta_{0}\in d^{*1/2}D_0/n\cdot [c_{m,\muT,r^*}^{(2)},c_{m,\muT,r^*}^{(3)}]$,  and corruption rate is bounded by $\alpha\leq c_{m,\muT,r^*}^{(4)}$,   then with probability at least $1-c_mMd^{\ast-10}$,  Algorithm~\ref{alg:abs:completion} exhibits the following dynamics:
	\begin{enumerate}[(1)]
		\item in phase one, namely for the $l$-th iteration satifying $(1-c_{m,\muT,r^*}^{(5)})^lD_0\geq C_{m,\muT,r^*}^{(2)} \sqrt{d^*}\gamma$, by specifying a stepsize $\eta_l=(1-c_{m,\muT,r^*}^{(5)})^l \eta_{0}$,  we have
		\begin{align*}
			\fro{\bcalT_{l+1}-\bcalT^*}&\leq (1-c_{m,\muT,r^*}^{(5)})^{l+1} D_0,\\
			\linft{\bcalT_{l+1}-\bcalT^*}&\leq \frac{C_{m,\muT,r^*}^{(3)}}{\sqrt{d^*}}\cdot (1-c_{m,\muT,r^*}^{(5)})^{l+1} D_0;
		\end{align*}
	\item in phase two, namely for the $l$-th iteration satisfying $C_{m,\muT,r^*}^{(4)} b_0\cdot\max\{(n^{-1}\cdot \textsf{DoF}\log\dmax )^{1/2}, \ \alpha \}\leq \fro{\bcalT_{l}-\bcalT^*}/d^{\ast1/2} \leq C_{m,\muT,r^*}^{(1)}\gamma$,  by choosing a constant stepsize satisfying $\eta_l=\eta\in (b_1^2/b_0)d^*/n\cdot[c_{m,\muT,r^*}^{(6)},c_{m,\muT,r^*}^{(7)}]$,  we have
	\begin{align*}
		\fro{\bcalT_{l+1}-\bcalT^*}&\leq (1-c_{m,\muT,r^*}^{(8)})^{l+1-l_1} \fro{\bcalT_{l_1}-\bcalT^*},\\
		\linft{\bcalT_{l+1}-\bcalT^*}&\leq \frac{C_{m,\muT,r^*}^{(5)}}{\sqrt{d^*}}\cdot (1-c_{m,\muT,r^*}^{(8)})^{l+1-l_1} \fro{\bcalT_{l_1}-\bcalT^*},
	\end{align*}
where $\bcalT_{l_1}$ is the output of the first phase and $l_1=O\big(\log(\mins^{\ast}/\sqrt{d^{\ast}}\gamma)\big)$. Therefore, by choosing $M= \Omega\big(\log(\mins^{\ast}/\sqrt{d^{\ast}}\gamma)+\log(\gamma/b_0)+\min\{\log(n/\textsf{DoF}_m),\log(1/\alpha)\}\big)$, Algorithm~\ref{alg:abs:completion} outputs an estimator $\widehat\bcalT=\bcalT_M$ achieving the error rate 
$$
d^{\ast-1}\|\widehat\bcalT-\bcalT^{\ast}\|_{\rm F}^2=O\Big(b_0^2\cdot \Big(\frac{\textsf{DoF}_m\log\dmax}{n}+\alpha^2\Big)\Big);
$$
$$
\|\widehat\bcalT-\bcalT^{\ast}\|_{\infty}^2=O\Big(b_0^2\cdot \Big(\frac{\textsf{DoF}_m\log\dmax}{n}+\alpha^2\Big)\Big),
$$
 if treating $\mu^{\ast}, m$ as constants, holding with the aforementioned probability.   
	\end{enumerate}
\end{theorem}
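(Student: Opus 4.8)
The plan is to mirror the two-phase analysis of \refthm{thm:abs:dynamics}, exploiting the crucial simplification afforded by sample splitting: since $\calD_{l+1}$ is drawn independently of $\calD_0,\dots,\calD_l$, the iterate $\bcalT_l$ is a \emph{fixed} (deterministic) rank-$\r$ tensor conditionally on the first $l{+}1$ sub-samples, so every probabilistic statement needed at the $(l{+}1)$-th step concerns the fresh sample $\calD_{l+1}$ evaluated at one fixed point (and at $\bcalT^*$). This removes the need for any uniform empirical-process control over the low-rank manifold and, in particular, replaces the leave-one-out device of \cite{wang2022robust} by a plain conditional-independence argument, which is exactly why the entrywise rate comes out optimal here. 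Writing $f_l(\bcalT):=\sum_{i=1}^n\big|Y_i^{(l+1)}-\inp{\bcalX_i^{(l+1)}}{\bcalT}\big|$ and $\bd:=\bcalT_l-\bcalT^*$, the skeleton of each step is: (i) a descent inequality from the HOSVD retraction, (ii) a sharpness lower bound on $f_l(\bcalT_l)-f_l(\bcalT^*)$, (iii) an upper bound on the Riemannian sub-gradient $\fro{\calP_{\TT_l}(\bcalG_l)}$, and (iv) slice-wise analogues of (ii)--(iii) that keep the iterates incoherent.

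First I would establish the per-iteration regularity lemma, the trace-regression counterpart of \reflm{lem:abs:regularity}. For sharpness, convexity gives $\inp{\bcalG_l}{\bd}\ge f_l(\bcalT_l)-f_l(\bcalT^*)$; the expectation of the right side over the uniform design equals $\tfrac{n}{d^*}\sum_\omega\big(\EE|\xi+s-[\bd]_\omega|-\EE|\xi+s|\big)$, which the median-zero/local-Bernstein part of Assumption~\ref{assu:abs:noise:completion} lower-bounds by $\tfrac{n}{d^*}\big((2b_0)^{-1}\fro{\bd}^2-\alpha\lone{\bd}\big)$ in the regime $\linft{\bd}\le C_{m,\muT,r^*}\gamma$, and by a coarser $\tfrac{n}{d^*}\big(\linft{\bd}^{-1}\fro{\bd}^2-6d^*\gamma\big)$-type bound in phase one (the heavy tail of $\xi$ truncated at level $\asymp\gamma$, the $1{+}\eps$ moment controlling the residual tail); the $\alpha$-term is routed through the $\ell_1$ estimate $\lone{[\bd]_{\tilde\Omega}}=O(\alpha\,d^{*1/2}\fro{\bd})$ valid for incoherent $\bcalT_l$, which is precisely what converts the naive $\alpha$ into the optimal $\alpha^2$. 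Pointwise Bernstein concentration (after truncation) turns these expectations into high-probability bounds at the single point $\bcalT_l$. For the sub-gradient $\bcalG_l=-\tfrac1n\sum_i\text{sign}\big(\xi_i+s_i-[\bd]_{\omega_i}\big)\bcalX_i$: in phase two the density upper bound $h_\xi\le b_1^{-1}$ forces each sign to agree with $\text{sign}(\xi_i+s_i)$ off a set of conditional probability $O(b_1^{-1}|[\bd]_{\omega_i}|)$, while $\EE[\text{sign}(\xi_i+s_i)\bcalX_i]$ is $O(\alpha)$ per coordinate, so projecting onto the fixed $\textsf{DoF}_m$-dimensional space $\TT_l$ and invoking vector/matrix Bernstein gives $\fro{\calP_{\TT_l}(\bcalG_l)}\lesssim\sqrt{m+1}\,b_1^{-1}(n/d^*)\fro{\bd}+n\big((\textsf{DoF}_m\log\dmax/n)^{1/2}+\alpha\big)/\sqrt{d^*}$ --- this is where $n\gtrsim\dmax\log\dmax$ is consumed, so that $\TT_l$ is adequately ``seen'' on every slice --- whereas in phase one only the trivial bound $\fro{\calP_{\TT_l}(\bcalG_l)}\le\sqrt{n}$ is used. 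Slice-wise versions of both estimates, using the incoherence of $\bcalT_l$, follow the same reasoning with $d^*$ replaced by $\dkm$ and an extra union bound over the $\sum_k d_k$ slices.

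Then I would assemble the recursion. The HOSVD retraction together with the tangent-space curvature estimate gives, for $\bcalZ_l:=\bcalT_l-\eta_l\calP_{\TT_l}(\bcalG_l)$,
\begin{align*}
\fro{\bcalT_{l+1}-\bcalT^*}&\le\fro{\bcalZ_l-\bcalT^*}+O\big(\eta_l^2\fro{\calP_{\TT_l}(\bcalG_l)}^2/\mins^*\big),\\
\fro{\bcalZ_l-\bcalT^*}^2&\le\fro{\bd}^2-2\eta_l\inp{\bcalG_l}{\bd}+\eta_l^2\fro{\calP_{\TT_l}(\bcalG_l)}^2+O\big(\eta_l\fro{\bcalG_l}\fro{\bd}^2/\mins^*\big),
\end{align*}
where the last term absorbs the gap between $\inp{\calP_{\TT_l}(\bcalG_l)}{\bd}$ and $\inp{\bcalG_l}{\bd}$. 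Feeding in the phase-one regularity bounds with geometrically decaying $\eta_l=(1-c_{m,\muT,r^*}^{(5)})^l\eta_0$ yields $\fro{\bcalT_{l+1}-\bcalT^*}\le(1-c_{m,\muT,r^*}^{(5)})^{l+1}D_0$ whenever $\fro{\bd}\gtrsim\sqrt{d^*}\gamma$, together with its slice-wise shadow $\linft{\bcalT_{l+1}-\bcalT^*}\lesssim(d^*)^{-1/2}\fro{\bcalT_{l+1}-\bcalT^*}$ (the incoherence-preservation step, carried out here without any trimming). In phase two the sharp sharpness and shrinking-gradient bounds with the constant step $\eta=\Theta\big((b_1^2/b_0)(d^*/n)\big)$ make the quadratic in $\eta$ collapse to a contraction factor $1-c(b_1/b_0)^2/(m+1)$, and the iteration shrinks until $\fro{\bd}/\sqrt{d^*}\asymp b_0\max\{(\textsf{DoF}_m\log\dmax/n)^{1/2},\alpha\}$, with the sup-norm tracking $(d^*)^{-1/2}\fro{\bd}$ throughout. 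Counting $l_1=O(\log(\mins^*/\sqrt{d^*}\gamma))$ phase-one steps and $l_2=O(\log(\gamma/b_0)+\min\{\log(n/\textsf{DoF}_m),\log(1/\alpha)\})$ phase-two steps, and union-bounding the $M=l_1+l_2$ independent regularity events (each failing with probability $O(d^{*-10})$), one obtains the $1-c_m M d^{*-10}$ guarantee and the claimed rates $d^{*-1}\fro{\widehat\bcalT-\bcalT^*}^2$ and $\linft{\widehat\bcalT-\bcalT^*}^2$, both $O\big(b_0^2(\textsf{DoF}_m\log\dmax/n+\alpha^2)\big)$.

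I expect the main obstacle to be the slice-wise / sup-norm half of the regularity lemma together with the incoherence-preservation step, i.e.\ proving that Algorithm~\ref{alg:abs:completion} keeps the iterates $\mu$-incoherent \emph{without} any trimming or truncation. Under only a finite $1{+}\eps$ moment the per-slice sums of $|\xi_i|$ and of the corruptions must be truncated and their tails controlled coordinatewise, and the projection $\calP_{\TT_l}$ onto a tangent space only approximately aligned with $\TT_{\bcalT^*}$ must be shown to act benignly on every slice --- the step in which the $n\gtrsim\dmax\log\dmax$ sample-size condition is spent. A secondary but essential point is to funnel the arbitrary $\alpha$-fraction corruption through the $\ell_1$ bound on $[\bd]_{\tilde\Omega}$, so that the final dependence on the corruption level is $\alpha^2$ rather than $\alpha$, matching the lower bound of \refthm{thm:lowerbound}.
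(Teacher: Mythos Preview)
Your proposal is essentially correct and matches the paper's approach: exploit sample splitting so that $\bcalT_l$ is fixed relative to the fresh batch $\calD_{l+1}$, prove a two-phase pointwise regularity lemma (the paper's \reflm{lem:completion}) via Bernstein-type concentration, and then run the same contraction argument as in \refthm{thm:abs:dynamics} with slice-wise bounds yielding the $\linft{\cdot}$ control and incoherence preservation without trimming.

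One technical difference worth flagging: for the phase-two bound on $\fro{\calP_{\TT_l}(\bcalG_l)}$ you argue via ``sign agreement off a set of probability $O(b_1^{-1}|[\bd]_{\omega_i}|)$'' followed by vector/matrix Bernstein on the projected sum. The paper instead expands $\fro{\calP_{\TT_l}(\bcalG_l)}^2$ directly into diagonal and off-diagonal parts, applies de~la~Pe\~na--Gin\'e decoupling to the U-statistic $\sum_{i\neq j}C_{ij}$, and then computes $\EE C_{ij}'$ using the identity $\EE\,\text{sign}(\inp{\bcalX_i}{\bd}-\xi_i)=2(H_\xi(\inp{\bcalX_i}{\bd})-H_\xi(0))$ together with $h_\xi\le b_1^{-1}$. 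Both routes land on the same bound $\fro{\calP_{\TT_l}(\bcalG_l)}^2\lesssim (m{+}1)\,n^2\mu^{*m}r^*\,b_1^{-2}d^{*-2}\fro{\bd}^2$; the decoupling route is a bit cleaner because it works directly with the squared norm and avoids having to control the covariance structure of $\calP_{\TT_l}(\bcalX_i)$ in the matrix-Bernstein step. Similarly, for the corruption term in the sharpness bound the paper keeps the cruder $-\alpha n\linft{\bd}$ (rather than routing through $\lone{[\bd]_{\tilde\Omega}}$) and relies on the entrywise induction $\linft{\bd}\asymp d^{*-1/2}\fro{\bd}$ to recover the $\alpha^2$ rate; your $\ell_1$ route is equivalent once incoherence is in hand.
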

By ignoring the log terms involved in $M$,  the established rates of $\widehat\bcalT$ in Frobenius norm and sup-norm are minimax optimal with respect to the sample size $n$, degree of freedom $\textsf{DoF}_m$, and the corruption rate $\alpha$.  The sample size requirement $n=\Omega_{m,\mu^{\ast},r^{\ast}}(\dmax \log \dmax)$ is sharp in view of existing works \citep{xia2019polynomial, cai2022provable}. Theorem~\ref{thm:missing-values} also allows a wide range of corruption rate under Huber's contamination model.

\section{Numerical Simulations}
We evaluate the convergence of our algorithm (written as RsGrad in short) and the error rate of the estimator, comparing them with two recent methods \cite{cai2022generalized,auddy2022estimating}. We present the simulation results from two perspectives: convergence dynamics and the accuracy of the output.
In fact, Algorithms~\ref{alg:pseuHuber:RsGrad} and \ref{alg:abs:RsGrad} demonstrate considerable tolerance with respect to parameter selections. Specifically, the stepsize decaying rate in the first phase can take values in the range $0.8<q<1$, all of which lead to roughly similar performance. Furthermore, a selection of $\eta\in[0.01,0.1]$ for the second phase stepsize is acceptable and does not significantly influence the accuracy.

\paragraph{Algorithm convergence}  We assess the convergence dynamics of our algorithm in comparison with RGrad \citep{cai2022generalized}, for which algorithmic parameters are exhaustively searched. Dimensions are set as $d_1=d_2=d_3=100$ and Tucker rank as $r_1=r_2=r_3=2$. Figure~\ref{fig:cov} represents the scenario under Student's $t$-distributed noise with degrees of freedom $\nu=2.01$, in the absence of sparse corruptions. 
The left figure \ref{fig:11} illustrates a low signal-to-noise ratio scenario where $\fro{\bcalT^{\ast}}/\EE|\xi|=300$. In this setting, the signal-to-noise ratio fulfills the condition $\mins^\leq \gamma d^{\ast1/2}$; according to Theorem~\ref{thm:hub:dynamics} and Theorem~\ref{thm:abs:dynamics}, it should bypass phase one and directly enter phase two. As expected, Figure~\ref{fig:11} shows that the iterations do enter the second phase after a few steps, aligning with our theoretical analysis. 
Conversely, Figure~\ref{fig:12} demonstrates a high signal-to-noise ratio setting where $\fro{\bcalT^{\ast}}/\EE|\xi|=1500$, clearly exhibiting the two-phase convergence of RsGrad. In both cases (figures \ref{fig:11} and \ref{fig:12}), RsGrad performs better. 
Figure~\ref{fig:cov-corrup} is plotted under conditions of both dense noise and sparse corruptions. For achieving the typical PCA optimal rate $\textsf{DoF}_m^{1/2}$ \citep{zhang2018tensor}, the corruption rate should be bounded by $(\textsf{DoF}_m/d^{\ast})^{1/2}\approx 0.02$ according to Theorem~\ref{thm:hub:dynamics}. Therefore, we fix the corruption rate $\alpha$ to be either $0.01$ or $0.02$. To differentiate from the scheme in \cite{chen2021bridging}, we set all the non-zero entries of the corruptions to large positive values, such as exceeding $100\times\linft{\bcalT^{\ast}}$. The top two figures \ref{fig:61} and \ref{fig:62} depict the scenario under Student's t noise with degrees of freedom $\nu=2.01$. The bottom two figures \ref{fig:63} and \ref{fig:64} illustrate the scenario under Gaussian noise. The results show that under heavy-tailed noise, RsGrad significantly outperforms RGrad. Conversely, under Gaussian noise, RGrad and RsGrad exhibit similar performance.

\begin{figure}
	\centering
	\begin{subfigure}[b]{0.45\textwidth}
		\centering
		\includegraphics[width=\textwidth]{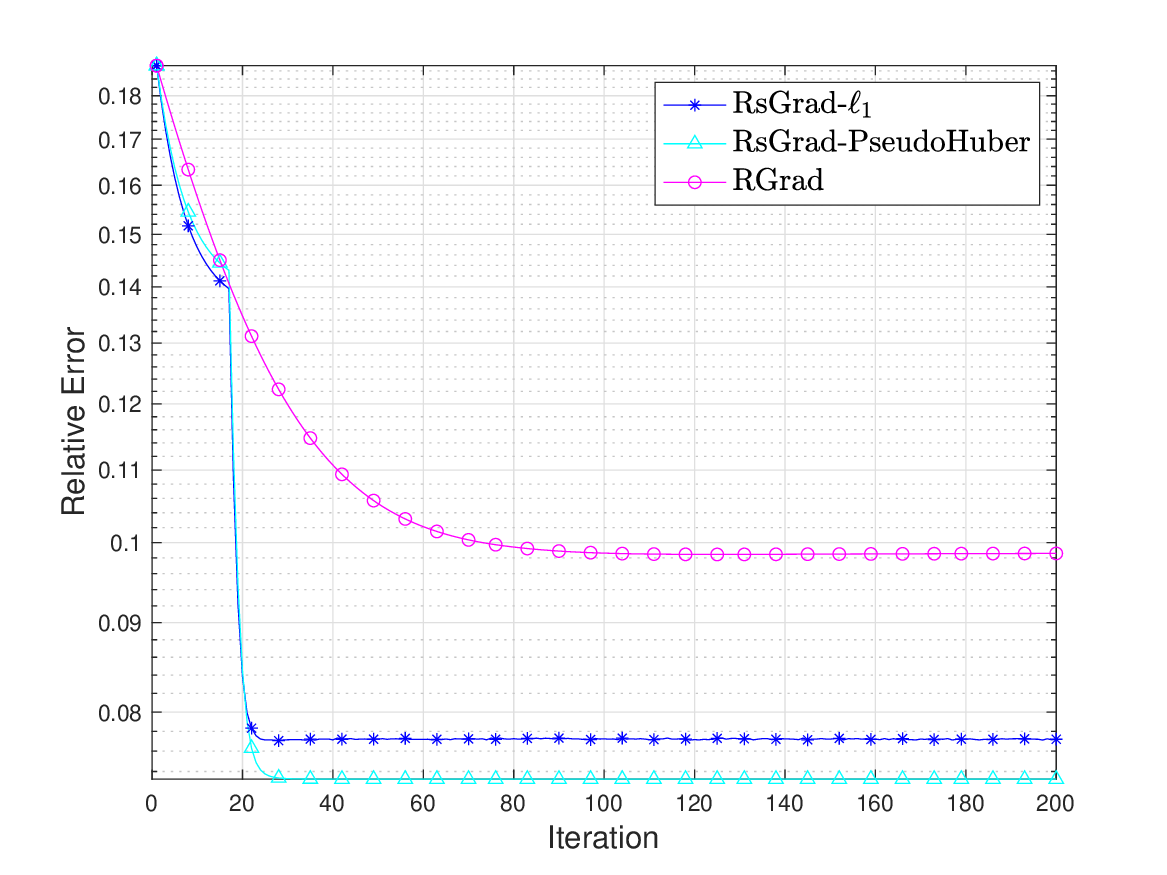}
		\caption{$\frac{\fro{\bcalT^*}}{\EE|\xi|}=300$}
		\label{fig:11}
	\end{subfigure}
	\hfill
	\begin{subfigure}[b]{0.45\textwidth}
		\centering
		\includegraphics[width=\textwidth]{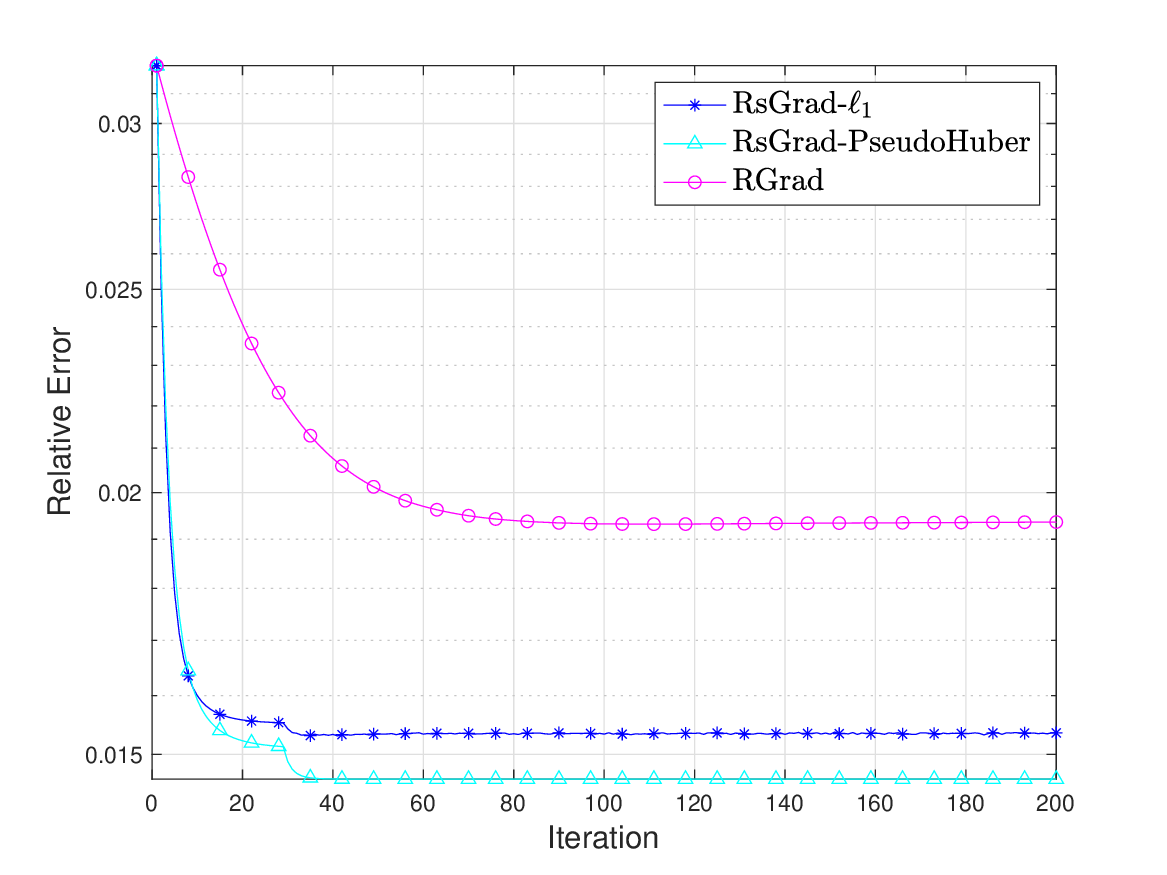}
		\caption{$\frac{\fro{\bcalT^*}}{\EE|\xi|}=1500$}
		\label{fig:12}
	\end{subfigure}
	\caption{Convergence dynamics of RGrad \citep{cai2022generalized}, RsGrad-$\ell_1$ (Algorithm~\ref{alg:abs:RsGrad}) and RsGrad-Pseudo Huber (Algorithm~\ref{alg:pseuHuber:RsGrad}) under Student $t$ noise with d.f. $\nu=2.01$. Dimension $d_1=d_2=d_3=100$, Tucker rank $r_1=r_2=r_3=2$.}
	\label{fig:cov}
\end{figure}

\begin{figure}
	\centering
	\begin{subfigure}[b]{0.45\textwidth}
		\centering
		\includegraphics[width=\textwidth]{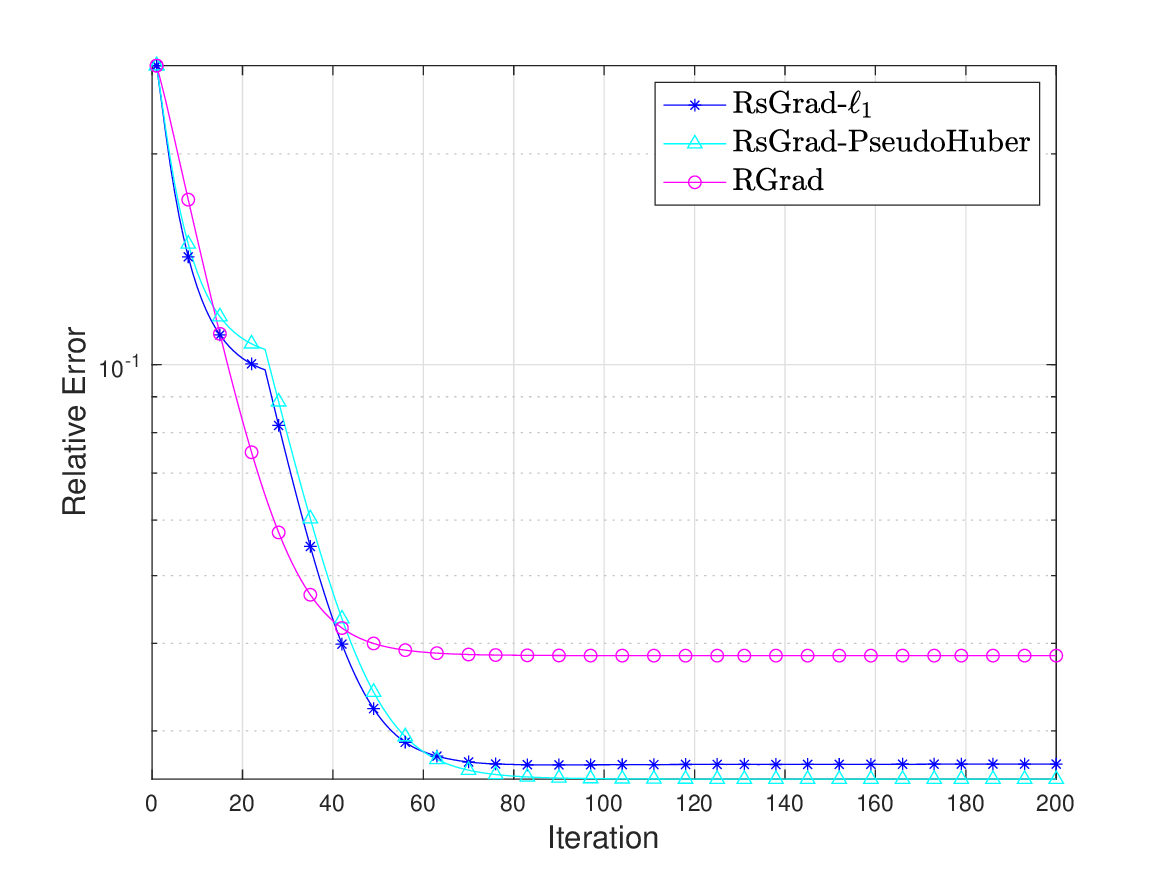}
		\caption{Student's t noise $\nu=2.01$, Corruption rate $\alpha=0.01$}
		\label{fig:61}
	\end{subfigure}
	\hfill
	\begin{subfigure}[b]{0.45\textwidth}
		\centering
		\includegraphics[width=\textwidth]{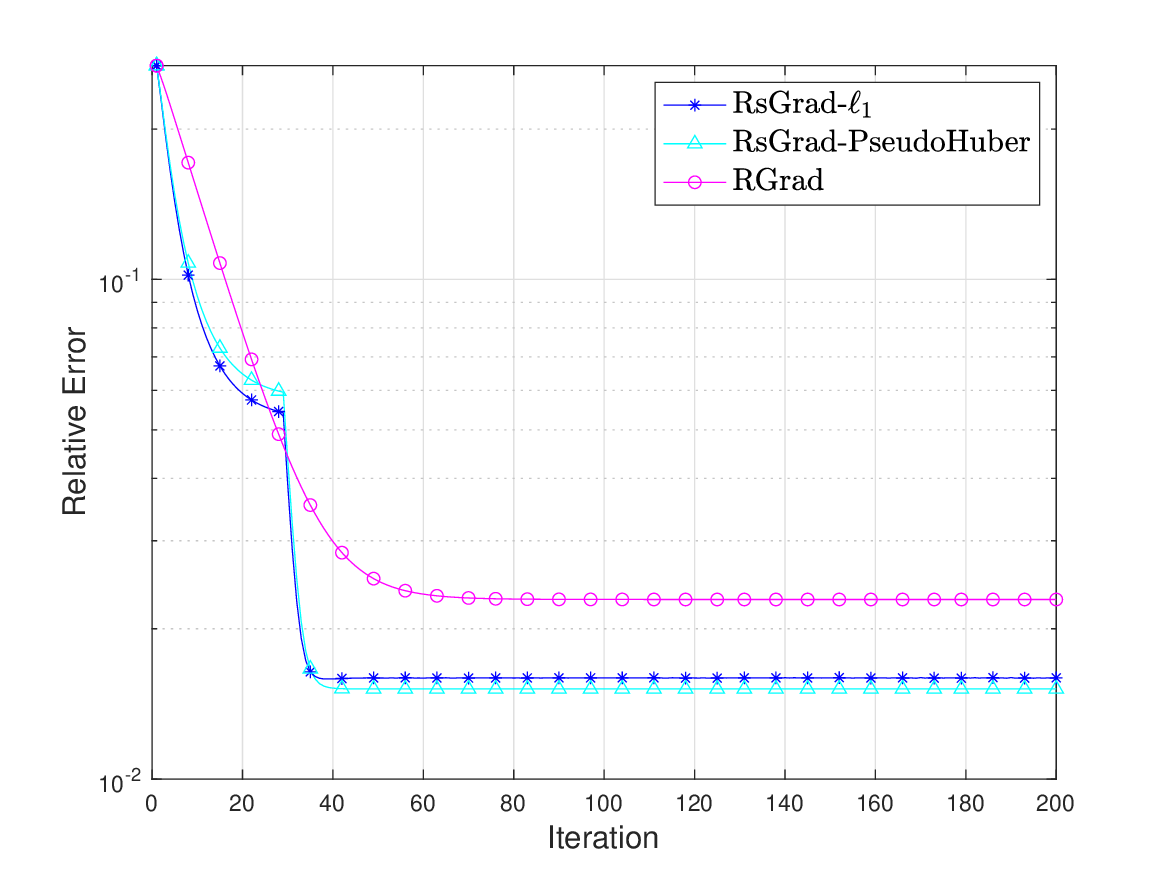}
		\caption{Student's t noise $\nu=2.01$, Corruption Rate $\alpha=0.02$}
		\label{fig:62}
	\end{subfigure}

\begin{subfigure}[b]{0.45\textwidth}
	\centering
	\includegraphics[width=\textwidth]{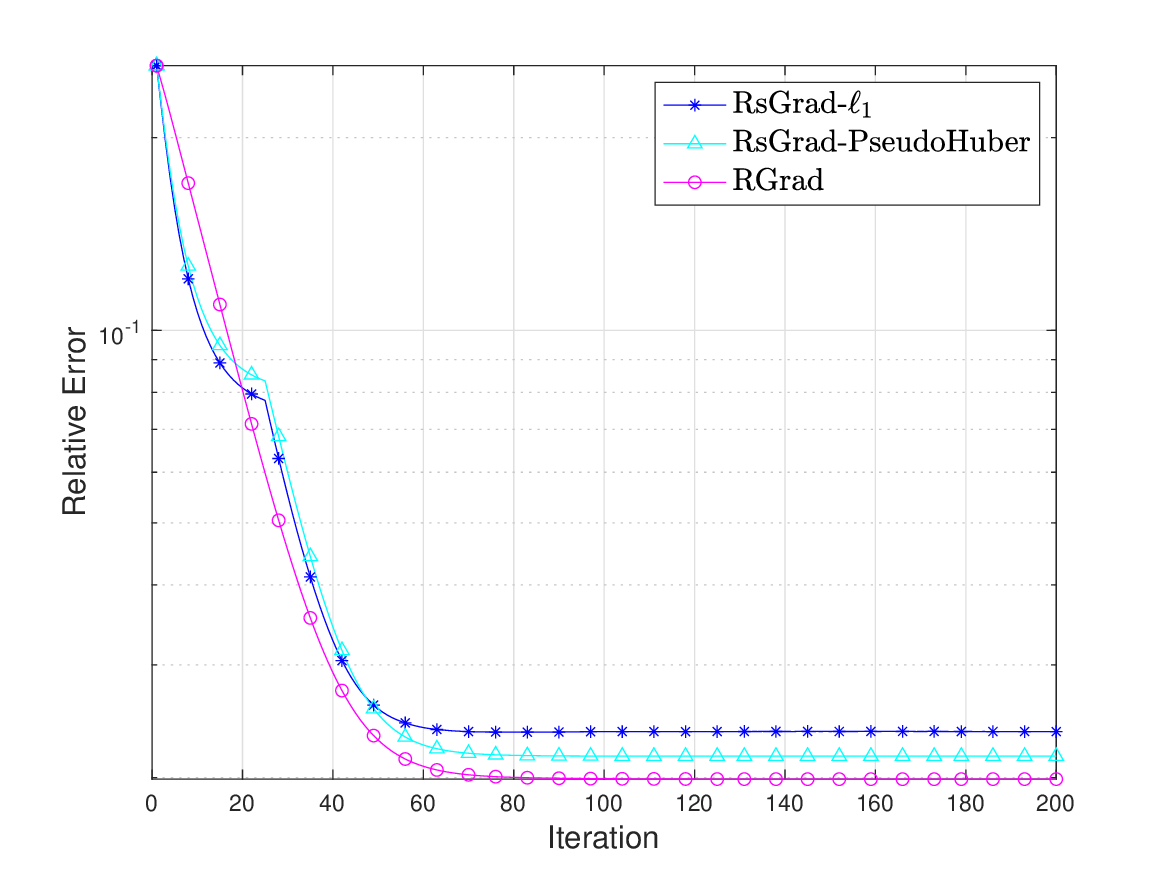}
	\caption{Gaussian noise, Corruption rate $\alpha=0.01$}
	\label{fig:63}
\end{subfigure}
\hfill
\begin{subfigure}[b]{0.45\textwidth}
	\centering
	\includegraphics[width=\textwidth]{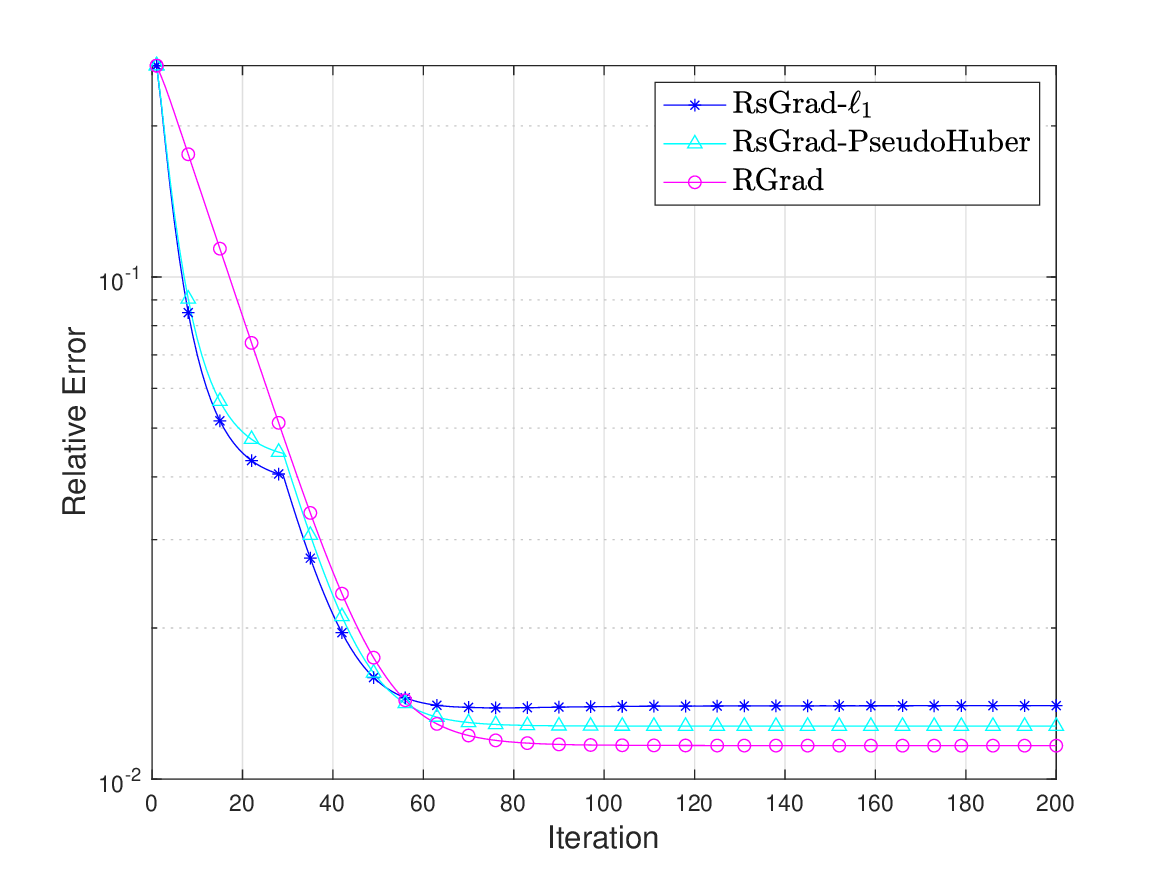}
	\caption{Gaussian noise, Corruption rate $\alpha=0.02$}
	\label{fig:64}
\end{subfigure}
	\caption{Convergence dynamics of RGrad \citep{cai2022generalized}, RsGrad-$\ell_1$ (Algorithm~\ref{alg:abs:RsGrad})  and RsGrad-PseudoHuber (Algorithm~\ref{alg:pseuHuber:RsGrad})  under dense noise and sparse corruptions, with dimension $d_1=d_2=d_3=100$, Tucker rank $r_1=r_2=r_3=2$ and a high signal-to-noise ratio $\fro{\bcalT^{\ast}}/\EE|\xi|=1500$.}
	\label{fig:cov-corrup}
\end{figure}

\paragraph{Accuracy} We assess the accuracy of output estimators by comparing them with the robust HOSVD approach \citep{auddy2022estimating}. The robust HOSVD method employs Catoni's estimator for initialization, followed by a one-step power iteration. This approach achieves statistically optimal accuracy up to a logarithmic factor with a smaller probability $1-\Omega\big((\log d)^{-1}\big)$. It's important to note that the robust HOSVD approach primarily provides eigenvector estimations for rank-one tensors under heavy-tailed noise conditions. Consequently, we have fixed the setting to $d_1=d_2=d_3=100$, $r_1=r_2=r_3=1$, with Student's t noise with a degree of freedom $\nu=2.01$, and we are comparing the accuracy of eigenvector estimation using the $\sin\Theta$ distance. 
Figure~\ref{fig:acc} presents a box-plot based on 50 replications. The left figure pertains to a low signal-to-noise ratio setting, where $\fro{\bcalT^{\ast}}/\EE|\xi|=150$, while the right figure corresponds to a scenario where $\fro{\bcalT^{\ast}}/\EE|\xi|=1000$. The results demonstrate that RsGrad exhibits greater robustness against heavy-tailed noise, along with superior accuracy and reduced deviation, which aligns with established theories.

\begin{figure}
	\centering
	\begin{subfigure}[b]{0.45\textwidth}
		\centering
		\includegraphics[width=\textwidth]{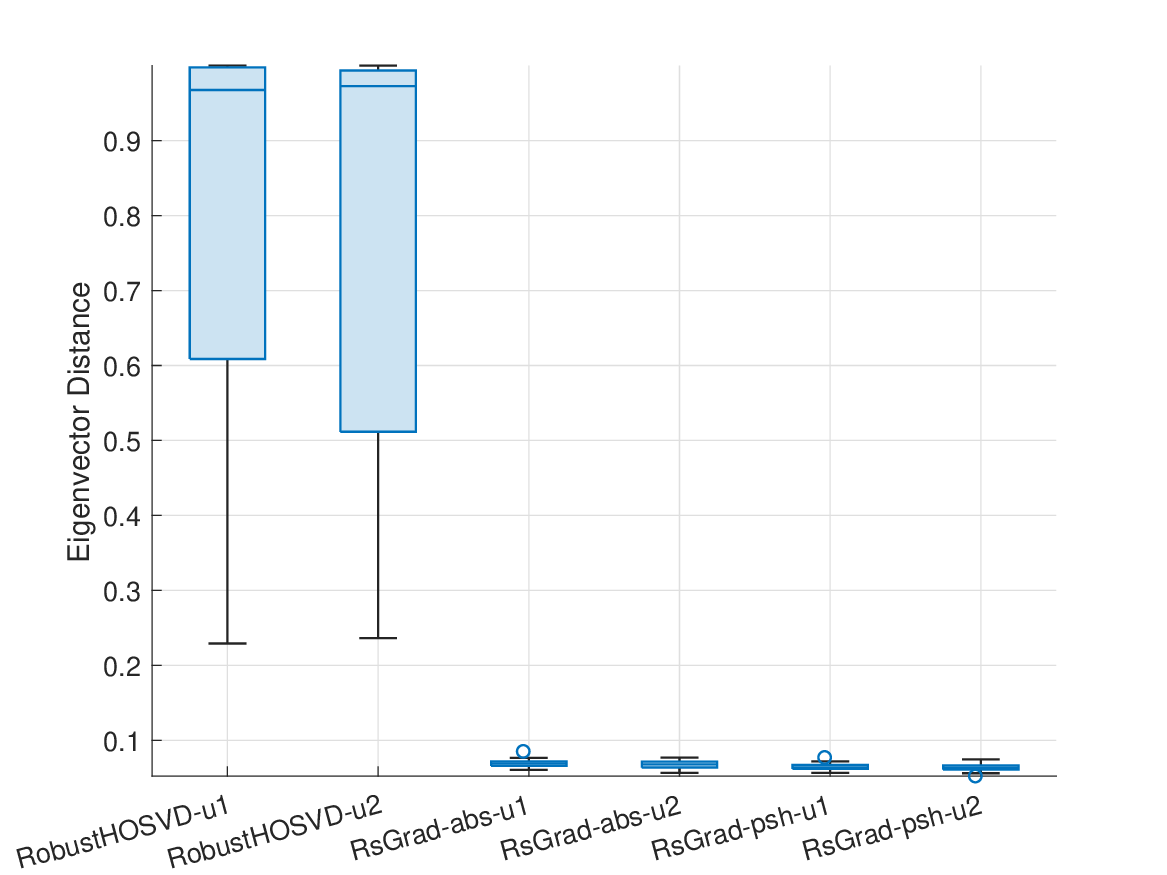}
		\caption{$\frac{\fro{\bcalT^*}}{\EE|\xi|}=150$}
		\label{fig:21}
	\end{subfigure}
	\hfill
	\begin{subfigure}[b]{0.45\textwidth}
		\centering
		\includegraphics[width=\textwidth]{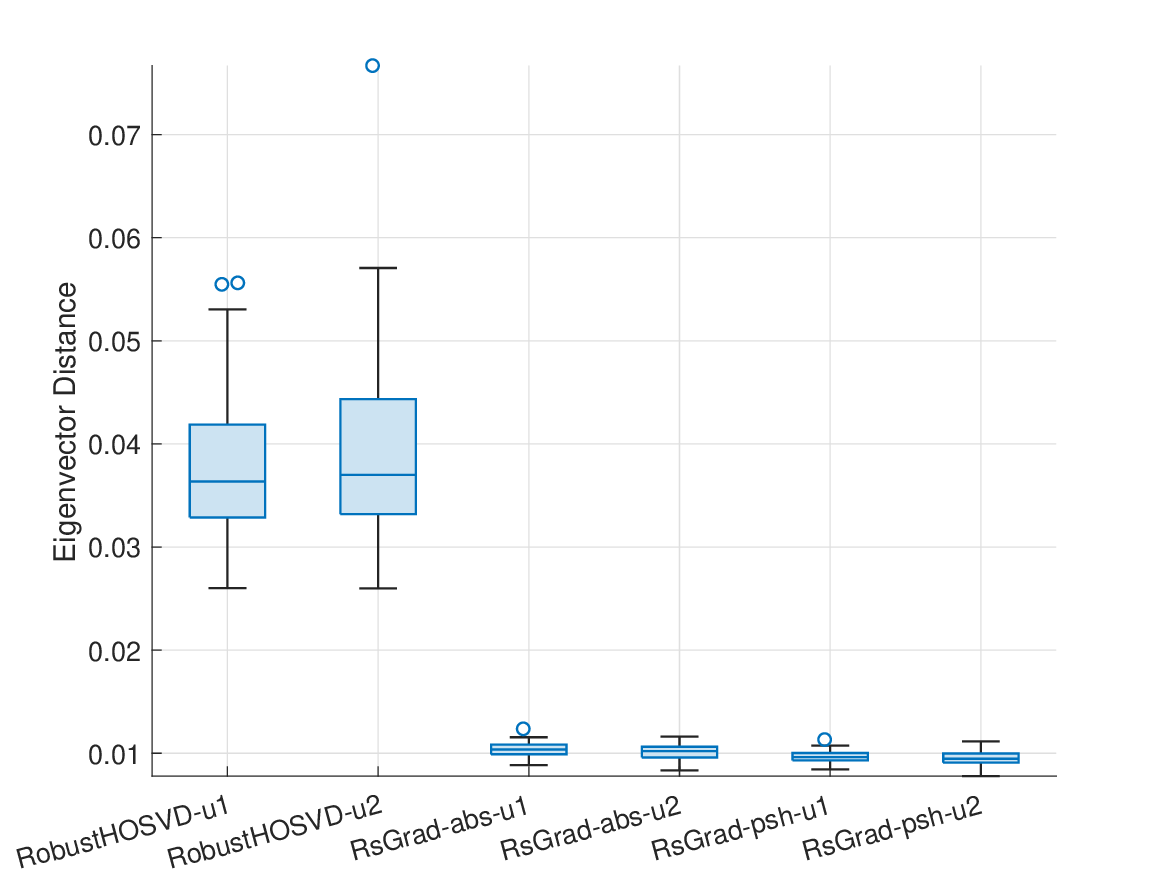}
		\caption{$\frac{\fro{\bcalT^*}}{\EE|\xi|}=1000$}
		\label{fig:22}
	\end{subfigure}
	\caption{Accuracy Comparisons of Robust HOSVD \citep{auddy2022estimating}, RsGrad-$\ell_1$ (Algorithm~\ref{alg:abs:RsGrad}) and RsGrad-Pseudo Huber (Algorithm~\ref{alg:pseuHuber:RsGrad})  under Student's $t$ noise with d.f. $\nu=2.01$, replicated $50$ times, dimension $d_1=d_2=d_3=100$ and Tucker rank $r_1=r_2=r_3=1$.}
	\label{fig:acc}
\end{figure}

\section{Real Data Applications}
\label{sec:real}
\subsection{Food balance dataset}
We collected the \textsf{Food Balance Dataset} from \url{https://www.fao.org/faostat/en/#data/FBS}. This dataset provides an intricate breakdown of a country or region's food supply during a specified period. Our analysis focuses on the food balance data in the year $2018$. We have incorporated all metrics for all items, excluding population, such as `production quantity', `import quantity', and `food supply' for `wheat and products', `apples and products'. It is crucial to acknowledge that some values in the dataset are imputed, while others are estimated, as per the notes on its website. This necessitates the use of robust statistical methods.

We first analyze the food balance data in Asian regions, consisting of $45$ countries or regions, such as Yemen, Viet Nam and so on. Consequently, we procure a three-way tensor $\textsf{Region}\times\textsf{Measurment}\times\textsf{Items}$, sized $45\times20\times 97$. It's worth noting that some of the measurements are the total value for the entire country for the year, while some represent per capita value per day; some indicate fat supply quantity, while others denote protein supply quantity.  To unify different measurements and negate the influence of population size, we scale the $45\times 20$ vectors of size $97$ such that each vector has a unit Euclidean length. The entries of the scaled tensor depict the proportion of a specific food type overall, and the entire tensor can reflect the dietary habits of a country or a region. For instance, different regions may have preferences for various kinds of meat or oil, despite each type providing protein or fat. We employ the RsGrad algorithm with an input Tucker rank of $(r_1,r_2,r_3)=(5,2,5)$, as increasing ranks do not significantly reduce the residuals. In fact, choices within the region $(2,2,2)-(10,5,10)$ yield similar results. We obtained Figure~\ref{fig:71} by plotting the second component eigenvector against the first one along the $\textsf{Region}$ trajectory. Southeast Asian countries, renowned for their Southeast Asian cuisine, occupy the top left of the figure. The center of the figure primarily consists of East Asian and South Asian countries or regions, which share similar dietary habits. The bottom right clusters West Asian countries that are geographically proximate. The figure effectively encapsulates the differences and similarities in dietary habits across Asia.

\begin{figure}
	\centering
	\begin{subfigure}[b]{0.45\textwidth}
		\centering
		\includegraphics[width=\textwidth]{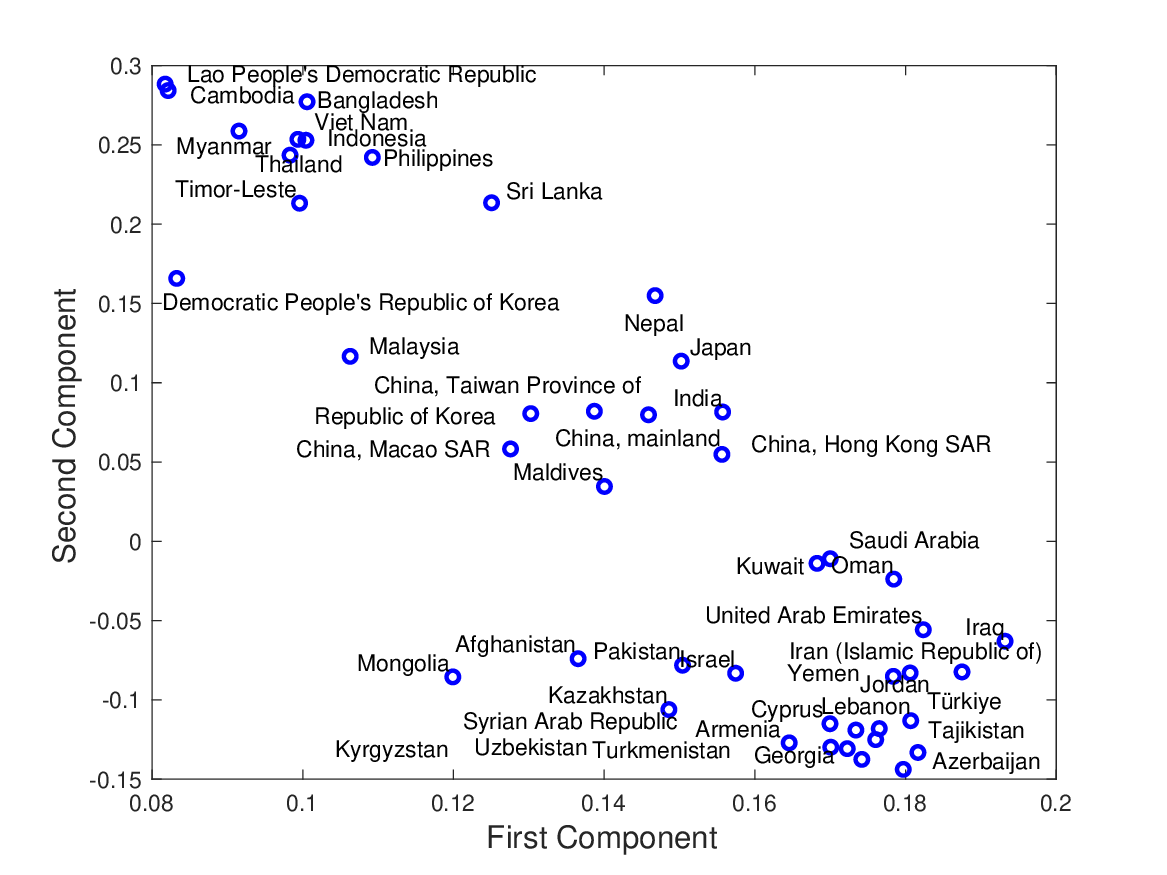}
		\caption{Regions in Asia}
		\label{fig:71}
	\end{subfigure}
	\hfill
\begin{subfigure}[b]{0.45\textwidth}
	\centering
	\includegraphics[width=\textwidth]{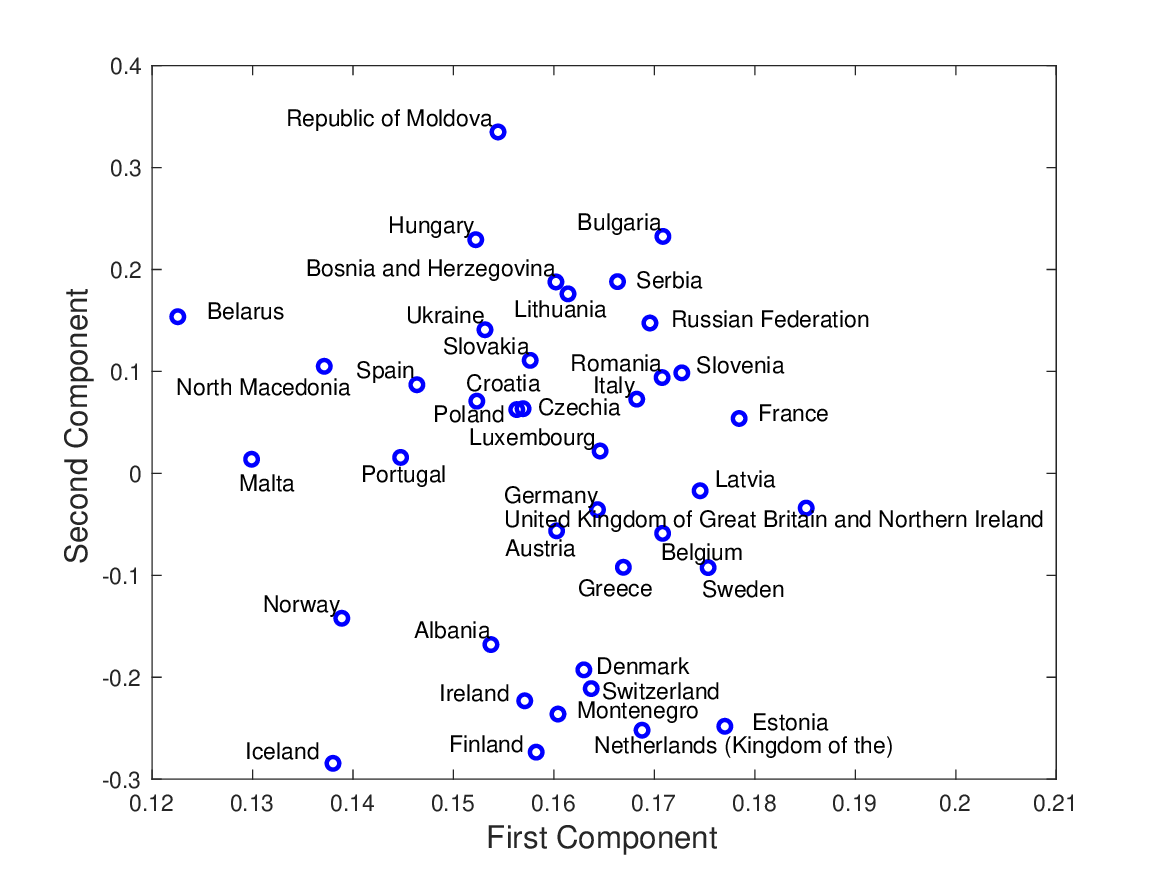}
	\caption{Regions in Europe}
	\label{fig:73}
\end{subfigure}
	\caption{Food balance in Asia and Europe. Node embedding by the leading two eigenvectors are presented. In the left figure, Southeast Asian, East Asian and South Asian, West Asian countries or regions are clustered, respectively, consistent with Asian culture. The right figure is obtained from European data and is also able to demonstrate the country habitat similarities.}
\end{figure}

Studies by \cite{cai2022generalized,dong2022fast} have indicated that varying robustness parameters can yield significantly different results. In our case, such confusion is not an issue. Although soft thresholding \citep{dong2022fast} or quantile thresholding \citep{cai2022generalized} can be employed to identify outliers, we provide a heatmap of absolute residuals measured with `food supply' in Figure~\ref{fig:72}. This method demonstrates that, barring a few outlying entries, the remaining values are sufficiently small. It reveals notable deviations in the supply of soybean oil in Taiwan, as well as maize supply in the Democratic People's Republic of Korea and Timor-Leste. 
Figure~\ref{fig:75} presents a heatmap of the scaled dataset within the `food supply' slice. However, it cannot identify the outlying entries, and can only illustrate which types of food are in high demand. Particularly, some staple food columns such as rice and wheat stand out.

In parallel, Figures~\ref{fig:73}, \ref{fig:74}, and \ref{fig:76} are derived from the European Food Balance Dataset. They also illustrate dietary similarities in Europe, where geographically close countries tend to cluster, such as Iceland, Finland, Norway, and others. Similar to the Asian dataset, the absolute residuals here can pinpoint outlying entries like maize supply in Albania and olive oil in Greece and Spain. However, the scaled original data can't provide this information, only indicating that wheat, milk, and sugar are in substantial demand across Europe..

\begin{figure}
	\begin{subfigure}[b]{0.45\textwidth}
		\centering
		\includegraphics[width=\textwidth]{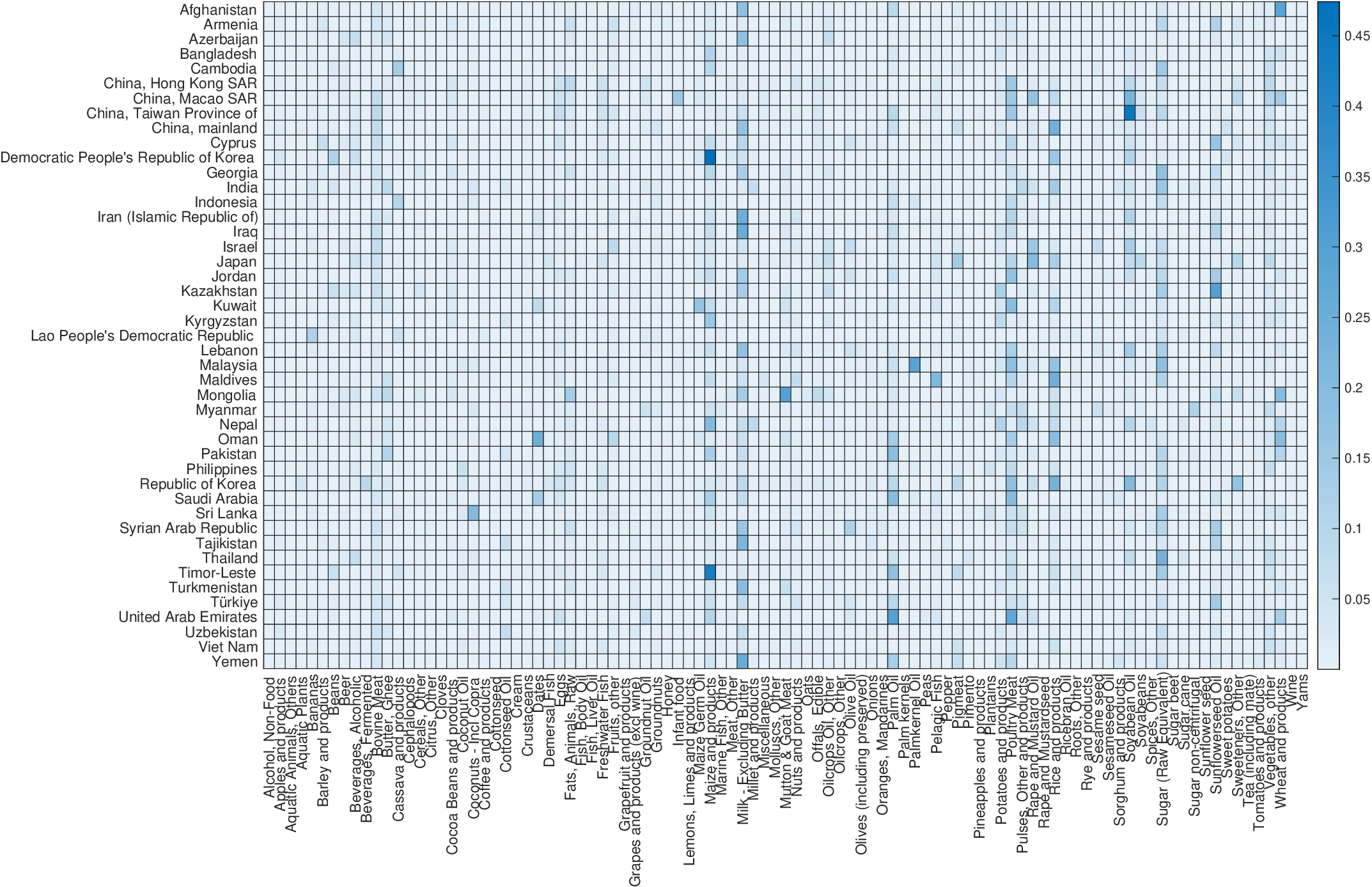}
		\caption{Asian Regions: Absolute Residuals}
		\label{fig:72}
	\end{subfigure}
\hfill
\begin{subfigure}[b]{0.45\textwidth}
	\centering
	\includegraphics[width=\textwidth]{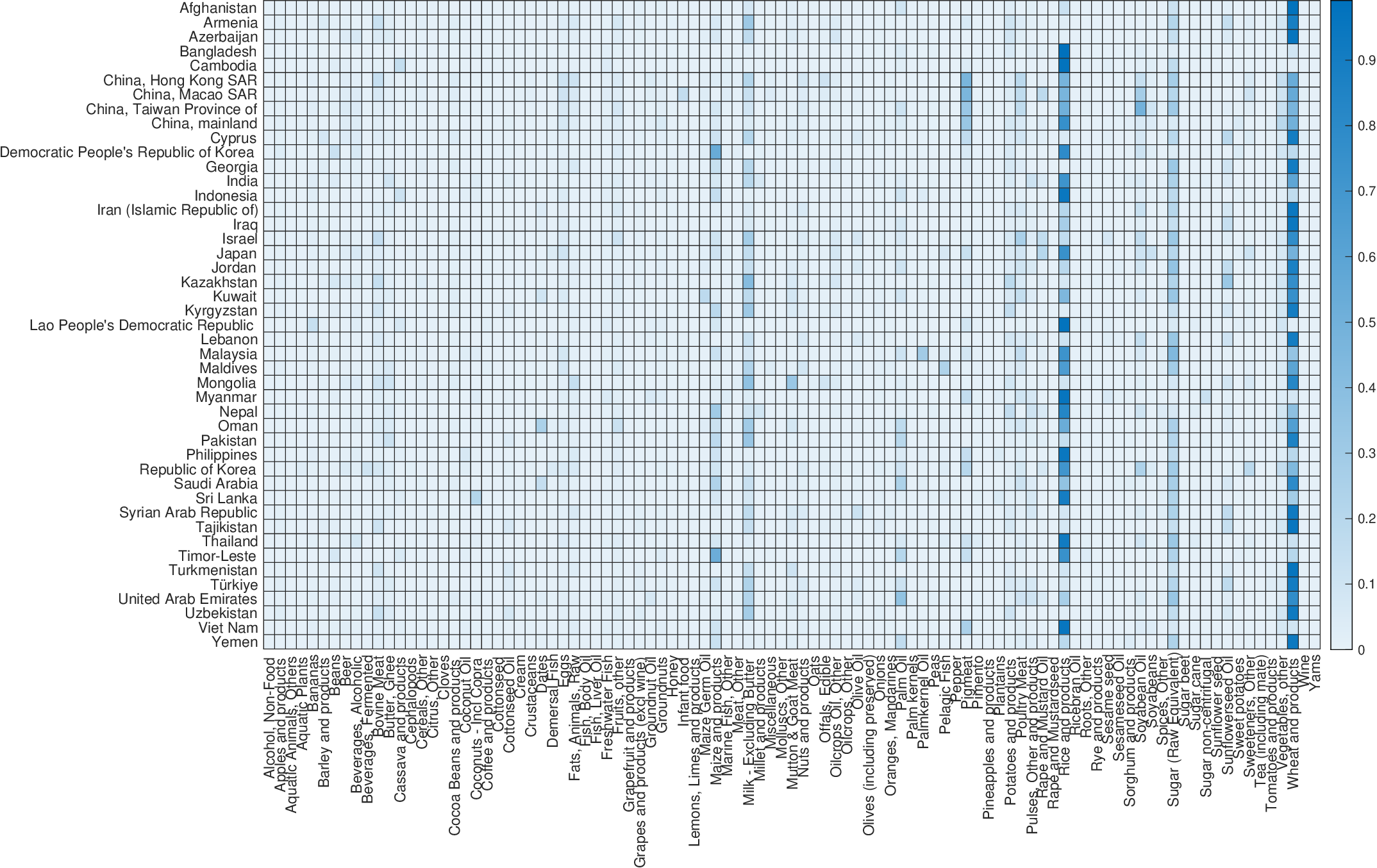}
	\caption{Asian Regions: Scaled Original Data}
	\label{fig:75}
\end{subfigure}

	\begin{subfigure}[b]{0.45\textwidth}
		\centering
		\includegraphics[width=\textwidth]{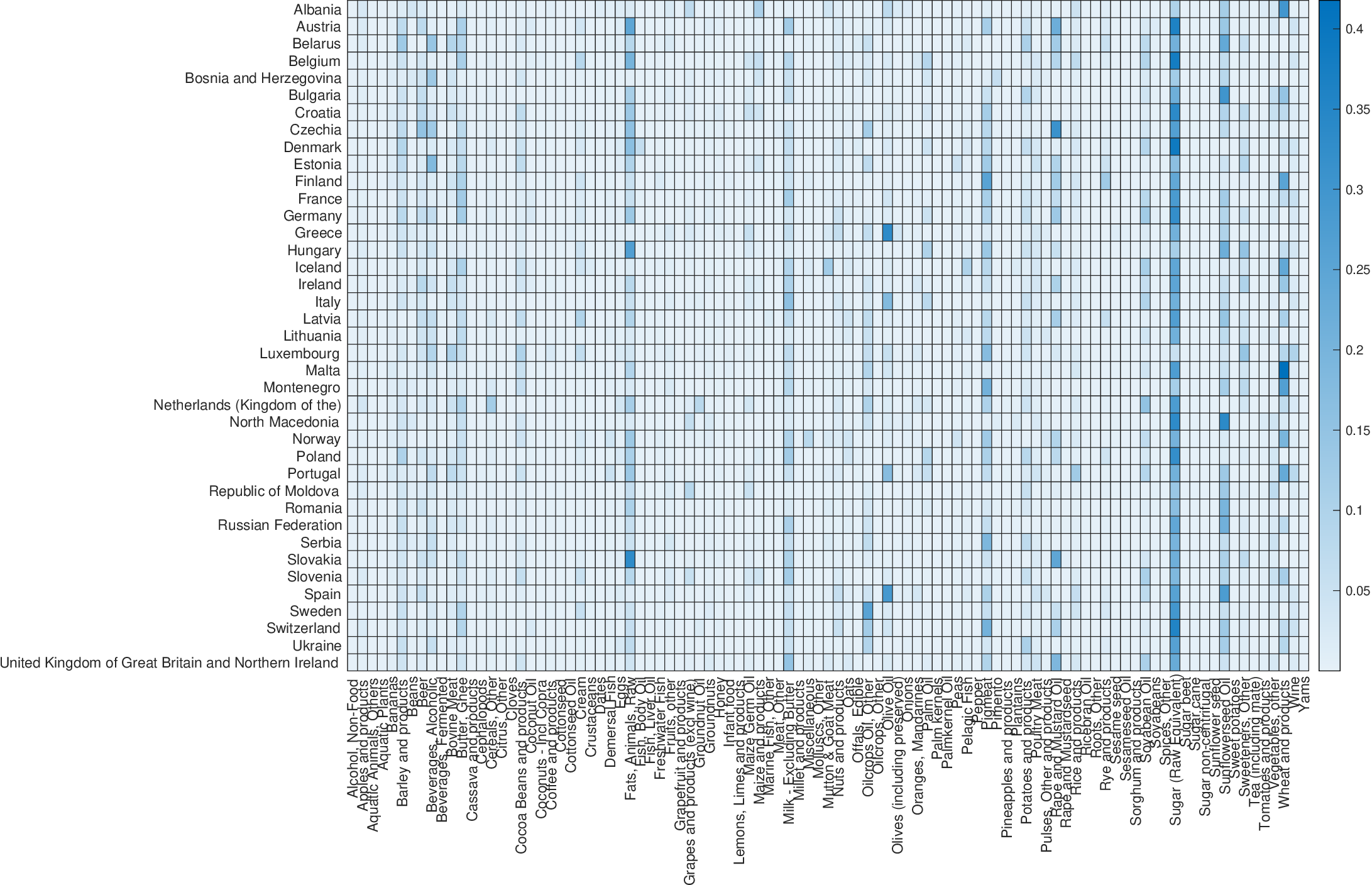}
		\caption{European Regions: Absolute Residuals}
		\label{fig:74}
	\end{subfigure}
\hfill
	\begin{subfigure}[b]{0.45\textwidth}
	\centering
	\includegraphics[width=\textwidth]{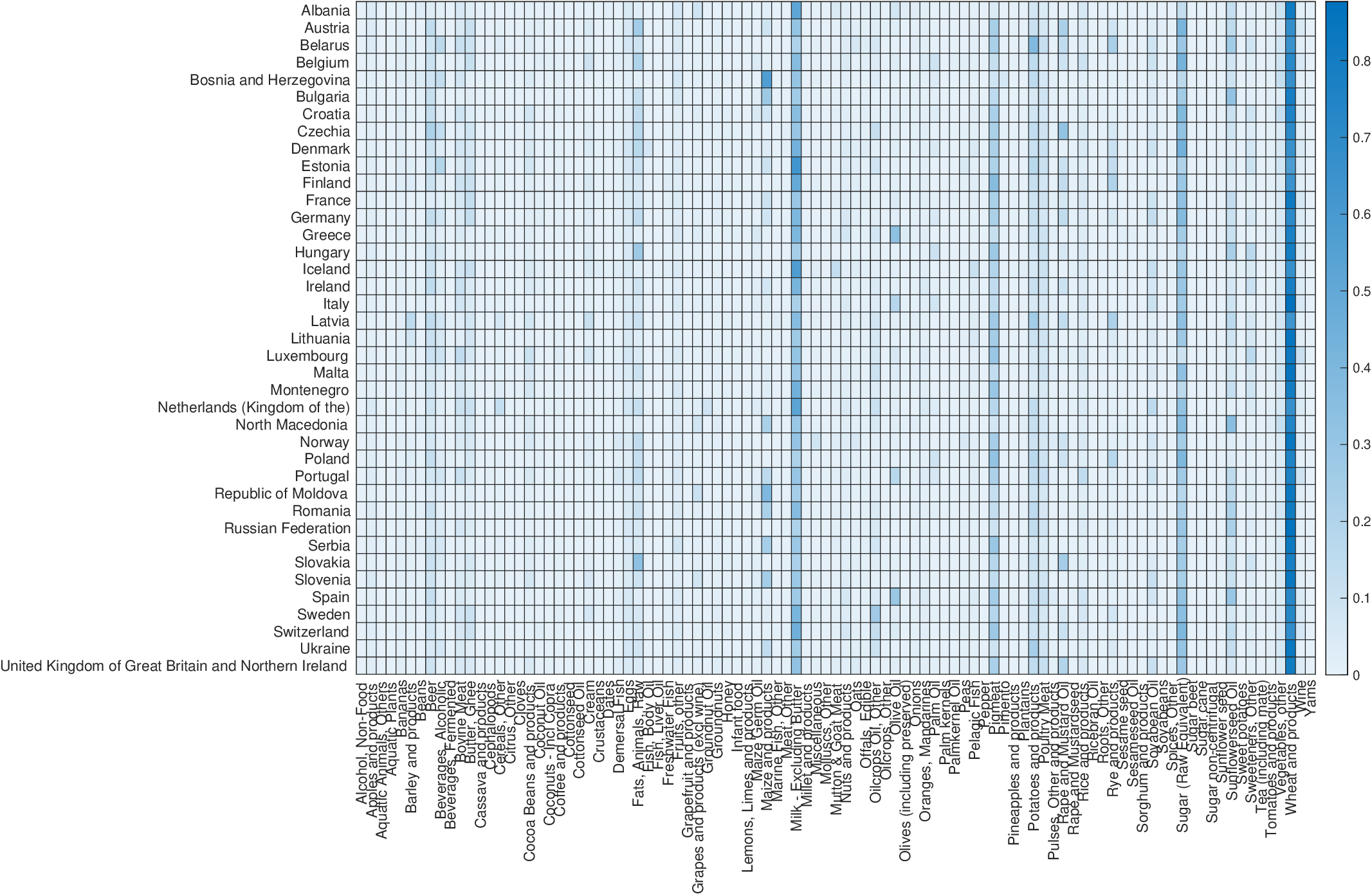}
	\caption{European Regions: Scaled Original Data}
	\label{fig:76}
\end{subfigure}
\caption{Slice of food supply measurement. The aforementioned figures illustrate that the scaled original data can indicate which types of food are in high demand. On the other hand, the outlying entries visible in the absolute residuals plot represent data that cannot be approximated by a low-rank structure, essentially indicating deviations from the pattern. This demonstrates the ability of our methods to uncover structures that may not be immediately discernible from the original data. Moreover, it underscores the robustness of the RsGrad method in handling outliers.}
\label{fig:7}
\end{figure}

\subsection{Trade flow dataset}
We colloected trade flow data from \url{https://comtradeplus.un.org/TradeFlow}, containing the trading quantity among countries. The goods are categorized according to HS code which could be found in \url{https://www.foreign-trade.com/reference/hscode.htm}. We focus on the import data among $47$ countries or regions. Specifically, $12$ of the countries are from Asia, $17$ from Europe, and $6$ from American.

The import amount is measured using the `CIF value', and we examine the trade of all goods categories (encoded as HS codes 01-97) during the year 2018. This results in a $47\times47\times97$ tensor, corresponding to $\textsf{Import Places}\times\textsf{Export Places}\times\textsf{Goods Category}$. After discarding the zero slices, we are left with a $45\times47\times96$ tensor. Given that population size significantly influences the quantity of imported goods, we scale the $45$ slices of the $47\times96$ matrices, ensuring each slice has a unit Frobenius norm. Consequently, each entry now represents the import proportion of certain goods from a specific country over the total import quantity. This scaled tensor can reflect a country's goods requirements or economic structure, and demonstrate whether two countries maintain a close trade relationship.
We input this tensor into the RsGrad algorithm with a Tucker rank of $(r_1,r_2,r_3)=(3,3,8)$, aiming to uncover the latent low-rank structure. Notably, the visualization is insensitive to rank selections: we have experimented with ranks in the region $(2,2,2)-(8,8,8)$, all of which produce similar outputs. Figure~\ref{fig:81} and \ref{fig:82} display the leading three eigenvectors in the $\textsf{Import Places}$ direction. Countries from the Americas, Asia, and Europe are denoted with blue circles, red triangles, and cyan plus signs respectively. In both figures, European countries cluster together, while Asian countries merge with American countries. This outcome aligns with the fact that a significant amount of trade occurs within Europe \citep{cai2022generalized}.
 
\begin{figure}
	\centering
	\begin{subfigure}[b]{0.45\textwidth}
		\centering
		\includegraphics[width=\textwidth]{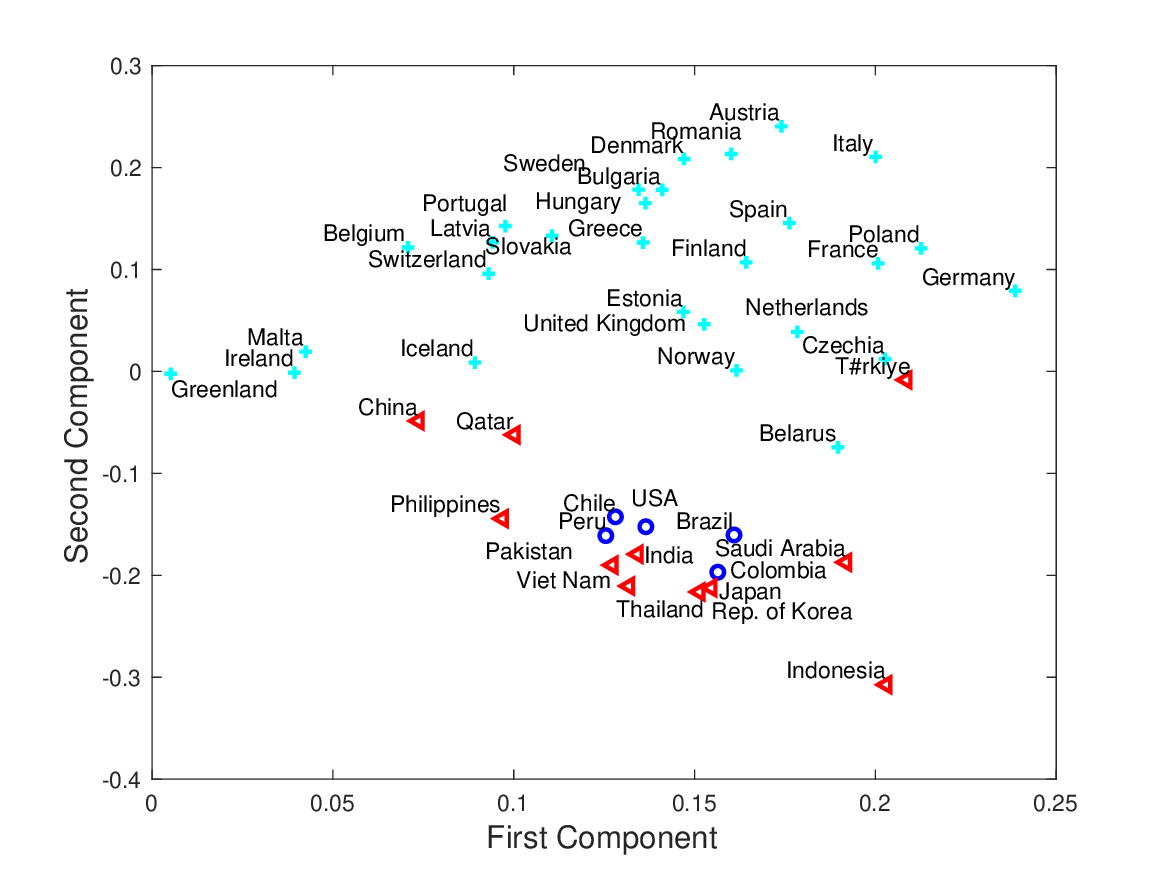}
		\caption{First Component against Second Component}
		\label{fig:81}
	\end{subfigure}
	\hfill
	\begin{subfigure}[b]{0.45\textwidth}
		\centering
		\includegraphics[width=\textwidth]{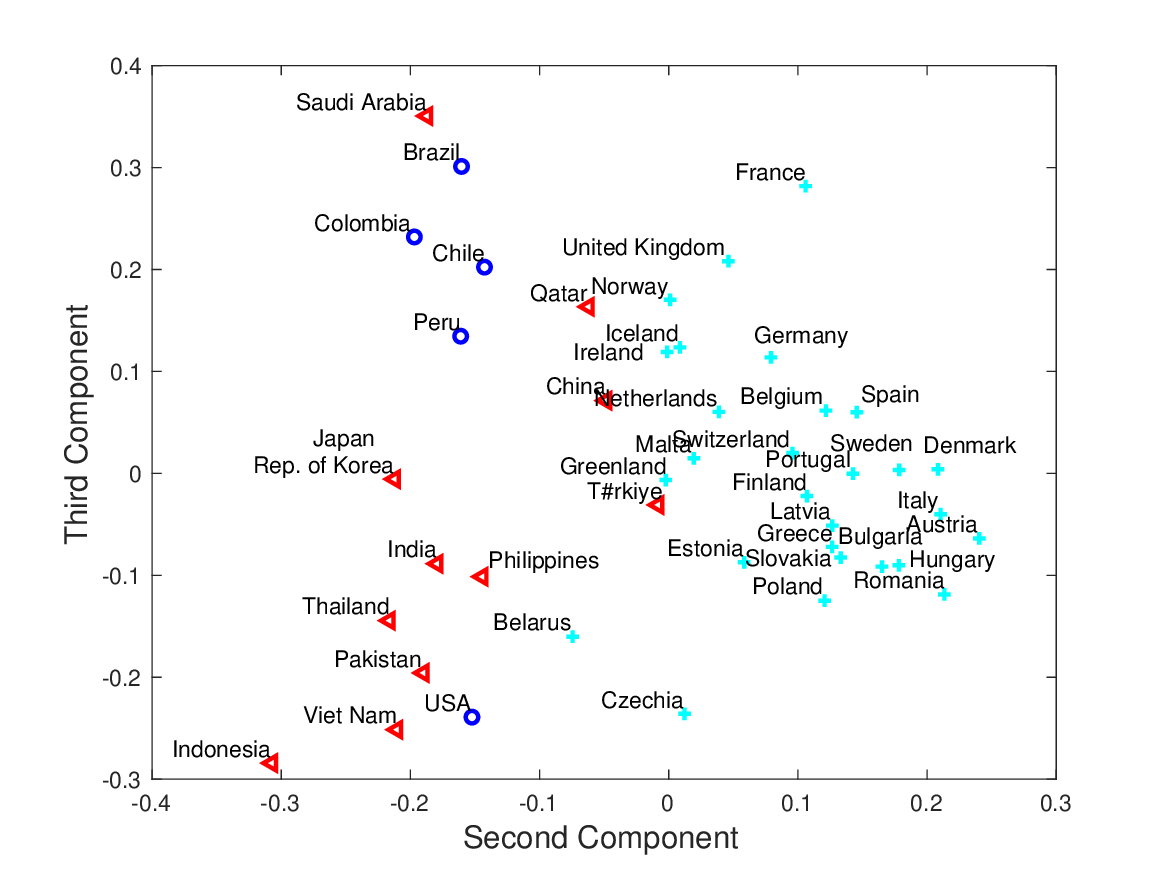}
		\caption{Second Component agains Third Component}
		\label{fig:82}
	\end{subfigure}
	\caption{Trade flow visualization of the $45$ countries. Blue circles represent American countries; Asian countries are plotted with red triangulars; cyan plus signs are used to mark European countries. In both figures, European countries cluster.}
\end{figure}

We also illustrate four slices of absolute residuals, corresponding to `clocks and watches and parts thereof', `glass and glassware', `mineral fuels, mineral oils and products of their distillation; bituminous substances; mineral waxes', and `printed books, newspapers, pictures and other products of the printing industry; manuscripts, typescripts and plans' (encoded as HS codes $91$, $70$, $27$ and $49$ respectively). In Figure~\ref{fig:83}, we observe that the import of glass and glassware from Portugal constitutes a significant portion of Spain's total imports. This is understandable given that Marinha Grande, a city in Portugal known as `The Crystal City', is renowned for its glass and glassware manufacturing. Figure~\ref{fig:84} shows that the import proportions of clocks and watches from Switzerland are notably high in China and France, reflecting Switzerland's prestige in watch manufacturing. Figure~\ref{fig:85} depicts the absolute residual plot in the mineral products slice, corroborating the fact that Norway is a major importer of mineral fuels. Finally, Figure~\ref{fig:86} reveals that the import of printed books and newspapers is significant in Germany, particularly from Austria and Switzerland.

\begin{figure}
	\centering
	\begin{subfigure}[b]{0.45\textwidth}
		\centering
		\includegraphics[width=\textwidth]{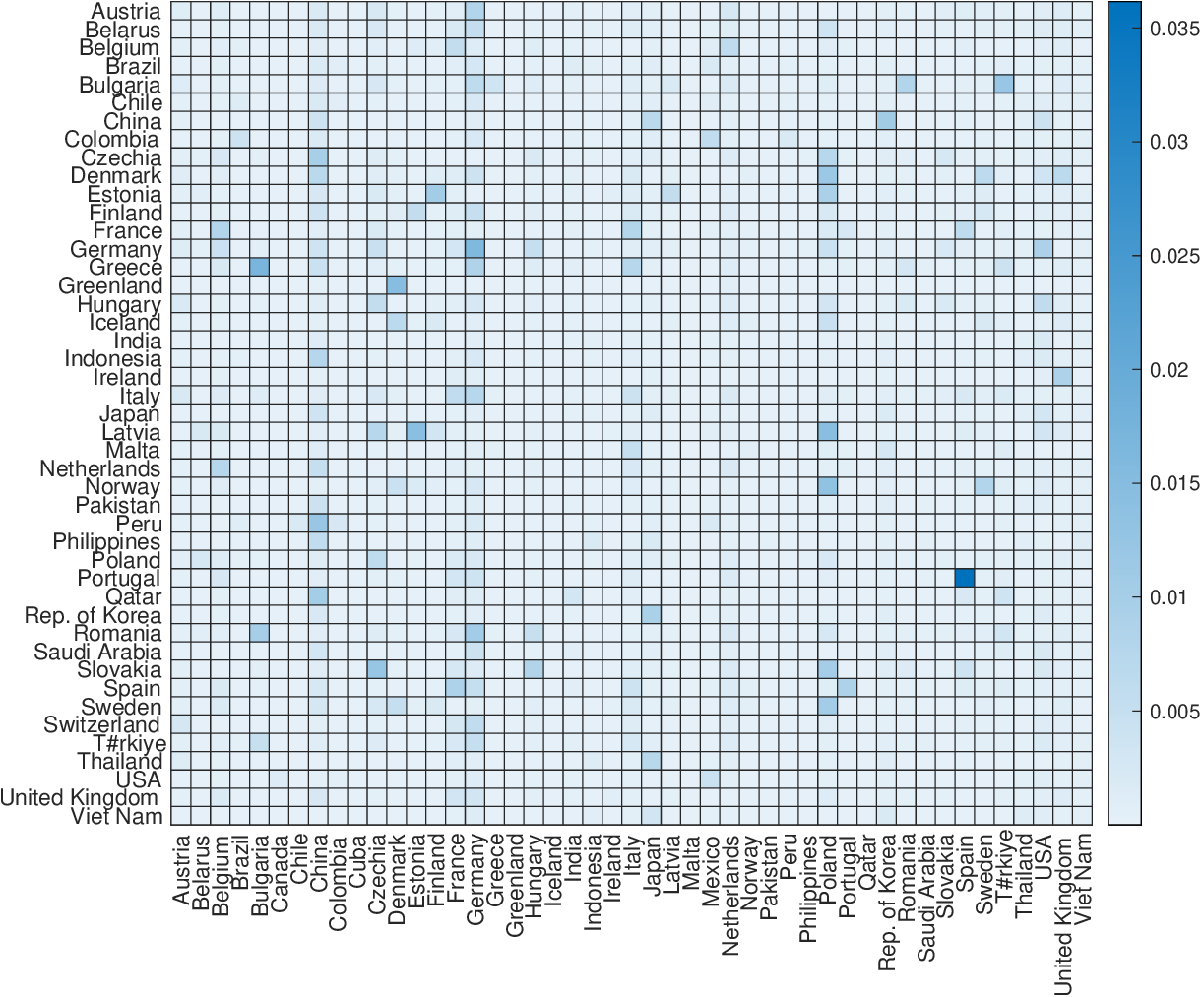}
		\caption{Glass and galssware}
		\label{fig:83}
	\end{subfigure}
	\hfill
	\begin{subfigure}[b]{0.45\textwidth}
		\centering
		\includegraphics[width=\textwidth]{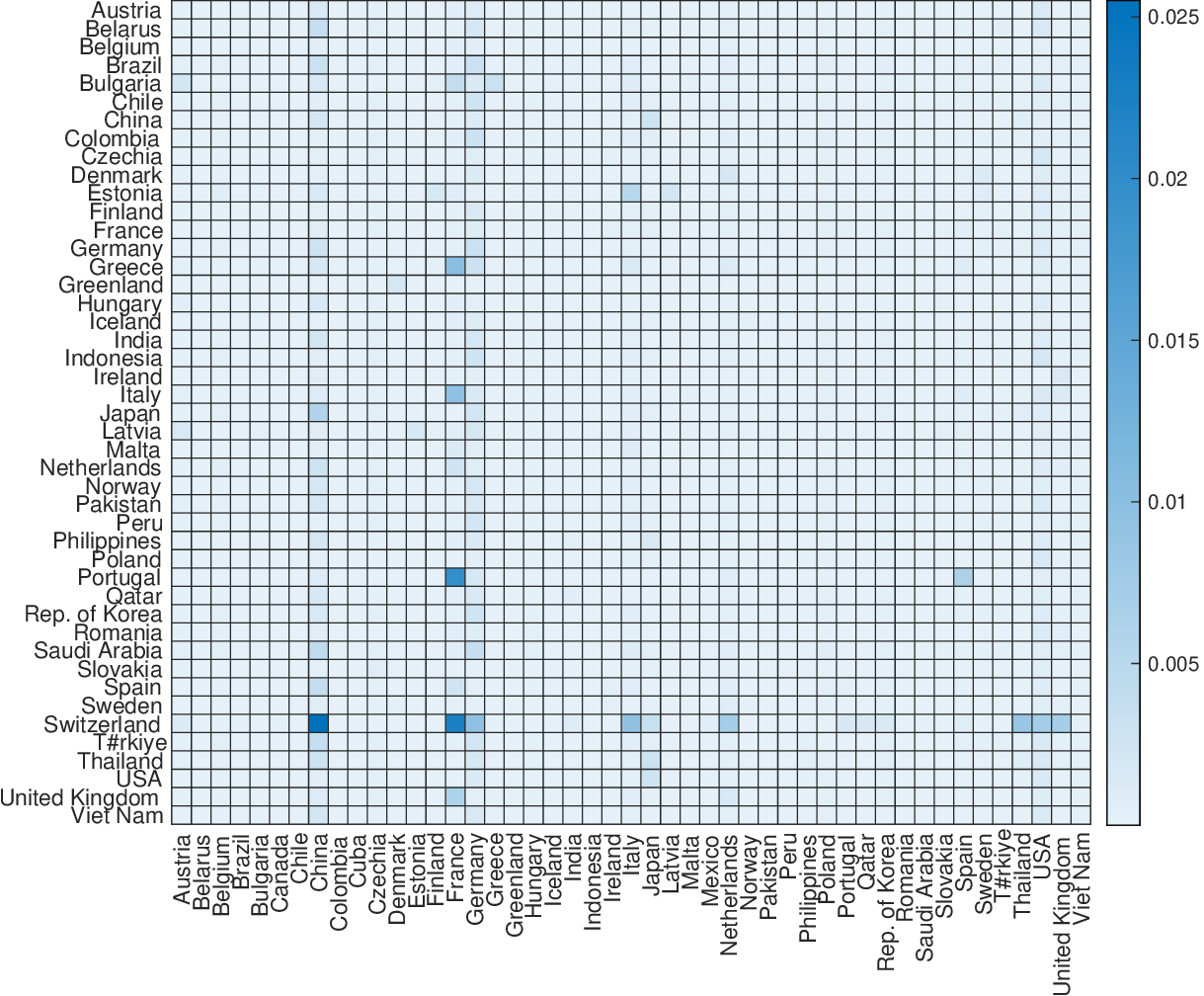}
		\caption{Clocks and watches and parts thereof}
		\label{fig:84}
	\end{subfigure}

\begin{subfigure}[b]{0.45\textwidth}
	\centering
	\includegraphics[width=\textwidth]{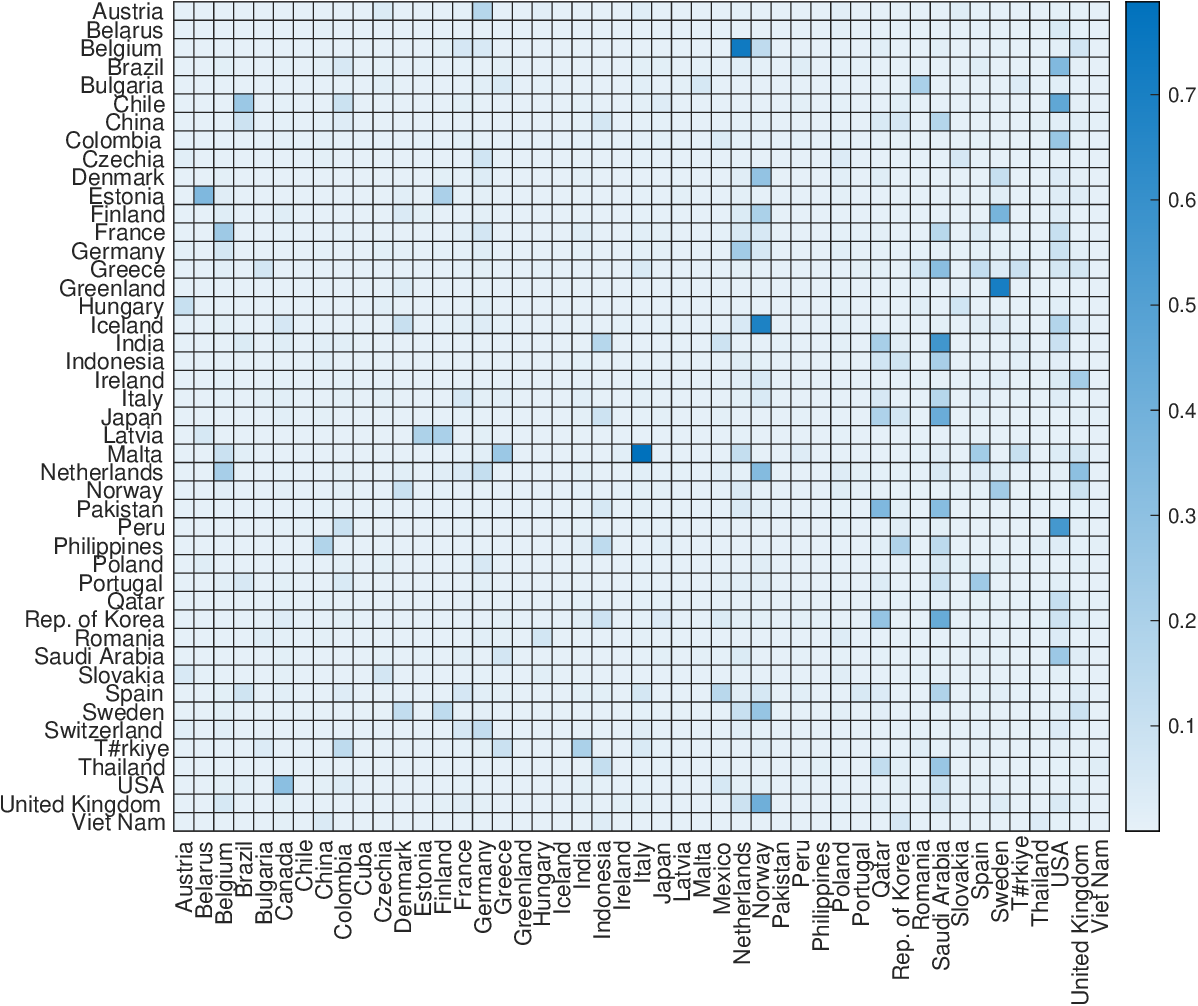}
	\caption{Mineral fuels, mineral oils and products of their distillation; bituminous substances; mineral waxes}
	\label{fig:85}
\end{subfigure}
\hfill
\begin{subfigure}[b]{0.45\textwidth}
	\centering
	\includegraphics[width=\textwidth]{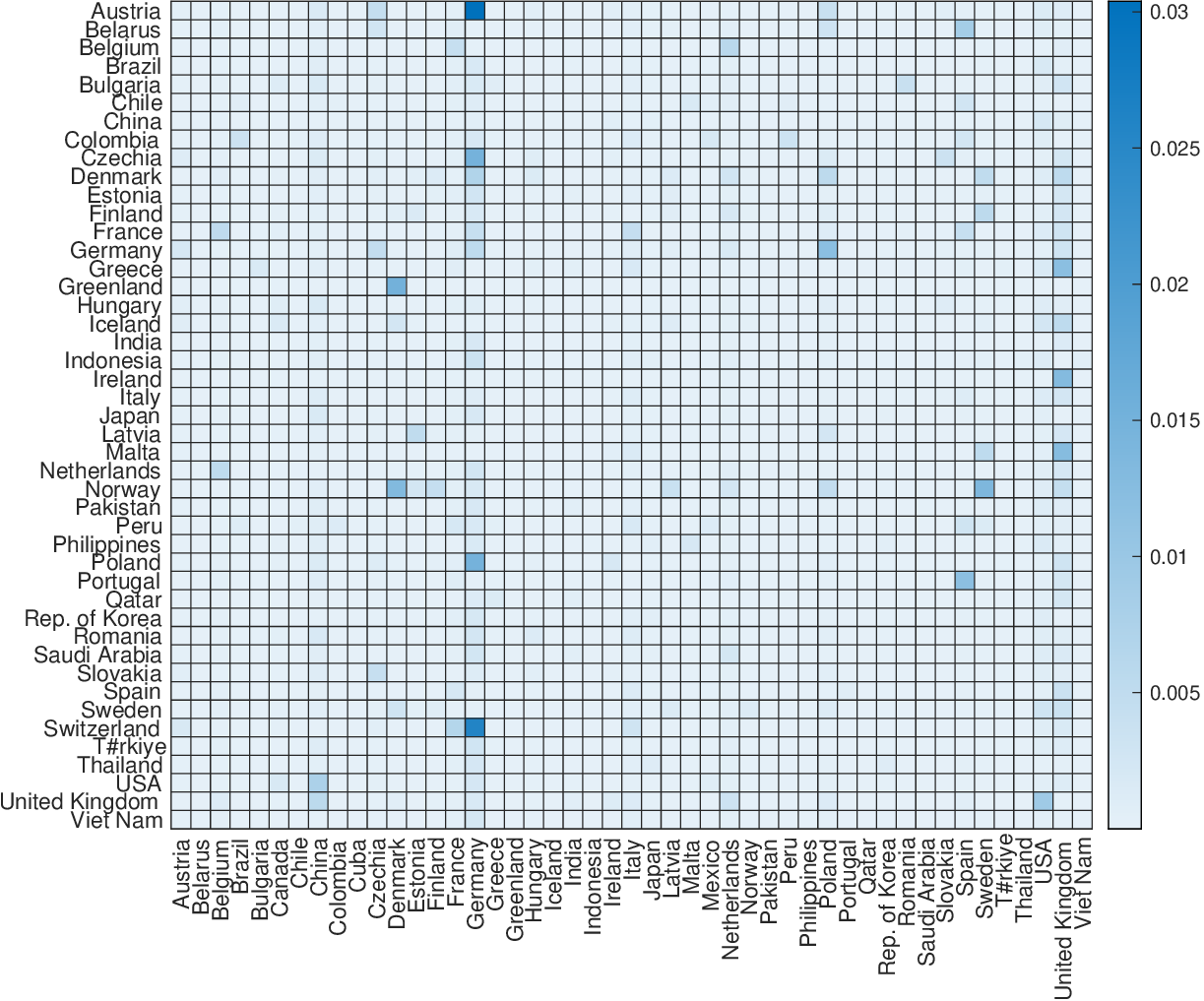}
	\caption{Printed books, newspapers, pictures and other products of the printing industry; manuscripts, typescripts and plans}
	\label{fig:86}
\end{subfigure}
	\caption{Absolute Residuals of Specific Slices.}
\end{figure}

\bibliographystyle{plainnat}
\bibliography{reference} 

\begin{thebibliography}{82}
\providecommand{\natexlab}[1]{#1}
\providecommand{\url}[1]{\texttt{#1}}
\expandafter\ifx\csname urlstyle\endcsname\relax
  \providecommand{\doi}[1]{doi: #1}\else
  \providecommand{\doi}{doi: \begingroup \urlstyle{rm}\Url}\fi

\bibitem[Agarwal et~al.(2012)Agarwal, Negahban, and
  Wainwright]{agarwal2012noisy}
Alekh Agarwal, Sahand Negahban, and Martin~J Wainwright.
\newblock Noisy matrix decomposition via convex relaxation: Optimal rates in
  high dimensions.
\newblock \emph{The Annals of Statistics}, pages 1171--1197, 2012.

\bibitem[Alquier et~al.(2019)Alquier, Cottet, and
  Lecu{\'e}]{alquier2019estimation}
Pierre Alquier, Vincent Cottet, and Guillaume Lecu{\'e}.
\newblock Estimation bounds and sharp oracle inequalities of regularized
  procedures with lipschitz loss functions.
\newblock \emph{The Annals of Statistics}, 47\penalty0 (4):\penalty0
  2117--2144, 2019.

\bibitem[Auddy and Yuan(2022)]{auddy2022estimating}
Arnab Auddy and Ming Yuan.
\newblock On estimating rank-one spiked tensors in the presence of heavy tailed
  errors.
\newblock \emph{IEEE Transactions on Information Theory}, 2022.

\bibitem[Boucheron et~al.(2013)Boucheron, Lugosi, and
  Massart]{boucheron2013concentration}
St{\'e}phane Boucheron, G{\'a}bor Lugosi, and Pascal Massart.
\newblock \emph{Concentration inequalities: A nonasymptotic theory of
  independence}.
\newblock Oxford university press, 2013.

\bibitem[Cai et~al.(2019)Cai, Li, Poor, and Chen]{cai2019nonconvex}
Changxiao Cai, Gen Li, H~Vincent Poor, and Yuxin Chen.
\newblock Nonconvex low-rank tensor completion from noisy data.
\newblock \emph{Advances in neural information processing systems}, 32, 2019.

\bibitem[Cai et~al.(2022{\natexlab{a}})Cai, Li, Poor, and
  Chen]{cai2022nonconvex}
Changxiao Cai, Gen Li, H~Vincent Poor, and Yuxin Chen.
\newblock Nonconvex low-rank tensor completion from noisy data.
\newblock \emph{Operations Research}, 70\penalty0 (2):\penalty0 1219--1237,
  2022{\natexlab{a}}.

\bibitem[Cai et~al.(2020)Cai, Miao, Wang, and Xian]{cai2020provable}
Jian-Feng Cai, Lizhang Miao, Yang Wang, and Yin Xian.
\newblock Provable near-optimal low-multilinear-rank tensor recovery.
\newblock \emph{arXiv preprint arXiv:2007.08904}, 2020.

\bibitem[Cai et~al.(2022{\natexlab{b}})Cai, Li, and Xia]{cai2022generalized}
Jian-Feng Cai, Jingyang Li, and Dong Xia.
\newblock Generalized low-rank plus sparse tensor estimation by fast riemannian
  optimization.
\newblock \emph{Journal of the American Statistical Association}, pages 1--17,
  2022{\natexlab{b}}.

\bibitem[Cai et~al.(2022{\natexlab{c}})Cai, Li, and Xia]{cai2022provable}
Jian-Feng Cai, Jingyang Li, and Dong Xia.
\newblock Provable tensor-train format tensor completion by riemannian
  optimization.
\newblock \emph{Journal of Machine Learning Research}, 23\penalty0
  (123):\penalty0 1--77, 2022{\natexlab{c}}.

\bibitem[Cai and Zhang(2018)]{cai2018rate}
T~Tony Cai and Anru Zhang.
\newblock Rate-optimal perturbation bounds for singular subspaces with
  applications to high-dimensional statistics.
\newblock \emph{The Annals of Statistics}, 46\penalty0 (1):\penalty0 60--89,
  2018.

\bibitem[Cai and Zhou(2016)]{cai2016matrix}
T~Tony Cai and Wen-Xin Zhou.
\newblock Matrix completion via max-norm constrained optimization.
\newblock 2016.

\bibitem[Cambier and Absil(2016)]{cambier2016robust}
L{\'e}opold Cambier and P-A Absil.
\newblock Robust low-rank matrix completion by riemannian optimization.
\newblock \emph{SIAM Journal on Scientific Computing}, 38\penalty0
  (5):\penalty0 S440--S460, 2016.

\bibitem[Cand{\`e}s et~al.(2011)Cand{\`e}s, Li, Ma, and
  Wright]{candes2011robust}
Emmanuel~J Cand{\`e}s, Xiaodong Li, Yi~Ma, and John Wright.
\newblock Robust principal component analysis?
\newblock \emph{Journal of the ACM (JACM)}, 58\penalty0 (3):\penalty0 1--37,
  2011.

\bibitem[Catoni(2016)]{catoni2016pac}
Olivier Catoni.
\newblock Pac-bayesian bounds for the gram matrix and least squares regression
  with a random design.
\newblock \emph{arXiv preprint arXiv:1603.05229}, 2016.

\bibitem[Chandrasekaran et~al.(2011)Chandrasekaran, Sanghavi, Parrilo, and
  Willsky]{chandrasekaran2011rank}
Venkat Chandrasekaran, Sujay Sanghavi, Pablo~A Parrilo, and Alan~S Willsky.
\newblock Rank-sparsity incoherence for matrix decomposition.
\newblock \emph{SIAM Journal on Optimization}, 21\penalty0 (2):\penalty0
  572--596, 2011.

\bibitem[Charisopoulos et~al.(2021)Charisopoulos, Chen, Davis, Diaz, Ding, and
  Drusvyatskiy]{charisopoulos2021low}
Vasileios Charisopoulos, Yudong Chen, Damek Davis, Mateo Diaz, Lijun Ding, and
  Dmitriy Drusvyatskiy.
\newblock Low-rank matrix recovery with composite optimization: good
  conditioning and rapid convergence.
\newblock \emph{Foundations of Computational Mathematics}, 21\penalty0
  (6):\penalty0 1505--1593, 2021.

\bibitem[Chen et~al.(2020)Chen, Xia, Cai, and Fan]{chen2020semiparametric}
Elynn~Y Chen, Dong Xia, Chencheng Cai, and Jianqing Fan.
\newblock Semiparametric tensor factor analysis by iteratively projected svd.
\newblock \emph{arXiv preprint arXiv:2007.02404}, 2020.

\bibitem[Chen et~al.(2016)Chen, Gao, and Ren]{chen2016general}
Mengjie Chen, Chao Gao, and Zhao Ren.
\newblock A general decision theory for huber’s$\epsilon$-contamination
  model.
\newblock \emph{Electronic Journal of Statistics}, 10:\penalty0 3752--3774,
  2016.

\bibitem[Chen et~al.(2018)Chen, Gao, and Ren]{chen2018robust}
Mengjie Chen, Chao Gao, and Zhao Ren.
\newblock Robust covariance and scatter matrix estimation under huber’s
  contamination model.
\newblock \emph{The Annals of Statistics}, 46\penalty0 (5):\penalty0
  1932--1960, 2018.

\bibitem[Chen and Wainwright(2015)]{chen2015fast}
Yudong Chen and Martin~J Wainwright.
\newblock Fast low-rank estimation by projected gradient descent: General
  statistical and algorithmic guarantees.
\newblock \emph{arXiv preprint arXiv:1509.03025}, 2015.

\bibitem[Chen et~al.(2021{\natexlab{a}})Chen, Chi, Fan, Ma,
  et~al.]{chen2021spectral}
Yuxin Chen, Yuejie Chi, Jianqing Fan, Cong Ma, et~al.
\newblock Spectral methods for data science: A statistical perspective.
\newblock \emph{Foundations and Trends{\textregistered} in Machine Learning},
  14\penalty0 (5):\penalty0 566--806, 2021{\natexlab{a}}.

\bibitem[Chen et~al.(2021{\natexlab{b}})Chen, Fan, Ma, and
  Yan]{chen2021bridging}
Yuxin Chen, Jianqing Fan, Cong Ma, and Yuling Yan.
\newblock Bridging convex and nonconvex optimization in robust pca: Noise,
  outliers and missing data.
\newblock \emph{The Annals of Statistics}, 49\penalty0 (5):\penalty0
  2948--2971, 2021{\natexlab{b}}.

\bibitem[Chinot et~al.(2020)Chinot, Lecu{\'e}, and Lerasle]{chinot2020robust}
Geoffrey Chinot, Guillaume Lecu{\'e}, and Matthieu Lerasle.
\newblock Robust statistical learning with lipschitz and convex loss functions.
\newblock \emph{Probability Theory and related fields}, 176\penalty0
  (3-4):\penalty0 897--940, 2020.

\bibitem[Crovella et~al.(1998)Crovella, Taqqu, and
  Bestavros]{crovella1998heavy}
Mark~E Crovella, Murad~S Taqqu, and Azer Bestavros.
\newblock Heavy-tailed probability distributions in the world wide web.
\newblock \emph{A practical guide to heavy tails}, 1:\penalty0 3--26, 1998.

\bibitem[Dalalyan and Minasyan(2022)]{dalalyan2022all}
Arnak~S Dalalyan and Arshak Minasyan.
\newblock All-in-one robust estimator of the gaussian mean.
\newblock \emph{The Annals of Statistics}, 50\penalty0 (2):\penalty0
  1193--1219, 2022.

\bibitem[De~la Pena and Gin{\'e}(2012)]{de2012decoupling}
Victor De~la Pena and Evarist Gin{\'e}.
\newblock \emph{Decoupling: from dependence to independence}.
\newblock Springer Science \& Business Media, 2012.

\bibitem[De~Lathauwer et~al.(2000)De~Lathauwer, De~Moor, and
  Vandewalle]{de2000multilinear}
Lieven De~Lathauwer, Bart De~Moor, and Joos Vandewalle.
\newblock A multilinear singular value decomposition.
\newblock \emph{SIAM journal on Matrix Analysis and Applications}, 21\penalty0
  (4):\penalty0 1253--1278, 2000.

\bibitem[De~Silva and Lim(2008)]{de2008tensor}
Vin De~Silva and Lek-Heng Lim.
\newblock Tensor rank and the ill-posedness of the best low-rank approximation
  problem.
\newblock \emph{SIAM Journal on Matrix Analysis and Applications}, 30\penalty0
  (3):\penalty0 1084--1127, 2008.

\bibitem[Depersin(2020)]{depersin2020robust}
Jules Depersin.
\newblock Robust subgaussian estimation with vc-dimension.
\newblock \emph{arXiv preprint arXiv:2004.11734}, 2020.

\bibitem[Depersin and Lecu{\'e}(2022)]{depersin2022robust}
Jules Depersin and Guillaume Lecu{\'e}.
\newblock Robust sub-gaussian estimation of a mean vector in nearly linear
  time.
\newblock \emph{The Annals of Statistics}, 50\penalty0 (1):\penalty0 511--536,
  2022.

\bibitem[Dong et~al.(2022)Dong, Tong, Ma, and Chi]{dong2022fast}
Harry Dong, Tian Tong, Cong Ma, and Yuejie Chi.
\newblock Fast and provable tensor robust principal component analysis via
  scaled gradient descent.
\newblock \emph{arXiv preprint arXiv:2206.09109}, 2022.

\bibitem[Elsener and van~de Geer(2018)]{elsener2018robust}
Andreas Elsener and Sara van~de Geer.
\newblock Robust low-rank matrix estimation.
\newblock \emph{The Annals of Statistics}, 46\penalty0 (6B):\penalty0
  3481--3509, 2018.

\bibitem[Fan et~al.(2016)Fan, Wang, and Zhu]{fan2016shrinkage}
Jianqing Fan, Weichen Wang, and Ziwei Zhu.
\newblock A shrinkage principle for heavy-tailed data: High-dimensional robust
  low-rank matrix recovery.
\newblock \emph{arXiv preprint arXiv:1603.08315}, 2016.

\bibitem[Golub and Van~Loan(2013)]{golub2013matrix}
Gene~H Golub and Charles~F Van~Loan.
\newblock \emph{Matrix computations}.
\newblock JHU press, 2013.

\bibitem[Han et~al.(2022)Han, Willett, and Zhang]{han2022optimal}
Rungang Han, Rebecca Willett, and Anru~R Zhang.
\newblock An optimal statistical and computational framework for generalized
  tensor estimation.
\newblock \emph{The Annals of Statistics}, 50\penalty0 (1):\penalty0 1--29,
  2022.

\bibitem[Hillar and Lim(2013)]{hillar2013most}
Christopher~J Hillar and Lek-Heng Lim.
\newblock Most tensor problems are np-hard.
\newblock \emph{Journal of the ACM (JACM)}, 60\penalty0 (6):\penalty0 1--39,
  2013.

\bibitem[Hsu et~al.(2011)Hsu, Kakade, and Zhang]{hsu2011robust}
Daniel Hsu, Sham~M Kakade, and Tong Zhang.
\newblock Robust matrix decomposition with sparse corruptions.
\newblock \emph{IEEE Transactions on Information Theory}, 57\penalty0
  (11):\penalty0 7221--7234, 2011.

\bibitem[Huber(1964)]{huber1964robust}
Peter~J Huber.
\newblock Robust estimation of a location parameter.
\newblock \emph{The Annals of Mathematical Statistics}, pages 73--101, 1964.

\bibitem[Jing et~al.(2021)Jing, Li, Lyu, and Xia]{jing2021community}
Bing-Yi Jing, Ting Li, Zhongyuan Lyu, and Dong Xia.
\newblock Community detection on mixture multilayer networks via regularized
  tensor decomposition.
\newblock \emph{The Annals of Statistics}, 49\penalty0 (6):\penalty0
  3181--3205, 2021.

\bibitem[Ke et~al.(2019)Ke, Shi, and Xia]{ke2019community}
Zheng~Tracy Ke, Feng Shi, and Dong Xia.
\newblock Community detection for hypergraph networks via regularized tensor
  power iteration.
\newblock \emph{arXiv preprint arXiv:1909.06503}, 2019.

\bibitem[Klopp et~al.(2017)Klopp, Lounici, and Tsybakov]{klopp2017robust}
Olga Klopp, Karim Lounici, and Alexandre~B Tsybakov.
\newblock Robust matrix completion.
\newblock \emph{Probability Theory and Related Fields}, 169:\penalty0 523--564,
  2017.

\bibitem[Kolda and Bader(2009)]{kolda2009tensor}
Tamara~G Kolda and Brett~W Bader.
\newblock Tensor decompositions and applications.
\newblock \emph{SIAM review}, 51\penalty0 (3):\penalty0 455--500, 2009.

\bibitem[Lecu{\'e} and Lerasle(2020)]{lecue2020robust}
Guillaume Lecu{\'e} and Matthieu Lerasle.
\newblock Robust machine learning by median-of-means: theory and practice.
\newblock 2020.

\bibitem[Liu et~al.(2022)Liu, Yuan, and Zhao]{liu2022characterizing}
Tianqi Liu, Ming Yuan, and Hongyu Zhao.
\newblock Characterizing spatiotemporal transcriptome of the human brain via
  low-rank tensor decomposition.
\newblock \emph{Statistics in Biosciences}, pages 1--29, 2022.

\bibitem[Lu et~al.(2016)Lu, Feng, Chen, Liu, Lin, and Yan]{lu2016tensor}
Canyi Lu, Jiashi Feng, Yudong Chen, Wei Liu, Zhouchen Lin, and Shuicheng Yan.
\newblock Tensor robust principal component analysis: Exact recovery of
  corrupted low-rank tensors via convex optimization.
\newblock In \emph{Proceedings of the IEEE conference on computer vision and
  pattern recognition}, pages 5249--5257, 2016.

\bibitem[Ludoux and Talagrand(1991)]{ludoux1991probability}
M~Ludoux and M~Talagrand.
\newblock Probability in banach spaces: isoperimetry and processes, 1991.

\bibitem[Lugosi and Mendelson(2019)]{lugosi2019risk}
Gabor Lugosi and Shahar Mendelson.
\newblock Risk minimization by median-of-means tournaments.
\newblock \emph{Journal of the European Mathematical Society}, 22\penalty0
  (3):\penalty0 925--965, 2019.

\bibitem[Lugosi and Mendelson(2021)]{lugosi2021robust}
Gabor Lugosi and Shahar Mendelson.
\newblock Robust multivariate mean estimation: the optimality of trimmed mean.
\newblock 2021.

\bibitem[Luo and Zhang(2022)]{luo2022tensor}
Yuetian Luo and Anru~R Zhang.
\newblock Tensor-on-tensor regression: Riemannian optimization,
  over-parameterization, statistical-computational gap, and their interplay.
\newblock \emph{arXiv preprint arXiv:2206.08756}, 2022.

\bibitem[Lyu and Xia(2023)]{lyu2023optimal}
Zhongyuan Lyu and Dong Xia.
\newblock Optimal estimation and computational limit of low-rank gaussian
  mixtures.
\newblock \emph{The Annals of Statistics}, 51\penalty0 (2):\penalty0 646--667,
  2023.

\bibitem[Lyu et~al.(2023)Lyu, Xia, and Zhang]{lyu2023latent}
Zhongyuan Lyu, Dong Xia, and Yuan Zhang.
\newblock Latent space model for higher-order networks and generalized tensor
  decomposition.
\newblock \emph{Journal of Computational and Graphical Statistics}, pages
  1--17, 2023.

\bibitem[Minsker(2015)]{minsker2015geometric}
Stanislav Minsker.
\newblock Geometric median and robust estimation in banach spaces.
\newblock \emph{Bernoulli}, 21\penalty0 (4):\penalty0 2308--2335, 2015.

\bibitem[Minsker(2018)]{minsker2018sub}
Stanislav Minsker.
\newblock Sub-gaussian estimators of the mean of a random matrix with
  heavy-tailed entries.
\newblock \emph{The Annals of Statistics}, 46\penalty0 (6A):\penalty0
  2871--2903, 2018.

\bibitem[Minsker et~al.(2022)Minsker, Ndaoud, and Wang]{minsker2022robust}
Stanislav Minsker, Mohamed Ndaoud, and Lang Wang.
\newblock Robust and tuning-free sparse linear regression via square-root
  slope.
\newblock \emph{arXiv preprint arXiv:2210.16808}, 2022.

\bibitem[Moon and Zhou(2022)]{moon2022high}
Haeseong Moon and Wen-Xin Zhou.
\newblock High-dimensional composite quantile regression: Optimal statistical
  guarantees and fast algorithms.
\newblock \emph{arXiv preprint arXiv:2208.09817}, 2022.

\bibitem[Netrapalli et~al.(2014)Netrapalli, UN, Sanghavi, Anandkumar, and
  Jain]{netrapalli2014non}
Praneeth Netrapalli, Niranjan UN, Sujay Sanghavi, Animashree Anandkumar, and
  Prateek Jain.
\newblock Non-convex robust pca.
\newblock \emph{Advances in Neural Information Processing Systems}, 27, 2014.

\bibitem[Oliveira and Orenstein(2019)]{oliveira2019sub}
Roberto~I Oliveira and Paulo Orenstein.
\newblock The sub-gaussian property of trimmed means estimators.
\newblock \emph{Unpublished, IMPA}, 2019.

\bibitem[Rachev(2003)]{rachev2003handbook}
Svetlozar~Todorov Rachev.
\newblock \emph{Handbook of heavy tailed distributions in finance: Handbooks in
  finance, Book 1}.
\newblock Elsevier, 2003.

\bibitem[Rauhut et~al.(2017)Rauhut, Schneider, and Stojanac]{rauhut2017low}
Holger Rauhut, Reinhold Schneider, and Zeljka Stojanac.
\newblock Low rank tensor recovery via iterative hard thresholding.
\newblock \emph{Linear Algebra and its Applications}, 523:\penalty0 220--262,
  2017.

\bibitem[Roberts et~al.(2015)Roberts, Boonstra, and
  Breakspear]{roberts2015heavy}
James~A Roberts, Tjeerd~W Boonstra, and Michael Breakspear.
\newblock The heavy tail of the human brain.
\newblock \emph{Current opinion in neurobiology}, 31:\penalty0 164--172, 2015.

\bibitem[Shen et~al.(2022)Shen, Li, Cai, and Xia]{shen2022computationally}
Yinan Shen, Jingyang Li, Jian-Feng Cai, and Dong Xia.
\newblock Computationally efficient and statistically optimal robust low-rank
  matrix and tensor estimation.
\newblock \emph{arXiv preprint arXiv:2203.00953}, 2022.

\bibitem[Shen et~al.(2023)Shen, Li, Cai, and Xia]{shen2023computationally}
Yinan Shen, Jingyang Li, Jian-Feng Cai, and Dong Xia.
\newblock Computationally efficient and statistically optimal robust
  high-dimensional linear regression.
\newblock \emph{arXiv preprint arXiv:2305.06199}, 2023.

\bibitem[Sun et~al.(2020)Sun, Zhou, and Fan]{sun2020adaptive}
Qiang Sun, Wen-Xin Zhou, and Jianqing Fan.
\newblock Adaptive huber regression.
\newblock \emph{Journal of the American Statistical Association}, 115\penalty0
  (529):\penalty0 254--265, 2020.

\bibitem[Sun and Li(2019)]{sun2019dynamic}
Will~Wei Sun and Lexin Li.
\newblock Dynamic tensor clustering.
\newblock \emph{Journal of the American Statistical Association}, 114\penalty0
  (528):\penalty0 1894--1907, 2019.

\bibitem[Thompson(2020)]{thompson2020outlier}
Philip Thompson.
\newblock Outlier-robust sparse/low-rank least-squares regression and robust
  matrix completion.
\newblock \emph{arXiv preprint arXiv:2012.06750}, 2020.

\bibitem[Tong et~al.(2021)Tong, Ma, and Chi]{tong2021low}
Tian Tong, Cong Ma, and Yuejie Chi.
\newblock Low-rank matrix recovery with scaled subgradient methods: Fast and
  robust convergence without the condition number.
\newblock \emph{IEEE Transactions on Signal Processing}, 69:\penalty0
  2396--2409, 2021.

\bibitem[Tucker(1966)]{tucker1966some}
Ledyard~R Tucker.
\newblock Some mathematical notes on three-mode factor analysis.
\newblock \emph{Psychometrika}, 31\penalty0 (3):\penalty0 279--311, 1966.

\bibitem[Van Der~Vaart et~al.(1996)Van Der~Vaart, van~der Vaart, van~der Vaart,
  and Wellner]{van1996weak}
Aad~W Van Der~Vaart, Aad van~der Vaart, Adrianus~Willem van~der Vaart, and Jon
  Wellner.
\newblock \emph{Weak convergence and empirical processes: with applications to
  statistics}.
\newblock Springer Science \& Business Media, 1996.

\bibitem[Vandereycken(2013)]{vandereycken2013low}
Bart Vandereycken.
\newblock Low-rank matrix completion by riemannian optimization.
\newblock \emph{SIAM Journal on Optimization}, 23\penalty0 (2):\penalty0
  1214--1236, 2013.

\bibitem[Vershynin(2018)]{vershynin2018high}
Roman Vershynin.
\newblock \emph{High-dimensional probability: An introduction with applications
  in data science}, volume~47.
\newblock Cambridge university press, 2018.

\bibitem[Wang and Fan(2022)]{wang2022robust}
Bingyan Wang and Jianqing Fan.
\newblock Robust matrix completion with heavy-tailed noise.
\newblock \emph{arXiv preprint arXiv:2206.04276}, 2022.

\bibitem[Wang et~al.(2020)Wang, Peng, Bradic, Li, and Wu]{wang2020tuning}
Lan Wang, Bo~Peng, Jelena Bradic, Runze Li, and Yunan Wu.
\newblock A tuning-free robust and efficient approach to high-dimensional
  regression.
\newblock \emph{Journal of the American Statistical Association}, 115\penalty0
  (532):\penalty0 1700--1714, 2020.

\bibitem[Wang and Li(2020)]{wang2020learning}
Miaoyan Wang and Lexin Li.
\newblock Learning from binary multiway data: Probabilistic tensor
  decomposition and its statistical optimality.
\newblock \emph{Journal of Machine Learning Research}, 21\penalty0 (154), 2020.

\bibitem[Wei et~al.(2016)Wei, Cai, Chan, and Leung]{wei2016guarantees}
Ke~Wei, Jian-Feng Cai, Tony~F Chan, and Shingyu Leung.
\newblock Guarantees of riemannian optimization for low rank matrix recovery.
\newblock \emph{SIAM Journal on Matrix Analysis and Applications}, 37\penalty0
  (3):\penalty0 1198--1222, 2016.

\bibitem[Xia(2021)]{xia2021normal}
Dong Xia.
\newblock Normal approximation and confidence region of singular subspaces.
\newblock \emph{Electronic Journal of Statistics}, 15\penalty0 (2):\penalty0
  3798--3851, 2021.

\bibitem[Xia and Yuan(2019)]{xia2019polynomial}
Dong Xia and Ming Yuan.
\newblock On polynomial time methods for exact low-rank tensor completion.
\newblock \emph{Foundations of Computational Mathematics}, 19\penalty0
  (6):\penalty0 1265--1313, 2019.

\bibitem[Xia and Zhou(2019)]{xia2019sup}
Dong Xia and Fan Zhou.
\newblock The sup-norm perturbation of hosvd and low rank tensor denoising.
\newblock \emph{The Journal of Machine Learning Research}, 20\penalty0
  (1):\penalty0 2206--2247, 2019.

\bibitem[Xia et~al.(2021)Xia, Yuan, and Zhang]{xia2021statistically}
Dong Xia, Ming Yuan, and Cun-Hui Zhang.
\newblock Statistically optimal and computationally efficient low rank tensor
  completion from noisy entries.
\newblock \emph{The Annals of Statistics}, 49\penalty0 (1), 2021.

\bibitem[Yi et~al.(2016)Yi, Park, Chen, and Caramanis]{yi2016fast}
Xinyang Yi, Dohyung Park, Yudong Chen, and Constantine Caramanis.
\newblock Fast algorithms for robust pca via gradient descent.
\newblock \emph{Advances in neural information processing systems}, 29, 2016.

\bibitem[Yu(1997)]{yu1997assouad}
Bin Yu.
\newblock Assouad, fano, and le cam.
\newblock \emph{Festschrift for Lucien Le Cam: research papers in probability
  and statistics}, pages 423--435, 1997.

\bibitem[Zhang(2019)]{zhang2019cross}
Anru Zhang.
\newblock Cross.
\newblock \emph{The Annals of Statistics}, 47\penalty0 (2):\penalty0 936--964,
  2019.

\bibitem[Zhang and Xia(2018)]{zhang2018tensor}
Anru Zhang and Dong Xia.
\newblock Tensor svd: Statistical and computational limits.
\newblock \emph{IEEE Transactions on Information Theory}, 64\penalty0
  (11):\penalty0 7311--7338, 2018.

\end{thebibliography}

\newpage

\appendix

\begin{center}
	\textbf{\Large{Appendix of ``Quantile and pseudo-Huber Tensor Decomposition"}}\\
	
\medskip	
	
	{Yinan Shen and Dong Xia}\\
	
\medskip

Department of Mathematics, Hong Kong University of Science and Technology	
	
\end{center}
	\section{Proofs under Heavy-Tailed Noise}
In this section, we are going to prove Lemma~\ref{lem:hub:regularity} and Theorem~\ref{thm:hub:dynamics}, where pseudo-Huber loss is taken. To simplify the writing, we introduce mask operaters $\calP_{\Omega_{j}^{(k)}}(\cdot)$,
\begin{align*}
	\left[\calP_{\Omega_{j}^{(k)}}(\bcalT)\right]_{i_1\dots i_m}:=\left\{
	\begin{array}{lcl}
		\left[\bcalT\right]_{i_1\cdots i_m}&     &\text{ if } i_k=j\\
		0&     &\text{ if } i_k\neq j
	\end{array}
	\right.,
\end{align*}
and $\calP_{\Omega_{-j}^{(k)}}(\bcalT):=\bcalT-\calP_{\Omega_{j}^{(k)}}(\bcalT)$.
Then $\lph{\fraM_{k}(\bcalT-\bcalY)_{j,\cdot}}-\lph{\fraM_{k}(\bcalT^*-\bcalY)_{j,\cdot}}$ has a simpler expression
\begin{align*}
	\lph{\fraM_{k}(\bcalT-\bcalY)_{j,\cdot}}-\lph{\fraM_{k}(\bcalT^*-\bcalY)_{j,\cdot}}&=\lph{\calP_{\Omega_{j}^{(k)}}\left(\bcalT-\bcalT^*-\bXi\right)}-\lph{\calP_{\Omega_{j}^{(k)}}\left(\bXi\right)}\\
	&=f\left(\calP_{\Omega_{j}^{(k)}}(\bcalT)\right)-f\left(\calP_{\Omega_{j}^{(k)}}(\bcalT^*)\right).
\end{align*}
\subsection{Proof of Lemma~\ref{lem:hub:regularity}}
\paragraph{Phase One Analyses}
We shall prove phase one properties under event $\bcalE_1$,
$$\bcalE_1:=\left\{\lone{\calP_{\Omega_{j}^{(k)}}(\bXi)}\leq 3\dkm\gamma,\quad\text{ for all } k=1,\dots,m,\; j=1,\dots,d_k\right\}.$$
Specifically, Lemma~\ref{teclem:Contraction of Heavy Tailed Random Variables:slice} proves $\PP(\bcalE_1)\geq 1-c\sum_{k=1}^{m} d_k(\dkm)^{-1-\min\{1,\eps\}}$. First consider Frobenius norm of the projected sub-gradient term. Notice that absolute values of entries in $\bcalG$ are not larger than $1$, which infers
\begin{align*}
	\fro{\calP_{\TT}(\bcalG)}^2&=\fro{\bcalG}^2-\fro{\calP_{\TT}^{\perp}(\bcalG)}^2\leq \fro{\bcalG}^2\leq d^*.
\end{align*}
It verifies $\fro{\calP_{\TT}(\bcalG)}\leq\sqrt{d^*}$. Then consider the function difference,
\begin{align*}
	f(\bcalT)-f(\bcalT^*)&=\sum_{i_1=1}^{d_1}\cdots \sum_{i_m=1}^{d_m}\sqrt{\left([\bcalT]_{i_1\cdots i_m}-[\bcalT^*]_{i_1\cdots i_m}-\xi_{i_1\cdots i_m}\right)^2+\delta^2}-\sum_{i_1=1}^{d_1}\cdots \sum_{i_m=1}^{d_m}\sqrt{\xi_{i_1\cdots i_m}^2+\delta^2}\\
	&\geq \lone{\bcalT-\bcalT^*}-2\lone{\bXi}-d^*\delta.
\end{align*}
which uses $\sqrt{(a-b)^2+\delta^2}\geq|a|-|b| $ and $\sqrt{b^2+\delta^2}\leq|b|+\delta $. On the other hand, event $\bcalE_1$ infers that $\lone{\bXi}\leq 3d^*\gamma$  and Lemma~\ref{teclem:norm-relation} shows $\lone{\cdot}\geq \linft{\cdot}^{-1}\fro{\cdot}^2$. Thus we have
\begin{align*}
	f(\bcalT)-f(\bcalT^*)\geq \linft{\bcalT-\bcalT^*}^{-1}\cdot\fro{\bcalT-\bcalT^*}^2-6d^*\gamma-d^*\delta.
\end{align*}
Next, consider slice of the projected sub-gradient. The matricization of $\calP_{\TT}(\bcalG)$ has the expression,
\begin{align*}
	&{~~~}\fraM_k(\calP_{\TT}(\bcalG))\\
	&=\fraM_k(\bcalG)\left(\otimes_{i\neq k}\U_i\right)\fraM_k(\bcalC)^{\dagger}\fraM_k(\bcalC)\left(\otimes_{i\neq k}\U_i\right)^{\top}+\U_k\U_k^{\top}\fraM_k(\bcalG)\left(\otimes_{i\neq k}\U_i\right)\left(\I-\fraM_k(\bcalC)^{\dagger}\fraM_k(\bcalC)\right)\left(\otimes_{i\neq k}\U_i\right)^{\top}\\
	&{~~~}+\sum_{i\neq k} \U_k\fraM_k(\bcalC\times_{j\neq i,k}\U_j\times\V_i),
\end{align*}
where $\V_i:=\left(\I_{d_i}-\U_i\U_i^{\top}\right)\fraM_k(\bcalG)\left(\otimes_{j\neq i}\U_j\right)\fraM_k(\bcalC)^{\dagger}$. Hence we have,
\begin{align*}
	\ltinf{\fraM_k(\calP_{\TT}(\bcalG))}^2&\leq 2\ltinf{\U_k}^2\fro{\bcalG}^2+\ltinf{\fraM_k(\bcalG)\left(\otimes_{i\neq k}\U_i\right)\fraM_k(\bcalC)^{\dagger}\fraM_k(\bcalC)\left(\otimes_{i\neq k}\U_i\right)^{\top}}^2\\
	&\leq 2\frac{\mu r_k}{d_k}\cdot d_1\cdots d_m+\dkm\leq3\frac{\mu r_k}{d_k}d^*.
\end{align*}
As for the slice function value difference, under event $\bcalE_1$, it has for each $k=1,\dots, m$, $j=1,\dots,
d_k$,
\begin{align*}
	\lph{\calP_{\Omega_{j}^{(k)}}(\bcalT-\bcalT^*-\bXi)}-\lph{\calP_{\Omega_{j}^{(k)}}(\bXi)}&\geq\lone{\calP_{\Omega_{j}^{(k)}}(\bcalT-\bcalT^*)}-2\lone{\calP_{\Omega_{j}^{(k)}}(\bXi)}-\dkm\delta\\
	&\geq \frac{1}{\linft{\calP_{\Omega_{j}^{(k)}}(\bcalT-\bcalT^*)}}\fro{\calP_{\Omega_{j}^{(k)}}(\bcalT-\bcalT^*)}^2-6d_k^- \gamma-\dkm\delta.
\end{align*}
Hence, we finish phase one analyses.
\paragraph{Phase Two Analysis} In phase two analyses, we shall assume the event $$\bcalE_2:=\left\{\sup_{\bcalT\in\RR^{d_1\times\cdots\times d_m}, \Delta\bcalT\in\MM_{2\r}}\left|f(\bcalT+\Delta\bcalT)-f(\bcalT)-\EE\left(f(\bcalT+\Delta\bcalT)-f(\bcalT) \right)\right|\cdot\fro{\Delta\bcalT}^{-1}\leq C\sqrt{\textsf{DoF}_m}\right\}$$
holds. Specifically, 
Lemma~\ref{teclem:empirical} proves $\PP(\bcalE_2)\geq1-\exp(-\textsf{DoF}/2)$. By event $\bcalE_2$ and loss function expectation Lemma~\ref{teclem:pseudo:function expectation}, when $\linft{\bcalT-\bcalT^*}\leq C_{m,\muT,r^*}(6\gamma+\delta)$ and $\fro{\bcalT-\bcalT^*}\geq cb_0\sqrt{\textsf{DoF}_m}$, we have, 
\begin{align*}
	f(\bcalT)-f(\bcalT^*)&\geq \EE[f(\bcalT)-f(\bcalT^*)]-C\sqrt{\textsf{DoF}_m}\fro{\bcalT-\bcalT^*}\\
	&\geq \frac{1}{3b_0}\fro{\bcalT-\bcalT^*}^2-C\sqrt{\textsf{DoF}_m}\fro{\bcalT-\bcalT^*}\\
	&\geq \frac{1}{4b_0}\fro{\bcalT-\bcalT^*}^2,
\end{align*}
where the last inequality is due to $\fro{\bcalT-\bcalT^*}\geq C_1 \sqrt{\textsf{DoF}_m}\cdot b_0$. The following lemma analyzes Frobenius norm for projected sub-gradient and completes proving Lemma~\ref{lem:hub:regularity}.
\begin{lemma}[Upper bound for sub-gradient]
	Let $\bcalT$ be Tucker rank at most $\r$ tensor. Suppose it satisfies $\fro{\bcalT-\bcalT^*}\geq\sqrt{\textsf{DoF}_m}$$\cdot b_0$. Let $\bcalG\in\partial f(\bcalT)$ be the sub-gradient and $\TT$ be the tangent space of $\MM_{\r}$ at point $\bcalT$. Then under $\bcalE_2$, we have $$\fro{\calP_{\TT}(\bcalG)}\leq c_1\cdot\sqrt{m+1}\cdot \delta^{-1}\fro{\bcalT-\bcalT^*}.$$
\end{lemma}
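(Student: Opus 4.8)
The plan is to peel off from $\calP_{\TT}(\bcalG)$ the part coming from the gradient evaluated at the truth and bound the two resulting pieces separately. Since the pseudo-Huber loss is differentiable, $\bcalG=\nabla f(\bcalT)$ is the unique tensor with $[\bcalG]_\omega=\rho_{H_p,\delta}'([\bcalT-\bcalY]_\omega)$, where $\rho_{H_p,\delta}'(x)=x(x^2+\delta^2)^{-1/2}$. Introduce the companion tensor $\bcalG^*:=\nabla f(\bcalT^*)$ with entries $[\bcalG^*]_\omega=-\rho_{H_p,\delta}'([\bXi]_\omega)$, and write $\calP_{\TT}(\bcalG)=\calP_{\TT}(\bcalG^*)+\calP_{\TT}(\bcalG-\bcalG^*)$, so that $\fro{\calP_{\TT}(\bcalG)}\le\fro{\calP_{\TT}(\bcalG^*)}+\fro{\calP_{\TT}(\bcalG-\bcalG^*)}$.

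The second piece is elementary. Because $|\rho_{H_p,\delta}''(x)|=\delta^2(x^2+\delta^2)^{-3/2}\le\delta^{-1}$, the map $\rho_{H_p,\delta}'$ is $\delta^{-1}$-Lipschitz, and since $[\bcalT-\bcalY]_\omega-[\bcalT^*-\bcalY]_\omega=[\bcalT-\bcalT^*]_\omega$ we get $\fro{\bcalG-\bcalG^*}\le\delta^{-1}\fro{\bcalT-\bcalT^*}$; as an orthogonal projection is a contraction, $\fro{\calP_{\TT}(\bcalG-\bcalG^*)}\le\delta^{-1}\fro{\bcalT-\bcalT^*}$.

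The first piece is the crux, and this is where $\bcalE_2$ and the zero-symmetry of $\xi$ enter. The tangent space $\TT$ of $\MM_{\r}$ at any rank-$\le\r$ tensor is contained in $\MM_{2\r}$, so (assuming $\calP_{\TT}(\bcalG^*)\neq\mathbf 0$, the zero case being trivial) the unit tensor $\bcalW:=\calP_{\TT}(\bcalG^*)/\fro{\calP_{\TT}(\bcalG^*)}$ lies in $\MM_{2\r}$, and orthogonality of the projection gives $\fro{\calP_{\TT}(\bcalG^*)}=\inp{\bcalG^*}{\bcalW}$. For any $t>0$, convexity of $f$ yields $t\,\inp{\bcalG^*}{\bcalW}\le f(\bcalT^*+t\bcalW)-f(\bcalT^*)$, while $\bcalE_2$ applied with base point $\bcalT^*$ and increment $\Delta\bcalT=t\bcalW\in\MM_{2\r}$ gives $f(\bcalT^*+t\bcalW)-f(\bcalT^*)\le\EE[f(\bcalT^*+t\bcalW)-f(\bcalT^*)]+Ct\sqrt{\textsf{DoF}_m}$. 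Writing $g(s):=\EE\rho_{H_p,\delta}(s-\xi)$, the expectation equals $\sum_\omega\big(g(t[\bcalW]_\omega)-g(0)\big)$; the relaxed zero-symmetry in the footnote to Assumption~\ref{assu:hub:noise} forces $g'(0)=0$, and $|g''|\le\delta^{-1}$ gives $g(s)-g(0)\le s^2/(2\delta)$, so $\EE[f(\bcalT^*+t\bcalW)-f(\bcalT^*)]\le t^2/(2\delta)$. Dividing by $t$ and letting $t\downarrow0$ yields $\fro{\calP_{\TT}(\bcalG^*)}=\inp{\bcalG^*}{\bcalW}\le C\sqrt{\textsf{DoF}_m}$.

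Finally I combine. Assumption~\ref{assu:hub:noise} gives $b_0\ge C_{m,\muT,r^*}\delta\ge\delta$, so the hypothesis $\fro{\bcalT-\bcalT^*}\ge b_0\sqrt{\textsf{DoF}_m}$ forces $\sqrt{\textsf{DoF}_m}\le\delta^{-1}\fro{\bcalT-\bcalT^*}$, whence $\fro{\calP_{\TT}(\bcalG)}\le(C+1)\delta^{-1}\fro{\bcalT-\bcalT^*}\le c_1\sqrt{m+1}\,\delta^{-1}\fro{\bcalT-\bcalT^*}$, the $\sqrt{m+1}$ being harmless slack (it also appears naturally if one instead bounds $\calP_{\TT}(\bcalG^*)$ by splitting the tangent-space projection into its core and $m$ factor components). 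The main obstacle is exactly the first piece: since $\bcalW$ is data-dependent one cannot take expectations directly, so the bound must be routed through the uniform empirical-process control $\bcalE_2$, and it is essential that the first-order directional derivative of the population loss at $\bcalT^*$ vanishes by symmetry, which is what makes $\inp{\bcalG^*}{\bcalW}$ of the low order $\sqrt{\textsf{DoF}_m}$ rather than $\Theta(\sqrt{d^*})$.
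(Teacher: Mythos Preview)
Your argument is correct, but it follows a genuinely different route from the paper. The paper does not split $\bcalG$ as $\bcalG^*+(\bcalG-\bcalG^*)$; instead it decomposes $\fro{\calP_{\TT}(\bcalG)}^2$ geometrically into the ``core'' piece $\frorr{\bcalG}^2$ plus the $m$ mode-wise pieces, and then bounds each via a self-bounding trick: it perturbs from $\bcalT$ in the direction of the low-rank projection of $\bcalG$ itself, applies the sub-gradient inequality at $\bcalT$, passes to population via $\bcalE_2$, uses the expectation bound $\EE f(\bcalS)-\EE f(\bcalT)\le(2\delta)^{-1}|\fro{\bcalS-\bcalT^*}^2-\fro{\bcalT-\bcalT^*}^2|$, and solves the resulting quadratic inequality in $\frorr{\bcalG}$. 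The factor $\sqrt{m+1}$ is thus intrinsic to the paper's decomposition, whereas in your argument it is pure slack.

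Your approach is more elementary for the pseudo-Huber case because it exploits the smoothness of $\rho_{H_p,\delta}$ twice: once to get $\fro{\bcalG-\bcalG^*}\le\delta^{-1}\fro{\bcalT-\bcalT^*}$ via the $\delta^{-1}$-Lipschitzness of $\rho_{H_p,\delta}'$, and once (together with $g'(0)=0$) to kill the first-order term in the population expansion around $\bcalT^*$. The trade-off is portability: the paper's self-bounding argument transfers verbatim to the absolute-loss lemma in Section~\ref{sec:abs:convergence}, where $\text{sign}(\cdot)$ is not Lipschitz and your bound on $\fro{\bcalG-\bcalG^*}$ would fail outright. So your proof is shorter here, but the paper's is the one that survives the non-smooth setting.
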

\begin{proof}
	Note that $\fro{\calP_{\TT}(\bcalG)}$ has the upper bound
	\begin{align*}
		\fro{\calP_{\TT}(\bcalG)}^2&=\fro{\bcalG\times_1\U_1\U_1^{\top}\times_2\cdots\times_m\U_m\U_m^{\top}}^2\\
		&{~~~~~}+\sum_{k=1}^{m}\fro{\left(\I_{d_k}-\U_k\U_k^{\top}\right)\fraM_k(\bcalG)\left(\otimes_{i\neq k}\U_i\right)\fraM_k(\bcalC^*)^{\dag} \fraM_k(\bcalC^*)\left(\otimes_{i\neq k}\U_i\right)^{\top} }^2\\
		&\leq \underbrace{\frorr{\bcalG}^2}_{=A_1}+\underbrace{\sum_{k=1}^{m}\fro{\fraM_k(\bcalG)\left(\otimes_{i\neq k}\U_i\right)\fraM_k(\bcalC)^{\dag} \fraM_k(\bcalC)\left(\otimes_{i\neq k}\U_i\right)^{\top} }^2}_{A_2},
	\end{align*}
	where $\frorr{\bcalG}:=\sup_{\W_j\in\OO_{d_j,r_j}}\fro{\bcalG\times_1\W_1\W_1^{\top}\times_2\cdots\times_m\W_m\W_m^{\top} }$.
	
	\textbf{First consider $A_1$.}
	Suppose $\bcalG$ achieves $\frorr{\cdot}$ with orthonormal matrices $\V_k\in\OO_{d_k,r_k}$, namely, $$\frorr{\bcalG}=\fro{\bcalG\times_1\V_1\V_1^{\top}\times_2\cdots\times_m\V_m\V_m^{\top}},$$ and then take $\bcalS=\bcalT+\frac{1}{2}\delta\cdot\bcalG\times_1\V_1\V_1^{\top}\times_2\cdots\times_m\V_m\V_m^{\top}$. Then we have $\rank(\bcalS-\bcalT)\leq\r$. Hence by definition of sub-gradient and by event $\bcalE_2$, we have
	\begin{align}
		\inp{\bcalS-\bcalT}{\bcalG}\leq f(\bcalS)-f(\bcalT)\leq\EE f(\bcalS)-\EE f(\bcalT)+C\sqrt{\textsf{DoF}_m}\fro{\bcalS-\bcalT}.
		\label{eq3}
	\end{align}
	With Lemma~\ref{teclem:pseudo:function expectation} we have $$ \EE f(\bcalS)-\EE f(\bcalT)\leq \frac{1}{2\delta}\fro{\bcalS-\bcalT}^2+\frac{1}{\delta}\fro{\bcalS-\bcalT}\fro{\bcalT-\bcalT^*}=\frac{\delta}{8}\frorr{\bcalG}^2+\frac{1}{2}\frorr{\G}\fro{\bcalT-\bcalT^*}.$$ Note that insert $\bcalS=\bcalT+\frac{1}{2}\delta\cdot\bcalG\times_1\V_1\V_1^{\top}\times_2\cdots\times_m\V_m\V_m^{\top}$ into Equation~\eqref{eq3} and with $b_0\geq \delta$, $\fro{\bcalT-\bcalT^*}\geq \delta\cdot\sqrt{\textsf{DoF}_m}$, we have 
	\begin{align*}
		\frac{1}{2}\delta \frorr{\bcalG}^2\leq \frac{1}{8}\delta\frorr{\bcalG}^2+C \fro{\bcalT-\bcalT^*}\frorr{\bcalG},
	\end{align*}
	By solving the sbove quadratic inequality, we get \begin{align*}
		\frorr{\bcalG}\leq c_1 \delta^{-1}\cdot\fro{\bcalT-\bcalT^*}.
	\end{align*}
	
	\textbf{Second consider $A_2$.} Note that $\fraM_k(\bcalG)\left(\otimes_{i\neq k}\U_i\right)\fraM_k(\bcalC^*)^{\dag} \fraM_k(\bcalC^*)\left(\otimes_{i\neq k}\U_i\right)^{\top}$ is the $k$-th matricization of some Tucker rank at most $\r$ tensor. Then by same analysis as $A_1$, we have \begin{align*}
		\fro{\fraM_k(\bcalG)\left(\otimes_{i\neq k}\U_i\right)\fraM_k(\bcalC^*)^{\dag} \fraM_k(\bcalC^*)\left(\otimes_{i\neq k}\U_i\right)^{\top}}\leq c_1\cdot \delta^{-1}\cdot\fro{\bcalT-\bcalT^*}.
	\end{align*}
	Finally, we have $\fro{\calP_{\TT}(\bcalG)}^2\leq (m+1)c_1^2\delta^{-2}\fro{\bcalT-\bcalT^*}^2$, which leads to
	\begin{align*}
		\fro{\calP_{\TT}(\bcalG)}\leq c_1\cdot \sqrt{m+1}\cdot \delta^{-1}\fro{\bcalT-\bcalT^*}.
	\end{align*}
\end{proof}
\subsection{Proof of Theorem~\ref{thm:hub:dynamics}}
\subsubsection{Leave-one-out Sequence}
Entrywise normed error in phase one could be obtained directly. However, in phase two, in order to have delicate bound of entrywise normed error, we turn to the powerful leave-one-out framework \citep{chen2021spectral}. Introduce two sets of the auxiliary loss function $\check{f}_{j}^{(k)}$ and $\hat{f}_j^{(k)}$, for each $k=1,\dots,m$ and $j=1,\dots,d_k$,
\begin{align*}
	\check{f}_j^{(k)}(\bcalT):=\lph{\calP_{\Omega_{-j}^{(k)}}\left(\bcalT-\bcalT^*-\bXi\right)}+\lph{\calP_{\Omega_{j}^{(k)}}\left(\bcalT-\bcalT^*\right)},
\end{align*}
and
\begin{align}
	\hat{f}_{j}^{(k)}(\bcalT):=\lph{\calP_{\Omega_{-j}^{(k)}}\left(\bcalT-\bcalT^*-\bXi\right)}+\EE\lph{\calP_{\Omega_{j}^{(k)}}\left(\bcalT-\bcalT^*-\bXi\right)}.
\end{align}
Both $\check{f}_{j}^{(k)}$ and $\hat{f}_j^{(k)}$ are free of noise randomness for the $j$-th slice by order $k$ and we define the leave-one-out sequence $\{\bcalT_l^{(k),j}\}$ accordingly, see Algorithm~\ref{alg:leave:RsGrad}. Here, $\check{f}_{j}^{(k)}$ is used in phase one while in phase two the leave-one-out sequence is based on $\hat{f}_{j}^{(k)}$, see Algorithm~\ref{alg:leave:RsGrad}.
\begin{algorithm}
	\caption{Leave-one-out Sequence}\label{alg:leave:RsGrad}
	\begin{algorithmic}
		\STATE{\textbf{Input}: Same $\bcalY$, $l_{\max}$, $\eta_{l}$ as Algorithm~\ref{alg:pseuHuber:RsGrad}}
		\STATE{Initialization: $\bcalT_0^{(k),j}\in\MM_\r$}
		\FOR{$l = 0,\ldots,l_{\max}$}
		\STATE{Choose a vanilla subgradient:  $\G_l^{(k),j}\in\left\{
			\begin{array}{lcl}
				\partial \check{f}_j^{(k)}(\bcalT_l^{(k),j})&     &\text{ if in phase one}\\
				\partial \hat{f}_j^{(k)}(\bcalT_l^{(k),j})&     &\text{ if in phase two}
			\end{array}
			\right.$}
		\STATE{Compute Riemannian sub-gradient: $\wt\G_l^{(k),j} = \calP_{\TT_l^{(k),j}}(\G_l^{(k),j})$}
		\STATE{Retraction to $\MM_\r$: $\bcalT_{l+1}^{(k),j} = \text{HOSVD}_\r(\bcalT_l^{(k),j} - \eta_{l}\wt\G_l^{(k),j})$}
		\ENDFOR
	\end{algorithmic}
\end{algorithm}

Even though, in phase one, we don't need the leave-one-out sequence to obtain sharp entrwise norm, in order to have a sequence not related with slice noise in the second phase, we need such a sequence in the first phase. Besides, notice that here for Pseudo-Huber loss, we have two different methods in removing the slice randomness, ignoring the noise or taking expectation and these two methods are equivalent in $\ell_2$ loss \cite{chen2021bridging,chen2021spectral}. Due to phase one and phase two have different analysis framework, the proper type of leave-one-out sequence is taken accordingly.
\subsubsection{Phase One}
\label{proof:pseudo-Huber:phaseone}
For convenience, denote $D_l:=\left(1-\frac{1}{32}(5m+1)^{-2}(3^m\mu^{*m} r^*)^{-1}\right)^{l}\cdot D_0$. We shall prove the following Euqation~\eqref{eq61}-\eqref{eq65} and \eqref{eq66}-\eqref{eq610} by induction. It's obvious that it holds for the initialization $\bcalT_0$. Suppose it holds for iteration $l$ and we consider the $(l+1)$-th iteration. As for the original sequence, we are going to prove 
\begin{subequations}
	\begin{align}
		\fro{\bcalT_{l+1}-\bcalT^*}&\leq D_{l+1},
		\label{eq61}\\
		\ltinf{\bcalT_{l+1}-\bcalT^*}&\leq 3\sqrt{\frac{\muT r_k}{d_k}}\cdot D_{l+1},
		\label{eq62}\\
		\ltinf{\left(\U_k^{(l+1)}\H_k^{(l+1)}-\U_k^*\right)\fraM_{k}(\bcalC^*)}&\leq5\sqrt{\frac{\muT r_k}{d_k}}\cdot D_{l+1},
		\label{eq63}\\
		\linft{\bcalT_{l+1}-\bcalT^*}&\leq (5m+1)\sqrt{\frac{\mu^{*m} r^*}{d^*}}\cdot D_{l+1},
		\label{eq64}\\
		\ltinf{\U_k^{(l)}}&\leq \sqrt{\frac{3\muT r_k}{d_k}},\label{eq65}
	\end{align}
	\label{eq6}
\end{subequations}
where $\bcalT_{l+1}=\bcalC_{l+1}\cdot\llbracket\U_1^{(l+1)},\dots,\U_m^{(l+1)}\rrbracket$ is the Tucker decomposition and $\H_k^{(l+1)}:=\U_k^{(l+1)\top}\U_k^*$. As for the leave-one-out sequence, we are going to prove
\begin{subequations}
	\begin{align}
		\fro{\bcalT_{l+1}^{(k),j}-\bcalT^*}&\leq D_{l+1}
		\label{eq66}\\
		\ltinf{\bcalT_{l+1}^{(k),j}-\bcalT^*}&\leq 3\sqrt{\frac{\muT r_k}{d_k}}\cdot D_{l+1}
		\label{eq67}\\
		\ltinf{\left(\U_k^{(l+1),(k),j}\H_k^{(l+1),(k),j}-\U_k^*\right)\fraM_{k}(\bcalC^*)}&\leq5\sqrt{\frac{\muT r_k}{d_k}}\cdot D_{l+1}
		\label{eq68}\\
		\linft{\bcalT_{l+1}^{(k),j}-\bcalT^*}&\leq (5m+1)\sqrt{\frac{\mu^{*m} r^*}{d^*}}\cdot D_{l+1},
		\label{eq69}\\
		\ltinf{\U_k^{(l+1),(k),j}}&\leq\sqrt{\frac{3\muT r_k}{d_k}},\label{eq610}
	\end{align}
\end{subequations}
where $\bcalT_{l+1}^{(k),j}=\bcalC_{l+1}^{(k),j}\cdot\llbracket\U_1^{(l+1),(k),j},\dots,\U_m^{(l+1),(k),j}\rrbracket$ is the Tucker decomposition and $\H_k^{(l+1),(k),j}:=\left(\U_k^{(l+1),(k),j}\right)^{\top}\U_k^* $. Notice that phase one regularity conditions Lemma~\ref{lem:hub:regularity} also holds for the leave-one-out sequences $\{\bcalT_{l+1}^{(k),j}\}$ under event $\bcalE_1$ in parallel and its convergence analyses are same as the original sequence. Hence we shall only show detailed proof of original sequence and skip the leave-one-out analysis in the first phase.
\paragraph{Frobenius norm}
First consider $\fro{\bcalT_l-\bcalT-\eta_l\calP_{\TT_l}(\bcalG_l)}$,
\begin{align*}
	\fro{\bcalT_l-\eta_l\calP_{\TT_l} (\bcalG_l)-\bcalT^*}^2=\fro{\bcalT_l-\bcalT^*}^2-2\eta_l\inp{\bcalT_l-\bcalT^*}{\calP_{\TT_l}(\bcalG_l)} + \eta_l^2\fro{\calP_{\TT_l}(\bcalG_l)}^2.
\end{align*}
We have analyzed the last term $\fro{\calP_{\TT_l}(\bcalG_l)}^2$ in Lemma~\ref{lem:hub:regularity} that $\fro{\calP_{\TT_l}(\bcalG_l)}^2\leq d^*$.
Note that by definition of sub-gradient and analyses of $f(\bcalT)-f(\bcalT^*)$ in Lemma~\ref{lem:hub:regularity}, the intermediate term has the following lower bound
\begin{align*}
	\inp{\bcalT_l-\bcalT^*}{\calP_{\TT_l}(\bcalG_l)}&=\inp{\bcalT_l-\bcalT^*}{\bcalG_l}-\inp{\calP_{\TT_l}^{\perp}(\bcalT_l-\bcalT^*)}{\bcalG_l}\\
	&\geq f(\bcalT_l)-f(\bcalT^*)-\inp{\calP_{\TT_l}^{\perp}\bcalT^*}{\bcalG_l}\\
	&\geq  \linft{\bcalT_l-\bcalT^*}^{-1}\fro{\bcalT_l-\bcalT^*}^2-6d^*\gamma-d^*\delta-\inp{\calP_{\TT_l}^{\perp}\bcalT^*}{\bcalG_l}.
\end{align*}
Besides, Lemma~\ref{teclem:rieman-orthogonal-project} shows that $\left|\inp{\calP_{\TT_l}^{\perp}\bcalT^*}{\bcalG_l}\right|\leq 8m^2\mins^{*-1}\fro{\bcalT_l-\bcalT^*}^2\cdot \fro{\bcalG_l}$ and absolute values of $\bcalG_l$ entries are bounded by $1$, which implies $\fro{\bcalG_l}\leq\sqrt{d^*}$. Thus, we have 
\begin{align*}
	\fro{\bcalT_l-\eta_l\calP_{\TT_l} (\bcalG_l)-\bcalT^*}^2&\leq \fro{\bcalT_l-\bcalT^*}^2-2\eta_l \linft{\bcalT_l-\bcalT^*}^{-1}\cdot\fro{\bcalT_l-\bcalT^*}^2+12\eta_{l}d^*\gamma+2\eta_{l}d^*\delta\\
	&{~~~~~~~~~~~~~~~~~~~~~~~~}+16\eta_{l}m^2\mins^{*-1}\fro{\bcalT_l-\bcalT^*}^2\cdot \sqrt{d^*}+\eta_{l}^2 d^*.
\end{align*}
Then insert $\fro{\bcalT_l-\bcalT^*}\leq D_l$ and $\linft{\bcalT_l-\bcalT^*}\leq (5m+1)\cdot\sqrt{\frac{3^m\mu^{*m} r^*}{d^*}}\cdot D_l $ into the above equation,
\begin{align*}
	&{~~~}\fro{\bcalT_l-\eta_l\calP_{\TT_l} (\bcalG_l)-\bcalT^*}^2\\
	&\leq\left(1-2\eta_{l}\linft{\bcalT_l-\bcalT^*}^{-1}\right)\fro{\bcalT_l-\bcalT^*}^2+12\eta_{l}d^*\gamma+2\eta_{l}d^*\delta+16\eta_{l}m^2\mins^{*-1}\fro{\bcalT_l-\bcalT^*}^2\cdot \sqrt{d^*}+\eta_{l}^2 d^*\\
	&\leq \left(1-2\eta_{l}\linft{\bcalT_l-\bcalT^*}^{-1}\right)D_l^2+12\eta_{l}d^*\gamma+2\eta_{l}d^*\delta+16\eta_{l}m^2\mins^{*-1}D_l^2\cdot \sqrt{d^*}+\eta_{l}^2 d^*\\
	&\leq D_l^2-2\eta_l (5m+1)^{-1}\sqrt{\frac{d^*}{3^m\mu^{*m} r^*}}D_l+12\eta_{l}d^*\gamma+2\eta_{l}d^*\delta+16\eta_{l}m^2\mins^{*-1}D_l^2\cdot \sqrt{d^*}+\eta_{l}^2 d^*,
\end{align*}
where the second inequality also uses $1-2\eta_{l}\linft{\bcalT_l-\bcalT^*}^{-1}> 0$. Then with phase one region constraint and initialization condition $D_l\leq D_0\leq c_m\mins^*$, we have
\begin{align*}
	\fro{\bcalT_l-\eta_l\calP_{\TT_l} (\bcalG_l)-\bcalT^*}^2&\leq D_l^2-\eta_l (5m+1)^{-1}\sqrt{\frac{d^*}{3^m\mu^{*m} r^*}}D_l+\eta_{l}^2 d^*.
\end{align*}
Note that the stepsize $\eta_l \in\frac{1}{8(5m+1)\sqrt{3^m\mu^mr^*d^*}}\cdot D_l\cdot \left[1,3\right]$ and we could have 
$$\fro{\bcalT_l-\eta_l\calP_{\TT_l} (\bcalG_l)-\bcalT^*}^2\leq \left( 1-\frac{3}{64}(5m+1)^{-2}(3^m\mu^{*m} r^*)^{-1}\right)D_l^2. $$
Recall that $\bcalT_{l+1}=\textrm{HOSVD}(\bcalT_l-\eta_{l}\calP_{\TT_l}(\bcalG_l))$ and by Theorem~\ref{teclem:perturbation:tensor}, we have \begin{align*}
	\fro{\bcalT_{l+1}-\bcalT^*}\leq \left( 1-\frac{1}{64}(5m+1)^{-2}(3^m\mu^{*m} r^*)^{-1}\right)D_l=D_{l+1},
\end{align*}
where initialization condition $D_{l}\leq D_0\leq c\mins^*\cdot (5m+1)^{-2}(3^m\muT r^*)^{-1}$ is used.
\paragraph{Entrywise norm}
Consider $\ltinf{\fraM_k\left(\bcalT_l-\bcalT^*-\eta_{l}\calP_{\TT_l}(\bcalG_l)\right)}$, for $k=1,\dots,m$ or equivalently, consider $$\fro{\calP_{\Omega_{j}^{(k)}}\left(\bcalT_l-\bcalT^*-\eta_{l}\calP_{\TT_l}(\bcalG_l)\right) },\quad \text{ for each }j=1,\dots,d_k.$$
Note that
\begin{align*}
	\fro{\calP_{\Omega_{j}^{(k)}}\left(\bcalT_l-\bcalT^*-\eta_{l}\calP_{\TT_l}(\bcalG_l)\right) }^2&= \fro{\calP_{\Omega_{j}^{(k)}}\left(\bcalT_l-\bcalT^*\right)}^2\\
	&{~~~~~~~~~~}-2\eta_{l}\inp{\calP_{\Omega_{j}^{(k)}}(\bcalT_l-\bcalT^*)}{\calP_{\Omega_{j}^{(k)}}(\calP_{\TT_l}(\bcalG_l))}+\eta_{l}^2\fro{\calP_{\Omega_{j}^{(k)}}(\calP_{\TT_l}(\bcalG_l))}^2.
\end{align*}
Insert induction $\ltinf{\U_k^{(l)}}\leq\sqrt{\frac{3\muT r_k}{d_k}}$ into Lemma~\ref{lem:hub:regularity} and it provides an upper bound for the last term $$\ltwo{\calP_{\Omega_{j}^{(k)}}(\calP_{\TT_l}(\bcalG_l))}^2\leq9\frac{\muT r_k}{d_k}d^*.$$ Then consider the intermidiate term $\inp{\calP_{\Omega_{j}^{(k)}}(\bcalT_l-\bcalT^*)}{\calP_{\Omega_{j}^{(k)}}(\calP_{\TT_l}(\bcalG_l))}= \inp{\calP_{\Omega_{j}^{(k)}}(\bcalT_l)}{\calP_{\Omega_{j}^{(k)}}(\calP_{\TT_l}(\bcalG_l))}-\inp{\calP_{\Omega_{j}^{(k)}}(\bcalT^*)}{\calP_{\Omega_{j}^{(k)}}(\calP_{\TT_l}(\bcalG_l))}$. Note that with simple calculations, we obtain
$$\inp{\calP_{\Omega_{j}^{(k)}}(\bcalT_l)}{\calP_{\Omega_{j}^{(k)}}(\calP_{\TT_l}(\bcalG_l))}=\inp{\calP_{\Omega_{j}^{(k)}}(\bcalT_l)}{\calP_{\Omega_{j}^{(k)}}(\bcalG_l)},$$
and
\begin{equation}
	\label{eq8}
	\begin{split}
		\inp{\calP_{\Omega_{j}^{(k)}}(\bcalT^*)}{\calP_{\TT_l}(\bcalG_l)}&=\inp{\calP_{\TT_l}\calP_{\Omega_{j}^{(k)}}(\bcalT^*)}{\bcalG_l}\\
		&=\inp{\calP_{\TT_l}\calP_{\Omega_{j}^{(k)}}\calP_{\TT_l}(\bcalT^*)}{\bcalG_l}+\inp{\calP_{\TT_l}\calP_{\Omega_{j}^{(k)}}\calP_{\TT_l}^{\perp}(\bcalT^*)}{\bcalG_l}\\
		&=\inp{\calP_{\Omega_{j}^{(k)}}\calP_{\TT_l}(\bcalT^*)}{\bcalG_l}+\inp{\calP_{\TT_l}\calP_{\Omega_{j}^{(k)}}\calP_{\TT_l}^{\perp}(\bcalT^*)}{\bcalG_l}\\
		&=\inp{\calP_{\Omega_{j}^{(k)}}(\bcalT^*)}{\bcalG_l}-\inp{\calP_{\Omega_{j}^{(k)}}\calP_{\TT_l}^{\perp}(\bcalT^*)}{\bcalG_l}+\inp{\calP_{\Omega_{j}^{(k)}}\calP_{\TT_l}^{\perp}(\bcalT^*)}{\calP_{\TT_l}(\bcalG_l)}\\
		&=\inp{\calP_{\Omega_{j}^{(k)}}(\bcalT^*)}{\calP_{\Omega_{j}^{(k)}}(\bcalG_l)}-\inp{\calP_{\Omega_{j}^{(k)}}\calP_{\TT_l}^{\perp}(\bcalT^*)}{\calP_{\Omega_{j}^{(k)}}\bcalG_l}+\inp{\calP_{\Omega_{j}^{(k)}}\calP_{\TT_l}^{\perp}(\bcalT^*)}{\calP_{\Omega_{j}^{(k)}}\calP_{\TT_l}(\bcalG_l)},
	\end{split}
\end{equation}
where $\calP_{\TT_l}\calP_{\Omega_{j}^{(k)}}\calP_{\TT_l}=\calP_{\Omega_{j}^{(k)}}\calP_{\TT_l}$ is used. With Lemma~\ref{teclem:rieman-orthogonal-project}, we have
\begin{align*}
	&{~~~~}\bigg| \inp{\calP_{\Omega_{j}^{(k)}}\calP_{\TT_l}^{\perp}(\bcalT^*)}{\calP_{\Omega_{j}^{(k)}}(\bcalG_l)}\bigg|\\
	&\leq\fro{\calP_{\Omega_{j}^{(k)}}\calP_{\TT_l}^{\perp}(\bcalT^*)}\fro{\calP_{\Omega_{j}^{(k)}}(\bcalG_l)}\\
	&\leq \sqrt{\dkm}\fro{\bcalT_l-\bcalT^*}\left(m^2\ltinf{\U_k^{(l)}}\frac{\fro{\bcalT_l-\bcalT^*}}{\mins^*}+m\ltinf{\U_k^{(l)}\U_k^{(l)\top}-\U_k^*\U_k^{*\top}}\right)=:B_1,
\end{align*}
and
\begin{align*}
	&{~~~~}\bigg|\inp{\calP_{\Omega_{j}^{(k)}}\calP_{\TT_l}^{\perp}(\bcalT^*)}{\calP_{\Omega_{j}^{(k)}}\calP_{\TT_l}(\bcalG_l)} \bigg|\\
	&\leq \fro{\calP_{\Omega_{j}^{(k)}}\calP_{\TT_l}^{\perp}(\bcalT^*)}\fro{\calP_{\Omega_{j}^{(k)}}\calP_{\TT_l}(\bcalG_l)}\\
	&\leq 3\sqrt{\frac{\muT r_k}{d_k}\cdot d^*}\fro{\bcalT_l-\bcalT^*}\left(m^2\ltinf{\U_k^{(l)}}\frac{\fro{\bcalT_l-\bcalT^*}}{\mins^*}+m\ltinf{\U_k^{(l)}\U_k^{(l)\top}-\U_k^*\U_k^{*\top}}\right)=:B_2.
\end{align*}
Note that by induction $\ltinf{\left(\U_k^{(l)}\H_k^{(l)}-\U_k^*\right)\fraM_k(\bcalC^*) }\leq 5\sqrt{\frac{\muT r_k}{d_k}}\cdot D_{l}$, we have $\ltinf{\U_k^{(l)}\H_k^{(l)}-\U_k^* }\leq 5\sqrt{\frac{\muT r_k}{d_k}}\cdot\mins^{*-1}D_{l}$. Lemma~\ref{teclem:ltinf-transformation} and Lemma~\ref{teclem:perturbation:matrix} infer that $$\ltinf{\U_k^{(l)}\U_k^{(l)\top}-\U_k^*\U_k^{*\top} }\leq 8\mins^{*-1}\sqrt{\frac{\muT r_k}{d_k}}D_l.$$ In this way, we have
\begin{align*}
	B_1\vee B_2\leq 16m^2\mins^{*-1}\sqrt{d^*}\frac{\muT r_k}{d_k}D_l^2.
\end{align*}
Also, by definition of sub-gradient and by regularity properties in Lemma~\ref{lem:hub:regularity}, we have \begin{align*}
	\inp{\calP_{\Omega_{j}^{(k)}}(\bcalT_l-\bcalT^*)}{\calP_{\Omega_{j}^{(k)}}(\bcalG_l)}&\geq\lph{\calP_{\Omega_{j}^{(k)}}(\bcalT-\bcalT^*-\bXi)}-\lph{\calP_{\Omega_{j}^{(k)}}(\bXi)}\\
	&\geq\linft{\calP_{\Omega_{j}^{(k)}}(\bcalT_l-\bcalT^*)}^{-1}\cdot\fro{\calP_{\Omega_{j}^{(k)}}(\bcalT_l-\bcalT^*)}^2- 6\dkm\gamma-\dkm\delta.
\end{align*}
Thus, the intermediate term has the lower bound
\begin{align*}
	&\inp{\calP_{\Omega_{j}^{(k)}}(\bcalT_l-\bcalT^*)}{\calP_{\Omega_{j}^{(k)}}(\calP_{\TT_l}(\bcalG_l))} \geq \linft{\calP_{\Omega_{j}^{(k)}}(\bcalT_l-\bcalT^*)}^{-1}\cdot\fro{\calP_{\Omega_{j}^{(k)}}(\bcalT_l-\bcalT^*)}^2- 6\dkm\gamma- (B_1+B_2).
\end{align*}
Hence combine the above euqations and then we have upper bound for the slice \begin{align*}
	&{~~~~}\fro{\calP_{\Omega_{j}^{(k)}}\left(\bcalT_l-\bcalT^*-\eta_{l}\calP_{\TT_l}(\bcalG_l)\right) }^2\\
	&\leq \fro{\calP_{\Omega_{j}^{(k)}}\left(\bcalT_l-\bcalT^*\right)}^2-2\eta_l \linft{\calP_{\Omega_{j}^{(k)}}(\bcalT_l-\bcalT^*)}^{-1}\cdot\fro{\calP_{\Omega_{j}^{(k)}}(\bcalT_l-\bcalT^*)}^2 +12\eta_{l} \dkm \gamma+2\eta_{l}(B_1+B_2)+9\eta_l^2 \frac{\muT r_k}{d_k}d^*\\
	&\leq 9\frac{\muT r_k}{d_k}D_l^2-18\eta_{l}\frac{\muT r_k}{d_k}(5m+1)^{-1}(3^m\mu^{*m} r^* d^*)^{-1/2}D_l+ 12\eta_{l} \dkm \gamma+2\eta_{l}(B_1+B_2)+9\eta_l^2 \frac{\muT r_k}{d_k}d^*\\
	&\leq 9\frac{\mu r_k}{d_k}\cdot\left(1- \frac{3}{64}(5m+1)^{-2}(3^m\mu^m r^*)^{-1}\right)D_{l}^2,
\end{align*}
where the second inequality uses induction of $\fro{\calP_{\Omega_{j}^{(k)}}(\bcalT_{l}-\bcalT^*)}$ and last line uses phase one region constraint and step size selection, similar to Frobenius norm analyses. The above equation infers that $$\fro{\calP_{\Omega_{j}^{(k)}}\left(\bcalT_l-\bcalT^*-\eta_{l}\calP_{\TT_l}(\bcalG_l)\right) }\leq3\sqrt{\frac{\muT r_k}{d_k}} \left(1-\frac{3}{128}(5m+1)^{-2}(3^m\mu^{*m} r^*)^{-1}\right) D_l.$$
Take maximum over $j=1,\dots,d_k$, it is exactly $$\ltinf{\fraM_{k}\left(\bcalT_l-\bcalT^*-\eta_{l}\calP_{\TT_l}(\bcalG_l)\right) }\leq 3\sqrt{\frac{\muT r_k}{d_k}}\left(1-\frac{3}{128}(5m+1)^{-2}(3^m\mu^{*m} r^*)^{-1}\right) D_l.$$
Besides, with Lemma~\ref{teclem:rieman-orthogonal-project} and Lemma~\ref{teclem:remian-perturb-subgradient}, we have
\begin{align*}
	\ltinf{\fraM_k(\calP_{\TT^*}^{\perp}(\bcalT_l-\eta_l\calP_{\TT_l}(\bcalG_l)))}&\leq \ltinf{\fraM_k(\calP_{\TT^*}^{\perp}(\bcalT_l))}+\eta_{l}\ltinf{\fraM_k(\calP_{\TT^*}^{\perp}(\calP_{\TT_l}(\bcalG_l)))}\\
	&\leq5m^2 \sqrt{\frac{\muT r_k}{d_k}}\cdot\mins^{*-1}D_l^2.
\end{align*}
Then it arrives at
\begin{align*}
	&{~~~~}\ltinf{\fraM_k(\calP_{\TT^*}(\bcalT_l-\bcalT^*-\eta_l\calP_{\TT_l}(\bcalG_l)))}\\
	&\leq \ltinf{\fraM_k(\bcalT_l-\bcalT^*-\eta_l\calP_{\TT_l}(\bcalG_l))}+\ltinf{\fraM_k(\calP_{\TT^*}^{\perp}(\bcalT_l-\eta_l\calP_{\TT_l}(\bcalG_l)))}\\
	&\leq 3\sqrt{\frac{\muT r_k}{d_k}}\left(1-\frac{3}{128}(5m+1)^{-2}(3^m\mu^{*m} r^*)^{-1}\right) D_l+5m^2 \sqrt{\frac{\muT r_k}{d_k}}\cdot\mins^{*-1}D_l^2,
\end{align*}
where $\calP_{\TT^*}^{\perp}(\bcalT^*)=0$ is used. Then by Lemma~\ref{teclem:perturbation:tensor}, we have \begin{align*}
	\ltinf{\fraM_k(\bcalT_{l+1}-\bcalT^*)}&\leq\ltinf{\fraM_k(\calP_{\TT^*}(\bcalT_l-\bcalT^*-\eta_l\calP_{\TT_l}(\bcalG_l)))}+32m\sqrt{\frac{\muT r_k}{d_k}}\frac{\fro{\bcalT_l-\bcalT^*-\eta_l\calP_{\TT_l}(\bcalG_l)}^2}{\mins^*}\\
	&{~~~}+32m\ltinf{\fraM_k(\bcalT_l-\bcalT^*-\eta_l\calP_{\TT_l}(\bcalG_l))}\frac{\fro{\bcalT_l-\bcalT^*-\eta_l\calP_{\TT_l}(\bcalG_l)}}{\mins^*}\\
	&\leq3\sqrt{\frac{\muT r_k}{d_k}}\cdot\left(1-\frac{3}{128}(5m+1)^{-2}(\mu^m r^*)^{-1}\right)D_{l}+32m\mins^{*-1}D_{l+1}^2\cdot\sqrt{\frac{\mu r_k}{d_k}}\\
	&{~~~}+5m^2 \sqrt{\frac{\muT r_k}{d_k}}\cdot\mins^{*-1}D_l^2\\
	&\leq3\sqrt{\frac{\muT r_k}{d_k}} \cdot D_{l+1},
\end{align*}
and Lemma~\ref{teclem:perturbation:tensor} also infers
\begin{align*}
	&{~~~~}\ltinf{\left(\U_k^{(l+1)}\H_k^{(l+1)}-\U_k^*\right)\fraM_k(\bcalC^*)}\\
	&\leq \ltinf{\U_{k\perp}^*\U_{k\perp}^*\fraM_k(\bcalT_l-\bcalT^*-\eta_l\calP_{\TT_l}(\bcalG_l))}+64\ltinf{\U_k^*}\frac{\fro{\bcalT_l-\bcalT^*-\eta_{l}\calP_{\TT_l}(\bcalG_l)}^2}{\mins^*}\\
	&{~~~}+16\ltinf{\U_{k\perp}^*\U_{k\perp}^*\fraM_k(\bcalT_l-\bcalT^*-\eta_l\calP_{\TT_l}(\bcalG_l))}\cdot \frac{\fro{\bcalT_l-\bcalT^*-\eta_{l}\calP_{\TT_l}(\bcalG_l)}}{\mins^*}\\
	&\leq \left(1+16D_{l+1}\cdot \mins^{*-1}\right)\ltinf{\fraM_k(\bcalT_l-\bcalT^*-\eta_l\calP_{\TT_l}(\bcalG_l)) }+1.1\ltinf{\U_k^*}D_{l+1}\\
	&\leq 5D_{l+1}\cdot\sqrt{\frac{\muT r_k}{d_k}},
\end{align*} where the second ineuqality uses
\begin{align*}
	&{~~~}\ltinf{\U_{k\perp}^*\U_{k\perp}^*\fraM_k(\bcalT_l-\bcalT^*-\eta_l\calP_{\TT_l}(\bcalG_l))}\\
	&\leq\ltinf{\fraM_k(\bcalT_l-\bcalT^*-\eta_l\calP_{\TT_l}(\bcalG_l)) }+\ltinf{\U_k^*}\fro{\bcalT_l-\bcalT^*-\eta_l\calP_{\TT_l}(\bcalG_l)}.
\end{align*}
Note that it implies $\bcalT_{l+1}$ is incoherent with $3\muT$, namely due to,
\begin{align*}
	\ltinf{\U_k^{(l+1)}}&\leq\sqrt{2}\ltinf{\U_k^{(l+1)}\H_k^{(l+1)}}\leq \sqrt{2}\linft{\U_k^{(l+1)}\H_k^{(l+1)}-\U_k^* }+\sqrt{2}\linft{\U_k^*}\\
	&\leq \sqrt{2}\mins^{*-1} \ltinf{\left(\U_k^{(l+1)}\H_k^{(l+1)}-\U_k^*\right)\fraM_k(\bcalC^*)}D_{l+1}+\sqrt{2}\linft{\U_k^*}\\
	&\leq\sqrt{\frac{3\muT r_k}{d_k}}.
\end{align*}
Finally, by Lemma~\ref{teclem:entrynorm-expansion}, we have bound of entrywise normed bound
\begin{align*}
	&{~~~~}\linft{\bcalT_{l+1}-\bcalT^*}\\
	&\leq \sqrt{\frac{3^m\mu^{*m} r^*}{d^*}}\fro{\bcalT_{l+1}-\bcalT^*}+\sum_{k=1}^m\sqrt{\frac{3^m\mu^{*m-1}r_k^-}{\dkm}}\ltinf{\left(\U_k^{(l+1)}\H_k^{(l+1)}-\U_k^*\right)\fraM_k(\bcalC^*)}\\
	&\leq (5m+1)\sqrt{\frac{3^m\mu^{*m} r^*}{d^*}}D_{l+1}.
\end{align*}
\paragraph{Phase One Output}
Notice that if the signal-to-noise ratio is smaller than $O(\sqrt{d^*})$ and then the initialization already guarantees error of scale $O(\sqrt{d^*}\gamma)$, in which case it enters phase two directly and doesn't need the first phase.
Anyway, phase two starts with the error rate of
\begin{align*}
	\fro{\bcalT_{l_1}^{(k),j}-\bcalT^*}	\vee\fro{\bcalT_{l_1}-\bcalT^*}\leq \min\left\{2(5m+1)\sqrt{3^m\mu^{*m}r^*d^*}(6\gamma+\delta), D_0 \right\},
\end{align*}
By traingular inequality, we have upper bound of distance between the origanl sequence and leave-one-out sequence,
\begin{align*}
	\fro{\bcalT_{l_1}^{(k),j}-\bcalT_{l_1}}\leq2\min\left\{2(5m+1)\sqrt{3^m\mu^{*m}r^*d^*}(6\gamma+\delta), D_0 \right\}.
\end{align*}
Also, it has
\begin{align*}
	\fro{\calP_{\Omega_{j}^{(k)}}\left(\bcalT_{l_1}^{(k),j}-\bcalT^*\right)}	\vee\fro{\calP_{\Omega_{j}^{(k)}}\left(\bcalT_{l_1}-\bcalT^*\right)}\leq 3\sqrt{\frac{\mu r_k}{d_k}}\cdot\min\left\{2(5m+1)\sqrt{3^m\mu^{*m}r^*d^*}(6\gamma+\delta), D_0 \right\},
\end{align*}
which infers
\begin{align*}
	\fro{\calP_{\Omega_{j}^{(k)}}\left(\bcalT_{l_1}-\bcalT_{l_1}^{(k),j}\right)}\leq 6\sqrt{\frac{\mu r_k}{d_k}}\cdot\min\left\{2(5m+1)\sqrt{3^m\mu^{*m}r^*d^*}(6\gamma+\delta), D_0 \right\}.
\end{align*}
The entry-wise normed distance has the following bound,
\begin{align*}
	\linft{\bcalT_{l_1}-\bcalT^*}\leq 2(5m+1)^23^m\mu^{*m}r^*(6\gamma+\delta),\quad \linft{\bcalT_{l_1}^{(k),j}-\bcalT^*}\leq 2(5m+1)^23^m\mu^{*m}r^*(6\gamma+\delta).
\end{align*}

\subsubsection{Phase Two}
Analysis of phase two is more delicate. We shall continue from the output of phase one $\bcalT_{l_1}$ and prove via induction.  Denote $D_l:=\left(1-\frac{3}{c_1^264(m+1)}\cdot\frac{\delta^2}{b_0^2}\right)^{l-l_1}\fro{\bcalT_{l_1}-\bcalT^*}$.  Suppose Equation~\eqref{eq81}-\eqref{eq85} hold for iteration $l$ and we shall prove Equation~\eqref{eq81}-\eqref{eq85} with iteration $l+1$ for all $k,v=1,\dots,m$ and $j=1,\dots,d_k$, $i=1,\dots,d_v$,
\begin{subequations}
	\begin{align}
		\fro{\bcalT_{l+1}-\bcalT^*}\vee \fro{\bcalT_{l+1}^{(k),j}-\bcalT^*}&\leq D_{l+1}\label{eq81}\\
		\fro{\calP_{\Omega_{j}^{(k)}}\left(\bcalT_{l+1}^{(k),j}-\bcalT^*\right)}&\leq3\sqrt{\frac{\muT r_k}{d_k}}D_{l+1}\label{eq82}\\
		\fro{\calP_{\Omega_{j}^{(k)}}\left(\bcalT_{l+1}^{(k),j}-\bcalT_{l+1}\right)}&\leq3\sqrt{\frac{\muT r_k}{d_k}}\min\left\{36D_0,C_{m,\muT,r^*}(6\gamma+\delta)\right\}+2\frac{\mins^*}{\sqrt{\textsf{DoF}_m}b_0}\delta\label{eq83}\\
		\fro{\calP_{\Omega_{j}^{(k)}}\left(\bcalT_{l+1}^{(v),i}-\bcalT_{l+1}^{(j),k}\right)}&\leq3\sqrt{\frac{\muT r_k}{d_k}}\min\left\{36D_0,C_{m,\muT,r^*}(6\gamma+\delta)\right\}+2\frac{\mins^*}{\sqrt{\textsf{DoF}_m}b_0}\delta\label{eq84}\\
		\ltinf{\U_k^{(l+1)}}\vee\ltinf{\U_k^{(l+1),(v),i}}&\leq\sqrt{\frac{3\muT r_k}{d_k}}\label{eq85}\\
		\linft{\bcalT_{l+1}-\bcalT^*}\vee\linft{\bcalT_{l+1}^{(k),j}-\bcalT^*}&\leq72(5m+1)^23^m\mu^{*m}r^*(6\gamma+\delta)\label{eq86}
	\end{align}
\end{subequations}
\paragraph{Frobenius norm}
First consider $\fro{\bcalT_l-\bcalT-\eta_l\calP_{\TT_l}(\bcalG_l)}$,
\begin{align*}
	\fro{\bcalT_l-\eta_l\calP_{\TT_l} (\bcalG_l)-\bcalT^*}^2=\fro{\bcalT_l-\bcalT^*}^2-2\eta_l\inp{\bcalT_l-\bcalT^*}{\calP_{\TT_l}(\bcalG_l)} + \eta_l^2\fro{\calP_{\TT_l}(\bcalG_l)}^2.
\end{align*}
According to Lemma~\ref{lem:hub:regularity}, the last term has the upper bound $\fro{\calP_{\TT_l}(\bcalG_l)}^2\leq c_1^2(m+1)\delta^{-2}\fro{\bcalT_l-\bcalT^*}^2$.
By definition of sub-gradient and analysis of $f(\bcalT)-f(\bcalT^*)$ in Lemma~\ref{lem:hub:regularity}, the intermediate term has the lower bound
\begin{align*}
	\inp{\bcalT_l-\bcalT^*}{\calP_{\TT_l}(\bcalG_l)}&=\inp{\bcalT_l-\bcalT^*}{\bcalG_l}-\inp{\calP_{\TT_l}^{\perp}(\bcalT_l-\bcalT^*)}{\bcalG_l}\\
	&\geq f(\bcalT_l)-f(\bcalT^*)-\inp{\calP_{\TT_l}^{\perp}\bcalT^*}{\bcalG_l}\\
	&\geq \frac{1}{2b_0} \fro{\bcalT_l-\bcalT^*}^2-\inp{\calP_{\TT_l}^{\perp}\bcalT^*}{\bcalG_l}
\end{align*}
Besides, by Lemma~\ref{teclem:rieman-orthogonal-project} and proofs of Lemma~\ref{lem:hub:regularity}, we have $ \left|\inp{\calP_{\TT_l}^{\perp}\bcalT^*}{\bcalG_l} \right|\leq \fro{\calP_{\TT_l}^{\perp}\bcalT^*}\|\bcalG_{l}\|_{\mathrm{F, 2\r}}\leq8m^2 c_1\delta^{-1}\mins^{*-1}\fro{\bcalT_l-\bcalT^*}^3$ and hence we have
\begin{align*}
	\fro{\bcalT_l-\eta_l\calP_{\TT_l} (\bcalG_l)-\bcalT^*}^2&\leq \fro{\bcalT_l-\bcalT^*}^2-\eta_{l}\frac{1}{b_0} \fro{\bcalT_l-\bcalT^*}^2+16\eta_{l}m^2c_1\delta^{-1}\mins^{*-1}\fro{\bcalT_l-\bcalT^*}^3\\
	&{~~~~~~~~~~~~~~~~~~~~~~~~~~~~~~~~~~~~~~~~~~~~~~~~~~~~~~}+\eta_{l}^2c_1^2(m+1)\delta^{-2}\fro{\bcalT_l-\calT^*}^2\\
	&\leq  \fro{\bcalT_l-\bcalT^*}^2-\eta_{l}\frac{1}{2b_0} \fro{\bcalT_l-\bcalT^*}^2 + +\eta_{l}^2c_1^2(m+1)\delta^{-2}\fro{\bcalT_l-\calT^*}^2\\
	&\leq \left(1-\frac{3}{c_1^264(m+1)}\cdot\frac{\delta^2}{b_0^2}\right)\fro{\bcalT_l-\bcalT^*}^2,
\end{align*}
where the second inequality is because of $\fro{\bcalT_l-\bcalT^*}\leq \fro{\bcalT_0-\bcalT^*}\leq c_1^{-1}m^{-2}\frac{\delta}{b_0}\mins^{*}$ and the last inequality uses stepsize selection $\eta_{l}\in\left[\frac{1}{8c_1^2(m+1)}\cdot\frac{\delta^2}{b_0}, \frac{3}{8c_1^2(m+1)}\cdot\frac{\delta^2}{b_0}\right]$.
By tensor perturbation Lemma~\ref{teclem:perturbation:tensor}, we have
\begin{align*}
	\fro{\bcalT_{l+1}-\bcalT^*}&\leq \fro{\bcalT_l-\eta_l\calP_{\TT_l} (\bcalG_l)-\bcalT^*}+\mins^{*-1}\fro{\bcalT_l-\eta_l\calP_{\TT_l} (\bcalG_l)-\bcalT^*}^2\\
	&\leq \left(1-\frac{1}{c_1^232(m+1)}\cdot\frac{\delta^2}{b_0^2}\right)\fro{\bcalT_l-\bcalT^*}\leq D_{l+1}.
\end{align*}
We could have parallel results under event $\bcalE_2$ for  leave-one-out sequence $k=1,\dots,m$, $j=1,\dots,d_k$, $\fro{\bcalT_{l+1}^{(k),j}-\bcalT^*}\leq D_{l+1}$ and hence we skip its proof.
\paragraph{Entrywise norm} We shall prove Equation~\eqref{eq82}-\eqref{eq86} step by step.

\textbf{Step One} First, consider the $j$-th slice of order $k$ in the leave-one-out sequence,
\begin{align*}
	&\fro{\calP_{\Omega_{j}^{(k)}}\left(\bcalT_{l}^{(k),j}-\bcalT^*-\eta_{l}\calP_{\TT_l^{(k),j}}(\bcalG_l^{(k),j})\right)}^2=\fro{\calP_{\Omega_{j}^{(k)}}\left(\bcalT_{l}^{(k),j}-\bcalT^*\right)}^2\\
	&{~~~~~~~~~}-2\eta_{l}\inp{\calP_{\Omega_{j}^{(k)}}\left(\bcalT_{l}^{(k),j}-\bcalT^*\right)}{\calP_{\Omega_{j}^{(k)}}\calP_{\TT_l^{(k),j}}\left(\bcalG_l^{(k),j}\right)}+\eta_l^2\fro{\calP_{\Omega_{j}^{(k)}}\calP_{\TT_l^{(k),j}}\left(\bcalG_l^{(k),j}\right)}^2.
\end{align*}
According to leave-one-out sequence construction and with expectation calculations in proof of  Lemma~\ref{teclem:pseudo:function expectation} (that is $|\EE\dot{\rho}_{H_p}(t-\xi)|\leq t/\delta$), we know $\fro{\calP_{\Omega_{j}^{(k)}}\left(\bcalG_l^{(k),j}\right) }^2\leq\delta^{-2}\cdot \fro{\calP_{\Omega_{j}^{(k)}}\left(\bcalT_{l}^{(k),j}-\bcalT^*\right)}^2$. Hence, by the induction of $\ltwo{(\U_k^{(l),(k),j})_{j,\cdot}}\leq\sqrt{\frac{3\muT r_k}{d_k}}$ and the regularity properties, the slice of projected sub-gradient term has the following upper bound $$\fro{\calP_{\Omega_{j}^{(k)}}\calP_{\TT_l^{(k),j}}\left(\bcalG_l^{(k),j}\right)}^2\leq\delta^{-2} \fro{\calP_{\Omega_{j}^{(k)}}\left(\bcalT_{l}^{(k),j}-\bcalT^*\right)}^2+6\delta^{-2}\frac{\muT r_k}{d_k}\fro{\bcalT_l^{(k),j}-\bcalT^*}^2.$$
As for the intermediate term, it has\begin{align*}
	&\inp{\calP_{\Omega_{j}^{(k)}}\left(\bcalT_{l}^{(k),j}-\bcalT^*\right)}{\calP_{\Omega_{j}^{(k)}}\calP_{\TT_l^{(k),j}}\left(\bcalG_l^{(k),j}\right)}\\
	&{~~~~~~~~~~~~~~~~~~~~~}=\inp{\calP_{\Omega_{j}^{(k)}}\left(\bcalT_{l}^{(k),j}\right)}{\calP_{\Omega_{j}^{(k)}}\left(\bcalG_l^{(k),j}\right)}-\inp{\calP_{\Omega_{j}^{(k)}}\left(\bcalT^*\right)}{\calP_{\Omega_{j}^{(k)}}\calP_{\TT_l^{(k),j}}\left(\bcalG_l^{(k),j}\right)},
\end{align*}
where the latter term could be expanded in the following way (see details in phase one analyses Section~\ref{proof:pseudo-Huber:phaseone}),
\begin{align*}
	\inp{\calP_{\Omega_{j}^{(k)}}\left(\bcalT^*\right)}{\calP_{\Omega_{j}^{(k)}}\calP_{\TT_l^{(k),j}}\left(\bcalG_l^{(k),j}\right)}&=\inp{\calP_{\Omega_{j}^{(k)}}\left(\bcalT^*\right)}{\calP_{\Omega_{j}^{(k)}}\bcalG_l^{(k),j}}-\underbrace{\inp{\calP_{\Omega_{j}^{(k)}}\calP_{\TT_l^{(k),j}}^{\perp}\left(\bcalT^*\right)}{\calP_{\Omega_{j}^{(k)}}\bcalG_l^{(k),j}}}_{E_1}\\
	&{~~~~~~~~~~~~~~~~~~}+\underbrace{\inp{\calP_{\Omega_{j}^{(k)}}\calP_{\TT_l^{(k),j}}^{\perp}\left(\bcalT^*\right)}{\calP_{\Omega_{j}^{(k)}}\calP_{\TT_l^{(k),j}}\bcalG_l^{(k),j}}}_{E_2}.
\end{align*}
Similar to phase one analyses in Section~\ref{proof:pseudo-Huber:phaseone}, we have bound for $|E_1|$ and $|E_2|$,
\begin{align*}
	|E_1|\vee|E_2|\leq 16m^2\mins^{*-1}\frac{\muT r_k}{d_k}D_{l}^2,
\end{align*}
where the induction of $\bcalT_l^{(k),j}$ is used. Also according to leave-one-out sequence definition and loss function expectation Lemma~\ref{teclem:pseudo:function expectation}, it has $$\inp{\calP_{\Omega_{j}^{(k)}}\left(\bcalT_l^{(k),j}-\bcalT^*\right)}{\calP_{\Omega_{j}^{(k)}}\bcalG_l^{(k),j}}\geq \hat{f}_j^{(k)}(\calP_{\Omega_{j}^{(k)}}(\bcalT^{(k),j}))-\hat{f}_j^{(k)}(\calP_{\Omega_{j}^{(k)}}(\bcalT^*))\geq b_0^{-1}\fro{\calP_{\Omega_{j}^{(k)}}\left(\bcalT_l^{(k),j}-\bcalT^*\right) }^2. $$
Thus the intermediate term has the lower bound
$$\inp{\calP_{\Omega_{j}^{(k)}}(\bcalT_{l}-\bcalT^*)}{\calP_{\Omega_{j}^{(k)}}(\calP_{\TT_l^{(k),j}}(\bcalG_l^{(k),j}))}\geq b_0^{-1}\fro{\calP_{\Omega_{j}^{(k)}}(\bcalT_{l}^{(k),j}-\bcalT^*)}^2-32m^2\mins^{*-1} \frac{\muT r_k}{d_k}\cdot D_l^2. $$
Hence, just like pase one entrywise normed analyses in Section~\ref{proof:pseudo-Huber:phaseone}, it has \begin{align}
	\fro{\calP_{\Omega_{j}^{(k)}}(\bcalT_{l}^{(k),j}-\bcalT^*-\eta_{l}\calP_{\TT_l^{(k),j}}(\bcalG_l^{(k),j}))}\leq 3\sqrt{\frac{\muT r_k}{d_k}}\left(1- \frac{3\delta^2}{64(m+1)b_0^2}\right)\cdot D_{l}.
	\label{eq11}
\end{align}
Remark that even though results of Lemma~\ref{teclem:perturbation:tensor} are measured $\ltinf{\cdot}$, they also hold if it's constrained to certain slice which is a byproduct in its proof. Then with $\bcalT_{l+1}^{(k),j}=\text{HOSVD}_{\r}\left(\bcalT_l^{(k),j}-\eta_l\calP_{\TT_l^{(k),j}}\left(\bcalG_l^{(k),j}\right)\right)$, it has
\begin{align}
	\fro{\calP_{\Omega_{j}^{(k)}}\left(\bcalT_{l+1}^{(k),j}-\bcalT^*\right)}\leq 3\sqrt{\frac{\muT r_k}{d_k}}\cdot D_{l+1}.
	\label{eq13}
\end{align} 
\textbf{Step Two} Consider distance between the original sequence and the leave-one-out sequence,
\begin{equation}
	\begin{split}
		&{~~~}\fro{\calP_{\Omega_{j}^{(k)}}\left(\bcalT_{l}-\bcalT_l^{(k),j}-\eta_{l}\cdot\left(\calP_{\TT_l}(\bcalG_l)-\calP_{\TT_l^{(k),j}}(\bcalG_l^{(k),j})\right)\right)}^2\\
		&=\ltwo{\calP_{\Omega_{j}^{(k)}}\left(\bcalT_{l}-\bcalT_l^{(k),j}\right) }^2-2\eta_{l}\inp{\calP_{\Omega_{j}^{(k)}}\left(\bcalT_l-\bcalT_l^{(k),j}\right)}{\calP_{\Omega_{j}^{(k)}}\left(\calP_{\TT_l}\bcalG_l-\calP_{\TT_l^{(k),j}}\bcalG_l^{(k),j}\right)}\\&{~~~}+\eta_{l}^2\fro{\calP_{\Omega_{j}^{(k)}}\calP_{\TT_l}\bcalG_l-\calP_{\Omega_{j}^{(k)}}\calP_{\TT_l^{(k),j}}\bcalG_l^{(k),j} }^2.
	\end{split}
	\label{eq7}
\end{equation}
Denote the sub-gradient of the original loss function at leave-one-out iterative as $\bar{\bcalG}_{l}^{(k),j}\in\partial f(\bcalT_{l}^{(k),j})$. Notice that entries of $\bcalG_{l}^{(k),j}$ are same as $\bar{\bcalG}_{l}^{(k),j}$ except the $j$ th slice of order $k$.
The last term of Equation~\eqref{eq7} could be upper bounded with
\begin{align*}
	&{~~~}\fro{\calP_{\Omega_{j}^{(k)}}\calP_{\TT_l}\bcalG_l-\calP_{\Omega_{j}^{(k)}}\calP_{\TT_l^{(k),j}}\bcalG_l^{(k),j} }\\
	&\leq \fro{\calP_{\Omega_{j}^{(k)}}\calP_{\TT_l}\left(\bcalG_l-\bar{\bcalG_l}^{(k),j}\right) }+\fro{\calP_{\Omega_{j}^{(k)}}\left(\calP_{\TT_l}-\calP_{\TT_l^{(k),j}}\right)\bar{\bcalG}_l^{(k),j}}+\fro{\calP_{\Omega_{j}^{(k)}}\calP_{\TT_l^{(k),j}}\left(\bcalG_l^{(k),j}-\bar{\bcalG_l}^{(k),j}\right)}.
\end{align*}
Note that with definition of Riemannian projections and induction over $\U_k^{(l)}$, we have \begin{align*}
	\fro{\calP_{\Omega_{j}^{(k)}}\calP_{\TT_l}\left(\bcalG_l-\bar{\bcalG}_l^{(k),j}\right) }^2\leq\fro{\calP_{\Omega_{j}^{(k)}}\left(\bcalG_l-\bar{\bcalG}_l^{(k),j}\right) }^2+3\delta^{-2}\frac{\muT r_k}{d_k}\cdot \fro{\bcalG_l-\bar{\bcalG}_l^{(k),j}}^2,
\end{align*}
and by Lemma~\ref{teclem:remian-perturb-subgradient}, we have
\begin{align}
	\fro{\calP_{\Omega_{j}^{(k)}}\left(\calP_{\TT_l}-\calP_{\TT_l^{(k),j}}\right)\bar{\bcalG}_l^{(k),j}}\leq m^2 \sqrt{\frac{\muT r_k}{d_k}}\delta^{-1}\mins^{*-1}D_l^2+\mins^{*-1}\delta^{-1}D_l\fro{\calP_{\Omega_{j}^{(k)}}\left(\bcalT_{l}^{(k),j}-\bcalT_l\right)}.
	\label{eq10}
\end{align}
\begin{claim} With probability exceeding $1-cd^{*-7}$, the following holds for each $k=1,\dots,m$ and $j=1,\dots,d_k$,
	\begin{align*}
		\fro{\calP_{\Omega_{j}^{(k)}}\calP_{\TT_l^{(k),j}}\left(\bcalG_l^{(k),j}-\bar{\bcalG_l}^{(k),j}\right)}\leq C(m+1) \sqrt{r^*\log d^*},
	\end{align*}
where $c,C>0$ are two constants.
\label{claim:leave-one-out}
\end{claim}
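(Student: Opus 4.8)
The plan is to exploit the independence that the leave-one-out construction in Algorithm~\ref{alg:leave:RsGrad} builds in: the sequence $\{\bcalT_l^{(k),j}\}_l$ never sees the noise on the slice it leaves out. Concretely, I would fix $k\in[m]$ and $j\in[d_k]$ and condition on the $\sigma$-algebra $\calF^{(k),j}$ generated by all the randomness except $\{[\bXi]_\omega:\omega\in\Omega_j^{(k)}\}$. Because $\hat f_j^{(k)}$ replaces the empirical slice-$j$ loss by its expectation, the whole trajectory $\{\bcalT_l^{(k),j}\}_l$ — and hence every tangent space $\TT_l^{(k),j}$, every factor $\U_p^{(l),(k),j}$ and core — is $\calF^{(k),j}$-measurable, and by the induction hypothesis \eqref{eq85} the factors are $3\muT$-incoherent. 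Since $\rho_{H_p,\delta}$ is differentiable, $\bcalG_l^{(k),j}$ and $\bar{\bcalG}_l^{(k),j}$ agree off $\Omega_j^{(k)}$, so $\bcalZ:=\bcalG_l^{(k),j}-\bar{\bcalG}_l^{(k),j}=\calP_{\Omega_j^{(k)}}(\bcalZ)$, and for $\omega\in\Omega_j^{(k)}$ one has $[\bcalZ]_\omega=\EE_\xi\dot\rho_{H_p,\delta}(t_\omega-\xi)-\dot\rho_{H_p,\delta}(t_\omega-[\bXi]_\omega)$ with $t_\omega:=[\bcalT_l^{(k),j}-\bcalT^*]_\omega$ being $\calF^{(k),j}$-measurable. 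Conditionally these entries are independent, mean zero, and bounded by $2$ (as $|\dot\rho_{H_p,\delta}|\le1$), so $\bpsi{[\bcalZ]_\omega}\lesssim1$.

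Next I would reduce the target to a quadratic form. Writing $P:=\calP_{\Omega_j^{(k)}}$, $Q:=\calP_{\TT_l^{(k),j}}$ and using $P\bcalZ=\bcalZ$, $P^2=P$, one gets $\fro{PQ\bcalZ}^2=\langle\bcalZ,QPQ\bcalZ\rangle=\bcalZ^\top M^2\bcalZ$, where — viewing $\bcalZ$ as a vector indexed by $\Omega_j^{(k)}$ — $M$ is the $\dkm\times\dkm$ PSD matrix obtained by compressing $\calP_{\TT_l^{(k),j}}$ to $\Omega_j^{(k)}$, so $\|M^2\|\le1$, $\mathrm{trace}(M^2)\le\mathrm{trace}(M)=:\tau$ and $\fro{M^2}\le\sqrt\tau$; in particular $\EE[\bcalZ^\top M^2\bcalZ\mid\calF^{(k),j}]=\sum_\omega\Var([\bcalZ]_\omega)[M^2]_{\omega\omega}\le4\tau$. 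The crux is the estimate $\tau=\mathrm{trace}\big(\calP_{\Omega_j^{(k)}}\calP_{\TT_l^{(k),j}}\big)\le C_{m,\muT,r^*}\,r^*$: decomposing $\calP_{\TT_l^{(k),j}}$ into its $m+1$ pieces exactly as in the proof of Lemma~\ref{lem:hub:regularity} (the ``all-modes'' term $\bcalW\mapsto\bcalW\times_1\U_1\U_1^\top\cdots\times_m\U_m\U_m^\top$ and, for each $p\in[m]$, the side term whose $p$-matricization is $(\I-\U_p\U_p^\top)\fraM_p(\cdot)\Pi_p^{\mathrm{row}}$ with $\Pi_p^{\mathrm{row}}$ a rank-$r_p$ projection), the trace against $\calP_{\Omega_j^{(k)}}$ becomes a sum of partial traces of rank-$\le r_p$ projections restricted to the coordinate slab $\{i_k=j\}$, each of which is kept at $O_{m,\muT,r^*}(r^*)$ by combining the incoherence $\ltinf{\U_p^{(l),(k),j}}^2\le3\muT r_p/d_p$ with the trivial rank bound. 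This is precisely where incoherence enters: the naive bound on one slice of the Riemannian projection of a bounded tensor is only $(3\muT r_k\dkm)^{1/2}\asymp d^{*1/2}$, and it is the mean-zero/independent structure of $\bcalZ$ that trades $\dkm$ for $\tau=O(r^*)$.

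Finally, conditionally on $\calF^{(k),j}$, I would invoke the Hanson--Wright inequality for the PSD quadratic form $\bcalZ^\top M^2\bcalZ$, which using $\fro{M^2}\le\sqrt\tau$, $\|M^2\|\le1$ and $\bpsi{[\bcalZ]_\omega}\lesssim1$ gives $\PP\big(\bcalZ^\top M^2\bcalZ\ge4\tau+t\mid\calF^{(k),j}\big)\le2\exp\big(-c\min\{t^2/\tau,\,t\}\big)$ for all $t>0$; choosing $t\asymp\sqrt{\tau\log d^*}+\log d^*$ makes this at most $d^{*-12}$, so on this conditional event $\fro{PQ\bcalZ}^2\lesssim\tau+\log d^*\lesssim_{m,\muT,r^*}(m+1)^2r^*\log d^*$, which is the claimed inequality. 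Taking expectation over $\calF^{(k),j}$ removes the conditioning, and a union bound over the $\le m\dmax$ pairs $(k,j)$ and the $O(\log d^*)$ second-phase iterations $l$ is comfortably absorbed into the stated probability $1-cd^{*-7}$. The main obstacle is the middle step — the slice-restricted trace bound $\mathrm{trace}(\calP_{\Omega_j^{(k)}}\calP_{\TT_l^{(k),j}})\lesssim_{m,\muT,r^*}r^*$ — which requires carefully unwinding the combinatorics of the Tucker tangent space together with the incoherence of the leave-one-out iterates; once that estimate is in hand, the rest is a routine conditional concentration argument.
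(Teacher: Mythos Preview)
Your proposal is correct and rests on exactly the same ingredients as the paper's proof: the leave-one-out construction makes $\TT_l^{(k),j}$ independent of the slice noise, so conditionally the entries of $\bcalZ=\bcalG_l^{(k),j}-\bar\bcalG_l^{(k),j}$ are independent, mean-zero, and bounded by $2$, and the slice-restricted Riemannian projection has effective dimension $O_m(r_k^-)\le O_m(r^*)$. The only difference is packaging of the concentration step. You write $\fro{\calP_{\Omega_j^{(k)}}\calP_{\TT_l^{(k),j}}\bcalZ}^2=\x^\top M^2\x$, bound $\mathrm{trace}(M)$ via incoherence, and invoke Hanson--Wright. The paper instead reads off from the $(m{+}1)$-piece Tucker tangent-projection formula an explicit decomposition
\[
\fro{\calP_{\Omega_j^{(k)}}\calP_{\TT_l^{(k),j}}\bcalZ}\le\sum_{i=1}^{m+1}c_i\,\ltwo{\x^\top\V_i},\qquad c_i\le1,\ \V_i\in\RR^{\dkm\times r_k^-}\text{ orthonormal},
\]
then observes that each coordinate $\x^\top\v_{il}$ is $\psi_2$-bounded by Hoeffding, so $\ltwo{\x^\top\V_i}^2$ has $\psi_1$-norm $\lesssim r_k^-$, giving $\ltwo{\x^\top\V_i}\lesssim\sqrt{r_k^-\log d^*}$ with probability $1-cd^{*-8}$ and concluding by a union bound. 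This more elementary route avoids Hanson--Wright entirely and subsumes your trace bound into the explicit identification of the low-dimensional $\V_i$; what your route buys is cleaner bookkeeping through a single quadratic form, at the cost of having to establish the trace estimate separately.
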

According to Claim~\ref{claim:leave-one-out}, we have \begin{align}
	\fro{\calP_{\Omega_{j}^{(k)}}\calP_{\TT_l^{(k),j}}\left(\bcalG_l^{(k),j}-\bar{\bcalG_l}^{(k),j}\right)}\leq C(m+1) \sqrt{r^*\log d^*}.
	\label{eq9}
\end{align}Thus the last term of Equation~\eqref{eq7} has upper bound,
\begin{align*}
	\fro{\calP_{\Omega_{j}^{(k)}}\calP_{\TT_l}\bcalG_l-\calP_{\Omega_{j}^{(k)}}\calP_{\TT_l^{(k),j}}\bcalG_l^{(k),j} }^2&\leq 4m^4 \frac{\muT r_k}{d_k}\delta^{-2}\mins^{*-2}D_l^4+4\delta^{-2}\mins^{*-2}D_l^2\fro{\calP_{\Omega_{j}^{(k)}}\left(\bcalT_{l}^{(k),j}-\bcalT_l\right)^2}\\
	&+\fro{\calP_{\Omega_{j}^{(k)}}\left(\bcalG_l-\bar{\bcalG}_l^{(k),j}\right) }^2+2\frac{\muT r_k}{d_k}\cdot \fro{\bcalG_l-\bar{\bcalG}_l^{(k),j}}^2+C(m+1)^2r^*\log d^*.
\end{align*}
As for the intermediate term of Equation~\eqref{eq7}, we have 
\begin{align*}
	&{~~~}\inp{\calP_{\Omega_{j}^{(k)}}\left(\bcalT_l-\bcalT_l^{(k),j}\right)}{\calP_{\Omega_{j}^{(k)}}\left(\calP_{\TT_l}\bcalG_l-\calP_{\TT_l^{(k),j}}\bcalG_l^{(k),j}\right)}\\
	&=\inp{\calP_{\Omega_{j}^{(k)}}\left(\bcalT_l-\bcalT_l^{(k),j}\right)}{\calP_{\Omega_{j}^{(k)}}\calP_{\TT_l}\left(\bcalG_l-\bar{\bcalG}_l^{(k),j} \right)}+\inp{\calP_{\Omega_{j}^{(k)}}\left(\bcalT_l-\bcalT_l^{(k),j}\right)}{\calP_{\Omega_{j}^{(k)}}\left(\calP_{\TT_l}-\calP_{\TT_l^{(k),j}}\right) \bcalG_l^{(k),j}}\\
	&{~~~}+\inp{\calP_{\Omega_{j}^{(k)}}\left(\bcalT_l-\bcalT_l^{(k),j}\right)}{\calP_{\Omega_{j}^{(k)}}\calP_{\TT_l^{(k),j}}\left(\bar{\bcalG}_{l}^{(k),j} - \bcalG_l^{(k),j}\right)}.
\end{align*}
Remark that second term and last term of the above equation could be upper bounded with
\begin{align*}
	&{~~~}\left|\inp{\calP_{\Omega_{j}^{(k)}}\left(\bcalT_l-\bcalT_l^{(k),j}\right)}{\calP_{\Omega_{j}^{(k)}}\left(\calP_{\TT_l}-\calP_{\TT_l^{(k),j}}\right) \bcalG_l^{(k),j}}\right|\\
	&\leq\fro{\calP_{\Omega_{j}^{(k)}}\left(\bcalT_l-\bcalT_l^{(k),j}\right)}\fro{\calP_{\Omega_{j}^{(k)}}\left(\calP_{\TT_l}-\calP_{\TT_l^{(k),j}}\right) \bcalG_l^{(k),j}},
\end{align*}
where $\fro{\calP_{\Omega_{j}^{(k)}}\left(\calP_{\TT_l}-\calP_{\TT_l^{(k),j}}\right) \bcalG_l^{(k),j}}$ is analyzed in Equation~\eqref{eq10} and similarly \begin{align*}
	&{~~~}\left|\inp{\calP_{\Omega_{j}^{(k)}}\left(\bcalT_l-\bcalT_l^{(k),j}\right)}{\calP_{\Omega_{j}^{(k)}}\calP_{\TT_l^{(k),j}} \left(\bar{\bcalG}_{l}^{(k),j} - \bcalG_l^{(k),j}\right) }\right|\\
	&\leq\fro{\calP_{\Omega_{j}^{(k)}}\left(\bcalT_l-\bcalT_l^{(k),j}\right)}\fro{\calP_{\Omega_{j}^{(k)}}\calP_{\TT_l^{(k),j}} \left(\bar{\bcalG}_{l}^{(k),j} - \bcalG_l^{(k),j}\right)},
\end{align*}
where $\fro{\calP_{\Omega_{j}^{(k)}}\calP_{\TT_l^{(k),j}}\left(\bar{\bcalG}_{l}^{(k),j} - \bcalG_l^{(k),j}\right) }$ is bounded in Equation~\eqref{eq9}. Note that with some simple calculations of the remaining term, we have
\begin{align*}
	&{~~~}\inp{\calP_{\Omega_{j}^{(k)}}\left(\bcalT_l-\bcalT_l^{(k),j}\right)}{\calP_{\Omega_{j}^{(k)}}\calP_{\TT_l}\left(\bcalG_l-\bar{\bcalG}_l^{(k),j} \right)}\\
	&= \inp{\calP_{\Omega_{j}^{(k)}}\left(\bcalT_l\right)}{\calP_{\Omega_{j}^{(k)}}\calP_{\TT_l}\left(\bcalG_l-\bar{\bcalG}_l^{(k),j} \right)}-\inp{\calP_{\Omega_{j}^{(k)}}\left(\bcalT_l^{(k),j}\right)}{\calP_{\Omega_{j}^{(k)}}\calP_{\TT_l}\left(\bcalG_l-\bar{\bcalG}_l^{(k),j} \right)}\\
	&=\inp{\calP_{\Omega_{j}^{(k)}}\left(\bcalT_l\right)}{\calP_{\Omega_{j}^{(k)}}\left(\bcalG_l-\bar{\bcalG}_l^{(k),j} \right)}-\inp{\calP_{\Omega_{j}^{(k)}}\left(\bcalT_l^{(k),j}\right)}{\calP_{\Omega_{j}^{(k)}}\calP_{\TT_l}\left(\bcalG_l-\bar{\bcalG}_l^{(k),j} \right)},
\end{align*}
where the second term has the following expressions (see details in Section~\ref{proof:pseudo-Huber:phaseone}),
\begin{align*}
	&{~~~}\inp{\calP_{\Omega_{j}^{(k)}}\left(\bcalT_l^{(k),j}\right)}{\calP_{\Omega_{j}^{(k)}}\calP_{\TT_l}\left(\bcalG_l-\bar{\bcalG}_l^{(k),j} \right)}\\
	&=\inp{\calP_{\Omega_{j}^{(k)}}\left(\bcalT_l^{(k),j}\right)}{\calP_{\Omega_{j}^{(k)}}\left(\bcalG_l-\bar\bcalG_l^{(k),j}\right)}-\underbrace{\inp{\calP_{\Omega_{j}^{(k)}}\calP_{\TT_l}^{\perp}\left(\bcalT_l^{(k),j}\right)}{\calP_{\Omega_{j}^{(k)}}\left(\bcalG_l-\bar\bcalG_l^{(k),j}\right)}}_{F_1}\\&{~~~~~~~~~}+\underbrace{\inp{\calP_{\Omega_{j}^{(k)}}\calP_{\TT_l}^{\perp}\left(\bcalT_l^{(k),j}\right)}{\calP_{\Omega_{j}^{(k)}}\calP_{\TT_l}\left(\bcalG_l-\bar\bcalG_l^{(k),j}\right)}}_{F_2}.
\end{align*}
By same analyses in Section~\ref{proof:pseudo-Huber:phaseone}, we have
\begin{align*}
	&{~~~}\left|F_1\right|\vee\left|F_2\right|\\
	&\leq m^2 \fro{\calP_{\Omega_{j}^{(k)}}\left(\bcalG_l-\bar\bcalG_l^{(k),j}\right) } \left(\sqrt{\frac{\muT r_k}{d_k}}\mins^{*-1}D_l^2+\mins^{*-1}D_l\fro{\calP_{\Omega_{j}^{(k)}}\left(\bcalT_{l}^{(k),j}-\bcalT_l\right)}\right)\\
	&\leq 0.25\delta \fro{\calP_{\Omega_{j}^{(k)}}\left(\bcalG_l-\bar\bcalG_l^{(k),j}\right) }^2+ m^4\delta^{-1} \left(\sqrt{\frac{\muT r_k}{d_k}}\mins^{*-1}D_l^2+\mins^{*-1}D_l\fro{\calP_{\Omega_{j}^{(k)}}\left(\bcalT_{l}^{(k),j}-\bcalT_l\right)}\right)^2,
\end{align*}
whose last line uses Cauchy-Schwarz inequality. On the other hand, by Lemma~\ref{teclem:hubfunction}, we have
\begin{align*}
	\inp{\calP_{\Omega_{j}^{(k)}}(\bcalT_{l}-\bcalT_l^{(k),j})}{\calP_{\Omega_{j}^{(k)}}(\bcalG_l-\bar\bcalG_l^{(k),j})}\geq \delta \fro{\calP_{\Omega_{j}^{(k)}}(\bcalG_l-\bcalG_l^{(k),j}) }^2.
\end{align*}
Thus altogether the intermediate term of Equation~\eqref{eq7} has lower bound,
\begin{align*}
	&{~~~}\inp{\calP_{\Omega_{j}^{(k)}}\left(\bcalT_l-\bcalT_l^{(k),j}\right)}{\calP_{\Omega_{j}^{(k)}}\left(\calP_{\TT_l}\bcalG_l-\calP_{\TT_l^{(k),j}}\bcalG_l^{(k),j}\right)}\\
	&\geq \frac{1}{2}\delta \fro{\calP_{\Omega_{j}^{(k)}}\left(\bcalG_l-\bcalG_l^{(k),j}\right) }^2-8m^4\frac{\muT r_k}{d_k}\delta^{-1}\mins^{*-2}D_l^{4}-8m^4\delta^{-1}\mins^{*-2}D_l^{2}\fro{\calP_{\Omega_{j}^{(k)}}\left(\bcalT_l-\bcalT_l^{(k),j}\right) }^2.
\end{align*}
Notice that with $8\eta_{l}\leq\delta$, we have $$8\eta_{l}^2\fro{\calP_{\Omega_{j}^{(k)}}\left(\bcalG_l-\bcalG_l^{(k),j}\right)}^2\leq \eta_{l} \delta \fro{\calP_{\Omega_{j}^{(k)}}(\bcalG_l-\bcalG_l^{(k),j}) }^2.$$
Hence after inserting value of stepsize $\eta_{l}\in\left[\frac{1}{8c_1^2(m+1)}\cdot\frac{\delta^2}{b_0}, \frac{3}{8c_1^2(m+1)}\cdot\frac{\delta^2}{b_0}\right]$, we have the upper bound for Equation~\eqref{eq7},
\begin{align*}
	&{~~~~}\fro{\calP_{\Omega_{j}^{(k)}}\left(\bcalT_{l}-\bcalT_l^{(k),j}-\eta_{l}\cdot\left(\calP_{\TT_l}(\bcalG_l)-\calP_{\TT_l^{(k),j}}(\bcalG_l^{(k),j})\right)\right)}^2\\
	&\leq \left(1+4m^2\frac{D_l^2}{\mins^{*2}}\cdot\frac{\delta}{b_0} \right)\ltwo{\calP_{\Omega_{j}^{(k)}}\left(\bcalT_{l}-\bcalT_l^{(k),j}\right) }^2+32m^4\frac{\delta}{b_0}\frac{\muT  r_k}{d_k}\cdot\frac{D_l^4}{\mins^{*2}}+\frac{\delta^4}{b_0^2}r^*\log d^*.
\end{align*}
Note that Assumption~\ref{assu:hub:noise} infers that $b_0\geq C_{m,\muT,r^*}(6\gamma+\delta)\geq r^*\gamma\geq r^*\delta\sqrt{\log d^*}$. Furthermore, take square root of the above equation and it has
\begin{equation}
	\begin{split}
		&\fro{\calP_{\Omega_{j}^{(k)}}\left(\bcalT_{l}-\bcalT_l^{(k),j}-\eta_{l}\cdot\left(\calP_{\TT_l}(\bcalG_l)-\calP_{\TT_l^{(k),j}}(\bcalG_l^{(k),j})\right)\right)}\\
		&{~~~~~~~~~~~~~~~~~~~~~~~}\leq \left(1+2m^4\frac{\delta}{b_0}\cdot\frac{D_l^2}{\mins^{*2}}\right)\ltwo{\calP_{\Omega_{j}^{(k)}}\left(\bcalT_{l}-\bcalT_l^{(k),j}\right) }+8m^2\sqrt{\frac{\muT r_k}{d_k}}\frac{D_l^2}{\mins^{*}}+\delta\\
		&{~~~~~~~~~~~~~~~~~~~~~~~}\leq \left(1+m^2\frac{D_l}{\mins^{*}}\right)\ltwo{\calP_{\Omega_{j}^{(k)}}\left(\bcalT_{l}-\bcalT_l^{(k),j}\right) }+8m^2\sqrt{\frac{\muT r_k}{d_k}}\frac{D_l^2}{\mins^{*}}+\delta,
	\end{split}
	\label{eq12}
\end{equation}
where the last line is due to initialization condition. Then, with slice perturbations (same as Section~\ref{proof:pseudo-Huber:phaseone} analyses), we have
\begin{align*}
	\ltwo{\calP_{\Omega_{j}^{(k)}}\left(\bcalT_{l+1}-\bcalT_{l+1}^{(k),j}\right) }\leq\left(1+m^2\frac{D_l}{\mins^{*}}\right)\ltwo{\calP_{\Omega_{j}^{(k)}}\left(\bcalT_{l}-\bcalT_l^{(k),j}\right) }+32m^2\sqrt{\frac{\muT r_k}{d_k}}\frac{D_l^2}{\mins^{*}}+\delta.
\end{align*}
By $D_l\geq \sqrt{\textsf{DoF}_m}b_0$ and $D_{l+1}\leq D_l$, the above equation implies \begin{align*}
	&{~~~}\fro{\calP_{\Omega_{j}^{(k)}}\left(\bcalT_{l+1}-\bcalT_{l+1}^{(k),j}\right) }+32\sqrt{\frac{\muT r_k}{d_k}}D_{l+1}+ \frac{\mins^*}{\sqrt{\textsf{DoF}_m}b_0}\delta\\
	&\leq\left(1+m^2\frac{D_l}{\mins^{*}}\right)\ltwo{\calP_{\Omega_{j}^{(k)}}\left(\bcalT_{l}-\bcalT_l^{(k),j}\right) }+32m^2\sqrt{\frac{\muT r_k}{d_k}}\frac{D_l^2}{\mins^{*}}+ 32\sqrt{\frac{\muT r_k}{d_k}}D_{l}+\frac{D_l}{\sqrt{\textsf{DoF}_m}b_0}\delta+\frac{\mins^*}{\sqrt{\textsf{DoF}_m}b_0}\delta\\
	&\leq \left(1+m^2\cdot\frac{D_l}{\mins^{*}}\right)\left(\fro{\calP_{\Omega_{j}^{(k)}}\left(\bcalT_{l}-\bcalT_l^{(k),j}\right) } +32\sqrt{\frac{\muT r_k}{d_k}} D_l+ \frac{\mins^*}{\sqrt{\textsf{DoF}_m}b_0}\delta\right)\\
	&\leq\prod_{h=l_1}^{l} \left(1+m^2\frac{D_{h}}{\mins^{*}}\right)\left(\fro{\calP_{\Omega_{j}^{(k)}}\left(\bcalT_{l_1}-\bcalT_{l_1}^{(k),j}\right) } +32\sqrt{\frac{\muT r_k}{d_k}}D_{l_1}+\frac{\mins^*}{\sqrt{\textsf{DoF}_m}b_0}\delta\right)\\
	&\leq 3\left(\fro{\calP_{\Omega_{j}^{(k)}}\left(\bcalT_{l_1}-\bcalT_{l_1}^{(k),j}\right) } +32\sqrt{\frac{\muT r_k}{d_k}}D_{l_1}+\frac{\mins^*}{\sqrt{\textsf{DoF}_m}b_0}\delta\right),
\end{align*}
where the last inequality is due to
\begin{align*}
	\prod_{h=l_1}^{+\infty} \left(1+m^2\frac{D_{h}}{\mins^{*}}\right)\leq\exp\left(\sum_{h=l_1}^{+\infty}\log\left(1+m^2\frac{D_l}{\mins^*}\right)\right)\leq\exp\left(\sum_{h=l_1}^{+\infty}m^2\frac{D_l}{\mins^*}\right)&\leq \exp\left(\mins^{*-1}\cdot D_{l_1}\cdot 64\frac{b_0^2}{\delta^2}\right)\\
	&\leq 3.
\end{align*}
Also, notice that $\fro{\calP_{\Omega_{j}^{(k)}}\left(\bcalT_{l_1}-\bcalT_{l_1}^{(k),j}\right) }+32\sqrt{\frac{\muT r_k}{d_k}}D_{l_1} \leq \sqrt{\frac{\muT r_k}{d_k}}\min\left\{c_m\sqrt{\mu^{*m}r^*d^*}(6\gamma+\delta), 36D_0 \right\} $, where $c_m:= 72(5m+1)\sqrt{3^m}$.
In this way, we completes showing
\begin{align}
	\fro{\calP_{\Omega_{j}^{(k)}}\left(\bcalT_{l+1}-\bcalT_{l+1}^{(k),j}\right) }\leq 3\sqrt{\frac{\muT r_k}{d_k}}\min\{36D_0,c_m\sqrt{\mu^{*m}r^*d^*}(6\gamma+\delta)\}+2\frac{\mins^*}{\sqrt{\textsf{DoF}_m}b_0}\delta. 
	\label{eq15}
\end{align} 
\textbf{Step Three} Combine step one by-product Equation~\eqref{eq11} and step two Equation~\eqref{eq12} and then we have
\begin{align*}
	&{~~~}\fro{\calP_{\Omega_{j}^{(k)}}\left(\bcalT_l-\bcalT^*-\eta_{l}\calP_{\TT_l}\bcalG_l\right)}\\
	&\leq\fro{\calP_{\Omega_{j}^{(k)}}\left(\bcalT_{l}-\bcalT_l^{(k),j}-\eta_{l}\cdot\left(\calP_{\TT_l}(\bcalG_l)-\calP_{\TT_l^{(k),j}}(\bcalG_l^{(k),j})\right)\right)} + 	\fro{\calP_{\Omega_{j}^{(k)}}(\bcalT_{l}^{(k),j}-\bcalT^*-\eta_{l}\calP_{\TT_l^{(k),j}}(\bcalG_l^{(k),j}))}\\
	&\leq 3 \sqrt{\frac{\muT r_k}{d_k}}\min\{36D_0,c_m\sqrt{\mu^{*m}r^*d^*}(6\gamma+\delta)\}+3\sqrt{\frac{\muT r_k}{d_k}}D_{l+1}+\delta.
\end{align*}
Similarly, Equation \eqref{eq13} and Equation \eqref{eq15} lead to
\begin{align*}
	&{~~~}\fro{\calP_{\Omega_{j}^{(k)}}\left(\bcalT_{l+1}-\bcalT^*\right) }\\
	&\leq\fro{\calP_{\Omega_{j}^{(k)}}\left(\bcalT_{l+1}^{(k,j)}-\bcalT^*\right) }+\fro{\calP_{\Omega_{j}^{(k)}}\left(\bcalT_{l+1}-\bcalT_{l+1}^{(k),j}\right) }\\
	&\leq 3\sqrt{\frac{\muT r_k}{d_k}}\min\{36D_0,c_m\sqrt{\mu^{*m}r^*d^*}(6\gamma+\delta)\}+3\sqrt{\frac{\muT r_k}{d_k}}D_{l+1}+2\frac{\mins^*}{\sqrt{\textsf{DoF}_m}b_0}\delta.
\end{align*}
Hence after taking maximum over $j=1,\dots,d_k$, we obtain
\begin{align*}
	\ltinf{\fraM_{k}\left(\bcalT_l-\bcalT^*-\eta_{l}\calP_{\TT_l}\bcalG_l \right)}\leq3\sqrt{\frac{\muT r_k}{d_k}}\min\{36D_0,c_m\sqrt{\mu^{*m}r^*d^*}(6\gamma+\delta)\}+3.1\sqrt{\frac{\muT r_k}{d_k}}D_{l_1},
\end{align*}
and
\begin{align*}
	\ltinf{\fraM_{k}(\bcalT_{l+1}-\bcalT^*)}\leq3\sqrt{\frac{\muT r_k}{d_k}}\min\{36D_0,c_m\sqrt{\mu^{*m}r^*d^*}(6\gamma+\delta)\}+3\sqrt{\frac{\muT r_k}{d_k}}D_{l_1}+2\frac{\mins^*}{\sqrt{\textsf{DoF}_m}b_0}\delta.
\end{align*}
By Lemma~\ref{teclem:perturbation:tensor}, we have
\begin{align*}
	\ltinf{\left(\U_k^{(l+1)}\H_k^{(l+1)}-\U_k^*\right)\fraM_{k}(\bcalC^*)}\leq3.1 \sqrt{\frac{\muT r_k}{d_k}}\min\{36D_0,c_m\sqrt{\mu^{*m}r^*d^*}(6\gamma+\delta)\}+6\sqrt{\frac{\muT r_k}{d_k}}D_{l_1}+2\frac{\mins^*}{\sqrt{\textsf{DoF}_m}b_0}\delta.
\end{align*}
Then combined with $D_0\leq C\mins^{*}$ and with same analyses in Section~\ref{proof:pseudo-Huber:phaseone}, we obtain $\linft{\U_k^{(l+1)}}\leq\sqrt{\frac{3\muT r_k}{d_k}}$ for each $k=1,\dots,m$. Finally, by Lemma~\ref{teclem:entrynorm-expansion}, we have upper bound of the error with respect to $\linft{\cdot}$ norm,
\begin{align*}
	\linft{\bcalT_{l+1}-\bcalT^*}\leq72(5m+1)^23^m\mu^{*m}r^*(6\gamma+\delta).
\end{align*}
\textbf{Final Step} We still need to show that leave-one-out sequences also stay in the phase two regions which is characterized in Lemma~\ref{lem:hub:regularity}, namely, $\linft{\bcalT_{l+1}^{(k),j}-\bcalT^*}\lesssim \gamma+\delta$. The proof procedure is similar to bounding $\linft{\bcalT_{l+1}-\bcalT^*}$. Hence, details are omitted and we only present the steps. Similar to Step Two, we could prove for any $v=1,\dots,k-1$ and any $v=1,\dots, d_v$,
\begin{align*}
	&\fro{\calP_{\Omega_{i}^{(v)}}\left(\bcalT_{l}^{(v),i}-\bcalT_l^{(k),j}-\eta_{l}\cdot\left(\calP_{\TT_l^{(v),i}}(\bcalG_l^{(v),i})-\calP_{\TT_l^{(k),j}}(\bcalG_l^{(k),j})\right)\right)}\\
	&{~~~~~~~~~~~~~~~~~~~~~~~~~~~~~}\leq 3\sqrt{\frac{\muT r_v}{d_v}}\min\left\{c_m\sqrt{\mu^{*m}r^*d^*}(6\gamma+\delta), 36D_0 \right\},
\end{align*}
and
\begin{align*}
	\fro{\calP_{\Omega_{i}^{(v)}}\left(\bcalT_{l+1}^{(v),i}-\bcalT_{l+1}^{(k),j}\right) }\leq 3\sqrt{\frac{\muT r_v}{d_v}}\min\left\{c_m\sqrt{\mu^{*m}r^*d^*}(6\gamma+\delta), 36D_0 \right\},
\end{align*}
by which we have
\begin{align*}
	&{~~~}\fro{\calP_{\Omega_{i}^{(v)}}\left(\bcalT^*-\bcalT_l^{(k),j}-\eta_{l}\calP_{\TT_l^{(k),j}}(\bcalG_l^{(k),j})\right)}\\
	&\leq\fro{\calP_{\Omega_{i}^{(v)}}\left(\bcalT_{l}^{(v),i}-\bcalT_l^{(k),j}-\eta_{l}\cdot\left(\calP_{\TT_l^{(v),i}}(\bcalG_l)-\calP_{\TT_l^{(k),j}}(\bcalG_l^{(k),j})\right)\right)}+\fro{\calP_{\Omega_{i}^{(v)}}\left(\bcalT^*-\bcalT_l^{(v),i}-\eta_{l}\calP_{\TT_l^{(v),i}}(\bcalG_l^{(v),i})\right)}\\
	&\leq 3\sqrt{\frac{\muT r_v}{d_v}}\min\left\{c_m\sqrt{\mu^{*m}r^*d^*}(6\gamma+\delta), 36D_0 \right\}+6\sqrt{\frac{\muT r_v}{d_v}}D_{l+1}+\delta,
\end{align*}
and \begin{align*}
	\fro{\calP_{\Omega_{i}^{(v)}}\left(\bcalT^*-\bcalT_{l+1}^{(k),j}\right) } &\leq \fro{\calP_{\Omega_{i}^{(v)}}\left(\bcalT_{l+1}^{(v),i}-\bcalT^*\right) } + \fro{\calP_{\Omega_{i}^{(v)}}\left(\bcalT_{l+1}^{(v),i}-\bcalT_{l+1}^{(k),j}\right) }\\
	&\leq 3\sqrt{\frac{\muT r_v}{d_v}}\min\left\{c_m\sqrt{\mu^mr^*d^*}(6\gamma+\delta), 36D_0 \right\}+6\sqrt{\frac{\muT r_v}{d_v}}D_{l+1}+2\frac{\mins^*}{\sqrt{\textsf{DoF}_m}b_0}\delta.
\end{align*}
By taking maximum over $i=1,\dots,d_v$, we aobtain
\begin{align*}
	\ltinf{\fraM_{v}(\bcalT_{l}^{(k),j}-\bcalT^*-\eta_l\calP_{\TT_l^{(k),j}}(\bcalG_l))}\leq 3\sqrt{\frac{\muT r_v}{d_v}}\min\left\{c_m\sqrt{\mu^{*m}r^*d^*}(6\gamma+\delta), 36D_0 \right\}+6\sqrt{\frac{\muT r_v}{d_v}}D_{l+1}+\delta.
\end{align*}
Then by Lemma~\ref{teclem:perturbation:tensor}, we have for each $v=1,\dots,m$,
\begin{align*}
	\ltinf{\left(\U_v^{(l+1),(k,j)}\H_v^{(l+1),(k),j}-\U_k^*\right)\fraM_{k}(\bcalC^*)}\leq5\sqrt{\frac{\muT r_v}{d_v}}\min\left\{c_m\sqrt{\mu^{*m}r^*d^*}(6\gamma+\delta), 36D_0 \right\},
\end{align*}
which infers $\ltinf{\U_v^{(l+1),(k),j}}\leq\sqrt{\frac{3\muT r_v}{d_v}}$
and by Lemma~\ref{teclem:entrynorm-expansion}, we have
\begin{align*}
	\linft{\bcalT_{l+1}^{(k),j}-\bcalT^*}\leq72(5m+1)^23^m\mu^{*m}r^*(6\gamma+\delta).
\end{align*}
\paragraph{Phase Two Output}
At the end of phase two, it reaches the error rate \begin{align*}
	\fro{\bcalT_{l_1+l_2}-\bcalT^*}\leq C\sqrt{\textsf{DoF}_m}\cdot b_0,\quad\linft{\bcalT_{l_1+l_2}-\bcalT^*}\leq C(5m+1)\sqrt{3^m\mu^{*m}r^*}\max_{k=1,\dots,m}(\dkm)^{-1/2}\cdot \sqrt{\textsf{DoF}_m}\cdot b_0.
\end{align*}

\begin{proof}[Proof of Claim~\ref{claim:leave-one-out}]
	First consider fixed $j$ and $k$. Notice that $\bcalG_l^{(k),j}-\bar{\bcalG_l}^{(k),j}$ is a mean zero tensor and only has non-zero entries on the $j$-th slice of order $k$. For simiplicity, we denote
	\begin{align*}
		\fraM_{k}(\bcalG_l^{(k),j}-\bar{\bcalG_l}^{(k),j})=\left(\begin{matrix}
			0&0&\ldots&0\\
			\vdots&\vdots& &\vdots\\
			0&0&\cdots&0\\
			x_{1}&x_{2} & &x_{\dkm}\\
			0&0&\cdots&0\\
			\vdots&\vdots& &\vdots\\
			0&0&\ldots&0\\
		\end{matrix}\right)\in\RR^{d_k\times\dkm}.
	\end{align*}
	 And denote $\x=(x_1,\dots,x_{\dkm})^{\top}$. Recall that $\calP_{\Omega_{j}^{(k)}}(\bXi)$ and $\bcalT_l^{(k),j}$ are independent. Then consider $\calP_{\TT_l^{(k),j}}\left(\bcalG_l^{(k),j}-\bar{\bcalG_l}^{(k),j}\right)$ and by Riemannian projection definition, we obtian
	 \begin{align*}
	 	\fro{\calP_{\Omega_{j}^{(k)}}\calP_{\TT_l^{(k),j}}\left(\bcalG_l^{(k),j}-\bar{\bcalG_l}^{(k),j}\right)}&\leq\sum_{i=1}^{m+1}c_i\ltwo{\x^{\top}\V_i},
	 \end{align*}
 where $c_i\leq 1$ and orthogonal matrices $\V_i\in\RR^{\dkm\times r_k^-}$ are independent of $\calP_{\Omega_{j}^{(k)}}(\bXi)$. Also notice that $|x_i|\leq2$. Suppose $\V_i=[\v_{i1},\dots,\v_{ir_k^-}]$ are columns of $\V_i$. The Orlicz norm could be bounded with $\op{\x^{\top}\v_{il}}_{\Psi_2}\leq 2$, where Hoeffeding Inequality is used. It leads to $\op{\ltwo{\x^{\top}\V_i}^2}_{\Psi_1}\leq \sum_{l=1}^{r_k^-} \op{\x^{\top}\v_{il}}_{\Psi_2}^2\leq 4r_k^-$. Thus, we have $$\fro{\calP_{\Omega_{j}^{(k)}}\calP_{\TT_l^{(k),j}}\left(\bcalG_l^{(k),j}-\bar{\bcalG_l}^{(k),j}\right)}\leq C(m+1)\sqrt{r_k^-\log d^*}$$ holds with probability exceeding $1-cd^{*-8}$. Taking the union over $k=1,\dots,m$ and $j=1,\dots,d_k$ and then we obtain Claim~\ref{claim:leave-one-out}, where $r_k^-<r^*$ is used.
\end{proof}
\section{Proofs under Heavy-Tailed Noise and Sparse Arbitrary Corruptions}
To simplify the notation, we use $\calP_{\Omega_{j}^{(k)}}(\cdot)$ to represent mask operator of the $j$ th slice of a tensor by the $k$ th order, namely,
\begin{align*}
	\left[\calP_{\Omega_{j}^{(k)}}(\bcalT)\right]_{i_1\dots i_m}:=\left\{
	\begin{array}{lcl}
		\left[\bcalT\right]_{i_1\cdots i_m}&     &\text{ if } i_k=j\\
		0&     &\text{ if } i_k\neq j
	\end{array}
	\right..
\end{align*}
Hence, after simple calculations, we have
\begin{align*}
	\lone{\fraM_{k}(\bcalT-\bcalY)_{j,\cdot}}-\lone{\fraM_{k}(\bcalT^*-\bcalY)_{j,\cdot}}&=f\left(\calP_{\Omega_{j}^{(k)}}(\bcalT)\right)-f\left(\calP_{\Omega_{j}^{(k)}}(\bcalT^*)\right)\\
	&=\lone{\calP_{\Omega_{j}^{(k)}}\left(\bcalT-\bcalT^*-\bXi\right)}-\lone{\calP_{\Omega_{j}^{(k)}}\left(\bXi\right)}.
\end{align*}
\subsection{Proof of Lemma~\ref{lem:abs:regularity}}
\paragraph{Phase One Analysis}
We shall prove phase one properties under event $\bcalE_1$,
$$\bcalE_1:=\left\{\lone{\calP_{\Omega_{j}^{(k)}}(\bXi)}\leq 3\dkm\gamma,\quad\text{ for all } k=1,\dots,m,\; j=1,\dots,d_k\right\}.$$
Specifically, Lemma~\ref{teclem:Contraction of Heavy Tailed Random Variables:slice} proves $\PP(\bcalE_1)\geq 1-c\sum_{k=1}^{m} d_k(\dkm)^{-1-\min\{1,\eps\}}$. First consider the projected sub-gradient term. Notice that absolute values of the sub-gradient entries $\bcalG$ are bounded by $1$, which leads to
\begin{align*}
	\fro{\calP_{\TT}(\bcalG)}^2&=\fro{\bcalG}^2-\fro{\calP_{\TT}^{\perp}(\bcalG)}^2\leq \fro{\bcalG}^2\leq d^*.
\end{align*}
It verifies $\fro{\calP_{\TT}(\bcalG)}\leq \sqrt{d^*}$. Then consider $f(\bcalT)-f(\bcalT^*)$,
\begin{align*}
	f(\bcalT)-f(\bcalT^*)&=\sum_{(i_1,\dots,i_m)\in\Omega}\left(\left|[\bcalT]_{i_1\cdots i_m}-[\bcalT^*]_{i_1\cdots i_m}-\xi_{i_1\cdots i_m}-[\bcalS]_{i_1\cdots i_m}\right|-\left|\xi_{i_1\cdots i_m}+[\bcalS]_{i_1\cdots i_m}\right|\right)\\
	&{~~~~~}+\sum_{(i_1,\dots,i_m)\notin\Omega}\left(\left\vert[\bcalT]_{i_1\cdots i_m}-[\bcalT^*]_{i_1\cdots i_m}-\xi_{i_1\cdots i_m}\right\vert-|\xi_{i_1\cdots i_m}|\right)\\
	&\geq -\lone{\calP_{\Omega}(\bcalT-\bcalT^*)}+\lone{\calP_{\Omega^{C}}(\bcalT-\bcalT^*)}-2\lone{\bXi}\\
	&= \lone{\bcalT-\bcalT^*}-2\lone{\calP_{\Omega}(\bcalT-\bcalT^*)}-2\lone{\bXi},
\end{align*}
where the inequality use triangle inequality. Note that under event $\bcalE_1$, it has $\lone{\bXi}\leq 3d^*\gamma$. Then by relationship among $\lone{\cdot}$, $\linft{\cdot}$, $\fro{\cdot}$ in Lemma~\ref{teclem:norm-relation}, we get
\begin{align*}
	f(\bcalT)-f(\bcalT^*)\geq \linft{\bcalT-\bcalT^*}^{-1}\cdot\fro{\bcalT-\bcalT^*}^2-2 \lone{\calP_{\Omega}(\bcalT-\bcalT^*)}-6d^*\gamma.
\end{align*}
Also, note that $\# \Omega\leq \alpha d^*$ and it infers that $\lone{\calP_{\Omega}(\bcalT-\bcalT^*)}\leq\alpha\linft{\bcalT-\bcalT^*}$. In conclusion, we obtain $$f(\bcalT)-f(\bcalT^*)\geq \linft{\bcalT-\bcalT^*}^{-1}\cdot\left(\fro{\bcalT-\bcalT^*}^2-\alpha d^*\linft{\bcalT-\bcalT^*}^2\right)-6d^*\gamma.$$
When $\bcalT$ is low-rank and incoherent, we could have delicate bound for slice of the projected sub-gradient. Suppose $\bcalT=\bcalC\cdot\llbracket\U_1,\dots,\U_m\rrbracket$ is the Tucker decomposition. In this way, matricization of the projected sub-gradient is,
\begin{align*}
	&{~~~}\fraM_k(\calP_{\TT}(\bcalG))\\
	&=\fraM_k(\bcalG)\left(\otimes_{i\neq k}\U_i\right)\fraM_k(\bcalC)^{\dagger}\fraM_k(\bcalC)\left(\otimes_{i\neq k}\U_i\right)^{\top}+\U_k\U_k^{\top}\fraM_k(\bcalG)\left(\otimes_{i\neq k}\U_i\right)\left(\I-\fraM_k(\bcalC)^{\dagger}\fraM_k(\bcalC)\right)\left(\otimes_{i\neq k}\U_i\right)^{\top}\\
	&{~~~}+\sum_{i\neq k} \U_k\fraM_k(\bcalC\times_{j\neq i,k}\U_j\times\V_i),
\end{align*}
where $\V_i:=\left(\I_{d_i}-\U_i\U_i^{\top}\right)\fraM_k(\bcalG)\left(\otimes_{j\neq i}\U_j\right)\fraM_k(\bcalC)^{\dagger}$. Then with the inequality $\ltinf{\A\B}\leq\ltinf{\A}\fro{\B}$, we have
\begin{align*}
	\ltinf{\fraM_k(\calP_{\TT}(\bcalG))}^2&\leq 2\ltinf{\U_k}^2\fro{\bcalG}^2+\ltinf{\fraM_k(\bcalG)\left(\otimes_{i\neq k}\U_i\right)\fraM_k(\bcalC)^{\dagger}\fraM_k(\bcalC)\left(\otimes_{i\neq k}\U_i\right)^{\top}}^2\\
	&\leq 3\frac{\mu r_k}{d_k}\cdot d_1\cdots d_m=3\mu r_k\cdot \dkm.
\end{align*}
On the other hand, by triangle inequality, the slice loss function has a lower bound
\begin{align*}
	\lone{\calP_{\Omega_{j}^{(k)}}(\bcalT-\bcalY)}&-\lone{\calP_{\Omega_{j}^{(k)}}(\bcalT^*-\bcalY)}=\lone{\calP_{\Omega_{j}^{(k)}}\calP_{\Omega^C}(\bcalT-\bcalT^*-\bXi)}-\lone{\calP_{\Omega_{j}^{(k)}}\calP_{\Omega^C}(\bXi)}\\
	&{~~~}+\lone{\calP_{\Omega_{j}^{(k)}}\calP_{\Omega}(\bcalT-\bcalT^*-\bXi-\bcalS)}-\lone{\calP_{\Omega_{j}^{(k)}}\calP_{\Omega}(\bXi+\bcalS)}\\
	&\geq\lone{\calP_{\Omega_{j}^{(k)}}(\bcalT-\bcalT^*)}-2\lone{\calP_{\Omega_{j}^{(k)}}\calP_{\Omega}(\bcalT-\bcalT^*)}-2\lone{\calP_{\Omega_{j}^{(k)}}(\bXi)}\\
	&\geq \frac{1}{\linft{\calP_{\Omega_{j}^{(k)}}(\bcalT-\bcalT^*)}}\left(\fro{\calP_{\Omega_{j}^{(k)}}(\bcalT-\bcalT^*)}^2-2\alpha\dkm\linft{\calP_{\Omega_{j}^{(k)}}(\bcalT-\bcalT^*)}^2\right)-6d_k^- \gamma,
\end{align*}
where the last line uses Lemma~\ref{teclem:norm-relation} and event $\bcalE_1$.
\paragraph{Phase Two Analysis}
Denote $f_0(\bcalT):=\lone{\bcalT-\bcalT^*-\bXi}$ for simplicity. In phase two analyses, we shall assume the event $$\bcalE_2:=\left\{\sup_{\bcalT\in\RR^{d_1\times\cdots\times d_m}, \Delta\bcalT\in\MM_{2\r}}\left|f_0(\bcalT+\Delta\bcalT)-f_0(\bcalT)-\EE\left(f_0(\bcalT+\Delta\bcalT)-f_0(\bcalT) \right)\right|\cdot\fro{\Delta\bcalT}^{-1}\leq C\sqrt{\textsf{DoF}_m}\right\}$$
holds. Specifically, 
Lemma~\ref{teclem:empirical} proves $\PP(\bcalE_2)\geq1-\exp(-\textsf{DoF}/2)$. Then under event $\bcalE_2$, we have a lower bound of $f(\bcalT)-f(\bcalT^*)$,
\begin{align*}
	f(\bcalT)-f(\bcalT^*)&\geq \EE[f(\bcalT)-f(\bcalT^*)]-C\sqrt{\textsf{DoF}_m}\fro{\bcalT-\bcalT^*}.
\end{align*}
Besides, \begin{align*}
	&{~~~}\EE f(\bcalT)-\EE f(\bcalT^*)\\
	&=\EE\left[\lone{\calP_{\Omega^{C}}(\bcalT-\bcalT^*-\bXi)}-\lone{\calP_{\Omega^C}(\bXi)}\right]+\EE\left[\lone{\calP_{\Omega}(\bcalT-\bcalT^*-\bXi-\bcalS)}-\lone{\calP_{\Omega}(\bXi+\bcalS)}\right]\\
	&=\EE\left[\lone{\bcalT-\bcalT^*-\bXi}-\lone{\bXi}\right]-\EE\left[\lone{\calP_{\Omega}(\bcalT-\bcalT^*-\bXi)}-\lone{\calP_{\Omega}(\bXi)}\right]\\
	&{~~~}+\EE\left[\lone{\calP_{\Omega}(\bcalT-\bcalT^*-\bXi-\bcalS)}-\lone{\calP_{\Omega}(\bXi+\bcalS)}\right].
\end{align*}
Note that Lemma~\ref{teclem:abs:function expectation} proves $\EE\left[\lone{\bcalT-\bcalT^*-\bXi}-\lone{\bXi}\right]\geq b_0^{-1}\fro{\bcalT-\bcalT^*}^2$. By triangle inequality, Holder Inequality and Lemma~\ref{teclem:alpha-bound}, we have
\begin{align*}
	\left|\EE\left[\lone{\calP_{\Omega}(\bcalT-\bcalT^*-\bXi)}-\lone{\calP_{\Omega}(\bXi)}\right]\right| &\leq\lone{\calP_{\Omega}(\bcalT-\bcalT^*)}\leq \sqrt{\alpha d^*}\fro{\calP_{\Omega}(\bcalT-\bcalT^*)}\\
	&\leq 2\alpha\sqrt{(m+1)(\muT\vee\mu)^mr^*d^*}\fro{\bcalT-\bcalT^*},
\end{align*}
and
\begin{align*}
	\left|\EE\left[\lone{\calP_{\Omega}(\bcalT-\bcalT^*-\bXi-\bcalS)}-\lone{\calP_{\Omega}(\bXi+\bcalS)}\right] \right|&\leq\lone{\calP_{\Omega}(\bcalT-\bcalT^*)}\\
	&\leq 2\alpha\sqrt{(m+1)(\muT\vee\mu)^mr^*d^*}\fro{\bcalT-\bcalT^*}.
\end{align*}
Thus, we obtain the following lower bound of $f(\bcalT-\bcalT^*)$,
\begin{align*}
	f(\bcalT-\bcalT^*)&\geq b_0^{-1}\fro{\bcalT-\bcalT^*}^2-4\alpha\sqrt{(m+1)(\muT\vee\mu)^mr^*d^*}\fro{\bcalT-\bcalT^*}-C\sqrt{\textsf{DoF}_m}\fro{\bcalT-\bcalT^*}\\
	&\geq\frac{1}{2b_0}\fro{\bcalT-\bcalT^*}^2,
\end{align*}
where the last inequality is due to the phase two region $$\fro{\bcalT-\bcalT^*}\geq Cb_0\cdot\max\{\sqrt{\textsf{DoF}_m}, \alpha \sqrt{(m+1)(\muT\vee\mu)^m r^*d^*}\}.$$ The following lemma shall inherit notations and assumptions in Lemma~\ref{lem:abs:regularity}. It provides upper bound for the projected sub-gradient and finishes the proof.
\begin{lemma}[Upper bound for projected sub-gradient]
	Let $\bcalT\in\MM_{\r,\mu}$ statisfy $\fro{\bcalT-\bcalT^*}\geq Cb_0\cdot\max\{\sqrt{\textsf{DoF}_m},\alpha \sqrt{(m+1)(\muT\vee\mu)^m r^*d^*}\}$. Let $\bcalG\in\partial f(\bcalT)$ be the sub-gradient and $\TT$ be the tangent space of $\MM_{\r}$ at point $\bcalT$. Then under event $\bcalE_2$, we have $$\fro{\calP_{\TT}(\bcalG)}\leq c_1\cdot \sqrt{m+1}\cdot b_1^{-1}\fro{\bcalT-\bcalT^*}.$$
\end{lemma}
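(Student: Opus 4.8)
The plan is to reduce the bound to the perturbation argument already carried out for the pseudo-Huber loss (with the smoothing scale $\delta^{-1}$ now played by the density scale $b_1^{-1}$), after first stripping off the part of the sub-gradient that is produced by the corruption. Let $\Omega$ denote the support of $\bcalS$ and recall the corruption-free loss $f_0(\bcalT):=\lone{\bcalT-\bcalT^*-\bXi}$, the object controlled by the event $\bcalE_2$. I would decompose $\bcalG=\bcalG_0+\Delta\bcalG$, where $\bcalG_0\in\partial f_0(\bcalT)$ is chosen to agree with $\bcalG$ on $\Omega^c$ and to equal $\mathrm{sign}\!\left([\bcalT-\bcalT^*-\bXi]_\omega\right)$ on $\Omega$ (a legitimate sub-gradient of $f_0$ at $\bcalT$), and $\Delta\bcalG:=\bcalG-\bcalG_0$ is supported on $\Omega$ with $\linft{\Delta\bcalG}\le 2$. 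Since $\bcalS$ is $\alpha$-fraction sparse, $\#\Omega\le\alpha d^*$, so $\fro{\Delta\bcalG}\le 2\sqrt{\alpha d^*}$, and by the triangle inequality it suffices to bound $\fro{\calP_\TT(\bcalG_0)}$ and $\fro{\calP_\TT(\Delta\bcalG)}$ separately.

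For the corruption term I would exploit that $\calP_\TT$ projects onto the tangent space at the $\mu$-incoherent point $\bcalT$. The Cauchy--Schwarz estimate underlying Lemma~\ref{teclem:alpha-bound} --- bounding $\fro{\Delta\bcalG\times_1\U_1^\top\cdots\times_m\U_m^\top}^2$ and the $m$ analogous mode-$k$ contractions by $\#\Omega\cdot\prod_{k}\ltinf{\U_k}^2\cdot\fro{\Delta\bcalG}^2$ and its slicewise variants --- gives $\fro{\calP_\TT(\Delta\bcalG)}\le\big((m+1)\alpha\mu^m r^*\big)^{1/2}\fro{\Delta\bcalG}\le 2\alpha\big((m+1)\mu^m r^* d^*\big)^{1/2}$. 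By the hypothesis $\fro{\bcalT-\bcalT^*}\ge Cb_0\,\alpha\big((m+1)(\muT\vee\mu)^m r^* d^*\big)^{1/2}$, together with $b_0\ge b_1$ and $\muT\vee\mu\ge\mu$, this is at most $c_1\sqrt{m+1}\,b_1^{-1}\fro{\bcalT-\bcalT^*}$ once $C$ is a large enough absolute constant.

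For $\fro{\calP_\TT(\bcalG_0)}$ I would mimic the pseudo-Huber proof line by line. Decompose $\fro{\calP_\TT(\bcalG_0)}^2\le\frorr{\bcalG_0}^2+\sum_{k=1}^m\fro{\fraM_k(\bcalG_0)(\otimes_{i\neq k}\U_i)\fraM_k(\bcalC)^\dagger\fraM_k(\bcalC)(\otimes_{i\neq k}\U_i)^\top}^2$; each of these $m+1$ quantities equals $\langle\bcalH,\bcalG_0\rangle=\fro{\bcalH}^2$ for the rank-$\le\r$ tensor $\bcalH$ realizing it (the inner product collapses because $\bcalH$ is an orthogonal-projection image of $\bcalG_0$). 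Take $\bcalS:=\bcalT+\tfrac14 b_1\bcalH$, so that $\bcalS-\bcalT\in\MM_\r\subset\MM_{2\r}$; convexity gives $\tfrac14 b_1\fro{\bcalH}^2=\langle\bcalS-\bcalT,\bcalG_0\rangle\le f_0(\bcalS)-f_0(\bcalT)$, event $\bcalE_2$ turns the right side into $\EE\!\left[f_0(\bcalS)-f_0(\bcalT)\right]+C\sqrt{\textsf{DoF}_m}\,\fro{\bcalS-\bcalT}$, and $\EE\!\left[f_0(\bcalS)-f_0(\bcalT)\right]\le 2b_1^{-1}\big(\fro{\bcalT-\bcalT^*}\fro{\bcalS-\bcalT}+\fro{\bcalS-\bcalT}^2\big)$ by the elementary computation behind Lemma~\ref{teclem:abs:function expectation} applied with the global bound $h_\xi\le b_1^{-1}$. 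Dividing by $\fro{\bcalS-\bcalT}$, solving the resulting linear inequality in $\fro{\bcalH}$, and absorbing the $\sqrt{\textsf{DoF}_m}$ term using $\fro{\bcalT-\bcalT^*}\ge Cb_0\sqrt{\textsf{DoF}_m}\ge Cb_1\sqrt{\textsf{DoF}_m}$, one obtains $\fro{\bcalH}\le c_1 b_1^{-1}\fro{\bcalT-\bcalT^*}$ for each of the $m+1$ pieces, hence $\fro{\calP_\TT(\bcalG_0)}\le c_1\sqrt{m+1}\,b_1^{-1}\fro{\bcalT-\bcalT^*}$. Adding the two estimates and renaming constants finishes the proof.

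The hard part will be the corruption term: one must extract a \emph{full} factor $\alpha$ from $\fro{\calP_\TT(\Delta\bcalG)}$ --- one $\sqrt\alpha$ from the restricted-isometry-type bound on $\calP_\TT\calP_\Omega$, where slicewise sparsity of $\Omega$ meets the incoherence of the factors of $\bcalT$, and another $\sqrt\alpha$ from $\fro{\Delta\bcalG}\lesssim\sqrt{\alpha d^*}$ --- since this $\alpha$-linear (not $\sqrt\alpha$) dependence is precisely what yields the improved $\alpha^2 d^*$ rate; the naive direct argument comparing $f(\bcalS)$ to $f(\bcalT)$ only produces the crude $\lone{\calP_\Omega(\bcalH)}\le\sqrt{\alpha d^*}\fro{\bcalH}$, which cannot be absorbed under the stated hypothesis. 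A routine but necessary check is that $\bcalG_0$ is a genuine sub-gradient of $f_0$ at $\bcalT$, so that the corruption-free perturbation argument applies to it verbatim.
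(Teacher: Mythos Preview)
Your proposal is correct and takes a genuinely different route from the paper. The paper does \emph{not} split the sub-gradient; instead it applies the sub-gradient inequality directly to the full loss $f$, taking $\bcalM=\bcalT+\tfrac12 b_1\,\bcalG\times_1\V_1\V_1^\top\cdots\times_m\V_m\V_m^\top$ and bounding
\[
f(\bcalM)-f(\bcalT)\;\le\;\bigl(\lone{\bcalM-\bcalT^*-\bXi}-\lone{\bcalT-\bcalT^*-\bXi}\bigr)+2\lone{\calP_\Omega(\bcalM-\bcalT)}
\]
via the triangle inequality, then controlling the corruption term by the crude estimate $\lone{\calP_\Omega(\bcalM-\bcalT)}\le\sqrt{\alpha d^*}\,\fro{\bcalM-\bcalT}=\tfrac12 b_1\sqrt{\alpha d^*}\,\frorr{\bcalG}$ --- precisely the ``naive'' route you flagged. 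This leaves an additive $b_1\sqrt{\alpha d^*}$ that the paper absorbs into $\fro{\bcalT-\bcalT^*}$ by appeal to the hypothesis; as you observed, that absorption is not transparent under the stated lower bound when $\alpha$ is small. Your decomposition $\bcalG=\bcalG_0+\Delta\bcalG$ circumvents the issue entirely: the $\bcalG_0$ piece runs the clean $f_0$ argument with no corruption term at all, and for $\Delta\bcalG$ you combine $\fro{\Delta\bcalG}\le 2\sqrt{\alpha d^*}$ with the RIP-type contraction $\fro{\calP_\TT\calP_\Omega(\cdot)}\lesssim\sqrt{\alpha\mu^m r^*}\fro{\cdot}$ (valid because each block of $\calP_\TT$ multiplies by $\otimes_{j\neq k}\U_j$, whose row-incoherence together with mode-$k$ slicewise sparsity yields the $\sqrt{\alpha}$), producing a full $\alpha$ factor that absorbs directly under the hypothesis $\fro{\bcalT-\bcalT^*}\ge Cb_0\,\alpha\sqrt{(m+1)(\muT\vee\mu)^m r^* d^*}$. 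Your argument is cleaner on this point and makes the $\alpha$-linear dependence explicit at the level of the gradient bound itself.
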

\begin{proof}
	Note that $\fro{\calP_{\TT}(\bcalG)}$ has the upper bound
	\begin{align*}
		\fro{\calP_{\TT}(\bcalG)}^2&=\fro{\bcalG\times_1\U_1\U_1^{\top}\times_2\cdots\times_m\U_m\U_m^{\top}}^2\\
		&{~~~~~}+\sum_{k=1}^{m}\fro{\left(\I_{d_k}-\U_k\U_k^{\top}\right)\fraM_k(\bcalG)\left(\otimes_{i\neq k}\U_i\right)\fraM_k(\bcalC^*)^{\dag} \fraM_k(\bcalC^*)\left(\otimes_{i\neq k}\U_i\right)^{\top} }^2\\
		&\leq \underbrace{\frorr{\bcalG}^2}_{A_1}+\underbrace{\sum_{k=1}^{m}\fro{\fraM_k(\bcalG)\left(\otimes_{i\neq k}\U_i\right)\fraM_k(\bcalC^*)^{\dag} \fraM_k(\bcalC^*)\left(\otimes_{i\neq k}\U_i\right)^{\top} }^2}_{A_2},
	\end{align*}
	where $\frorr{\bcalG}:=\sup_{\W_j\in\OO_{d_j,r_j}}\fro{\bcalG\times_1\W_1\W_1^{\top}\times_2\cdots\times_m\W_m\W_m^{\top} }$.
	
	\textbf{First consider $A_1$.}
	Suppose $\bcalG$ achieves $\frorr{\cdot}$ with orthogonal matrices $\V_k\in\OO_{d_k,r_k}$, namely, $$\frorr{\bcalG}=\fro{\bcalG\times_1\V_1\V_1^{\top}\times_2\cdots\times_m\V_m\V_m^{\top}},\quad \text{ for all } k=1,\cdots,m.$$ Then take $\bcalS=\bcalT+\frac{1}{2}b_1\cdot\bcalG\times_1\V_1\V_1^{\top}\times_2\cdots\times_m\V_m\V_m^{\top}$. 
	By definition of sub-gradient and by triangular inequality, we have
	\begin{equation}
		\begin{split}
		\inp{\bcalM-\bcalT}{\bcalG}\leq f(\bcalM)-f(\bcalT)&=\lone{\bcalM-\bcalT^*-\bXi}-\lone{\bcalT-\bXi}\\
		&~~~+\lone{\calP_{\Omega}(\bcalM-\bcalT^*-\bXi-\bcalS)}- \lone{\calP_{\Omega}(\bcalT-\bcalT^*-\bXi-\bcalS)}\\
		&~~~-\lone{\calP_{\Omega}(\bcalM-\bcalT^*-\bXi)}+ \lone{\calP_{\Omega}(\bcalT-\bcalT^*-\bXi)}\\
		&\leq \lone{\bcalM-\bcalT^*-\bXi}-\lone{\bcalT-\bXi}+2\lone{\calP_{\Omega}(\bcalM-\bcalT)}.
		\label{eq2}
		\end{split}
	\end{equation}
On the other hand, event $\bcalE_2$ and  Lemma~\ref{teclem:abs:function expectation} imply that \begin{align*}
	\lone{\bcalM-\bcalT^*-\bXi}-\lone{\bcalT-\bXi}&\leq \frac{1}{b_1}\left(\fro{\bcalM-\bcalT}^2+2\fro{\bcalM-\bcalT}\fro{\bcalT-\bcalT^*}\right)+C\sqrt{\textsf{DoF}_m}\fro{\bcalM-\bcalT}.
\end{align*}
 Also note that 
	\begin{align*}
		\lone{\calP_{\Omega}(\bcalM-\bcalT)}\leq \sqrt{\alpha d^*}\fro{\bcalM-\bcalT}&=0.5b_1\sqrt{\alpha d^*}\fro{\bcalG\times_1\V_1\V_1^{\top}\times_2\cdots\times_m\V_m\V_m^{\top}}\\
		&= 0.5b_1\sqrt{\alpha d^*}\frorr{\bcalG}.
	\end{align*}
 Insert $\bcalS=\bcalT+\frac{1}{2}b_1\cdot\bcalG\times_1\V_1\V_1^{\top}\times_2\cdots\times_m\V_m\V_m^{\top}$ into Equation~\eqref{eq2} and with $\fro{\bcalT-\bcalT^*}\geq b_0\cdot\max\{\sqrt{\textsf{DoF}_m}, \alpha\sqrt{(m+1)\bar{\mu}^m r^*d^*}\}$, $b_0\geq b_1$, we obtain
	\begin{align*}
		\frac{1}{2}b_1 \frorr{\bcalG}^2& \leq \frac{1}{4}b_1\frorr{\bcalG}^2+\fro{\bcalT-\bcalT^*}\frorr{\bcalG}+0.5b_1\sqrt{\alpha d^*}\frorr{\bcalG}+C b_1 \sqrt{\textsf{DoF}_m}\frorr{\bcalG}\\
		&\leq \frac{1}{4}b_1\frorr{\bcalG}^2+C \fro{\bcalT-\bcalT^*}\frorr{\bcalG}.
	\end{align*}
	By solving the sbove quadratic inequality of $\frorr{\bcalG}$, we get \begin{align*}
		\frorr{\bcalG}\leq c_1 b_1^{-1}\cdot\fro{\bcalT-\bcalT^*}.
	\end{align*}
	
	\textbf{Second consider $A_2$.} Note that $\fraM_k(\bcalG)\left(\otimes_{i\neq k}\U_i\right)\fraM_k(\bcalC^*)^{\dag} \fraM_k(\bcalC^*)\left(\otimes_{i\neq k}\U_i\right)^{\top}$ is the $k$-th matricization of some Tucker rank at most $\r$ tensor.
	Then by same analysis trick as $A_1$, we have \begin{align*}
		\fro{\fraM_k(\bcalG)\left(\otimes_{i\neq k}\U_i\right)\fraM_k(\bcalC^*)^{\dag} \fraM_k(\bcalC^*)\left(\otimes_{i\neq k}\U_i\right)^{\top}}\leq c_1\cdot b_1^{-1}\cdot\fro{\bcalT-\bcalT^*}.
	\end{align*}
	Finally, we have $\fro{\calP_{\TT}(\bcalG)}^2\leq (m+1)c_1^2b_{1}^{-2}\fro{\bcalT-\bcalT^*}^2$, which leads to
	\begin{align*}
		\fro{\calP_{\TT}(\bcalG)}\leq c_1\cdot \sqrt{m+1}\cdot b_{1}^{-1}\fro{\bcalT-\bcalT^*}
	\end{align*}
\end{proof}

\subsection{Proof of Theorem~\ref{thm:abs:dynamics}}
\subsubsection{Phase One}
For convenience, denote $D_l:=\left(1-\frac{1}{32}(5m+1)^{-2}(3^m\mu^{*m} r^*)^{-1}\right)^{l}\cdot D_0$. We shall prove the following Euqation~\eqref{eq51}-\eqref{eq54} by induction. It's obvious that it holds for the initialization $\bcalT_0$. Suppose it holds for iteration $l$ and we consider the $(l+1)$-th iteration. We need to prove 
\begin{subequations}
	\begin{align}
		\fro{\bcalT_{l+1}-\bcalT^*}&\leq D_{l+1}
		\label{eq51}\\
		\ltinf{\bcalT_{l+1}-\bcalT^*}&\leq 3\sqrt{\frac{\muT r_k}{d_k}}\cdot D_{l+1}
		\label{eq52}\\
		\ltinf{\left(\U_k^{(l+1)}\H_k^{(l+1)}-\U_k^*\right)\fraM_{k}(\bcalC^*)}&\leq5\sqrt{\frac{\muT r_k}{d_k}}\cdot D_{l+1}
		\label{eq53}\\
		\linft{\bcalT_{l+1}-\bcalT^*}&\leq (5m+1)\sqrt{\frac{3^m\mu^{*m} r^*}{d^*}}\cdot D_{l+1}
		\label{eq54}\\
		\linft{\U_k^{(l+1)}}&\leq\sqrt{\frac{3\muT r_k}{d_k}}.\label{eq55}
	\end{align}
	\label{eq5}
\end{subequations}

\paragraph*{Frobenius norm}
First consider $\fro{\bcalT_l-\bcalT-\eta_l\calP_{\TT_l}(\bcalG_l)}$,
\begin{align*}
	\fro{\bcalT_l-\eta_l\calP_{\TT_l} (\bcalG_l)-\bcalT^*}^2=\fro{\bcalT_l-\bcalT^*}^2-2\eta_l\inp{\bcalT_l-\bcalT^*}{\calP_{\TT_l}(\bcalG_l)} + \eta_l^2\fro{\calP_{\TT_l}(\bcalG_l)}^2.
\end{align*}
We have analyzed the last term in Lemma~\ref{lem:abs:regularity}, which has $\fro{\calP_{\TT_l}(\bcalG_l)}^2\leq d^*$.
Note that by definition of sub-gradient and analyses of $f(\bcalT)-f(\bcalT^*)$ in Lemma~\ref{lem:abs:regularity}, the intermediate term has the lower bound
\begin{align*}
	\inp{\bcalT_l-\bcalT^*}{\calP_{\TT_l}(\bcalG_l)}&=\inp{\bcalT_l-\bcalT^*}{\bcalG_l}-\inp{\calP_{\TT_l}^{\perp}(\bcalT_l-\bcalT^*)}{\bcalG_l}\\
	&\geq f(\bcalT_l)-f(\bcalT^*)-\inp{\calP_{\TT_l}^{\perp}\bcalT^*}{\bcalG_l}\\
	&\geq  \linft{\bcalT_l-\bcalT^*}^{-1}\cdot\left(\fro{\bcalT_l-\bcalT^*}^2-2\alpha d^* \linft{\bcalT_l-\bcalT^*}^2\right)-6d^*\gamma-\inp{\calP_{\TT_l}^{\perp}\bcalT^*}{\bcalG_l}
\end{align*}
Besides, Lemma~\ref{teclem:rieman-orthogonal-project} shows $\left|\inp{\calP_{\TT_l}^{\perp}\bcalT^*}{\bcalG_l}\right|\leq \fro{\calP_{\TT_l}^{\perp}\bcalT^*}\cdot \fro{\bcalG_l}\leq 8m^2\sqrt{d^*}\mins^{*-1}\fro{\bcalT_l-\bcalT^*}^2$. Hence, we have 
\begin{align*}
	\fro{\bcalT_l-\eta_l\calP_{\TT_l} (\bcalG_l)-\bcalT^*}^2&\leq \fro{\bcalT_l-\bcalT^*}^2-2\eta_l\linft{\bcalT_l-\bcalT^*}^{-1}\cdot\fro{\bcalT_l-\bcalT^*}^2+4\eta_{l}\alpha d^*\linft{\bcalT_{l}-\bcalT^*}\\
	&{~~~~~~~~~~~~~~~~~~~~~~~~~~}+12\eta_{l}d^*\gamma+16\eta_{l}m^2\sqrt{d^*}\mins^{*-1}\fro{\bcalT_l-\bcalT^*}^2+\eta_{l}^2 d^*,
\end{align*}
Then insert the induction of $\bcalT_{l}$ into the above equation $\fro{\bcalT_l-\bcalT^*}\leq D_l$ and $\linft{\bcalT_l-\bcalT^*}\leq (5m+1)\sqrt{\frac{3^m\mu^{*m} r^*}{d^*}}\cdot D_l $ into the above equation, which is slimiar to pseudo-Huber loss case Section~\ref{proof:pseudo-Huber:phaseone},
\begin{align*}
	\fro{\bcalT_l-\eta_l\calP_{\TT_l} (\bcalG_l)-\bcalT^*}^2&\leq D_l^2-2\eta_{l}(5m+1)^{-1}\sqrt{\frac{d^*}{3^m\mu^{*m} r^*}}D_l+4\eta_{l}\alpha (5m+1)\sqrt{3^m\mu^{*m} r^*d^*}D_l+12\eta_{l}d^*\gamma\\
	&{~~~~~~~~~~~~~~~~~~~~~~~~~~~~~~~~~~~~~~~~~~~~~~~~~~~~~}+16\eta_{l}m^2\sqrt{d^*}\mins^{*-1}D_l^2+\eta_{l}^2 d^*\\
	&\leq D_l^2-\frac{2}{3}\eta_l(5m+1)^{-1}\sqrt{\frac{d^*}{3^m\mu^{*m} r^*}}D_l +16\eta_{l}m^2\sqrt{d^*}\mins^{*-1}D_l^2+\eta_{l}^2 d^*\\
	&\leq D_l^2-\frac{1}{2}\eta_l(5m+1)^{-1}\sqrt{\frac{d^*}{3^m\mu^{*m} r^*}}D_l +\eta_{l}^2 d^*,
\end{align*}
where the second inequality uses phase one region constraints $D_l\geq 12(5m+1)\sqrt{3^m\mu^{*m} r^*d^*}\gamma$, corruption rate $\alpha\leq\frac{1}{12(5m+1)^23^m\mu^{*m}r^*}$ and the last line is from initialization condition. Then with the stepsize $\eta_l \in\frac{1}{8(5m+1)\sqrt{3^m\mu^{*m}r^*d^*}}\cdot D_l\cdot \left[1,3\right]$, we obtain
$$\fro{\bcalT_l-\eta_l\calP_{\TT_l} (\bcalG_l)-\bcalT^*}^2\leq \left( 1-\frac{3}{64}(5m+1)^{-2}(3^m\mu^{*m} r^*)^{-1}\right)D_l^2. $$
Note that $\bcalT_{l+1}=\textrm{HOSVD}(\bcalT_l-\eta_{l}\calP_{\TT_l}(\bcalG_l))$ and perturbation bound Theorem~\ref{teclem:perturbation:tensor} implies \begin{align*}
	\fro{\bcalT_{l+1}-\bcalT^*}\leq \left( 1-\frac{1}{64}(5m+1)^{-2}(3^m\mu^{*m} r^*)^{-1}\right)D_l=D_{l+1},
\end{align*}
where we use $D_{l}< D_0\leq c\mins^*\cdot (5m+1)^{-2}(3^m\mu^{*m}r^*)^{-1}$.
\paragraph*{Entrywise norm} \label{subsec:abs-phaseone-entry}
For each $k=1,\dots,m$, consider $\ltinf{\fraM_k\left(\bcalT_l-\bcalT^*-\eta_{l}\calP_{\TT_l}(\bcalG_l)\right)}$,  or equivalently, consider $$\fro{\calP_{\Omega_{j}^{(k)}}\left(\bcalT_l-\bcalT^*-\eta_{l}\calP_{\TT_l}(\bcalG_l)\right) },\quad \text{ for each }j=1,\dots,d_k.$$
Note that
\begin{align*}
	\fro{\calP_{\Omega_{j}^{(k)}}\left(\bcalT_l-\bcalT^*-\eta_{l}\calP_{\TT_l}(\bcalG_l)\right) }^2&= \fro{\calP_{\Omega_{j}^{(k)}}\left(\bcalT_l-\bcalT^*\right)}^2\\
	&{~~~~~~~~~~}-2\eta_{l}\inp{\calP_{\Omega_{j}^{(k)}}(\bcalT_l-\bcalT^*)}{\calP_{\Omega_{j}^{(k)}}(\calP_{\TT_l}(\bcalG_l))}+\eta_{l}^2\ltwo{\calP_{\Omega_{j}^{(k)}}(\calP_{\TT_l}(\bcalG_l))}^2.
\end{align*}
With $\ltinf{\U_k^{(l)}}\leq\sqrt{\frac{3\muT r_k}{d_k}}$, Lemma~\ref{lem:abs:regularity} provides an upper bound for the last term $$\ltwo{\calP_{\Omega_{j}^{(k)}}(\calP_{\TT_l}(\bcalG_l))}^2\leq9\frac{\muT r_k}{d_k}d^*.$$ Then consider the intermidiate term $\inp{\calP_{\Omega_{j}^{(k)}}(\bcalT_l-\bcalT^*)}{\calP_{\Omega_{j}^{(k)}}(\calP_{\TT_l}(\bcalG_l))}= \inp{\calP_{\Omega_{j}^{(k)}}(\bcalT_l)}{\calP_{\Omega_{j}^{(k)}}(\calP_{\TT_l}(\bcalG_l))}-\inp{\calP_{\Omega_{j}^{(k)}}(\bcalT^*)}{\calP_{\Omega_{j}^{(k)}}(\calP_{\TT_l}(\bcalG_l))}$. Note that simple calculations lead to
$$\inp{\calP_{\Omega_{j}^{(k)}}(\bcalT_l)}{\calP_{\Omega_{j}^{(k)}}(\calP_{\TT_l}(\bcalG_l))}=\inp{\calP_{\Omega_{j}^{(k)}}(\bcalT_l)}{\calP_{\Omega_{j}^{(k)}}(\bcalG_l)},$$
and
\begin{equation}
	\label{eq8}
	\begin{split}
		\inp{\calP_{\Omega_{j}^{(k)}}(\bcalT^*)}{\calP_{\TT_l}(\bcalG_l)}&=\inp{\calP_{\TT_l}\calP_{\Omega_{j}^{(k)}}(\bcalT^*)}{\bcalG_l}\\
		&=\inp{\calP_{\TT_l}\calP_{\Omega_{j}^{(k)}}\calP_{\TT_l}(\bcalT^*)}{\bcalG_l}+\inp{\calP_{\TT_l}\calP_{\Omega_{j}^{(k)}}\calP_{\TT_l}^{\perp}(\bcalT^*)}{\bcalG_l}\\
		&=\inp{\calP_{\Omega_{j}^{(k)}}\calP_{\TT_l}(\bcalT^*)}{\bcalG_l}+\inp{\calP_{\TT_l}\calP_{\Omega_{j}^{(k)}}\calP_{\TT_l}^{\perp}(\bcalT^*)}{\bcalG_l}\\
		&=\inp{\calP_{\Omega_{j}^{(k)}}(\bcalT^*)}{\bcalG_l}-\inp{\calP_{\Omega_{j}^{(k)}}\calP_{\TT_l}^{\perp}(\bcalT^*)}{\bcalG_l}+\inp{\calP_{\Omega_{j}^{(k)}}\calP_{\TT_l}^{\perp}(\bcalT^*)}{\calP_{\TT_l}(\bcalG_l)}\\
		&=\inp{\calP_{\Omega_{j}^{(k)}}(\bcalT^*)}{\calP_{\Omega_{j}^{(k)}}(\bcalG_l)}-\inp{\calP_{\Omega_{j}^{(k)}}\calP_{\TT_l}^{\perp}(\bcalT^*)}{\calP_{\Omega_{j}^{(k)}}\bcalG_l}+\inp{\calP_{\Omega_{j}^{(k)}}\calP_{\TT_l}^{\perp}(\bcalT^*)}{\calP_{\Omega_{j}^{(k)}}\calP_{\TT_l}(\bcalG_l)}.
	\end{split}
\end{equation}
With Lemma~\ref{teclem:rieman-orthogonal-project}, we have
\begin{align*}
	&{~~~~}\bigg| \inp{\calP_{\Omega_{j}^{(k)}}\calP_{\TT_l}^{\perp}(\bcalT^*)}{\calP_{\Omega_{j}^{(k)}}(\bcalG_l)}\bigg|\\
	&\leq \sqrt{\dkm}\fro{\bcalT_l-\bcalT^*}\left(m^2\ltinf{\U_k^{(l)}}\frac{\fro{\bcalT_l-\bcalT^*}}{\mins^*}+m\ltinf{\U_k^{(l)}\U_k^{(l)\top}-\U_k^*\U_k^{*\top}}\right)=:B_1,
\end{align*}
and
\begin{align*}
	&{~~~~}\bigg|\inp{\calP_{\Omega_{j}^{(k)}}\calP_{\TT_l}^{\perp}(\bcalT^*)}{\calP_{\Omega_{j}^{(k)}}\calP_{\TT_l}(\bcalG_l)} \bigg|\\
	&\leq 3\sqrt{\frac{\muT r_k}{d_k}\cdot d^*}\fro{\bcalT_l-\bcalT^*}\left(m^2\ltinf{\U_k^{(l)}}\frac{\fro{\bcalT_l-\bcalT^*}}{\mins^*}+m\ltinf{\U_k^{(l)}\U_k^{(l)\top}-\U_k^*\U_k^{*\top}}\right)=:B2.
\end{align*}
Note that with induction $\ltinf{\left(\U_k^{(l)}\H_k^{(l)}-\U_k^*\right)\fraM_k(\bcalC^*) }\leq 4\sqrt{\frac{\muT r_k}{d_k}}\cdot D_{l}$, we have $\ltinf{\U_k^{(l)}\H_k^{(l)}-\U_k^* }\leq 4\sqrt{\frac{\muT r_k}{d_k}}\cdot\mins^{*-1}D_{l}$. Also, Lemma~\ref{teclem:ltinf-transformation} shows $$\ltinf{\U_k^{(l)}\U_k^{(l)\top}-\U_k^*\U_k^{*\top} }\leq 8\mins^{*-1}\sqrt{\frac{\muT r_k}{d_k}}D_l.$$ In this way, we have
\begin{align*}
	B_1\vee B_2\leq 16m^2\mins^{*-1}\sqrt{d^*}\frac{\muT r_k}{d_k}D_l^2.
\end{align*}
Also, by definition of sub-gradient and by analysis in Lemma~\ref{lem:abs:regularity}, we have \begin{align*}
	\inp{\calP_{\Omega_{j}^{(k)}}(\bcalT_l-\bcalT^*)}{\calP_{\Omega_{j}^{(k)}}(\bcalG_l)}&\geq \lone{\calP_{\Omega_{j}^{(k)}}(\bcalT_{l}-\bcalY)}-\lone{\calP_{\Omega_{j}^{(k)}}(\bcalT^*-\bcalY)}\\
	&\geq\linft{\calP_{\Omega_{j}^{(k)}}(\bcalT_l-\bcalT^*)}^{-1}\cdot\fro{\calP_{\Omega_{j}^{(k)}}(\bcalT_l-\bcalT^*)}^2-2\alpha\dkm \linft{\calP_{\Omega_{j}^{(k)}}(\bcalT-\bcalT^*)}- 6\dkm\gamma.
\end{align*}
Thus, the intermediate term has the lower bound,
\begin{align*}
	\bigg|\inp{\calP_{\Omega_{j}^{(k)}}(\bcalT_l-\bcalT^*)}{\calP_{\Omega_{j}^{(k)}}(\calP_{\TT_l}(\bcalG_l))} \bigg|&\geq \linft{\calP_{\Omega_{j}^{(k)}}(\bcalT_l-\bcalT^*)}^{-1}\cdot\fro{\calP_{\Omega_{j}^{(k)}}(\bcalT_l-\bcalT^*)}^2- 6\dkm\gamma- (B_1+B_2)\\
	&~~~~~-2\alpha\dkm \linft{\calP_{\Omega_{j}^{(k)}}(\bcalT-\bcalT^*)}.
\end{align*}
Hence combine the above euqations and then we have upper bound for the slice \begin{align*}
	&{~~~~}\fro{\calP_{\Omega_{j}^{(k)}}\left(\bcalT_l-\bcalT^*-\eta_{l}\calP_{\TT_l}(\bcalG_l)\right) }^2\\
	&\leq \fro{\calP_{\Omega_{j}^{(k)}}\left(\bcalT_l-\bcalT^*\right)}^2-2\eta_l \linft{\calP_{\Omega_{j}^{(k)}}(\bcalT_l-\bcalT^*)}^{-1}\cdot\fro{\calP_{\Omega_{j}^{(k)}}(\bcalT_l-\bcalT^*)}^2 \\
	&{~~~~~~~~~~~~~~~~~~~~~~~~~~~~~~~~~~~~~~}+4\eta_{l} \alpha\dkm\linft{\bcalT_l-\bcalT^*}+12\eta_{l} \dkm \gamma+2\eta_{l}(B_1+B_2)+9\eta_l^2 \frac{\muT r_k}{d_k}d^*.
\end{align*}
Then insert inductions of $\bcalT_l$ into the above equation,
\begin{align*}
	&{~~~~}\fro{\calP_{\Omega_{j}^{(k)}}\left(\bcalT_l-\bcalT^*-\eta_{l}\calP_{\TT_l}(\bcalG_l)\right) }^2\\
	&\leq 9\frac{\mu r_k}{d_k}D_l^2-18\eta_{l}\frac{\muT r_k}{d_k}(5m+1)^{-1}(3^m\mu^{*m} r^* d^*)^{-1/2}D_l^2\\
	&{~~~~~~~~~~~~~~~~~~~~~~~~~~~~~~~~~~~~}+8\eta_{l}\alpha \frac{1}{d_k} (5m+1)\sqrt{3^m\mu^{*m}r^*d^*}D_{l}+ 12\eta_{l} \dkm \gamma+2\eta_{l}(B_1+B_2)+9\eta_l^2 \frac{\muT r_k}{d_k}d^*\\
	&\leq 9\frac{\muT r_k}{d_k}\cdot\left(1- \frac{3}{64}(5m+1)^{-2}(3^m\mu^{*m} r^*)^{-1}\right)D_{l}^2,
\end{align*}
where the last line uses phase one region constraints, corruption rate and initialization guarantees. It shows $$\fro{\calP_{\Omega_{j}^{(k)}}\left(\bcalT_l-\bcalT^*-\eta_{l}\calP_{\TT_l}(\bcalG_l)\right) }\leq 3\sqrt{\frac{\muT r_k}{d_k}}\left(1-\frac{3}{128}(5m+1)^{-2}(3^m\mu^{*m} r^*)^{-1}\right) D_l.$$
It also infers $$\ltinf{\fraM_{k}\left(\bcalT_l-\bcalT^*-\eta_{l}\calP_{\TT_l}(\bcalG_l)\right) }\leq 3\sqrt{\frac{\muT r_k}{d_k}}\left(1-\frac{3}{128}(5m+1)^{-2}(3^m\mu^{*m} r^*)^{-1}\right) D_l.$$
Also, notice that,
\begin{align*}
	\ltinf{\fraM_k(\calP_{\TT^*}(\bcalT_l-\bcalT^*-\eta_l\calP_{\TT_l}(\bcalG_l)))}&\leq \ltinf{\fraM_k(\bcalT_l-\bcalT^*-\eta_l\calP_{\TT_l}(\bcalG_l))}+\ltinf{\fraM_k(\calP_{\TT^*}^{\perp}(\bcalT_l-\eta_l\calP_{\TT_l}(\bcalG_l)))},
\end{align*}
furthermore, with Lemma~\ref{teclem:rieman-orthogonal-project} and Lemma~\ref{teclem:remian-perturb-subgradient}, we could have the following upper bound for the latter term, (same as phase one under pseudo-Huber loss)
\begin{align*}
	\ltinf{\fraM_k(\calP_{\TT^*}^{\perp}(\bcalT_l-\eta_l\calP_{\TT_l}(\bcalG_l)))}\leq 8m^2 \sqrt{\frac{\muT r_k}{d_k}}\cdot\mins^{*-1}D_l^2.
\end{align*}
Then it arrives at
\begin{align*}
	&{~~~~}\ltinf{\fraM_k(\calP_{\TT^*}(\bcalT_l-\bcalT^*-\eta_l\calP_{\TT_l}(\bcalG_l)))}\\
	&\leq \ltinf{\fraM_k(\bcalT_l-\bcalT^*-\eta_l\calP_{\TT_l}(\bcalG_l))}+\ltinf{\fraM_k(\calP_{\TT^*}^{\perp}(\bcalT_l-\eta_l\calP_{\TT_l}(\bcalG_l)))}\\
	&\leq 3\sqrt{\frac{\muT r_k}{d_k}}\left(1-\frac{3}{128}(5m+1)^{-2}(3^m\mu^{*m} r^*)^{-1}\right) D_l+5m^2 \sqrt{\frac{\muT r_k}{d_k}}\cdot\mins^{*-1}D_l^2.
\end{align*}
Then by Lemma~\ref{teclem:perturbation:tensor}, we have \begin{align*}
	\ltinf{\fraM_k(\bcalT_{l+1}-\bcalT^*)}&\leq\ltinf{\fraM_k(\calP_{\TT^*}(\bcalT_l-\bcalT^*-\eta_l\calP_{\TT_l}(\bcalG_l)))}+32m\sqrt{\frac{\muT r_k}{d_k}}\frac{\fro{\bcalT_l-\bcalT^*-\eta_l\calP_{\TT_l}(\bcalG_l)}^2}{\mins^*}\\
	&{~~~}+32m\ltinf{\fraM_k(\bcalT_l-\bcalT^*-\eta_l\calP_{\TT_l}(\bcalG_l))}\frac{\fro{\bcalT_l-\bcalT^*-\eta_l\calP_{\TT_l}(\bcalG_l)}}{\mins^*}\\
	&\leq\left(1-\frac{3}{128}(4m+1)^{-2}(\mu^{*m} r^*)^{-1}\right)D_{l}\cdot\sqrt{\frac{3\muT r_k}{d_k}}+32m\mins^{*-1}D_{l+1}^2\cdot\sqrt{\frac{3\muT r_k}{d_k}}\\
	&{~~~}+8m^2 \sqrt{\frac{\muT r_k}{d_k}}\cdot\mins^{*-1}D_l^2\\
	&\leq 3D_{l+1}\cdot\sqrt{\frac{\muT r_k}{d_k}},
\end{align*}
where $D_l^2/\mins^*\leq D_l\cdot D_{0}/\mins^*\leq cD_l/(m^4\mu^{*m}r^*)\leq 2cD_{l+1}/(m^4\mu^{*m}r^*)$ is used, and
\begin{align*}
	&\ltinf{\left(\U_k^{(l+1)}\H_k^{(l+1)}-\U_k^*\right)\fraM_k(\bcalC^*)}\\
	&{~~~~~~~~}\leq \ltinf{\U_{k\perp}^*\U_{k\perp}^*\fraM_k(\bcalT_l-\bcalT^*-\eta_l\calP_{\TT_l}(\bcalG_l))}+64\ltinf{\U_k^*}\frac{\fro{\bcalT_l-\bcalT^*-\eta_{l}\calP_{\TT_l}(\bcalG_l)}^2}{\mins^*}\\
	&{~~~~~~~~~~~}+16\ltinf{\U_{k\perp}^*\U_{k\perp}^*\fraM_k(\bcalT_l-\bcalT^*-\eta_l\calP_{\TT_l}(\bcalG_l))}\cdot \frac{\fro{\bcalT_l-\bcalT^*-\eta_{l}\calP_{\TT_l}(\bcalG_l)}}{\mins^*}\\
	&{~~~~~~~~}\leq \left(1+16D_{l+1}\cdot \mins^{*-1}\right)\ltinf{\fraM_k(\bcalT_l-\bcalT^*-\eta_l\calP_{\TT_l}(\bcalG_l)) }+1.1\ltinf{\U_k^*}D_{l+1}\\
	&{~~~~~~~~}\leq 5D_{l+1}\cdot\sqrt{\frac{\muT r_k}{d_k}},
\end{align*}
where the second ineuqality is because $$\ltinf{\U_{k\perp}^*\U_{k\perp}^*\fraM_k(\bcalT_l-\bcalT^*-\eta_l\calP_{\TT_l}(\bcalG_l))}\leq\ltinf{\fraM_k(\bcalT_l-\bcalT^*-\eta_l\calP_{\TT_l}(\bcalG_l)) }+\ltinf{\U_k^*}\fro{\bcalT_l-\bcalT^*-\eta_l\calP_{\TT_l}(\bcalG_l)}.$$
Note that it implies $\bcalT_{l+1}$ is incoherent with $3\muT$, namely due to,
\begin{align*}
	\ltinf{\U_k^{(l+1)}}&\leq\sqrt{2}\ltinf{\U_k^{(l+1)}\H_k^{(l+1)}}\leq \sqrt{2}\linft{\U_k^{(l+1)}\H_k^{(l+1)}-\U_k^* }+\sqrt{2}\linft{\U_k^*}\\
	&\leq \sqrt{2}\mins^{*-1} \ltinf{\left(\U_k^{(l+1)}\H_k^{(l+1)}-\U_k^*\right)\fraM_k(\bcalC^*)}D_{l+1}+\sqrt{2}\linft{\U_k^*}\\
	&\leq\sqrt{\frac{3\muT r_k}{d_k}},
\end{align*}
where the first inequality is from $\op{(\H_k^{(l+1)})^{-1}}\leq \sqrt{2}$, see Lemma 4.6.3 \citep{chen2021spectral}. Finally, by Lemma~\ref{teclem:entrynorm-expansion}, we have the upper bound for the entrywise norm of $\bcalT_{l+1}-\bcalT^*$,
\begin{align*}
	&{~~~~}\linft{\bcalT_{l+1}-\bcalT^*}\\
	&\leq \sqrt{\frac{3^m\mu^{*m} r^*}{d^*}}\fro{\bcalT_{l+1}-\bcalT^*}+\sum_{k=1}^m\sqrt{\frac{3^m\mu^{*m-1}r_k^-}{\dkm}}\ltinf{\left(\U_k^{(l+1)}\H_k^{l+1}-\U_k^*\right)\fraM_k(\bcalC^*)}\\
	&\leq (5m+1)\sqrt{\frac{3^m\mu^{*m} r^*}{d^*}}D_{l+1}.
\end{align*}

\subsubsection{Phase Two}
\paragraph*{Frobenius norm}
First consider $\fro{\bcalT_l-\bcalT-\eta_l\calP_{\TT_l}(\bcalG_l)}$,
\begin{align*}
	\fro{\bcalT_l-\eta_l\calP_{\TT_l} (\bcalG_l)-\bcalT^*}^2=\fro{\bcalT_l-\bcalT^*}^2-2\eta_l\inp{\bcalT_l-\bcalT^*}{\calP_{\TT_l}(\bcalG_l)} + \eta_l^2\fro{\calP_{\TT_l}(\bcalG_l)}^2.
\end{align*}
We have analyzed the last term in Lemma~\ref{lem:abs:regularity} that  $\fro{\calP_{\TT_l}(\bcalG_l)}^2\leq c_1^2(m+1)b_1^{-2}\fro{\bcalT_l-\bcalT^*}^2$.
By definition of sub-gradient and analysis of $f(\bcalT)-f(\bcalT^*)$ in Lemma~\ref{lem:abs:regularity}, the intermediate term has the lower bound
\begin{align*}
	\inp{\bcalT_l-\bcalT^*}{\calP_{\TT_l}(\bcalG_l)}&=\inp{\bcalT_l-\bcalT^*}{\bcalG_l}-\inp{\calP_{\TT_l}^{\perp}(\bcalT_l-\bcalT^*)}{\bcalG_l}\\
	&\geq f(\bcalT_l)-f(\bcalT^*)-\inp{\calP_{\TT_l}^{\perp}\bcalT^*}{\bcalG_l}\\
	&\geq \frac{1}{2b_0} \fro{\bcalT_l-\bcalT^*}^2-\inp{\calP_{\TT_l}^{\perp}\bcalT^*}{\bcalG_l}.
\end{align*}
Lemma~\ref{teclem:rieman-orthogonal-project} and bound of $\frorr{\bcalG}$ in proofs of Lemma~\ref{lem:abs:regularity} infer $ \bigg|\inp{\calP_{\TT_l}^{\perp}\bcalT^*}{\bcalG_l} \bigg|\leq8m^2 c_1b_1^{-1}\mins^{*-1}\fro{\bcalT_l-\bcalT^*}^3$ and then we have
\begin{align*}
	\fro{\bcalT_l-\eta_l\calP_{\TT_l} (\bcalG_l)-\bcalT^*}^2&\leq \fro{\bcalT_l-\bcalT^*}^2-\eta_{l}\frac{1}{2b_0} \fro{\bcalT_l-\bcalT^*}^2+\eta_{l}^2c_1^2(m+1)b_1^{-2}\fro{\bcalT_l-\calT^*}^2\\
	&\leq \left(1-\frac{3}{64c_1^2(m+1)}\cdot\frac{b_1^2}{b_0^2}\right)\fro{\bcalT_l-\bcalT^*}^2,
\end{align*}
where the last inequality is due to $\eta_{l}\in\left[\frac{1}{8c_1^2(m+1)}\cdot\frac{b_1^2}{b_0}, \frac{3}{8c_1^2(m+1)}\cdot\frac{b_1^2}{b_0}\right]$. Then note that since $\linft{\bcalT^*-\bcalT_{l_1}}\leq\tau_1$, for each entry $i_1,\dots,i_m$ it has
\begin{align*}
	\left|[\text{Trun}_{\tau_1,\bcalT_{l_1}}(\bcalT_{l}-\eta_{l} \calP_{\TT_l}(\bcalG_{l}))-\bcalT^*]_{i_1\cdots i_m}\right|\leq \left|[\bcalT_l-\eta_l\calP_{\TT_l} (\bcalG_l)-\bcalT^*]_{i_1\cdots i_m}\right|.
\end{align*}
Besides $\linft{\bcalT^*}\leq \sqrt{\frac{\tau_2}{d^*}}\fro{\text{Trun}_{\tau_1,\bcalT_{l_1}}(\bcalT_{l}-\eta_{l} \calP_{\TT_l}(\bcalG_{l}))}$. Thus altogether we have
\begin{align*}
	\left|[\text{Trim}_{\tau_2}(\text{Trun}_{\tau_1,\bcalT_{l_1}}(\bcalT_{l}-\eta_{l} \calP_{\TT_l}(\bcalG_{l})))-\bcalT^*]_{i_1\cdots i_m}\right|&\leq \left|[\text{Trun}_{\tau_1,\bcalT_{l_1}}(\bcalT_{l}-\eta_{l} \calP_{\TT_l}(\bcalG_{l}))-\bcalT^*]_{i_1\cdots i_m}\right|\\
	&\leq \left|[\bcalT_l-\eta_l\calP_{\TT_l} (\bcalG_l)-\bcalT^*]_{i_1\cdots i_m}\right|,
\end{align*}
which is also used in \cite{cai2022generalized}. As a consequence, we have \begin{align*}
	\fro{\text{Trim}_{\tau_2}(\text{Trun}_{\tau_1,\bcalT_{l_1}}(\bcalT_{l}-\eta_{l} \calP_{\TT_l}(\bcalG_{l})))-\bcalT^*}^2&\leq\fro{\bcalT_{l}-\eta_{l}\calP_{\TT_l}(\bcalG_{l})-\bcalT^*}^2\\
	&\leq \left(1-\frac{3}{64c_1^2(m+1)}\cdot\frac{b_1^2}{b_0^2}\right)\fro{\bcalT_l-\bcalT^*}^2.
\end{align*}
Then by perturbation bound Lemma~\ref{teclem:perturbation:tensor}, we have
\begin{align*}
	\fro{\bcalT_{l+1}-\bcalT^*}&\leq \fro{\text{Trim}_{\tau_2}(\text{Trun}_{\tau_1,\bcalT_{l_1}}(\bcalT_{l}-\eta_{l} \calP_{\TT_l}(\bcalG_{l})))-\bcalT^*}\\
	&{~~~~~~~~~~~~~~~~~~~}+\mins^{*-1}\fro{\text{Trim}_{\tau_2}(\text{Trun}_{\tau_1,\bcalT_{l_1}}(\bcalT_{l}-\eta_{l} \calP_{\TT_l}(\bcalG_{l})))-\bcalT^*}^2\\
	&\leq \left(1-\frac{1}{32c_1^2(m+1)}\cdot\frac{b_1^2}{b_0^2}\right)\fro{\bcalT_l-\bcalT^*}.
\end{align*}
\paragraph*{Entrywise norm}
Note that with the trimming operations, the entrywise normed error is guaranteed
\begin{align*}
	\left|[\text{Trim}_{\tau_1,\bcalT_{l_1}}(\bcalT_l-\eta_l\calP_{\TT_l}\bcalG_l)-\bcalT^*]_{i_1\cdots i_m}\right|&\leq \left|[\text{Trim}_{\tau_1,\bcalT_{l_1}}(\bcalT_l-\eta_l\calP_{\TT_l}\bcalG_l)-\bcalT_{l_1}]_{i_1\cdots i_m}\right|+\left|[\bcalT_{l_1}-\bcalT^*]_{i_1\cdots i_m}\right|\\
	&\leq 2\tau_1,
\end{align*}
and
\begin{align*}
	\left|[\text{Trim}_{\tau_2}(\text{Trun}_{\tau_1,\bcalT_{l_1}}(\bcalT_{l}-\eta_{l} \calP_{\TT_l}(\bcalG_{l})))-\bcalT^*]_{i_1\cdots i_m}\right|&\leq \left|[\text{Trun}_{\tau_1,\bcalT_{l_1}}(\bcalT_{l}-\eta_{l} \calP_{\TT_l}(\bcalG_{l}))-\bcalT^*]_{i_1\cdots i_m}\right|\leq 2\tau_1.
\end{align*}
Thus we have $$	\fro{\calP_{\Omega_{j}^{(k)}}(\text{Trim}_{\tau_2}(\text{Trun}_{\tau_1,\bcalT_{l_1}}(\bcalT_{l}-\eta_{l} \calP_{\TT_l}(\bcalG_{l})))-\bcalT^*)}\leq 2\sqrt{\dkm}\tau_1.$$
Furthermore, by Lemma~\ref{teclem:perturbation:tensor}, we get (details of calculations are same as Section~\ref{proof:pseudo-Huber:phaseone})
\begin{align*}
	\ltinf{(\U_k^{(l+1)}\H_k^{(l+1)}-\U_k^*)\fraM_{k}(\bcalC)}\leq 5\sqrt{\dkm}\tau_1.
\end{align*}
 Also $\text{Trim}_{\tau_2}(\cdot)$ guarantees the incoherence of $\bcalT_{l+1}$, see Lemma B.6 of \cite{cai2022generalized}, namely,
$$\ltinf{\U_k^{(l+1)}}\leq 2\kappa \sqrt{\frac{\tau_2}{d_k}}.$$
Finally, by Lemma~\ref{teclem:entrynorm-expansion}, we obtain the entrywised norm
\begin{align*}
	\linft{\bcalT_{l+1}-\bcalT^*}&\leq (5m+1)2^m\kappa^m\tau_2^{m/2}\tau_1.
\end{align*}
\subsection{Proof of Lower Bound Theorem~\ref{thm:lowerbound}}
The proof follows Theorem~5.1 in \cite{chen2018robust}. Define \begin{align*}
	\omega(\alpha,\MM_{\r,\muT}):=\sup\left\{\fro{\bcalT_1-\bcalT_2}^2:\; \max_{i_1,\cdots,i_m}\textsf{TV}(P_{[\bcalT_1]_{i_1\cdots i_m}}, P_{[\bcalT_2]_{i_1\cdots i_m}})\leq \sigma\frac{\alpha}{1-\alpha},\; \bcalT_1,\bcalT_2\in\MM_{\r,\muT}\right\},
\end{align*} 
where $P_{[\bcalT_j]_{i_1\cdots i_m}}:=N([\bcalT_j]_{i_1\cdots i_m},\sigma^2)$, $j=1,2$ is the Gaussian distribution and $\textsf{TV}(\cdot,\cdot)$ is the total variation. We shall first prove \begin{align*}
	\inf_{\widehat{\bcalT}}\sup_{\bcalT^*\in\MM_{\r,\muT}}\sup_{\{Q_{i_1\dots i_m}\}}\PP\left(\fro{\widehat{\bcalT}-\bcalT^*}^2\geq \left(\sum_{k=1}^{m} r_kd_k+r_1\cdots r_m\right)\sigma^2\vee \omega(\alpha,\MM_{\r,\muT})\right)\geq c,
\end{align*}
for some constant $c$ and then prove $\omega(\alpha,\MM_{\r,\muT})\geq C\alpha^2d^*/\mu^{*m}r^*$ for some $C>0$.

\textbf{Step One} If the corruption rate satisfies $\omega(\alpha,\MM_{\r,\muT})\leq \left(\sum_{k=1}^{m} r_kd_k+r_1\cdots r_m\right)\sigma^2$, then the lower bound is $\sigma^2\cdot(\sum_{k=1}^{m}r_kd_k+r_1\cdots r_m)$, which is shown in \cite{zhang2018tensor}. We only need to prove when $\omega(\alpha,\MM_{\r,\muT})\geq \left(\sum_{k=1}^{m} r_kd_k+r_1\cdots r_m\right)\sigma^2$, it has \begin{align}
	\inf_{\widehat{\bcalT}}\sup_{\bcalT^*\in\MM_{\r,\muT}}\sup_{\{Q_{i_1\dots i_m}\}}\PP\left(\fro{\widehat{\bcalT}-\bcalT^*}^2\geq \omega(\alpha,\MM_{\r,\muT})\right)\geq c.
	\label{eq:corrupt1}
\end{align}
There exist $\bcalT_1,\bcalT_2\in\MM_{\r,\muT}$ such that
\begin{align*}
	\fro{\bcalT_1-\bcalT_2}^2=\omega(\alpha,\MM_{\r,\muT}),\quad \max_{i_1,\cdots,i_m}\textsf{TV}(P_{[\bcalT_1]_{i_1\cdots i_m}}, P_{[\bcalT_2]_{i_1\cdots i_m}})\leq \frac{\alpha'}{1-\alpha'}\sigma,
\end{align*}
for some $0<\alpha'\leq\alpha$. Note that for each entry $(i_1,\dots,i_m)\in [d_1]\times\cdots\times [d_m]$, there is $0<\alpha_{i_1\dots i_m}\leq\alpha'$ such that
\begin{align*}
	\textsf{TV}(P_{[\bcalT_1]_{i_1\cdots i_m}}, P_{[\bcalT_2]_{i_1\cdots i_m}})=\frac{\alpha_{i_1\dots i_m}}{1-\alpha_{i_1\dots i_m}}\sigma.
\end{align*}
Besides, according to \cite{chen2018robust}, there exist distributions $\tilde{Q}_{i_1\dots i_m}^{(1)}$ and $\tilde{Q}_{i_1\dots i_m}^{(2)}$ such that \begin{align*}
	(1-\alpha_{i_1\dots i_m})P_{[\bcalT_1]_{i_1\cdots i_m}}+\alpha_{i_1\dots i_m} \tilde{Q}_{i_1\dots i_m}^{(1)}=(1-\alpha_{i_1\dots i_m})P_{[\bcalT_2]_{i_1\cdots i_m}}+\alpha_{i_1\dots i_m} \tilde{Q}_{i_1\dots i_m}^{(2)}.
\end{align*}
There exist distributions $Q_{i_1\dots i_m}^{(j)}$, $j=1,2$, such that if random variable $\omega\sim Q_{i_1\dots i_m}^{(j)}$ then $\omega+[\bcalT_j+\bXi_j]_{i_1\dots i_m}\sim \tilde{Q}_{i_1\dots i_m}^{(j)}$, where $\bXi_j$ comprises i.i.d. $N(0,\sigma^2)$ entries. Then construct the corruptions with 
$$[\bcalS]_{i_1\dots i_m}^{(j)}\sim (1-\alpha_{i_1\dots i_m})\delta_0+\alpha_{i_1\dots i_m}Q_{i_1\dots i_m}^{(j)},\quad j=1,2,$$
where $\delta_0$ is the zero distribution. Specifically, if a random variable follows $\delta_0$, then it is a.s. zero. Under such corruptions, $\bcalY_1:=\bcalT_1+\bXi_1+\bcalS_1$ and $\bcalY_2\bcalT_2+\bXi_2+\bcalS_2$ have the same distribution, in which case $\bcalT_1$ and $\bcalT_2$ are not identifiable based on observations $\bcalY_j$, $j=1,2$. Then Le Cam’s two point testing method \cite{yu1997assouad} leads to Equation~\eqref{eq:corrupt1}.

\textbf{Step Two} We have
\begin{align*}
	\omega(\alpha,\MM_{\r,\muT})&=\sup\left\{\fro{\bcalT_1-\bcalT_2}^2:\; \max_{i_1,\cdots,i_m}\textsf{TV}(P_{[\bcalT_1]_{i_1\cdots i_m}}, P_{[\bcalT_2]_{i_1\cdots i_m}})\leq \sigma\frac{\alpha}{1-\alpha},\; \bcalT_1,\bcalT_2\in\MM_{\r,\muT}\right\}\\
	&\geq\sup\left\{\fro{\bcalT_1-\bcalT_2}^2:\; \linft{\bcalT_1-\bcalT_2}^2\leq 4\sigma^2\alpha^2,\; \bcalT_1,\bcalT_2\in\MM_{\r,\muT}\right\}\\
	&\geq C \sigma^2\cdot\alpha^2d^*/\mu^{*m}r^*,
\end{align*}
where the last equation follows from \cite{chen2021bridging} and the proof completes.
\section{Proofs of Initialization Theorem~\ref{thm:init}}
\label{proof:init}
Recall that $\tilde{\Omega}$ is the support of sparse corruption term $\bcalS$. Denote $$\bcalE:=\left\{\|\fraM_{k}(\bcalS)_{j,\cdot}\|_{0}\leq 3\alpha\dkm,\quad k\in[m],\; j\in[d_k] \right\}$$ as the event of $\bcalS$ to be an $\alpha$-fiber sparse tensor. By Chernoff bounds, we have $\PP(\bcalE)\geq 1-\sum_{k=1}^{m}\dkm\exp(-\alpha d_k)$. We shall use the fact that for all $\bcalX$, its operator is not larger than the one with its entrywise absolute value, namely, $\op{\bcalX}\leq \op{\bcalY}$ where $[\bcalY]_{\omega}=|[\bcalX]_{\omega}|$. First consider entries of $\hat{\bcalY}-\bcalT^*$, for any $(i_1,\dots,i_m)\in[d_1]\times\cdots\times[d_m]$, it has
\begin{align*}
	[\hat{\bcalY}-\bcalT^*]_{i_1\cdots i_m}&= \left(\xi_{i_1\cdots i_m}+[\bcalS]_{i_1\cdots i_m}\right)\cdot1_{\left\{|[\bcalY]_{i_1\cdots i_m}|\leq\tau\right\}}+\left(\tau\cdot\text{sign}\left([\bcalY]_{i_1\dots i_m}\right)-[\bcalT^*]_{i_1\dots i_m}\right)\cdot 1_{\left\{|[\bcalY]_{i_1\cdots i_m}|>\tau\right\}}\\
	&=\xi_{i_1\cdots i_m}\cdot1_{\left\{|[\bcalY]_{i_1\cdots i_m}|\leq\tau,(i_1,\dots,i_m)\notin \tilde\Omega\right\}}+\left([\bcalS]_{i_1\cdots i_m}+\xi_{i_1\cdots i_m}\right)\cdot1_{\left\{|[\bcalY]_{i_1\cdots i_m}|\leq\tau, (i_1,\dots,i_m)\in \tilde\Omega\right\}}\\
	&{~~~}+\left(\tau\cdot\text{sign}\left([\bcalY]_{i_1\dots i_m}\right)-[\bcalT^*]_{i_1\dots i_m}\right)\cdot 1_{\left\{|[\bcalY]_{i_1\cdots i_m}|>\tau, (i_1,\dots,i_m)\notin \tilde\Omega\right\}}\\
	&{~~~}+\left(\tau\cdot\text{sign}\left([\bcalY]_{i_1\dots i_m}\right)-[\bcalT^*]_{i_1\dots i_m}\right)\cdot 1_{\left\{|[\bcalY]_{i_1\cdots i_m}|>\tau, (i_1,\dots,i_m)\in \tilde\Omega\right\}}.
\end{align*}
After simple calculations, we have
\begin{align*}
	\hat{\bcalY}-\bcalT^*&=\bXi-\bXi\odot 1_{\{\omega\in\tilde{\Omega}\}}+\left(\bcalS+\bXi\right)\odot1_{\{|\bcalY|\leq\tau,\omega\in\tilde{\Omega}\}}\\
	&~~~+\left(\tau\text{sign}(\bcalT^*+\bXi)-\bcalT^*-\bXi\right)\odot1_{\{|\bcalY|>\tau,\omega\notin\tilde{\Omega}\}}+\left(\tau\text{sign}(\bcalY)-\bcalT^*\right)\odot1_{\{|\bcalY|>\tau,\omega\in\tilde{\Omega}\}}.
\end{align*}
Notice that $\bXi-\bXi\odot 1_{\{\omega\in\tilde{\Omega}\}}=\calP_{\tilde{\Omega}^{C}}(\bXi)$ is a mean zero term. Then by Theorem~2.1 in \cite{auddy2022estimating}, with probability exceeding $1-c_m\dmax^{-\eps/4}$, the first two terms have the bounded operator norm,
\begin{align*}
	&~~~~\op{\bXi-\bXi\odot 1_{\{\omega\in\tilde{\Omega}\}}} \leq C\|\xi\|_2\left(\sqrt{\dmax\log\dmax}+d^{*1/4}(\log\dmax)^{1/4}\right).
\end{align*}
 Then consider the fourth term and with probability exceeding $1-c_m\dmax^{-\eps/4}$
\begin{align*}
	&~~~\op{\left(\tau\text{sign}(\bcalY)-\bcalT^*-\bXi\right)\odot1_{\{|\bcalY|>\tau,\omega\notin\tilde{\Omega}\}}}\\
	&\leq \op{\left(\tau\text{sign}(\bcalT^*+\bXi)-\bcalT^*-\bXi\right)\odot1_{\{|\bcalT^*+\bXi|>\tau,\omega\notin\tilde{\Omega}\}}-\EE\left(\tau\text{sign}(\bcalT^*+\bXi)-\bcalT^*-\bXi\right)\odot1_{\{|\bcalT^*+\bXi|>\tau,\omega\notin\tilde{\Omega}\}}}\\
	&~~~+\op{\EE\left(\tau\text{sign}(\bcalT^*+\bXi)-\bcalT^*-\bXi\right)\odot1_{\{|\bcalT^*+\bXi|>\tau,\omega\notin\tilde{\Omega}\}}}\\
	&\leq \|\xi\|_4\left(\sqrt{\dmax\log\dmax}+d^{*1/4}(\log\dmax)^{1/4}\right)+\sqrt{d^*}\left(\|\xi\|_4+\linft{\bcalT^*} \right)\frac{\|\xi\|_4^2}{\tau^2},
\end{align*}
where the last line is due to Theorem~2.1 of \cite{auddy2022estimating} and also $\EE|(\xi+t-\tau\cdot\text{sign}(\xi+t))\cdot1_{\{|\xi+t|\geq \tau\}}|\leq \EE|(\xi+t)\cdot1_{\{|\xi+t|\geq \tau\}}|\leq \sqrt{t^2+\EE\xi^2}\sqrt{\PP(|\xi|\geq \tau/2)}$. And as for the third and the last term, which is an $\alpha$-fraction fiber sparse term, according to Lemma~\ref{teclem:sparse}, we have $\|\left(\tau\text{sign}(\bcalY)-\bcalT^*\right)\odot1_{\{|\bcalY|>\tau,\omega\in\tilde{\Omega}\}}\|_{\muT}\leq 2\tau\alpha \sqrt{d^*}$.
Thus altogether we have
\begin{align*}
	\|\hat{\bcalY}-\bcalT^*\|&\leq 2(\|\xi\|_4+\linft{\bcalT^*})\cdot\left(\sqrt{\dmax\log\dmax}+d^{*1/4}(\log\dmax)^{1/4}+\sqrt{d^*}\frac{\|\xi\|_4^2}{\tau^2} \right)+2\alpha\tau\sqrt{d^*}=:\Lambda\\
	&\leq 2(\|\xi\|_4+\linft{\bcalT^*})\cdot\left(\sqrt{\dmax\log\dmax}+4d^{*1/4}(\log\dmax)^{1/4} \right)+2\alpha\tau\sqrt{d^*},
\end{align*}
where $\|\cdot\|_{\mu}\leq\|\cdot\|$ is used.
Note that the initialization is $\bcalT_0=\bcalC^{(0)}\cdot\llbracket\U_1^{(0)},\dots,\U_m^{(0)}\rrbracket=\text{HOSVD}(\hat{\bcalY})$. By tensor perturbation bound \citep{cai2022generalized} or modifications of Theorem~3 in \cite{cai2018rate} with analyses similar to the above one, for each $k=1,\dots,m$, we have
\begin{align*}
	&~~~\op{\U_k^{(0)}\H_k^{(0)}-\U_k^*}\vee\min_{\Q\in\OO_{r_k,r_k}}\op{\U_k^{(0)}\Q-\U_k^*}\leq\left\Vert \U_k^{(0)}\U_k^{(0)\top}-\U_k^*\U_k^{*\top}\right\Vert\leq C\frac{\Lambda}{\mins^{*}},
\end{align*}
where $\H_k^{(0)}:=\U_k^{(0)}\top\U_k^*$. Furthermore, consider $\hat{\bcalT}_0-\bcalT^*$,
\begin{align*}
	\bcalT_0-\bcalT^*&=\hat{\bcalY}\times_{k=1,\dots,m}\U_k^{(0)}\U_k^{(0)\top}-\bcalT^*\times_{k=1,\dots,m}\U_k^*\U_k^{*\top}\\
	&=\sum_{k=1}^m \hat\bcalY\times_{i<k}\U_i^*\U_i^{*\top}\times_k\left(\U_k^{(0)}\U_k^{(0)\top}-\U_k^*\U_k^{*\top} \right)\times_{j>k}\U_j^{(0)}\U_j^{(0)\top}+\left(\hat{\bcalY}-\bcalT^*\right)\times_{k=1,\dots,m}\U_k^{*}\U_k^{*\top}.
\end{align*}
Then we have
\begin{align*}
	\fro{\bcalT_0-\bcalT^*}&\leq m\op{\hat\bcalY}\fro{\U_k^{(0)}\U_k^{(0)\top}-\U_k^*\U_k^{*\top}}+ \sqrt{r^*}\|\hat{\bcalY}-\bcalT^*\|_{\mu}\leq Cm\kappa\sqrt{r^*}\Lambda.
\end{align*}
Also, by Lemma~\ref{teclem:perturbation-coretensor}, we have
\begin{align*}
	\fro{\bcalC^{(0)}\cdot\llbracket\U_1^{(0)},\dots,\U_m^{(0)}\rrbracket-\bcalC^*}\leq Cm\kappa\sqrt{r^*}\Lambda.
\end{align*}
\subsection{Leave-one-out Sequence}
Introduce leave-one-out sequence: for each $k=1,\dots,m$ and $j=1,\dots,d_k$, denote $\hat{\bcalY}^{(k),j}:=\calP_{\Omega_{-j}^{(k)}}(\hat{\bcalY})+\calP_{\Omega_{j}^{(k)}}(\bcalT^*)$ and $\bcalT_0^{(k),j}:=\bcalC_0^{(k),j}\cdot\llbracket\U_1^{(0),(k),j},\dots,\U_m^{(0),(k),j}\rrbracket=\text{HOSVD}_{\r}(\hat\bcalY^{(k),j})$. Notice that $\hat{\bcalY}-\hat{\bcalY}^{(k),j}=\calP_{\Omega_{j}^{(k)}}(\hat{\bcalY}-\bcalT^*)$. 

By \cite{cai2018rate}, we have\begin{align*}
	\left\Vert\U_k^{(0)}\U_k^{(0)\top}- \U_k^{(0),(k),j}\U_k^{(0),(k),j\top}\right\Vert \leq c\sqrt{\frac{\muT r_k}{d_k}}\frac{\Lambda}{\mins^*}.
\end{align*}
On the other hand, notice that $\calP_{\Omega_{j}^{(k)}}(\hat{\bcalY}^{(k),j})=\calP_{\Omega_{j}^{(k)}}(\bcalT^*)$. Then by Lemma~\ref{teclem:yuxinchen} and Lemma~\ref{teclem:coretensor1}, we have \begin{align*}
	\ltwo{\left(\U_k^{(0),(k),j}\H_k^{(0),(k),j}-\U_k^*\right)_{j,\cdot}}\leq cm\kappa\sqrt{\frac{\muT r_k}{d_k}}\frac{\Lambda}{\mins^*},
\end{align*}
where $\H_k^{(0),(k),j}=\U_k^{(0),(k),j\top}\U_k^*$. Combine the above two inequalities, it has
\begin{align*}
	\ltwo{\left(\U_k^{(0)}\H_k^{(0)}-\U_k^*\right)_{j,\cdot}}&\leq\ltwo{\left(\U_k^{(0),(k),j}\H_k^{(0),(k),j}-\U_k^*\right)_{j,\cdot}}+\ltwo{\left(\U_k^{(0),(k),j}\H_k^{(0),(k),j}-\U_k^{(0)}\H_k^{(0)}\right)_{j,\cdot}}\\
	&\leq\ltwo{\left(\U_k^{(0),(k),j}\H_k^{(0),(k),j}-\U_k^*\right)_{j,\cdot}}+\ltwo{\left(\U_k^{(0),(k),j}\U_k^{(0),(k),j\top}-\U_k^{(0)}\U_k^{(0)\top}\right)_{j,\cdot}}\\
	&\leq \ltwo{\left(\U_k^{(0),(k),j}\H_k^{(0),(k),j}-\U_k^*\right)_{j,\cdot}}+\op{\U_k^{(0),(k),j}\U_k^{(0),(k),j\top}-\U_k^{(0)}\U_k^{(0)\top}}\\
	&\leq cm\kappa\sqrt{\frac{\muT r_k}{d_k}}\frac{\Lambda}{\mins^*}.
\end{align*}
Take maximum over $j=1,\dots,d_k$ and then we obtain
\begin{align*}
	\ltinf{\U_k^{(0)}\H_k^{(0)}-\U_k^*}\leq cm\kappa\sqrt{\frac{\muT r_k}{d_k}}\frac{\Lambda}{\mins^*}.
\end{align*}
\subsection{Entrywise norm}
By Lemma~\ref{teclem:entrynorm-expansion}, we have the upper bound of the entrywise norm \begin{align*}
	\linft{\bcalT_0-\bcalT^*}&\leq cm^2\kappa^2\sqrt{r^*}\sqrt{\frac{\mu^{*m} r^*}{d^*}}\frac{\Lambda}{\mins^*},
\end{align*}
which finishes the proof.
\begin{lemma}[\cite{yi2016fast}]
	Suppose $\bcalS\in\RR^{d_1\times\cdots\times d_m}$ is an $\alpha$-fiber sparse tensor. Then we have $$\|\bcalS\|\leq\alpha\sqrt{d^*}\linft{\bcalS}.$$
	\label{teclem:sparse}
\end{lemma}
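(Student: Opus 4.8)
The plan is to reduce the tensor norm $\|\bcalS\|$ to the operator norm of a single matricization and then finish with the classical Schur test. For any unit vectors $\u_1,\dots,\u_m$ one has $\langle\bcalS,\u_1\circ\cdots\circ\u_m\rangle=\u_1^\top\fraM_1(\bcalS)(\u_2\otimes\cdots\otimes\u_m)$ with $\|\u_2\otimes\cdots\otimes\u_m\|_2=1$, so the tensor spectral norm obeys $\|\bcalS\|\le\|\fraM_1(\bcalS)\|$ (and the identical estimate holds for every mode $k$, which also covers the reading of $\|\cdot\|$ as the maximal matricization operator norm). Hence it suffices to bound the operator norm of the $d_1\times d_1^{-}$ matrix $\fraM_1(\bcalS)$.

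First I would record the two support facts inherited from $\alpha$-fiber sparsity. Each column of $\fraM_1(\bcalS)$ is exactly a mode-$1$ fiber of $\bcalS$, hence carries at most $\alpha d_1$ nonzero entries, so every absolute column sum is at most $\alpha d_1\linft{\bcalS}$. Each row of $\fraM_1(\bcalS)$ is a flattened mode-$1$ slice of $\bcalS$; grouping its $d_1^{-}$ entries into the $d_1^{-}/d_2$ mode-$2$ fibers it contains (each a mode-$2$ fiber of $\bcalS$, hence with at most $\alpha d_2$ nonzeros) shows the slice has at most $\alpha d_1^{-}$ nonzeros, so every absolute row sum is at most $\alpha d_1^{-}\linft{\bcalS}$. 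Combining these through the Schur test (interpolation between the $\ell_1\to\ell_1$ and $\ell_\infty\to\ell_\infty$ bounds) gives
$$
\|\bcalS\|\le\|\fraM_1(\bcalS)\|\le\sqrt{\|\fraM_1(\bcalS)\|_{1\to1}\cdot\|\fraM_1(\bcalS)\|_{\infty\to\infty}}\le\sqrt{\alpha d_1\linft{\bcalS}\cdot\alpha d_1^{-}\linft{\bcalS}}=\alpha\sqrt{d^*}\,\linft{\bcalS},
$$
where I used $d_1d_1^{-}=d^*$.

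There is no genuine obstacle here; the only point demanding a little care is the slice-support count in the second step — that slice-level sparsity is implied by (but strictly weaker than) fiber-level sparsity — which is why the statement is phrased for fiber-sparse tensors. If one prefers to avoid quoting the Schur test, the same constant can be obtained by a direct two-fold Cauchy–Schwarz estimate of $\sum_{\omega\in\mathrm{supp}(\bcalS)}\prod_{k}|\u_k(i_k)|$: split off the $i_1$-summation, apply Cauchy–Schwarz once using the mode-$1$ slice bound, and once more using the mode-$1$ fiber bound together with $\prod_{k\ge2}\|\u_k\|_2^2=1$. This is essentially the tensor analogue of the matrix fact in \cite{yi2016fast}.
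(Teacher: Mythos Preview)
The paper does not actually prove this lemma; it is stated with a citation to \cite{yi2016fast}, which handles the matrix case, and no tensor-level argument is supplied. Your proof is correct and is precisely the natural tensor extension of that matrix result: bound the tensor spectral norm by the operator norm of a matricization, then apply the Schur (Holmgren) test, using fiber sparsity once to control the column sums (each column of $\fraM_1(\bcalS)$ is a mode-$1$ fiber) and once more, via the slice-to-fiber decomposition you describe, to control the row sums. The only subtle point---that fiber sparsity in one mode propagates to slice-level sparsity in every other mode---is exactly what you flagged, and it is handled correctly.
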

\section{Proofs under Missing Values}
We shall only prove the following regularity properties with which the convergence dynamics could be obtained easily following the framework of PCA.
\begin{lemma}[Two-phase regularity properties with missing data]
	Suppose $\{\xi_{i}\}_{i=1}^n$ are i.i.d. following Assumption~\ref{assu:abs:noise:completion} and independent corruptions $\{s_i\}_{i=1}^n$ are non-zero with probability $\alpha$. 
	Then there exist $c_1,c_2,c_3,C_0,C_1,C_2,C_3$ such that if $n\geq C_0\dmax\log \dmax$, then for any fixed $\mu$-incoherent tensor $\bcalT\in\RR^{d_1\times \cdots\times d_m}$ such that $\ltinf{\fraM_k(\bcalT-\bcalT^*)}\gtrsim C_3b_0\sqrt{\log\dmax\cdot d^*/n}$ for all $k\in[m]$ and for any sub-gradient $\bcalG\in\partial f(\bcalT)$, with probability exceeding $1-c_1\sum_{l=2,4}\exp(-t_l^2/(n\|\fraM_{k}(\bcalT-\bcalT^*)_{j,\cdot}\|_{\mathrm{F}}^2/d^*+t_l\|\fraM_{k}(\bcalT-\bcalT^*)_{j,\cdot} \|_{\infty}))-c_2\sum_{l=1,3}\exp(-t_l^2/(n\|\bcalT-\bcalT^*\|_{\mathrm{F}}^2/d^*+t_l\|\bcalT-\bcalT^* \|_{\infty}))-c_3md^{*-10}$,
	\begin{enumerate}[(1).]
		\item we have
		\begin{align*}
			\fro{\calP_{\TT}(\bcalG)}^2&\leq C_1(m+1)n^2\frac{\mu^m r^*}{d^*},\\
			\sum_{i=1}^{n}\left|Y_i-\inp{\bcalX_i}{\bcalT}\right|&-\sum_{i=1}^{n}\left|Y_i-\inp{\bcalX_i}{\bcalT^*}\right|\\
			&\geq \frac{n}{2d^*}\linft{\bcalT-\bcalT^*}^{-1}\cdot\left(\fro{\bcalT-\bcalT^*}^2-2\alpha d^*\linft{\bcalT-\bcalT^*}^2\right)-2n\gamma-t_1;
		\end{align*}
		and for any $k\in[m]$ and $j\in[d_k]$, we have
		\begin{align*}
			\ltinf{\fraM_{k}\calP_{\TT}(\bcalG)}^2&\leq C_1(m+1)n^2\frac{\mu^m r^*}{d^*}\cdot\frac{\mu r_k}{d_k},\\
			\sum_{i=1}^{n}|Y_i-\langle\bcalX_i,\calP_{\Omega_{j}^{(k)}}(\bcalT)\rangle|&-\sum_{i=1}^{n}|Y_i-\langle\bcalX_i,\calP_{\Omega_{j}^{(k)}}(\bcalT^*)\rangle|\geq \frac{n}{2d^*}\linft{\fraM_{k}(\bcalT-\bcalT^*)_{j,\cdot}}^{-1}\\&\times\left(\fro{\fraM_{k}(\bcalT-\bcalT^*)_{j,\cdot}}^2-2\alpha \dkm\linft{\fraM_{k}(\bcalT-\bcalT^*)_{j,\cdot}}^2\right)-2\frac{n}{d_k}\gamma-t_2;
		\end{align*}
		\item we have
		\begin{align*}
			\fro{\calP_{\TT}(\bcalG)}^2&\leq C_2(m+1)n^2\frac{\mu^m r^*}{d^{*2}b_1^2}\fro{\bcalT-\bcalT^*}^2\\
			\sum_{i=1}^{n}\left|Y_i-\inp{\bcalX_i}{\bcalT}\right|-\sum_{i=1}^{n}\left|Y_i-\inp{\bcalX_i}{\bcalT^*}\right|&\geq \frac{1}{4b_0}\frac{n}{d^*}\fro{\bcalT-\bcalT^*}^2-\alpha n\linft{\bcalT-\bcalT^*}-t_3;
		\end{align*}
		and for any $k\in[m]$ and $j\in[d_k]$, we have
		\begin{align*}
			\ltinf{\fraM_{k}\calP_{\TT}(\bcalG)}^2&\leq C_2mn^2\frac{\mu^m r^*}{d^{*2}}\cdot\frac{\mu r_k}{d_k}\fro{\bcalT-\bcalT^*}^2+Cn^2\frac{\mu^m r^*}{d^{*2}}\cdot\frac{\mu r_k}{d_k}\ltinf{\fraM_{k}(\bcalT-\bcalT^*)}^2,\\
			\sum_{i=1}^{n}|Y_i-\langle\bcalX_i,\calP_{\Omega_{j}^{(k)}}(\bcalT)\rangle|&-\sum_{i=1}^{n}|Y_i-\langle\bcalX_i,\calP_{\Omega_{j}^{(k)}}(\bcalT^*)\rangle|\geq\frac{1}{4b_0}\frac{n}{d^*}\fro{\fraM_{k}(\bcalT-\bcalT^*)_{j,\cdot}}^2 \\&-\alpha \frac{n}{d_k}\linft{\fraM_{k}(\bcalT-\bcalT^*)_{j,\cdot}}-t_4;
		\end{align*}
	\end{enumerate}
	\label{lem:completion}
\end{lemma}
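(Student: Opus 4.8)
\textbf{Proof plan for Lemma~\ref{lem:completion}.}
The plan is to reduce the missing-values setting to the full-observation analysis carried out for Lemma~\ref{lem:abs:regularity}, exploiting the fact that here $\bcalT$ is \emph{fixed} (deterministic) thanks to the sample-splitting construction in Algorithm~\ref{alg:abs:completion}. The key observation is that each summand $|Y_i-\inp{\bcalX_i}{\bcalT}|-|Y_i-\inp{\bcalX_i}{\bcalT^*}|$ is a bounded-variation function of the single random index $\omega_i$ and the independent noise $\xi_i$, so the sum $\sum_{i=1}^n(\cdot)$ is an empirical average of i.i.d.\ terms whose expectation equals $\frac{n}{d^*}\big(f(\bcalT)-f(\bcalT^*)\big)$ with $f$ the full $\ell_1$-loss from \eqref{eq:loss-abs}. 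First I would compute this expectation: conditioning on $\omega_i=\omega$ uniform, $\EE\big[|[\bcalT-\bcalT^*]_\omega+\xi_i+s_i|-|\xi_i+s_i|\big]$ splits over $\omega\notin\tilde\Omega$ and $\omega\in\tilde\Omega$ exactly as in the Phase~Two analysis of Lemma~\ref{lem:abs:regularity}; invoking Lemma~\ref{teclem:abs:function expectation} on the noise part, the Huber-type bound $\EE[|t-\xi|-|\xi|]\geq b_0^{-1}t^2$ (resp.\ the loose triangle bound for Phase One), and Lemma~\ref{teclem:alpha-bound} for the $\alpha$-fraction of corrupted coordinates, I get the claimed lower bounds for $\EE$ of the per-index loss difference; for the slice versions one restricts to $\calP_{\Omega_j^{(k)}}$ and divides $d^*$ by $d_k$.

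Next I would control the fluctuation of $\sum_{i=1}^n(\cdot)$ around its mean by a Bernstein inequality for bounded-difference i.i.d.\ sums, since each term is bounded by $\linft{\bcalT-\bcalT^*}$ in absolute value and has variance at most $\linft{\bcalT-\bcalT^*}\cdot\EE|[\bcalT-\bcalT^*]_{\omega_i}|\le \linft{\bcalT-\bcalT^*}\cdot\fro{\bcalT-\bcalT^*}^2/d^*$; this produces exactly the deviation terms $t_1,t_3$ (full tensor) and $t_2,t_4$ (slices) with the stated sub-exponential tails indexed by $\|\cdot\|_{\mathrm F}^2/d^*$ and $\|\cdot\|_\infty$. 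Here the fixedness of $\bcalT$ is crucial: no empirical-process / covering argument over a manifold is needed, unlike Lemma~\ref{lem:abs:regularity}, and this is where the weaker $1{+}\eps$ moment condition of Assumption~\ref{assu:abs:noise:completion} suffices (only a first-moment bound on $|\xi|$ enters the variance proxy). In Phase Two I also need the sign-flip / local-Bernstein lower bound to survive subtraction of the deviation term, which is why the hypothesis $\ltinf{\fraM_k(\bcalT-\bcalT^*)}\gtrsim b_0\sqrt{\log\dmax\cdot d^*/n}$ is imposed; I would verify that this forces the leading quadratic term to dominate $t_3,t_4$.

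For the projected sub-gradient bounds, the strategy mirrors the ``Upper bound for projected sub-gradient'' lemmas inside Lemma~\ref{lem:abs:regularity}, but now the relevant object is $\bcalG=\sum_{i=1}^n g_i\bcalX_i$ with $g_i\in\partial|\cdot|$ bounded by $1$; the number of indices landing in any fixed mode-$k$ slice concentrates around $n/d_k$ by Chernoff (contributing a $c_3 m d^{*-10}$ term after a union bound over $k,j$), giving $\frorr{\bcalG}^2 \lesssim n^2\mu^m r^*/d^*$ in Phase One and, in Phase Two, substituting $\bcalS=\bcalT+\tfrac12 b_1 \bcalG\times_1\V_1\V_1^\top\cdots$ into the sub-gradient inequality and using the just-proven expectation bound plus the Bernstein deviation yields the quadratic-in-$\fro{\bcalT-\bcalT^*}$ bound with the extra factor $n^2/(d^{*2}b_1^2)$; the slice versions pick up $\mu r_k/d_k$ exactly as before, plus the $\ltinf{\fraM_k(\bcalT-\bcalT^*)}^2$ term coming from the ``constant'' (noise-only) part of the slice sub-gradient. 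The main obstacle I anticipate is bookkeeping the four coupled deviation parameters $t_1,\dots,t_4$ so that their variance proxies are genuinely $O(n\|\cdot\|_{\mathrm F}^2/d^*)$ (not $O(n\|\cdot\|_\infty^2)$) — this requires carefully bounding the second moment of each per-index term by $\|\cdot\|_\infty\cdot\|\cdot\|_{\mathrm F}^2/d^*$ rather than the crude $\|\cdot\|_\infty^2$, using $\EE|[\bcalT-\bcalT^*]_{\omega_i}| = \lone{\bcalT-\bcalT^*}/d^* \le \|\cdot\|_\infty^{-1}\cdot\ldots$ — together with ensuring the union bound over all $d_k$ slices only costs the polynomial $d^{*-10}$ factor, which dictates the sample-size requirement $n\gtrsim\dmax\log\dmax$.
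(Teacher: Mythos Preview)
Your treatment of the loss-difference lower bounds matches the paper's: compute the expectation by splitting on $\{s_i=0\}$ versus $\{s_i\neq 0\}$, invoke either the triangle inequality (Phase One) or the per-entry bound $\EE[|t-\xi|-|\xi|]\ge b_0^{-1}t^2$ from Lemma~\ref{teclem:abs:function expectation} (Phase Two), and apply a fixed-$\bcalT$ Bernstein inequality (Proposition~\ref{tecprop:concentration completion indepen}). The variance proxy there is simply $\EE\inp{\bcalX_i}{\bcalT-\bcalT^*}^2=\fro{\bcalT-\bcalT^*}^2/d^*$, so your concern about an extra $\linft{\cdot}$ factor in the second moment is unnecessary.

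The genuine gap is in your Phase-Two bound on $\fro{\calP_\TT(\bcalG)}$. The substitution $\bcalS=\bcalT+\tfrac12 b_1\,\bcalG\times_1\V_1\V_1^\top\cdots$ into the sub-gradient inequality produces $f(\bcalS)-f(\bcalT)$ with $\bcalS$ \emph{data-dependent}: it is built from $\bcalG$, which is a function of the very sample $\{(\bcalX_i,\xi_i,s_i)\}$ you want to concentrate over. The fixed-$\bcalT$ Bernstein does not apply to $f(\bcalS)-f(\bcalT)$; controlling it would require a uniform deviation over all low-rank perturbations $\bcalS-\bcalT\in\MM_\r$, i.e.\ precisely the covering argument you claim to avoid. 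In Lemmas~\ref{lem:hub:regularity}--\ref{lem:abs:regularity} this trick works only because the event $\bcalE_2$ supplies that uniform control; the whole point of sample splitting here is to dispense with $\bcalE_2$, so the trick no longer transfers.

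The paper instead expands $\fro{\calP_\TT(\bcalG)}^2$ directly as a quadratic form in the signs $g_i=\operatorname{sign}(\inp{\bcalX_i}{\bcalT-\bcalT^*}-\xi_i-s_i)$, writes it as a diagonal sum plus an off-diagonal U-statistic $\sum_{i\neq j}C_{ij}$, applies a decoupling inequality to the latter, and then uses Bernstein after computing $\EE C_{ij}'$. The crucial calculation is $\EE g_i=2(1-\alpha)\bigl(H_\xi(\inp{\bcalX_i}{\bcalT-\bcalT^*})-H_\xi(0)\bigr)+O(\alpha)$; the density upper bound $h_\xi\le b_1^{-1}$ converts this to $|\EE g_i|\lesssim b_1^{-1}|\inp{\bcalX_i}{\bcalT-\bcalT^*}|+\alpha$, and after averaging over $\bcalX_i,\bcalX_j$ one obtains $|\EE C_{ij}'|\lesssim \mu^m r^*\,d^{*-2}b_1^{-2}\fro{\bcalT-\bcalT^*}^2$. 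This is where the $n^2 b_1^{-2}\fro{\bcalT-\bcalT^*}^2/d^{*2}$ scaling comes from, and your plan has no substitute for it. The same decoupling device---not Chernoff on slice counts---is also what the paper uses for the $A_2$ term in Phase One.
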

\subsection{Proof of Lemma~\ref{lem:completion}}
\subsubsection{Phase One Analysis}
\paragraph{Analysis of $f(\bcalT)-f(\bcalT^*)$} 
Note that with triangle inequality we have
\begin{align*}
	\EE f(\bcalT)-\EE f(\bcalT^*)&=\sum_{i=1}^{n} \EE  \left[\left|\inp{\bcalX_i}{\bcalT-\bcalT^*}-\xi_{i}-s_i\right|-\left|\xi_{i}+s_i\right|\right]\cdot 1_{\{s_i=0\}}\\
	&~~~+\sum_{i=1}^{n} \EE  \left[\left|\inp{\bcalX_i}{\bcalT-\bcalT^*}-\xi_{i}-s_i\right|-\left|\xi_{i}+s_i\right|\right]\cdot 1_{\{s_i\neq0\}}\\
	&\geq \frac{(1-\alpha)n}{d^*}\lone{\bcalT-\bcalT^*}-2(1-\alpha)n\gamma-\alpha n\linft{\bcalT-\bcalT^*}.
\end{align*}
Denote the event $$\bcalE:=\left\{	\left|f(\bcalT)-f(\bcalT^*)-\EE\left[f(\bcalT)-f(\bcalT^*)\right]\right|\leq t\right\},$$ where $c<1/4$ is some constant. Specifically, Proposition~\ref{tecprop:concentration completion indepen} proves that $\PP(\bcalE)\geq 1-2\exp\left(-\frac{t^2}{n\fro{\bcalT-\bcalT^*}^2/d^*+t\linft{\bcalT-\bcalT^*}}\right)$. And event $\bcalE$ implies that
\begin{align*}
	f(\bcalT)- f(\bcalT^*)&\geq \EE f(\bcalT)-\EE f(\bcalT^*)-t\\
	&\geq \frac{n}{2d^*}\cdot\linft{\bcalT-\bcalT^*}^{-1}\cdot\left(\fro{\bcalT-\bcalT^*}^2- 2\alpha d^*\linft{\bcalT-\bcalT^*}^2\right)-2n\gamma-t,
\end{align*}
where Lemma~\ref{teclem:norm-relation} is used.
\paragraph{Analysis of $\fro{\calP_{\TT}(\bcalG)}$}
Note that $\fro{\calP_{\TT}(\bcalG)}$ has the expansion, \begin{align*}
	\fro{\calP_{\TT}(\bcalG)}^2&\leq\underbrace{\fro{\bcalG\times_1\U_1\U_1^{\top}\times_2\cdots\times_m\U_m\U_m^{\top}}^2}_{=A_1}\\
	&{~~~~~}+\sum_{k=1}^{m}\underbrace{\fro{\fraM_k(\bcalG)\left(\otimes_{i\neq k}\U_i\right)\fraM_k(\bcalC)^{\dag} \fraM_k(\bcalC)\left(\otimes_{i\neq k}\U_i\right)^{\top} }^2}_{=A_2}.
\end{align*}
First analyze $A_1$. By sub-gradient definition, we have
\begin{align*}
	\fro{\bcalG\times_1\U_1\U_1^{\top}\times_2\cdots\times_m\U_m\U_m^{\top}}^2&\leq f(\bcalT+\bcalG\times_1\U_1\U_1^{\top}\times_2\cdots\times_m\U_m\U_m^{\top})-f(\bcalT)\\
	&\leq \sum_{i=1}^{n} \left|\inp{\bcalX_i}{\bcalG\times_1\U_1\U_1^{\top}\times_2\cdots\times_m\U_m\U_m^{\top}}\right|\\
	&\leq \sum_{i=1}^{n} \lone{\bcalX_i}\linft{\bcalG\times_1\U_1\U_1^{\top}\times_2\cdots\times_m\U_m\U_m^{\top}}.
\end{align*}
Notice that $\lone{\X_i}=1$ and $\linft{\bcalG\times_1\U_1\U_1^{\top}\times_2\cdots\times_m\U_m\U_m^{\top}}\leq\sqrt{\frac{\mu^m r^*}{d^*}}\fro{\bcalG\times_1\U_1\times_2\cdots\times_m\U_m}=\sqrt{\frac{\mu^m r^*}{d^*}}\fro{\bcalG\times_1\U_1\U_1^{\top}\times_2\cdots\times_m\U_m\U_m^{\top}}$. Thus we have
\begin{align*}
	\fro{\bcalG\times_1\U_1\U_1^{\top}\times_2\cdots\times_m\U_m\U_m^{\top}}\leq n\sqrt{\frac{\mu^m r^*}{d^*}}.
\end{align*}
In this way we prove $A_1\leq n^2\frac{\mu^m r^*}{d^*}$. Before bounding term $A_2$, we introduce the orthogonal matrix $\V_k\in\RR^{\dkm\times r_k}$ which denotes $\V_k\V_k^{\top}= \left(\otimes_{i\neq k}\U_i\right)\fraM_k(\bcalC)^{\dag} \fraM_k(\bcalC)\left(\otimes_{i\neq k}\U_i\right)^{\top}$ and satisfies $\ltinf{\V_k}\leq\sqrt{\mu^{m-1}r_k^-/\dkm}$. Then we have
\begin{align*}
	A_2&=\sum_{i=1}^{n} \underbrace{\text{trace}(\fraM_{k}(\bcalX_i) \V_k\V_k^{\top} \fraM_{k}(\bcalX_i)^{\top})}_{B_1}\\
	&~~~~~~~~+\underbrace{\sum_{i\neq j} \text{trace}(\fraM_{k}(\bcalX_i) \V_k\V_k^{\top} \fraM_{k}(\bcalX_j)^{\top})\times \text{sign}(\inp{\X_i}{\bcalT}-Y_i)\times\text{sign}(\inp{\X_j}{\bcalT}-Y_j)}_{B_2}.
\end{align*}
We shall only provide the detailed bound of the leading term $B_2$. Suppose $\bcalX_j^{i},\xi_j^{i}, s_j^{i}$ is an i.i.d. copy of $\bcalX_j, \xi_j, s_j$ respectively. Denote $C_{ij}':= \text{trace}(\fraM_k(\bcalX_i)\V_k\V_k^{\top} \fraM_k(\bcalX_j^i)^{\top})\cdot \text{sign}(\inp{\bcalX_i}{\bcalT-\bcalT^*}-\xi_i-s_i)\cdot \text{sign}(\inp{\bcalX_j^i}{\bcalT-\bcalT^*}-\xi_j^i-s_j^i)$. Then by decoupling technique \citep{de2012decoupling}, we have
\begin{align*}
	\PP\left(\left|B_2\right|\geq t\right)\leq C\PP\left(\left|\sum_{i\neq j} C_{ij}'\right|\geq t\right).
\end{align*}
First consider $\EE C_{ij}'$,
\begin{align*}
	\EE C_{ij}'&\leq \EE\text{trace}(\fraM_{k}(\bcalX_i) \V_k\V_k^{\top} \fraM_{k}(\bcalX_j^i)^{\top})
	\leq \frac{\mu^{m}r^*}{d^*}.
\end{align*}
Also, we have \begin{align*}
	\EE(C_{ij}')^2=\frac{1}{(d^*)^2}\sum_{\bcalX_i\in\bcalX,\, \bcalX_j\in\bcalX} \text{trace}(\fraM_{k}(\bcalX_i) \V_k\V_k^{\top} \fraM_{k}(\bcalX_j)^{\top})^2\leq\frac{r_k^2}{d^{*2}}d_k,
\end{align*}
and $\left|C_{ij}'\right|\leq \frac{\mu^{m-1}r_k^-}{\dkm}$. Then by Bernstein's inequality Theorem~\ref{thm:Bernstein:valuetype}, we have
\begin{align*}
	\sum_{i\neq j} C_{ij}'\leq \frac{\mu^{m}r^*n^2}{d^*},
\end{align*}
holds with probability exceeding $1-\exp(-n^2/\dmax^2)$. Thus altogether we have the upper bound $\fro{\calP_{\TT}(\bcalG)}^2\leq C m\frac{\mu^{m}r^*n^2}{d^*}$ with probability exceeding $1-m\exp(-n^2/\dmax)$.

\paragraph{Analysis of $f(\calP_{\Omega_{j}^{(k)}}(\bcalT))-f(\calP_{\Omega_{j}^{(k)}}(\bcalT^*))$}
Notice that under event $\bcalE_1$, we have
\begin{align*}
	\EE f(\calP_{\Omega_{j}^{(k)}}(\bcalT))-\EE f(\calP_{\Omega_{j}^{(k)}}(\bcalT^*))&=\sum_{i=1}^{n} \EE  \left[\left|\inp{\bcalX_i}{\bcalT-\bcalT^*}-\xi_{i}-s_i\right|-\left|\xi_{i}+s_i\right|\right]\cdot 1_{\{s_i=0\}}\cdot 1_{\{\bcalX_i\in\Omega_{j}^{(k)}\}}\\
	&~~~+\sum_{i=1}^{n} \EE  \left[\left|\inp{\bcalX_i}{\bcalT-\bcalT^*}-\xi_{i}-s_i\right|-\left|\xi_{i}+s_i\right|\right]\cdot 1_{\{s_i\neq0\}}\cdot 1_{\{\bcalX_i\in\Omega_{j}^{(k)}\}}\\
	&\geq \frac{(1-\alpha)n}{d^*}\lone{\calP_{\Omega_{j}^{(k)}}(\bcalT-\bcalT^*)}-2\frac{n}{d_k}\gamma-\frac{\alpha n}{d_k}\linft{\calP_{\Omega_{j}^{(k)}}(\bcalT-\bcalT^*)}.
\end{align*}
Denote 
\begin{align*}
	\bcalE_j^{(k)}(t)&:= \left\{\left|f(\calP_{\Omega_{j}^{(k)}}(\bcalT))-f(\calP_{\Omega_{j}^{(k)}}(\bcalT^*))-\EE \left[f(\calP_{\Omega_{j}^{(k)}}(\bcalT))-f(\calP_{\Omega_{j}^{(k)}}(\bcalT^*))\right]\right|\leq t \right\}.
\end{align*}
And with similar proofs in Lemma~\ref{tecprop:concentration completion indepen}, we have $\PP(\bcalE_j^{(k)})\geq 1-\exp\left(-c\frac{t^2}{\frac{n}{d^*}\fro{\calP_{\Omega_{j}^{(k)}}(\bcalT-\bcalT^*)}^2+ t\linft{\calP_{\Omega_{j}^{(k)}}(\bcalT-\bcalT^*)}}\right)$. Then under event $\bcalE_j^{(k)}$, we have
\begin{align*}
	f(\calP_{\Omega_{j}^{(k)}}(\bcalT))- f(\calP_{\Omega_{j}^{(k)}}(\bcalT^*))&\geq \frac{n}{2d^*}\linft{\calP_{\Omega_{j}^{(k)}}(\bcalT-\bcalT^*)}^{-1}\cdot \fro{\calP_{\Omega_{j}^{(k)}}(\bcalT-\bcalT^*)}^2\\
	&~~~~~~~~~~~~~~~~~~~~~~~~~~-2\frac{n}{d_k}\gamma-\frac{\alpha n}{d_k}\linft{\calP_{\Omega_{j}^{(k)}}(\bcalT-\bcalT^*)}-t.
\end{align*}
\paragraph{Analysis of $\ltinf{\fraM_k(\calP_{\TT}(\bcalG))}$}
Denote $\bcalT:=\bcalC\cdot\llbracket\U_{1},\dots,\U_m\rrbracket$ and then we have
\begin{align*}
	&~~~~\fraM_k(\calP_{\TT}(\bcalG))\\
	&=\fraM_k(\bcalG)\left(\otimes_{i\neq k}\U_i\right)\fraM_k(\bcalC)^{\dagger}\fraM_k(\bcalC)\left(\otimes_{i\neq k}\U_i\right)^{\top}+\U_k\U_k^{\top}\fraM_k(\bcalG)\left(\otimes_{i\neq k}\U_i\right)\left(\I-\fraM_k(\bcalC)^{\dagger}\fraM_k(\bcalC)\right)\left(\otimes_{i\neq k}\U_i\right)^{\top}\\
	&~~~~+\sum_{i\neq k} \U_k\fraM_k(\bcalC\times_{j\neq i,k}\U_j\times\V_i),
\end{align*}
where $\V_i:=\left(\I_{d_i}-\U_i\U_i^{\top}\right)\fraM_k(\bcalG)\left(\otimes_{j\neq i}\U_j\right)\fraM_k(\bcalC)^{\dagger}$. Then with similar analyses in $A_1$, $A_2$ and $B_2$, we have
\begin{align*}
	\ltinf{\fraM_k(\calP_{\TT}(\bcalG))}^2&\leq m\ltinf{\U_k}^2\cdot n^2\frac{\mu^m r^*}{d^*}+\ltinf{\fraM_k(\bcalG)\left(\otimes_{i\neq k}\U_i\right)\fraM_k(\bcalC)^{\dagger}\fraM_k(\bcalC)\left(\otimes_{i\neq k}\U_i\right)^{\top}}^2\\
	&\leq mn^2C_1\frac{\mu^m r^*}{d^*}\cdot\frac{\mu r}{d_k}+C_2\frac{n^2}{d_kd^*}\\
	&\leq C(m+1)n^2\frac{\mu^m r^*}{d^*}\cdot\frac{\mu r}{d_k},
\end{align*}
holds with probability exceeding $1-d_k\exp(-n^2/d_k^2)$. Thus $\ltwo{\fraM_k(\calP_{\TT}(\bcalG))}^2\leq C(m+1)n^2\frac{\mu^m r^*}{d^*}\cdot\frac{\mu r}{d_k}$ holds for all $k=1.\dots,m$ with probability exceeding $1-\sum_{k=1}^{m}d_k\exp(-n^2/d_k^2)$.
\subsubsection{Phase Two Analysis}
\paragraph{Analysis of $f(\bcalT)-f(\bcalT^*)$}
Notice that
\begin{align*}
	\EE f(\bcalT)- \EE f(\bcalT^*)&=\sum_{i=1}^{n} \EE  \left[\left|\inp{\bcalX_i}{\bcalT-\bcalT^*}-\xi_{i}-s_i\right|-\left|\xi_{i}+s_i\right|\right]\cdot 1_{\{s_i=0\}}\\
	&~~~+\sum_{i=1}^{n} \EE  \left[\left|\inp{\bcalX_i}{\bcalT-\bcalT^*}-\xi_{i}-s_i\right|-\left|\xi_{i}+s_i\right|\right]\cdot 1_{\{s_i\neq0\}}\\
	&\geq \frac{(1-\alpha)n}{d^*} \EE\left[\lone{\bcalT-\bcalT^*-\bXi}-\lone{\bXi}\right] - \alpha n\linft{\bcalT-\bcalT^*}\\
	&\geq \frac{1}{b_0}\frac{(1-\alpha)n}{d^*}\fro{\bcalT-\bcalT^*}^2-\alpha n \linft{\bcalT-\bcalT^*}.
\end{align*}
Then under event $\bcalE$, we have
\begin{align*}
	f(\bcalT)- f(\bcalT^*)&\geq \frac{1}{b_0}\frac{(1-\alpha)n}{d^*}\fro{\bcalT-\bcalT^*}^2-\alpha n \linft{\bcalT-\bcalT^*}-t.
\end{align*}
\paragraph{Analysis of $\fro{\calP_{\TT}(\bcalG)}$}
Note that
\begin{align*}
	\fro{\calP_{\TT}(\bcalG)}^2&=\underbrace{\fro{\bcalG\times_1\U_1\U_1^{\top}\times_2\cdots\times_m\U_m\U_m^{\top}}^2}_{=B_1}\\
	&{~~~~~}+\sum_{k=1}^{m}\underbrace{\fro{\left(\I_{d_k}-\U_k\U_k^{\top}\right)\fraM_k(\bcalG)\left(\otimes_{i\neq k}\U_i\right)\fraM_k(\bcalC^*)^{\dag} \fraM_k(\bcalC^*)\left(\otimes_{i\neq k}\U_i\right)^{\top} }^2}_{=B_2}.
\end{align*}
Also, the sub-gradient has the expression of $\bcalG=\sum_{i=1}^n \text{sign}(\inp{\bcalX_i}{\bcalT-\bcalT^*}-\xi_i)\cdot\bcalX_i$, where $\text{sign}(0)$ takes arbitrary values in $[-1,1]$. First consider $B_1$ term,
\begin{align*}
	&B_1=\sum_{i=1}^{n}\text{trace}(\U_1^{\top} \fraM_1(\bcalX_i) \otimes_{k\neq 1}\U_k\U_k^{\top} \fraM_1(\bcalX_i)^{\top} \U_1)\\
	&+\sum_{i\neq j}\underbrace{\text{trace}(\U_1^{\top} \fraM_1(\bcalX_i) \otimes_{j\neq 1}\U_j\U_j^{\top} \fraM_1(\bcalX_j)^{\top} \U_1)\cdot \text{sign}(\inp{\bcalX_i}{\bcalT-\bcalT^*}-\xi_i-s_i)\cdot \text{sign}(\inp{\bcalX_j}{\bcalT-\bcalT^*}-\xi_j-s_j)}_{C_{ij}}.
\end{align*}
Notice that
\begin{align*}
	\EE \text{trace}(\U_1^{\top} \fraM_1(\bcalX_i) \otimes_{k\neq 1}\U_k\U_k^{\top} \fraM_1(\bcalX_i)^{\top} \U_1)&=\frac{1}{d^*}\fro{\U_1}^2\cdots\fro{\U_m}^2=\frac{r^*}{d^*}.
\end{align*}
\begin{align*}
	\EE \text{trace}(\U_1^{\top} \fraM_1(\bcalX_i) \otimes_{k\neq 1}\U_k\U_k^{\top} \fraM_1(\bcalX_i)^{\top} \U_1)^2 \leq\frac{1}{d^*}\cdot d^*\cdot\ltinf{\U_1}^4\cdots\ltinf{\U_m}^4=\frac{\mu^{2m}r^{*2}}{d^{*2}}.
\end{align*}
\begin{align*}
	\left| \text{trace}(\U_1^{\top} \fraM_1(\bcalX_i) \otimes_{k\neq 1}\U_k\U_k^{\top} \fraM_1(\bcalX_i)^{\top} \U_1)\right|\leq \ltinf{\U_1}^2\cdots \ltinf{\U_m}^2\leq \frac{\mu^m r^*}{d^*}.
\end{align*}
Thus by Bernstein's inequality Theorem~\ref{thm:Bernstein:valuetype}, we have
\begin{align*}
	\PP\left(\left|\sum_{i=1}^{n}\text{trace}(\U_1^{\top} \fraM_1(\bcalX_i) \otimes_{k\neq 1}\U_k\U_k^{\top} \fraM_1(\bcalX_i)^{\top} \U_1)-n\frac{r^*}{d^*}\right|\geq t\right)\leq 2\exp\left(-\frac{t^2}{\frac{n\mu^{2m}r^{*2}}{d^{*2}}+\frac{\mu^m r^*}{d^*} t}\right).
\end{align*}
Take $t=n\frac{r^*}{d^*}$ and then it leads to $$\sum_{i=1}^{n}\text{trace}(\U_1^{\top} \fraM_1(\bcalX_i) \otimes_{k\neq 1}\U_k\U_k^{\top} \fraM_1(\bcalX_i)^{\top} \U_1)\geq 2n\frac{r^*}{d^*}$$ holds with probability less than $2\exp(-n/\mu^{2m}r^{*2})$. Suppose $\bcalX_j^{i},\xi_j^{i}, s_j^{i}$ is an i.i.d. copy of $\bcalX_j, \xi_j, s_j$ respectively. Denote $C_{ij}':= \text{trace}(\U_1^{\top} \fraM_1(\bcalX_i) \otimes_{j\neq 1}\U_j\U_j^{\top} \fraM_1(\bcalX_j^i)^{\top} \U_1)\cdot \text{sign}(\inp{\bcalX_i}{\bcalT-\bcalT^*}-\xi_i-s_i)\cdot \text{sign}(\inp{\bcalX_j^i}{\bcalT-\bcalT^*}-\xi_j^i-s_j^i)$. Then by decoupling technique \citep{de2012decoupling}, we have
\begin{align*}
	\PP\left(\left|\sum_{i\neq j} C_{ij}\right|\geq t\right)\leq C\PP\left(\left|\sum_{i\neq j} C_{ij}'\right|\geq t\right).
\end{align*}
We have
\begin{align*}
	\EE C_{ij}' &= 2(1-\alpha)^2 \EE \text{trace}(\U_1^{\top} \fraM_1(\bcalX_i) \otimes_{j\neq 1}\U_j\U_j^{\top} \fraM_1(\bcalX_j)^{\top} \U_1) \\
	&~~~~~~~~~~~\times(H_{\xi}(\inp{\bcalX_i}{\bcalT-\bcalT^*})-H_{\xi}(0))(H_{\xi}(\inp{\bcalX_j}{\bcalT-\bcalT^*})-H_{\xi}(0))\\
	&+4\alpha(1-\alpha)  \EE \text{trace}(\U_1^{\top} \fraM_1(\bcalX_i) \otimes_{j\neq 1}\U_j\U_j^{\top} \fraM_1(\bcalX_j)^{\top} \U_1) \\
	&~~~~~~~~~~~\times(H_{\xi}(\inp{\bcalX_i}{\bcalT-\bcalT^*}-s_i)-H_{\xi}(0))(H_{\xi}(\inp{\bcalX_j}{\bcalT-\bcalT^*})-H_{\xi}(0))\\
	&+2\alpha^2 \EE \text{trace}(\U_1^{\top} \fraM_1(\bcalX_i) \otimes_{j\neq 1}\U_j\U_j^{\top} \fraM_1(\bcalX_j)^{\top} \U_1) \\
	&~~~~~~~~~~~\times(H_{\xi}(\inp{\bcalX_i}{\bcalT-\bcalT^*}-s_i)-H_{\xi}(0))(H_{\xi}(\inp{\bcalX_j}{\bcalT-\bcalT^*}-s_j)-H_{\xi}(0))\\
	&\leq 2\frac{\mu^{m} r^{*}}{d^{*}}\left(\EE H_{\xi}(\inp{\bcalX_i}{\bcalT-\bcalT^*})-H_{\xi}(0)\right)^2+4\alpha \frac{\mu^{m} r^{*}}{d^{*}} \EE\left[ H_{\xi}(\inp{\bcalX_i}{\bcalT-\bcalT^*})-H_{\xi}(0)\right]\\
	&~~~+2\alpha^2 \frac{\mu^{m} r^{*}}{d^{*}}\\
	&\leq 2\frac{\mu^{m} r^{*}}{d^{*}}\frac{\lone{\bcalT-\bcalT^*}^2}{ d^{*2}b_1^2}+4\alpha \frac{\mu^{m} r^{*}}{d^{*}}\frac{\lone{\bcalT-\bcalT^*}}{ d^{*}b_1}+2\alpha^2 \frac{\mu^{m} r^{*}}{d^{*}}\\
	&\leq 2\frac{\mu^m r^*}{d^{*2}b_1^2}\fro{\bcalT-\bcalT^*}^2+4\alpha \frac{\mu^{m} r^{*}}{d^{*}}\frac{\fro{\bcalT-\bcalT^*}}{ \sqrt{d^{*}}b_1}+2\alpha^2 \frac{\mu^{m} r^{*}}{d^{*}}\\
	&\leq 3\frac{\mu^m r^*}{d^{*2}b_1^2}\fro{\bcalT-\bcalT^*}^2,
\end{align*}
and
\begin{align*}
	\EE(C_{ij}')^2&=\EE \text{trace}(\U_1^{\top} \fraM_1(\bcalX_i) \otimes_{j\neq 1}\U_j\U_j^{\top} \fraM_1(\bcalX_j)^{\top} \U_1)^2\\
	&\leq \frac{\mu^{2m}r^{*2}}{d^{*2}},
\end{align*}
\begin{align*}
	\left|C_{ij}'\right|\leq \frac{\mu^{m}r^*}{d^*}.
\end{align*}
Thus by Bernstein Inequality Theorem~\ref{thm:Bernstein:valuetype}, we have
\begin{align*}
	\PP\left(\left|\sum_{i\neq j} C_{ij}'- \EE C_{ij}'\right|\geq t\right)\leq2\exp\left(-\frac{t^2}{\frac{n\mu^{2m}r^{*2}}{d^{*2}}+\frac{\mu^m r^*}{d^*} t}\right),
\end{align*}
which shows that with probability exceeding $1-\exp(-n)$,
\begin{align*}
	\left|\sum_{i\neq j} C_{ij}'\right|\leq n^2\frac{\mu^m r^*}{d^{*2}b_1^2}\fro{\bcalT-\bcalT^*}^2.
\end{align*}
Thus we have $|B_1|\leq n^2\frac{\mu^m r^*}{d^{*2}b_1^2}\fro{\bcalT-\bcalT^*}^2$. With similar analyses, we have $|B_2|\leq n^2\frac{\mu^m r^*}{d^{*2}b_1^2}\fro{\bcalT-\bcalT^*}^2$ holds with probability exceeding $1-\sum_{k=1}^{m}\exp(-d_k)$. Thus in total we have
\begin{align*}
	\fro{\calP_{\TT}(\bcalG)}^2\leq C_2(m+1)n^2\frac{\mu^m r^*}{d^{*2}b_1^2}\fro{\bcalT-\bcalT^*}^2.
\end{align*}
\paragraph{Analysis of $f(\calP_{\Omega_{j}^{(k)}}(\bcalT))-f(\calP_{\Omega_{j}^{(k)}}(\bcalT^*))$}
Notice that under event $\bcalE_1$, we have
\begin{align*}
	\EE f(\calP_{\Omega_{j}^{(k)}}(\bcalT))-\EE f(\calP_{\Omega_{j}^{(k)}}(\bcalT^*))&=\sum_{i=1}^{n} \EE  \left[\left|\inp{\bcalX_i}{\bcalT-\bcalT^*}-\xi_{i}-s_i\right|-\left|\xi_{i}+s_i\right|\right]\cdot 1_{\{s_i=0\}}\cdot 1_{\{\bcalX_i\in\Omega_{j}^{(k)}\}}\\
	&~~~+\sum_{i=1}^{n} \EE  \left[\left|\inp{\bcalX_i}{\bcalT-\bcalT^*}-\xi_{i}-s_i\right|-\left|\xi_{i}+s_i\right|\right]\cdot 1_{\{s_i\neq0\}}\cdot 1_{\{\bcalX_i\in\Omega_{j}^{(k)}\}}\\
	&\geq \frac{(1-\alpha)n}{d^*}\frac{1}{b_0}\fro{\calP_{\Omega_{j}^{(k)}}(\bcalT-\bcalT^*)}^2-\frac{\alpha n}{d_k}\linft{\bcalT-\bcalT^*}.
\end{align*}
Denote 
\begin{align*}
	\bcalE_j^{(k)}&:= \left\{\left|f(\calP_{\Omega_{j}^{(k)}}(\bcalT))-f(\calP_{\Omega_{j}^{(k)}}(\bcalT^*))-\EE \left[f(\calP_{\Omega_{j}^{(k)}}(\bcalT))-f(\calP_{\Omega_{j}^{(k)}}(\bcalT^*))\right]\right|\leq t_j^{(k)} \right\}.
\end{align*}
And with similar proofs in Lemma~\ref{teclem:norm-relation}, we have $\PP(\bcalE_j^{(k)})\geq 1-\exp\left(-c\frac{n\fro{\calP_{\Omega_{j}^{(k)}}(\bcalT-\bcalT^*)}^2}{d^* \linft{\bcalT-\bcalT^*}^2}\right)$. Then under event $\bcalE_j^{(k)}$, we have
\begin{align*}
	f(\calP_{\Omega_{j}^{(k)}}(\bcalT))- f(\calP_{\Omega_{j}^{(k)}}(\bcalT^*))\geq \frac{n}{2d^*}\frac{1}{b_0}\cdot \fro{\calP_{\Omega_{j}^{(k)}}(\bcalT-\bcalT^*)}^2-\frac{\alpha n}{d_k}\linft{\bcalT-\bcalT^*}.
\end{align*}

\paragraph*{Analysis of $\ltinf{\fraM_k(\calP_{\TT}(\bcalG))}$}
Denote $\bcalT:=\bcalC\cdot\llbracket\U_{1},\dots,\U_m\rrbracket$ and then we have
\begin{align*}
	&~~~~\fraM_k(\calP_{\TT}(\bcalG))\\
	&=\fraM_k(\bcalG)\left(\otimes_{i\neq k}\U_i\right)\fraM_k(\bcalC)^{\dagger}\fraM_k(\bcalC)\left(\otimes_{i\neq k}\U_i\right)^{\top}+\U_k\U_k^{\top}\fraM_k(\bcalG)\left(\otimes_{i\neq k}\U_i\right)\left(\I-\fraM_k(\bcalC)^{\dagger}\fraM_k(\bcalC)\right)\left(\otimes_{i\neq k}\U_i\right)^{\top}\\
	&~~~~+\sum_{i\neq k} \U_k\fraM_k(\bcalC\times_{j\neq i,k}\U_j\times\V_i),
\end{align*}
where $\V_i:=\left(\I_{d_i}-\U_i\U_i^{\top}\right)\fraM_k(\bcalG)\left(\otimes_{j\neq i}\U_j\right)\fraM_k(\bcalC)^{\dagger}$. Then with similar analyses in $B_1$ and $B_2$, we have
\begin{align*}
	\ltinf{\fraM_k(\calP_{\TT}(\bcalG))}^2&\leq m\ltinf{\U_k}^2\cdot C_1 n^2\frac{\mu^m r^*}{d^{*2} b_1^2}\fro{\bcalT-\bcalT^*}^2+\ltinf{\fraM_k(\bcalG)\left(\otimes_{i\neq k}\U_i\right)\fraM_k(\bcalC)^{\dagger}\fraM_k(\bcalC)\left(\otimes_{i\neq k}\U_i\right)^{\top}}^2\\
	&\leq Cmn^2\frac{\mu^m r^*}{d^{*2} b_1^2}\cdot\frac{\mu r}{d_k}\cdot\fro{\bcalT-\bcalT^*}^2+Cn^2\frac{\mu^m r^*}{d^{*2} b_1^2}\cdot\ltinf{\fraM_{k}(\bcalT-\bcalT^*)}^2,
\end{align*}
holds with probability exceeding $1-cd^{*-10}$ when $\ltinf{\fraM_{k}(\bcalT-\bcalT^*)}\geq C_0b_1\cdot\sqrt{\frac{n}{d^*}\log \dmax}$.
\begin{proposition}[Concentration in the setting of Completion and Independence]
	Suppose there are $n$ pairs of i.i.d. observation, $\{(Y_i,\X_i)\}_{i=1}^n$, satifying $Y_i=\inp{\X_i}{\bcalT^*}+\xi_{i}$. Suppose the loss function is given by
	$f(\bcalT):=\sum_{i=1}^{n}\left|Y_i-\inp{\X_i}{\bcalT}\right|$. Then for any fixed $\bcalT\in\RR^{d_1\times\cdots\times d_m}$ we have with probability exceeding $1-2\exp\left(-\frac{t^2}{\frac{n\fro{\bcalT-\bcalT^*}^2}{d^*}+t\linft{\bcalT-\bcalT^*}}\right)$,
	\begin{align*}
		\left|f(\bcalT)-f(\bcalT^*)-\EE\left[f(\bcalT)-f(\bcalT^*)\right]\right|\leq t
	\end{align*}
\label{tecprop:concentration completion indepen}
\end{proposition}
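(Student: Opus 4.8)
The plan is to write $f(\bcalT)-f(\bcalT^*)=\sum_{i=1}^n Z_i$ where
$Z_i:=\big|Y_i-\inp{\bcalX_i}{\bcalT}\big|-\big|Y_i-\inp{\bcalX_i}{\bcalT^*}\big|=\big|\inp{\bcalX_i}{\bcalT-\bcalT^*}-\xi_i\big|-|\xi_i|$,
and then apply a Bernstein-type inequality for bounded independent summands to $\sum_i (Z_i-\EE Z_i)$. First I would record the a.s. bound $|Z_i-\EE Z_i|\leq 2\linft{\bcalT-\bcalT^*}$, which follows from the $1$-Lipschitz property of $|\cdot|$ together with $|\inp{\bcalX_i}{\bcalT-\bcalT^*}|\leq\linft{\bcalT-\bcalT^*}$ (recall $\bcalX_i\in\bcalX$ is a single canonical basis tensor, so the inner product is exactly one entry of $\bcalT-\bcalT^*$). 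Next I would bound the variance proxy: since $|Z_i|\leq|\inp{\bcalX_i}{\bcalT-\bcalT^*}|$ by the reverse triangle inequality, and $\bcalX_i$ is uniform over the $d^*$ basis tensors,
$\EE Z_i^2\leq \EE\inp{\bcalX_i}{\bcalT-\bcalT^*}^2=\frac{1}{d^*}\sum_{\omega}\big([\bcalT-\bcalT^*]_\omega\big)^2=\frac{\fro{\bcalT-\bcalT^*}^2}{d^*}.$
Hence $\sum_{i=1}^n\Var(Z_i)\leq n\fro{\bcalT-\bcalT^*}^2/d^*$.

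With $\sigma^2:=n\fro{\bcalT-\bcalT^*}^2/d^*$ and $b:=2\linft{\bcalT-\bcalT^*}$ in hand, Bernstein's inequality (the value-type version, Theorem~\ref{thm:Bernstein:valuetype}, already invoked elsewhere in this appendix) gives
$\PP\big(|f(\bcalT)-f(\bcalT^*)-\EE[f(\bcalT)-f(\bcalT^*)]|\geq t\big)\leq 2\exp\!\Big(-\frac{t^2/2}{\sigma^2+bt/3}\Big)\leq 2\exp\!\Big(-c\frac{t^2}{n\fro{\bcalT-\bcalT^*}^2/d^*+t\linft{\bcalT-\bcalT^*}}\Big),$
and absorbing the absolute constants $1/2$, $1/3$, $c$ into the constant hidden in the statement yields exactly the claimed bound (the statement as written has no explicit constant in the exponent, so this is immediate; if one wishes $c=1$ one simply rescales). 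This is the entire argument — it is a one-shot concentration bound for a fixed $\bcalT$, with no chaining or covering needed, precisely because $\bcalT$ is not being varied here.

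The only place requiring any care — and thus the ``main obstacle,'' though it is mild — is making sure the sub-exponential/bounded-difference bookkeeping is airtight: namely that the per-term bound $|Z_i-\EE Z_i|\leq 2\linft{\bcalT-\bcalT^*}$ and the variance bound $\Var(Z_i)\leq\fro{\bcalT-\bcalT^*}^2/d^*$ are the right quantities to feed into the Bernstein bound so that the two terms $n\fro{\bcalT-\bcalT^*}^2/d^*$ and $t\linft{\bcalT-\bcalT^*}$ appear in the denominator with no stray dimensional factors. Since $\inp{\bcalX_i}{\bcalT-\bcalT^*}$ is literally a single coordinate of $\bcalT-\bcalT^*$, both bounds are tight and dimension-free, so no subtlety actually arises; the proof is short. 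I would present it as: define $Z_i$, verify the two moment bounds in two lines each, cite Theorem~\ref{thm:Bernstein:valuetype}, and conclude.
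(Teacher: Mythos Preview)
Your proposal is correct and matches the paper's proof essentially line for line: define $Z_i$, bound $\Var(Z_i)\leq \EE Z_i^2\leq \fro{\bcalT-\bcalT^*}^2/d^*$ via the reverse triangle inequality and uniform sampling, bound $|Z_i-\EE Z_i|\leq 2\linft{\bcalT-\bcalT^*}$, and apply Theorem~\ref{thm:Bernstein:valuetype}. There is nothing to add.
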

\begin{proof}
	The proof follows Bernstein's Inequality Theorem~\ref{thm:Bernstein:valuetype}. First note that for any $i=1,\dots,n$,
	\begin{align*}
		&~~~\EE\left[\left|Y_i-\inp{\X_i}{\bcalT}\right|-\left|Y_i-\inp{\X_i}{\bcalT^*}\right| -\EE\left[\left|Y_i-\inp{\X_i}{\bcalT}\right|-\left|Y_i-\inp{\X_i}{\bcalT^*}\right| \right] \right]^2\\
		&\leq \EE\left[\left|Y_i-\inp{\X_i}{\bcalT}\right|-\left|Y_i-\inp{\X_i}{\bcalT^*}\right| \right]^2\\
		&\leq\EE\left[\inp{\X_i}{\bcalT-\bcalT^*}\right]^2\\
		&=\frac{1}{d^*}\fro{\bcalT-\bcalT^*}^2.
	\end{align*}
At the same time, it has
\begin{align*}
	\left|\left|Y_i-\inp{\X_i}{\bcalT}\right|-\left|Y_i-\inp{\X_i}{\bcalT^*}\right| -\EE\left[\left|Y_i-\inp{\X_i}{\bcalT}\right|-\left|Y_i-\inp{\X_i}{\bcalT^*}\right| \right]\right|\leq2\linft{\bcalT-\bcalT^*}.
\end{align*}
Thus Theorem~\ref{thm:Bernstein:valuetype} leads to
\begin{align*}
	\left|f(\bcalT)-f(\bcalT^*)-\EE\left[f(\bcalT)-f(\bcalT^*)\right]\right|\geq t
\end{align*}
holds with probability bounded with $2\exp\left(-\frac{t^2}{\frac{n\fro{\bcalT-\bcalT^*}^2}{d^*}+t\linft{\bcalT-\bcalT^*}}\right)$.
\end{proof}
\begin{theorem}[Bernstein's Inequality]
	Let $X_1,\dots,X_n$ be independent zero-mean random variables. Suppose that $|X_i|\leq M$ almost surely, for all $i$. Then for all positive $t>0$,\begin{align*}
		\PP\left(\big| \sum_{i=1}^n X_i\big|\geq t\right)\leq2\exp\left(-\frac{\frac{1}{2}t^2}{\sum_{i=1}^{n}\EE X_i^2 + \frac{1}{3}Mt}\right).
	\end{align*}
	\label{thm:Bernstein:valuetype}
\end{theorem}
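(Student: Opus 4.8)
The plan is to prove this by the classical exponential-moment (Chernoff) method. First I would reduce to a one-sided estimate: the variables $-X_1,\dots,-X_n$ also satisfy all the hypotheses (independent, zero mean, bounded in absolute value by $M$, with the same second moments), so it suffices to show $\PP\big(\sum_{i=1}^n X_i\geq t\big)\leq \exp\big(-\tfrac{t^2/2}{\sigma^2+Mt/3}\big)$, where $\sigma^2:=\sum_{i=1}^n\EE X_i^2$, and then a union bound over the two tails produces the leading factor $2$.

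For any $\lambda>0$, Markov's inequality applied to $e^{\lambda\sum_i X_i}$ together with independence gives $\PP\big(\sum_i X_i\geq t\big)\leq e^{-\lambda t}\prod_{i=1}^n\EE e^{\lambda X_i}$, so the heart of the matter is bounding each moment generating factor $\EE e^{\lambda X_i}$. Expanding $e^{\lambda x}=1+\lambda x+\sum_{k\geq 2}\lambda^k x^k/k!$ and taking expectations, the linear term vanishes; using the pointwise bound $|X_i|^k\leq M^{k-2}X_i^2$ valid for $k\geq 2$ (since $|X_i|\leq M$) yields $\EE e^{\lambda X_i}\leq 1+\EE X_i^2\sum_{k\geq 2}\lambda^k M^{k-2}/k!$. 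I would then invoke the elementary inequality $k!\geq 2\cdot 3^{k-2}$ to dominate the series by a geometric one: for $\lambda M<3$, $\sum_{k\geq 2}\lambda^k M^{k-2}/k!\leq \tfrac{\lambda^2}{2}\sum_{j\geq 0}(\lambda M/3)^j=\tfrac{\lambda^2/2}{1-\lambda M/3}$. Combining with $1+u\leq e^u$ gives $\EE e^{\lambda X_i}\leq\exp\big(\tfrac{\lambda^2\EE X_i^2/2}{1-\lambda M/3}\big)$, hence $\PP\big(\sum_i X_i\geq t\big)\leq\exp\big(-\lambda t+\tfrac{\lambda^2\sigma^2/2}{1-\lambda M/3}\big)$ for every $\lambda\in(0,3/M)$.

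Finally I would optimize the exponent by taking $\lambda=t/(\sigma^2+Mt/3)$, which automatically lies in $(0,3/M)$. With this choice $1-\lambda M/3=\sigma^2/(\sigma^2+Mt/3)$ and $\lambda(\sigma^2+Mt/3)=t$, so the bracketed exponent collapses: $-\lambda t+\tfrac{\lambda^2\sigma^2/2}{1-\lambda M/3}=-\lambda t+\tfrac{\lambda^2(\sigma^2+Mt/3)}{2}=-\lambda t+\tfrac{\lambda t}{2}=-\tfrac{\lambda t}{2}=-\tfrac{t^2/2}{\sigma^2+Mt/3}$, which is exactly the claimed bound; doubling for the two-sided version finishes the proof.

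There is no deep obstacle here — this is a textbook result — and essentially the only place requiring care is pinning down the constant $1/3$, which is produced precisely by the factorial bound $k!\geq 2\cdot 3^{k-2}$ (equivalently $\sum_{k\geq 2}u^{k-2}/k!\leq \tfrac{1}{2(1-u/3)}$ for $0\leq u<3$). I would verify this elementary inequality explicitly (induction on $k$, with equality at $k=2,3$), and then confirm that the optimizing $\lambda$ indeed satisfies the admissibility constraint $\lambda M<3$ so that the geometric-series step is legitimate.
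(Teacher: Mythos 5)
Your proof is correct: the Chernoff--Cram\'er argument with the moment bound $|X_i|^k\leq M^{k-2}X_i^2$, the factorial estimate $k!\geq 2\cdot 3^{k-2}$, and the optimizing choice $\lambda=t/(\sigma^2+Mt/3)$ is exactly the standard derivation of Bernstein's inequality, and your algebra (including the admissibility check $\lambda M<3$, which only degenerates in the trivial case $\sigma^2=0$) goes through. The paper itself states this theorem as a classical result and gives no proof, so there is nothing to compare against beyond noting that your argument is the textbook one and is complete.
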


\section{Technical Lemma}
The following lemma connects $\lone{\cdot}$, $\linft{\cdot}$ norm and $\fro{\cdot}$ norm.
\begin{lemma}
	For any tensor $\bcalT\in\RR^{d_1\times\cdots\times d_m}$, its entrywise $\ell_1$, $\ell_{\infty}$ and Frobenius norm  have the following relationship:
	\begin{align*}
		\linft{\bcalT}\lone{\bcalT}\geq \fro{\bcalT}^2,\ \ \ \lone{\bcalT}\leq\sqrt{d_1\cdots d_m}\fro{\bcalT}.
	\end{align*}
	\label{teclem:norm-relation}
\end{lemma}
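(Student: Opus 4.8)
Both inequalities are elementary pointwise/summation estimates, so the plan is simply to expand the Frobenius norm entrywise and apply two standard bounds. Write $\Omega:=[d_1]\times\cdots\times[d_m]$ for the index set, so that $\fro{\bcalT}^2=\sum_{\omega\in\Omega}[\bcalT]_{\omega}^2$, $\lone{\bcalT}=\sum_{\omega\in\Omega}|[\bcalT]_{\omega}|$, and $\linft{\bcalT}=\max_{\omega\in\Omega}|[\bcalT]_{\omega}|$, and note $|\Omega|=d_1\cdots d_m$.

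For the first inequality, I would factor each squared entry as $[\bcalT]_{\omega}^2=|[\bcalT]_{\omega}|\cdot|[\bcalT]_{\omega}|$ and bound the second factor by the maximum entry:
\begin{align*}
\fro{\bcalT}^2=\sum_{\omega\in\Omega}|[\bcalT]_{\omega}|\cdot|[\bcalT]_{\omega}|\leq\Big(\max_{\omega'\in\Omega}|[\bcalT]_{\omega'}|\Big)\sum_{\omega\in\Omega}|[\bcalT]_{\omega}|=\linft{\bcalT}\cdot\lone{\bcalT}.
\end{align*}
This gives $\linft{\bcalT}\lone{\bcalT}\geq\fro{\bcalT}^2$.

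For the second inequality, I would apply the Cauchy--Schwarz inequality to the vectors $\big(|[\bcalT]_{\omega}|\big)_{\omega\in\Omega}$ and the all-ones vector:
\begin{align*}
\lone{\bcalT}=\sum_{\omega\in\Omega}|[\bcalT]_{\omega}|\cdot 1\leq\Big(\sum_{\omega\in\Omega}[\bcalT]_{\omega}^2\Big)^{1/2}\Big(\sum_{\omega\in\Omega}1\Big)^{1/2}=\fro{\bcalT}\cdot\sqrt{d_1\cdots d_m},
\end{align*}
which is exactly $\lone{\bcalT}\leq\sqrt{d_1\cdots d_m}\,\fro{\bcalT}$. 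There is no genuine obstacle here; the only thing to be mildly careful about is keeping the roles of the entrywise $\ell_1$, $\ell_\infty$, and $\ell_2$ norms straight, and noting that the argument is dimension-agnostic (it treats $\bcalT$ purely as a list of $d_1\cdots d_m$ scalars), so the tensor structure plays no role.
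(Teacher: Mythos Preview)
Your proof is correct. The Cauchy--Schwarz argument for the second inequality is exactly what the paper does. For the first inequality, your route is actually more direct than the paper's: you bound $\sum_\omega|[\bcalT]_\omega|^2\le\linft{\bcalT}\sum_\omega|[\bcalT]_\omega|$ entrywise, whereas the paper proceeds through the variational characterization $\fro{\bcalT}=\sup_{\fro{\bcalM}=1}\inp{\bcalT}{\bcalM}$, identifies the maximizer $\bcalM_0=\bcalT/\fro{\bcalT}$, and then compares $\lone{\bcalT}=\inp{\bcalT}{\operatorname{sign}(\bcalT)}$ with $\inp{\bcalT}{\bcalM_0}$ via $\linft{\bcalM_0}=\linft{\bcalT}/\fro{\bcalT}$. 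Both arrive at the same bound; your argument is a one-line H\"older-type estimate, while the paper's variational framing is slightly more elaborate but offers no extra generality here.
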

\begin{proof}
	Notice that we could obtain $\lone{\bcalT}\leq\sqrt{d_1\cdots d_m}\fro{\bcalT}$ by Cauchy-Schwarz inequality. Then we only need to discuss the first inequality. Note that Frobenius norm is defined to be $\fro{\bcalT}=\sup_{\bcalM:\,\fro{\bcalM}=1} \inp{\bcalT}{\bcalM}$ and suppose it achieves the supremum at $\bcalM_0$, which implies $$\fro{\bcalT}= \inp{\bcalT}{\bcalM_0}, \quad \fro{\bcalM_0}=1,\quad\text{sign}(\bcalT)=\text{sign}(\bcalM_0),\quad  \linft{\bcalT}/\fro{\bcalT}=\linft{\bcalM_0}.$$
	Hence, we have \begin{align*}
		\lone{\bcalT}=\inp{\bcalT}{\text{sign}(\bcalT)}=\frac{1}{\linft{\bcalM_0}} \inp{\bcalT}{\linft{\bcalM_0}\cdot\text{sign}(\bcalT)}\geq\frac{\fro{\bcalT}}{\linft{\bcalT}}\cdot \inp{\bcalT}{\bcalM_0}=\fro{\bcalT}^2/\linft{\bcalT}.
	\end{align*}
\end{proof}

The following lemma analyzes slice sum of the heavy-tailed noise term. Recall that $d^*=d_1\cdots d_m$ and $\dkm=d^*/d_k$, for each $k=1,\dots,m$. Also recall that $$\lone{\calP_{\Omega_{j}^{(k)}}(\bXi)}=\sum_{i_1=1}^{d_1}\cdots\sum_{i_{k-1}=1}^{d_{k-1}}\sum_{i_{k+1}=1}^{d_{k+1}}\cdots\sum_{i_m=1}^{d_m}\big|\xi_{i_1\cdots i_{k-1}ji_{k+1}\cdots i_m}\big|=:\sum_{i_k=j}\big|\xi_{i_1\cdots i_{k-1}ji_{k+1}\cdots i_m}\big|. $$
\begin{lemma}
	Suppose random tensor $\bXi=\left(\xi_{i_1\cdots i_m}\right)\in\RR^{d_1\times\cdots\times d_m}$ contains i.i.d. entries with finite $2+\eps$ moment, namely, $\EE|\xi_{i_1\cdots i_m}|^{2+\eps}<+\infty$. Then for each $k=1,\dots,m$, with probability exceeding $1-c_1\frac{d_k}{\dkm}\cdot \left(\dkm\right)^{-\min\{\eps,1\}}-c_2 \frac{d_k}{\dkm}\cdot \left(\dkm\right)^{-1}$, we have
	$$\lone{\calP_{\Omega_{j}^{(k)}}(\bXi)}\leq 3\left(\EE|\xi|^{2+\eps}\right)^{1/(2+\eps)}\cdot \dkm,\quad \text{ for all }j=1,\dots,d_k.$$
	\label{teclem:Contraction of Heavy Tailed Random Variables:slice}
\end{lemma}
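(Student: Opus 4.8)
The plan is to fix $k$, bound the failure probability for a single slice index $j\in[d_k]$, and finish with a union bound over $j$. Fix $j$, write $N:=\dkm$, and let $Z_1,\dots,Z_N$ denote the absolute values of the $N$ entries lying in the $j$-th slice, so that they are i.i.d.\ copies of $|\xi|$ and $\lone{\calP_{\Omega_j^{(k)}}(\bXi)}=\sum_{i=1}^N Z_i$. A preliminary normalization: by the power-mean (Lyapunov) inequality, $\EE Z_i\le\gamma$, $\EE Z_i^2\le\gamma^2$, and $\EE|\xi|^{2+\eps}=\gamma^{2+\eps}$; in particular $\EE\sum_i Z_i\le\gamma N$, so $\{\sum_i Z_i>3\gamma N\}\subseteq\{\sum_i Z_i-\EE\sum_i Z_i>2\gamma N\}$, and it suffices to bound the probability of the latter.

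The main step is a single truncation at level $\tau:=\gamma N$, splitting $Z_i=Z_i'+Z_i''$ with $Z_i':=Z_i\mathbbm{1}(Z_i\le\tau)$. For the large-entry part, a union bound together with Markov's inequality applied with the $(2+\eps)$-th moment gives $\PP\big(\exists i: Z_i>\tau\big)\le N\,\EE|\xi|^{2+\eps}\tau^{-(2+\eps)}=N^{-1-\eps}\le N^{-1-\min\{1,\eps\}}$, and on the complement of this event $\sum_i Z_i=\sum_i Z_i'$. For the truncated part, $0\le Z_i'\le\tau=\gamma N$ with mean at most $\gamma$ and variance at most $\gamma^2$, so Bennett's inequality for bounded summands applies with deviation $t=2\gamma N$, variance proxy $v\le N\gamma^2$, and bound $b=\tau=\gamma N$. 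Since $bt/v=2N$ is large — i.e.\ the target deviation $\asymp\gamma N$ is of the same order as the truncation level rather than of the order of the fluctuation scale $\sqrt N\,\gamma$ — one is in the ``Poissonian'' regime $h(u)=(1+u)\log(1+u)-u\asymp u\log u$ of Bennett's bound, and the exponent $-\tfrac{v}{b^2}h\!\big(\tfrac{bt}{v}\big)\asymp-\tfrac1N\cdot N\log N=-\log N$ collapses to give $\PP\big(\sum_i Z_i'>3\gamma N\big)\lesssim N^{-2}$. Combining the two parts, $\PP\big(\sum_i Z_i>3\gamma N\big)\lesssim N^{-1-\min\{1,\eps\}}+N^{-2}$. (Equivalently one may quote a Fuk--Nagaev inequality with moment exponent $q=2+\min\{1,\eps\}$, which packages exactly this trade-off between a power term $\asymp N\,\EE|\xi|^q/t^q$ and a subpolynomial remainder; the truncation argument above is just its specialization here.)

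Finally, taking the union over $j=1,\dots,d_k$ converts the per-slice estimate into $\PP\big(\exists j:\lone{\calP_{\Omega_j^{(k)}}(\bXi)}>3\gamma\dkm\big)\le c_1\,d_k(\dkm)^{-1-\min\{1,\eps\}}+c_2\,d_k(\dkm)^{-2}=c_1\tfrac{d_k}{\dkm}(\dkm)^{-\min\{1,\eps\}}+c_2\tfrac{d_k}{\dkm}(\dkm)^{-1}$, which is precisely the claimed bound (the regime $\dkm=O(1)$ is trivial after enlarging the constants so that the first term already exceeds $1$).

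I expect the delicate point to be the truncated-sum estimate: a crude Chebyshev bound would only give $\PP(\sum_i Z_i'>3\gamma N)\lesssim N^{-1}$, which is too weak to produce the $(\dkm)^{-2}$ term, so one genuinely needs the sharp Bennett/Bernstein bound evaluated in the regime where the deviation is comparable to — not much smaller than — the truncation level; and one must simultaneously respect that lowering $\tau$ to assist this step would degrade the large-entry probability, which is what pins $\tau$ at the scale $\gamma\dkm$. The remaining ingredients (power-mean moment reduction, the union bound over slices) are routine.
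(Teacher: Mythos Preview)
Your proof is correct and follows the same overall strategy as the paper: truncate each $|\xi|$ at level $\tau=\gamma\dkm$, control the large-entry event by a union bound plus the $(2+\eps)$-th moment Markov inequality, control the truncated sum by a concentration inequality, and finish with a union over the $d_k$ slices. The only substantive difference is the concentration tool for the truncated part. The paper uses a fourth-moment Chebyshev bound: with $\bar\varphi:=|\xi|\,\mathbbm{1}(|\xi|\le\tau)$ it expands $\EE\big(\sum_i(\bar\varphi_i-\EE\bar\varphi_i)\big)^4\lesssim N\,\EE\bar\varphi^4+N^2(\EE\bar\varphi^2)^2$, invokes $\EE\bar\varphi^4\le\tau^{2-\eps}\,\EE|\xi|^{2+\eps}$, and reads off the $N^{-(1+\eps)}+N^{-2}$ rate directly from Markov with $p=4$. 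You instead use Bennett's inequality in its Poissonian regime. Both deliver the required $N^{-2}$ term; the paper's route is a shade more elementary (no exponential inequality needed), while yours is more in line with modern concentration references and, as you note, is essentially Fuk--Nagaev specialized to this setting. One small slip in your write-up: since $bt/v=2N$, the Bennett exponent is $\asymp-2\log N$ rather than $-\log N$, which is what actually produces the claimed $N^{-2}$ rather than only $N^{-1}$.
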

\begin{proof}
	First consider the case when $\eps<1$. For convenience, denote $\varphi_{i_1\cdots i_m}:=\big| \xi_{i_1\cdots i_m}\big|$. Introduce $\xi$ i.i.d. with $\xi_{i_1\cdots i_m}$ and denote $\gamma:=\left(\EE|\xi|^{2+\eps}\right)^{1/(2+\eps)}$. For constant $s>0$, define the truncated variable $\bar{\varphi}_{i_1\cdots i_m}:=\big| \xi_{i_1\cdots i_m}\cdot 1_{\{|\xi_{i_1\cdots i_m}|\leq s\}}\big|$. And $\varphi,\bar{\varphi}$ are i.i.d. copy of $\varphi_{i_1\cdots i_m},\bar{\varphi}_{i_1\cdots i_m}$, respectively. Consider the probability of $ \varphi_{i_1\cdots i_m}\neq\bar{\varphi}_{i_1\cdots i_m}$,
	\begin{align*}
		\PP\left(\varphi_{i_1\cdots i_m}\neq\bar{\varphi}_{i_1\cdots i_m}\right)=\PP\left(|\xi_{i_1\cdots i_m}|>s\right)\leq\frac{\EE|\xi|^{2+\eps}}{s^{2+\eps}}.
	\end{align*}
	Hence, for the slice, we have
	\begin{align*}
		\PP\left(\sum_{i_k=j}\bar{\varphi}_{i_1\cdots i_{k-1}ji_{k+1}\cdots i_m}\neq \sum_{i_k=j}\varphi_{i_1\cdots i_{k-1}ji_{k+1}\cdots i_m}\right)&\leq \sum_{i_k=j}\PP\left(\bar{\varphi}_{i_1\cdots i_{k-1}ji_{k+1}\cdots i_m}\neq \varphi_{i_1\cdots i_{k-1}ji_{k+1}\cdots i_m}\right)\\
		&\leq \frac{d_1\cdots d_m}{d_k}\cdot\frac{\EE|\xi|^{2+\eps}}{s^{2+\eps}}=\dkm\cdot \frac{\EE|\xi|^{2+\eps}}{s^{2+\eps}}.
	\end{align*}
	Then consider $\bar{\varphi}$,
	\begin{align*}
		&\PP\left(\bigg|\sum_{i_k=j}\left[\bar{\varphi}_{i_1\cdots i_{k-1}ji_{k+1}\cdots i_m}-\EE\bar{\varphi}_{i_1\cdots i_{k-1}ji_{k+1}\cdots i_m}\right]\bigg|\geq s\right)\\
		&{~~~~~~~~~~~~~~~~~~~~~~}\leq \frac{\EE\bar{\varphi}^4\cdot d_1\cdots d_m/d_k+\left(\EE\bar{\varphi}^2\right)^2\cdot d_1^2\cdots d_m^2/d_k^2}{s^4}\\
		&{~~~~~~~~~~~~~~~~~~~~~~}\leq \frac{\EE|\xi|^{2+\eps}\cdot d_1\cdots d_m/d_k}{s^{2+\eps}}+\frac{\left(\EE\xi^2\right)^2\cdot d_1^2\cdots d_m^2/d_k^2}{s^4}= \frac{\EE|\xi|^{2+\eps}\cdot \dkm}{s^{2+\eps}}+\frac{\left(\EE\xi^2\right)^2\cdot (\dkm)^2}{s^4},
	\end{align*}
	where the first inequality is from Markov inequality and the second inequality uses $\EE\bar{\varphi}^4\leq s^{2-\eps}\EE|\xi|^{2+\eps}$. We take $s=\dkm\cdot \gamma$ and then by the above two equations we have
	\begin{align*}
		&{~~~~}\PP\left(\bigg|\sum_{i_k=j}\varphi_{i_1\cdots i_{k-1}ji_{k+1}\cdots i_m}-\EE\varphi_{i_1\cdots i_{k-1}ji_{k+1}\cdots i_m}\bigg|\geq 2\gamma\cdot \dkm\right)\\
		&\leq\PP\left(\sum_{i_k=j}\bar{\varphi}_{i_1\cdots i_{k-1}ji_{k+1}\cdots i_m}\neq \sum_{i_k=j}\varphi_{i_1\cdots i_{k-1}ji_{k+1}\cdots i_m}\right)\\
		&{~~~~~~~~~~~~~~~~~~~~~~~~~~~~}+\PP\left(\bigg|\sum_{i_k=j}\left[\bar{\varphi}_{i_1\cdots i_{k-1}ji_{k+1}\cdots i_m}-\EE\bar{\varphi}_{i_1\cdots i_{k-1}ji_{k+1}\cdots i_m}\right]\bigg|\geq \gamma\cdot \dkm\right)\\
		&\leq 2\left(\dkm\right)^{-(1+\eps)}+ \left(\dkm\right)^{-2},
	\end{align*}
	where we use Markov ineuqality, $\EE\varphi_{i_1\cdots i_{k-1}ji_{k+1}\cdots i_m}- \EE\bar{\varphi}_{i_1\cdots i_{k-1}ji_{k+1}\cdots i_m}\leq \gamma$ and $\EE\xi^2\leq\gamma^{2}$.
	Hence, take the union for all $j=1,\dots,d_m$ and then we have with probability exceeding $1-2\frac{d_k}{\dkm}\cdot \left(\dkm\right)^{-\eps}- \frac{d_k}{\dkm}\cdot \left(\dkm\right)^{-1}$, the following holds
	$$\lone{\calP_{\Omega_{j}^{(k)}}(\bXi)}\leq 3\gamma\cdot \dkm,\quad \text{ for all } j=1,\dots,d_k.$$
	Case of $\eps\geq1$ has similar proof where $\lone{\calP_{\Omega_{j}^{(k)}}(\bXi)}\leq 3\gamma\cdot \dkm$ holds for all $j=1,\dots,d_k$ with probability exceeding $1-3\frac{d_k}{\dkm}\cdot \left(\dkm\right)^{-1}$.
\end{proof}

\begin{lemma}
	Suppose tensors $\bcalT,\bcalT^*\in\MM_{\r,\mu}$ have same Tucker rank, with Tucker decomposition $\bcalT=\bcalC\times \U_{1}\times_2\cdots\times_m\U_{m}$ and $\bcalT^*=\bcalC^*\times \U_{1}^*\times_2\cdots\times_m\U_{m}^*$. Introduce matrices $\H_k:=\U_k^{\top}\U_k^*$, for each $k=1,\dots,m$. Then we have
	$$\linft{\bcalT-\bcalT^*}\leq\sqrt{\frac{\mu^m r_1\cdots r_m}{d_1\cdots d_m}}\fro{\bcalT-\bcalT^*}+\sum_{k=1}^m \sqrt{\frac{\mu^{m-1} r_1\cdots r_m/r_k}{d_1\cdots d_m/d_k}} \ltinf{\left(\U_k\H_k-\U_k^*\right)\fraM_k(\bcalC^*)}.$$
	\label{teclem:entrynorm-expansion}
\end{lemma}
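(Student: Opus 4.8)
The plan is to decompose $\bcalT-\bcalT^*$ into a ``leading'' term that lives in the tangent space of $\bcalT^*$ and $m$ ``residual'' terms, one for each mode, then bound each piece in sup-norm using incoherence and the elementary inequality $\linft{\A\B}\le\ltinf{\A}\cdot\fro{\B}$ for matrices $\A$ (with rows of bounded $\ell_2$-norm) and $\B$. Concretely, writing $\bcalT=\bcalC\cdot\llbracket\U_1,\dots,\U_m\rrbracket$ and $\bcalT^*=\bcalC^*\cdot\llbracket\U_1^*,\dots,\U_m^*\rrbracket$, I would telescope across modes: define intermediate tensors obtained by swapping the factor matrices one mode at a time, e.g.\ $\bcalT^{(0)}:=\bcalT^*$ and $\bcalT^{(k)}$ obtained from $\bcalT^{(k-1)}$ by replacing $\U_k^*$ (acting on the appropriately-updated core) by $\U_k\H_k$, with a final correction accounting for the core difference. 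The difference $\bcalT-\bcalT^*$ then splits as a sum of $m+1$ terms: one is $(\bcalC - \bcalC\cdot\llbracket\H_1,\dots,\H_m\rrbracket\text{-type})$ term that can be absorbed into $\fro{\bcalT-\bcalT^*}$ after projecting onto the rank-$\r$ structure of $\bcalT^*$, and the $k$-th residual term has mode-$k$ matricization of the form $(\U_k\H_k-\U_k^*)\fraM_k(\bcalC^*)\big(\otimes_{i\ne k}(\cdot)\big)^\top$, where the remaining Kronecker factor has orthonormal-ish columns with $(2,\infty)$-norm controlled by incoherence.

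For the leading term I would argue that its sup-norm is at most $\linft{\bcalC\cdot\llbracket\U_1,\dots,\U_m\rrbracket - \bcalC\cdot\llbracket\U_1\H_1^{-1}\cdots,\dots\rrbracket}$ — but more cleanly, note the component of $\bcalT-\bcalT^*$ that is $\mu$-incoherent along all modes (with the $\U_k$'s, or equivalently a tensor in $\MM_{\r,\mu}$-like span) obeys $\linft{\bcalZ}\le\sqrt{\mu^m r^*/d^*}\,\fro{\bcalZ}$ by the standard bound $\linft{\bcalZ}=\max_\omega|[\bcalZ]_\omega|$ and writing $[\bcalZ]_\omega = \e_{i_1}^\top\U_1\cdot(\text{core})\cdot(\e_{i_m}^\top\U_m)^\top$, then applying $\ltinf{\U_k}\le\sqrt{\mu r_k/d_k}$ mode by mode against $\fro{\text{core}}=\fro{\bcalZ}$. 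For the $k$-th residual term I would matricize along mode $k$: its $(2,\infty)$-norm (hence sup-norm, since sup-norm $\le$ $(2,\infty)$-norm of any matricization) is bounded by $\ltinf{(\U_k\H_k-\U_k^*)\fraM_k(\bcalC^*)}$ times $\ltinf{\otimes_{i\ne k}\U_i^\#}\cdot(\text{something})$ — and crucially the Kronecker product of matrices whose columns are in the span of incoherent $\U_i$'s has $(2,\infty)$-norm $\le\prod_{i\ne k}\sqrt{\mu r_i/d_i}=\sqrt{\mu^{m-1}r_k^-/\dkm}$. Summing gives the claimed bound.

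The main obstacle I anticipate is the bookkeeping of the telescoping decomposition so that (i) each residual term genuinely has the advertised matricized form with the factor $(\U_k\H_k-\U_k^*)\fraM_k(\bcalC^*)$ appearing exactly, and (ii) the leftover ``core'' term is legitimately $\mu$-incoherent (or close enough) so that the $\sqrt{\mu^m r^*/d^*}\fro{\bcalT-\bcalT^*}$ bound applies without an extra multiplicative constant — in particular one needs $\H_k$ to be close to orthogonal, which follows from $\bcalT$ being close to $\bcalT^*$, but the statement as written does not assume closeness, so presumably the telescoping is arranged to avoid needing $\H_k^{-1}$ (e.g.\ by inserting $\U_k\H_k$ rather than $\U_k$, which is the orthogonal projection of $\U_k^*$-columns, so $\U_k\H_k$ has operator norm $\le 1$ and the residual $\U_k\H_k-\U_k^*$ is exactly the ``unexplained'' part). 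I would double-check that each intermediate tensor $\bcalT^{(k)}$ has Tucker rank $\le\r$ and appropriate incoherence, so that the mode-by-mode sup-norm estimates go through; this is routine but must be done carefully. The remaining estimates ($\linft{\A\B}\le\ltinf{\A}\fro{\B}$, Kronecker $(2,\infty)$-norm multiplicativity, $\fro{\text{core}}=\fro{\text{tensor}}$ when factors are orthonormal) are all standard and I would invoke them without detailed proof.
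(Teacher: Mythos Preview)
Your proposal is correct and is exactly the paper's approach: the decomposition used is
\[
\bcalT-\bcalT^*=(\bcalC-\bcalC^*\cdot\llbracket\H_1,\dots,\H_m\rrbracket)\cdot\llbracket\U_1,\dots,\U_m\rrbracket+\sum_{k=1}^m\bcalC^*\times_{i<k}\U_i^*\times_k(\U_k\H_k-\U_k^*)\times_{j>k}\U_j\H_j,
\]
and the Frobenius bound on the leading term follows from the clean identity $\bcalC-\bcalC^*\cdot\llbracket\H_1,\dots,\H_m\rrbracket=(\bcalT-\bcalT^*)\cdot\llbracket\U_1^\top,\dots,\U_m^\top\rrbracket$ (a projection), so your worries about $\H_k$-invertibility and closeness are moot---no inverse is ever needed.
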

\begin{proof}
	First consider difference between $\bcalT$ and $\bcalT^*$,
	\begin{align*}
		&{~~~~}\bcalT-\bcalT^*\\
		&=\bcalC\times \U_{1}\times_2\cdots\times_m\U_{m}-\bcalC^*\times \U_{1}^*\times_2\cdots\times_m\U_{m}^*\\
		&=\left(\bcalC-\bcalC^*\times_1\H_1\times_2\cdots\times_m\H_m\right)\times_1\U_1\times_2\cdots\times_m\U_{m}+\sum_{k=1}^{m} \bcalC^*\times_{i<k} \U_i^*\times_k (\U_k\H_k-\U^*)\times_{j>k}\U_j\H_j.
	\end{align*}
	Note that the first term has the expression \begin{align*}
		\bcalC-\bcalC^*\times_1\H_1\times_2\cdots\times_m\H_m&=\bcalC-\bcalT^*\times_{1}\U_{1}^{\top}\times_2\cdots\times_m\U_m^{\top}=\left(\bcalT-\bcalT^*\right)\times_{1}\U_{1}^{\top}\times_2\cdots\times_m\U_m^{\top},
	\end{align*}
	which shows
	$$\fro{\bcalC-\bcalC^*\times_1\H_1\times_2\cdots\times_m\H_m }\leq\fro{\bcalT-\bcalT^*},$$
	$$ \linft{\bcalT-\bcalT^*}\leq \sqrt{\frac{\mu^m r_1\cdots r_m}{d_1\cdots d_m}}\fro{\bcalT-\bcalT^*}+\sum_{k=1}^m \sqrt{\frac{\mu^{m-1} r_1\cdots r_m/r_k}{d_1\cdots d_m/d_k}} \ltinf{\left(\U_k\H_k-\U_k^*\right)\fraM_k(\bcalC^*)}$$
\end{proof}

\begin{lemma}
	Pseudo-Huber loss function $\rho(x)=\sqrt{x^2+\delta^2}$ maps $\RR$ to $\RR$. Denote derivative of $\rho(\cdot)$ as $\dot{\rho}(\cdot)$ and than we have $\dot{\rho}(\cdot)$ is Lipschitz continuous with $\delta^{-1}$, namely, $$\left|\dot{\rho}(x_1)-\dot{\rho}(x_2)\right|\leq \delta^{-1}\left|x_1-x_2\right|\quad\text{ for all } x_1,x_2\in\RR,$$ moreover, we have $$\left(\dot{\rho}(x_1)-\dot{\rho}(x_2) \right)^2\leq \delta^{-1}(x_1-x_2)(\dot{\rho}(x_1)-\dot{\rho}(x_2)), \quad\text{ for all } x_1,x_2\in\RR.$$
	\label{teclem:hubfunction}
\end{lemma}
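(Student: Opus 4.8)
The plan is to prove the two claimed properties of the pseudo-Huber derivative $\dot\rho(x)=x/\sqrt{x^2+\delta^2}$ directly from elementary calculus. First I would establish the Lipschitz bound by showing $\sup_{x\in\RR}|\ddot\rho(x)|\le\delta^{-1}$, which gives the Lipschitz claim immediately by the mean value theorem. A direct computation yields $\ddot\rho(x)=\delta^2(x^2+\delta^2)^{-3/2}$, which is positive and maximized at $x=0$ with value $\delta^{-1}$; hence $0<\ddot\rho(x)\le\delta^{-1}$ for all $x$, and so $|\dot\rho(x_1)-\dot\rho(x_2)|=|\int_{x_2}^{x_1}\ddot\rho(t)\,dt|\le\delta^{-1}|x_1-x_2|$.

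For the second inequality, I would use both facts just obtained: $\dot\rho$ is $\delta^{-1}$-Lipschitz and, because $\ddot\rho>0$, $\dot\rho$ is strictly increasing, so $\dot\rho$ is monotone. For a monotone map the product $(x_1-x_2)(\dot\rho(x_1)-\dot\rho(x_2))$ is always nonnegative and equals $|x_1-x_2|\cdot|\dot\rho(x_1)-\dot\rho(x_2)|$. Combining with the Lipschitz bound $|\dot\rho(x_1)-\dot\rho(x_2)|\le\delta^{-1}|x_1-x_2|$ gives
$$
\big(\dot\rho(x_1)-\dot\rho(x_2)\big)^2=|\dot\rho(x_1)-\dot\rho(x_2)|\cdot|\dot\rho(x_1)-\dot\rho(x_2)|\le\delta^{-1}|x_1-x_2|\cdot|\dot\rho(x_1)-\dot\rho(x_2)|=\delta^{-1}(x_1-x_2)\big(\dot\rho(x_1)-\dot\rho(x_2)\big),
$$
which is exactly the claim. (If one prefers to avoid invoking monotonicity, the same conclusion follows by writing the product as an integral $ \int\int \ddot\rho(s)\ddot\rho(t)$-type expression, but the monotonicity route is cleaner.)

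There is essentially no serious obstacle here; the only mildly delicate point is making sure the second moment / monotonicity argument is airtight, namely that $(x_1-x_2)(\dot\rho(x_1)-\dot\rho(x_2))=|x_1-x_2|\,|\dot\rho(x_1)-\dot\rho(x_2)|$ — this requires that $x_1-x_2$ and $\dot\rho(x_1)-\dot\rho(x_2)$ have the same sign, which is precisely the strict monotonicity of $\dot\rho$ coming from $\ddot\rho>0$. I would state this sign-matching explicitly as the one non-automatic step, and otherwise the proof is a two-line computation of $\ddot\rho$ followed by the chain of inequalities above.

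\begin{proof}
Write $\rho(x)=(x^2+\delta^2)^{1/2}$. Then $\dot\rho(x)=x(x^2+\delta^2)^{-1/2}$ and a direct differentiation gives
$$
\ddot\rho(x)=(x^2+\delta^2)^{-1/2}-x^2(x^2+\delta^2)^{-3/2}=\delta^2(x^2+\delta^2)^{-3/2}.
$$
Hence $\ddot\rho(x)>0$ for all $x\in\RR$, and $\ddot\rho(x)\le\ddot\rho(0)=\delta^{-1}$. By the fundamental theorem of calculus, for any $x_1,x_2\in\RR$,
$$
\left|\dot\rho(x_1)-\dot\rho(x_2)\right|=\left|\int_{x_2}^{x_1}\ddot\rho(t)\,dt\right|\le\delta^{-1}\left|x_1-x_2\right|,
$$
which proves the Lipschitz claim. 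Moreover, since $\ddot\rho>0$, the function $\dot\rho$ is strictly increasing, so $x_1-x_2$ and $\dot\rho(x_1)-\dot\rho(x_2)$ have the same sign and therefore
$$
(x_1-x_2)\big(\dot\rho(x_1)-\dot\rho(x_2)\big)=\left|x_1-x_2\right|\cdot\left|\dot\rho(x_1)-\dot\rho(x_2)\right|.
$$
Combining this identity with the Lipschitz bound yields
$$
\big(\dot\rho(x_1)-\dot\rho(x_2)\big)^2=\left|\dot\rho(x_1)-\dot\rho(x_2)\right|\cdot\left|\dot\rho(x_1)-\dot\rho(x_2)\right|\le\delta^{-1}\left|x_1-x_2\right|\cdot\left|\dot\rho(x_1)-\dot\rho(x_2)\right|=\delta^{-1}(x_1-x_2)\big(\dot\rho(x_1)-\dot\rho(x_2)\big),
$$
which is the second claimed inequality.
\end{proof}
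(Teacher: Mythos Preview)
Your proof is correct and essentially identical to the paper's: both compute $\ddot\rho(x)=\delta^2(x^2+\delta^2)^{-3/2}$, use the bound $0<\ddot\rho\le\delta^{-1}$ for the Lipschitz claim, and then invoke the positivity of $\ddot\rho$ (equivalently, monotonicity of $\dot\rho$) to obtain the second inequality. The only cosmetic difference is that the paper phrases the first step via the mean value theorem at an intermediate point while you use the integral form.
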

\begin{proof}
	Notice that $\dot{\rho}(x)=\frac{x}{\sqrt{x^2+\delta^2}}$ and second derivative of $\rho(\cdot)$ is $\ddot{\rho}(x)=\frac{\delta^2}{\left(x^2+\delta^2\right)^{3/2}}$. $\ddot{\rho}$ is a bounded function $0\leq\ddot{\rho}(x)\leq\delta^{-1}$. Then for any $x_1,x_2$, we have $$\dot{\rho}(x_1)-\dot{\rho}(x_2)=\ddot{\rho}(\theta x_1+(1-\theta)x_2) (x_1-x_2),$$ where $\theta\in[0,1]$ is some constant. Hence, we have $\left|\dot{\rho}(x_1)-\dot{\rho}(x_2)\right|\leq \delta^{-1}\left|x_1-x_2\right|$. Then, by $\ddot{\rho}(x)>0$, we have $$\left(\dot{\rho}(x_1)-\dot{\rho}(x_2) \right)^2\leq \delta^{-1}(x_1-x_2)(\dot{\rho}(x_1)-\dot{\rho}(x_2)).$$
\end{proof}

\begin{lemma}[Lemma B.8 of \cite{cai2022generalized}]
	Let $\Omega$ be the $\alpha$-fraction set. Suppose $\bcalT^*\in\MM_{\r}$ is $\muT$-incoherent. Under the assumptions that $\bcalT\in\MM_{\r}$ is $\mu$-incoherent and $\left\|\bcalT_l- \bcalT^*\right\|_{\mathrm{F}} \leq \frac{\mins^*}{16 m}$, we have
	$$
	\left\|\mathcal{P}_{\Omega}\left(\bcalT-\bcalT^*\right)\right\|_{\mathrm{F}}^2 \leq C_m\alpha\max\{\muT,\mu\}^m r^* \left\|\bcalT-\bcalT^*\right\|_{\mathrm{F}}^2
	$$
	where $C_m=4(m+1)$.
	\label{teclem:alpha-bound}
\end{lemma}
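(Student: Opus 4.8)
The plan is to pass through an entrywise bound and then count using the $\alpha$-fraction structure of $\Omega$. Fix Tucker decompositions $\bcalT=\bcalC\cdot\llbracket\U_1,\dots,\U_m\rrbracket$, $\bcalT^*=\bcalC^*\cdot\llbracket\U_1^*,\dots,\U_m^*\rrbracket$, set $\H_k:=\U_k^\top\U_k^*$ and $\bar\mu:=\muT\vee\mu$. Since $\Omega$ (the support of $\bcalS$) has at most $\alpha\dkm$ nonzeros in every slice of mode $k$, we have $|\Omega|\le\alpha d^*$, hence $\|\calP_\Omega(\bcalZ)\|_{\rm F}^2\le\alpha d^*\linft{\bcalZ}^2$ for any tensor $\bcalZ$. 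Applying this with $\bcalZ=\bcalT-\bcalT^*$ and squaring the bound of Lemma~\ref{teclem:entrynorm-expansion} (after a Cauchy--Schwarz split of the $m+1$ summands) reduces the claim to two ingredients: the trivial core estimate coming from the $\mu$-incoherence of $\U_1,\dots,\U_m$, and, for each $k$, the factor estimate $\ltinf{(\U_k\H_k-\U_k^*)\fraM_k(\bcalC^*)}\lesssim\sqrt{\bar\mu r_k/d_k}\,\fro{\bcalT-\bcalT^*}$.

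For the factor estimate I would first record the identity
$$(\U_k\H_k-\U_k^*)\fraM_k(\bcalC^*)=(\I-\U_k\U_k^\top)\,\fraM_k(\bcalT-\bcalT^*)\,\big(\otimes_{i\neq k}\U_i^*\big),$$
which follows from $\U_k\H_k-\U_k^*=-(\I-\U_k\U_k^\top)\U_k^*$, the relation $\U_k^*\fraM_k(\bcalC^*)=\fraM_k(\bcalT^*)\big(\otimes_{i\neq k}\U_i^*\big)$, and the fact that $\U_k\U_k^\top$ fixes $\fraM_k(\bcalT)$. Splitting $\e_j^\top(\I-\U_k\U_k^\top)=\e_j^\top-(\U_k^\top\e_j)^\top\U_k^\top$ disposes of one piece via $\ltinf{\U_k}\le\sqrt{\mu r_k/d_k}$ together with $\|\fraM_k(\bcalT-\bcalT^*)\|\le\fro{\bcalT-\bcalT^*}$; the remaining piece is the slice-$j$ error of $\bcalT-\bcalT^*$ compressed onto the incoherent row space $\otimes_{i\neq k}\U_i^*$, and this is exactly where the spectral-gap hypothesis $\fro{\bcalT-\bcalT^*}\le\mins^{\ast}/(16m)$ enters: a matrix/tensor perturbation argument of the type behind Lemma~\ref{teclem:perturbation:matrix} and Lemma~\ref{teclem:perturbation:tensor} (already used in this paper, e.g.\ to obtain \eqref{eq63}) shows that the perturbed factor directions inherit incoherence from $\bcalT^*$, so that after the $\maxs^{\ast}$-sized leading terms cancel one is left with the bound $\sqrt{\bar\mu r_k/d_k}\,\fro{\bcalT-\bcalT^*}$. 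Reassembling the $m+1$ pieces and tracking constants gives $C_m=4(m+1)$; this is precisely Lemma~B.8 of \cite{cai2022generalized}, whose proof may be quoted.

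The main obstacle is this last step. The naive estimates yield either $\ltinf{(\U_k\H_k-\U_k^*)\fraM_k(\bcalC^*)}\le\fro{\bcalT-\bcalT^*}$, which is dimension-dependent once multiplied by $\alpha d^*$, or $\lesssim\sqrt{\bar\mu r_k/d_k}\,\maxs^{\ast}$, which fails to be proportional to $\fro{\bcalT-\bcalT^*}$; it is the combination of \emph{both} scalings that forces one to exploit the closeness of $\bcalT$ and $\bcalT^*$ rather than merely the incoherence of each, and this is the crux of the argument. Everything else --- the $\alpha$-fraction counting, the Cauchy--Schwarz split, and the core term --- is routine.
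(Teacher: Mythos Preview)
The paper does not prove this lemma; it simply cites Lemma~B.8 of \cite{cai2022generalized}. Your sketch, however, has a genuine gap in strategy, not merely in bookkeeping.

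The route you propose --- bound $\|\calP_\Omega(\bcalT-\bcalT^*)\|_{\rm F}^2$ by $\alpha d^*\|\bcalT-\bcalT^*\|_\infty^2$ and then apply Lemma~\ref{teclem:entrynorm-expansion} --- forces you to prove the factor estimate $\ltinf{(\U_k\H_k-\U_k^*)\fraM_k(\bcalC^*)}\lesssim\sqrt{\bar\mu r_k/d_k}\,\fro{\bcalT-\bcalT^*}$. This estimate is \emph{false} under the stated hypotheses. Take $m=2$, $r_1=r_2=1$, $\bcalT^*=\u\,\v^{*\top}$ and $\bcalT=\u\,\v^\top$ with the same $\u$, and let $\v$ be a unit-norm perturbation of $\v^*$ concentrated in one coordinate so that $\|\v-\v^*\|_2=\|\v-\v^*\|_\infty=\eps$. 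Both tensors are $O(1)$-incoherent, the spectral-gap condition holds for small $\eps$, and $\fro{\bcalT-\bcalT^*}=\eps$; yet $\ltinf{(\v^\top\v^*)\v-\v^*}\asymp\eps$, while $\sqrt{\bar\mu/d}\,\eps\ll\eps$. The bound \eqref{eq63} you invoke in support is not a perturbation consequence of Frobenius closeness --- it is an \emph{inductive invariant} maintained along the algorithm's trajectory, and its proof in Section~\ref{proof:pseudo-Huber:phaseone} crucially uses the structure of the Riemannian gradient step.

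The argument in \cite{cai2022generalized} does not pass through $\|\cdot\|_\infty$. One telescopes
\[
\bcalT-\bcalT^*=(\bcalT-\bcalT^*)\times_1\U_1\U_1^\top\cdots\times_m\U_m\U_m^\top-\sum_{k=1}^m\bcalT^*\times_{i<k}\U_i\U_i^\top\times_k(\I-\U_k\U_k^\top),
\]
into $m+1$ pieces each of Frobenius norm at most $\fro{\bcalT-\bcalT^*}$ (for the $k$-th piece use $(\I-\U_k\U_k^\top)\fraM_k(\bcalT^*)=(\I-\U_k\U_k^\top)\fraM_k(\bcalT^*-\bcalT)$, so no $\kappa$ appears). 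For each piece $\bcalZ$ one selects a mode $\ell$ so that the \emph{row space} of $\fraM_\ell(\bcalZ)$ lies in $\mathrm{span}\big(\otimes_{i\neq\ell}\W_i\big)$ with each $\W_i\in\{\U_i,\U_i^*\}$. Then for every fixed $j\in[d_\ell]$ the $\alpha$-fraction slice sparsity gives
\[
\big\|\calP_{\Omega}\big(\e_j^\top\fraM_\ell(\bcalZ)\big)\big\|_2^2\;\le\;\alpha\,\bar\mu^{\,m-1}r_\ell^-\,\big\|\e_j^\top\fraM_\ell(\bcalZ)\big\|_2^2,
\]
and summing over $j$ yields $\|\calP_\Omega(\bcalZ)\|_{\rm F}^2\le\alpha\bar\mu^{\,m-1}r_\ell^-\|\bcalZ\|_{\rm F}^2\le\alpha\bar\mu^{\,m}r^*\|\bcalZ\|_{\rm F}^2$. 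The point your sup-norm approach misses is that only $m-1$ of the $m$ factor incoherences are needed per piece, and the $\alpha$-fraction structure is exploited slice by slice rather than entry by entry.
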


\subsection{Empirical processes for tensor PCA}
\begin{lemma}
	Let $\bcalE=\left(\eps_{i_1\dots i_m}\right)\in\RR^{d_1\times\dots\times d_m}$ be a random tensor with i.i.d.Rademacher entries, namely, $\PP(\eps_{i_1\dots i_m}=1)=\PP(\eps_{i_1\dots i_m}=-1)=1/2$. Then there exists some $c>0$ such that for all $t>0$, \begin{align*}
		\PP\left(\sup_{\bcalM\in\MM_{\r},\fro{\bcalM}\leq 1}|\inp{\bcalE}{\bcalM}|\geq t\right)\leq 2\exp\left(-\frac{t^2}{2} +C\left(r_1\cdots r_m+\sum_{j=1}^{m}r_jd_j\right)\right).
	\end{align*}
	Specifically, it infers $$\EE\sup_{\bcalM\in\MM_{\r},\fro{\bcalM}\leq 1}|\inp{\bcalE}{\bcalM}|\leq C\sqrt{r_1\cdots r_m+\sum_{j=1}^{m}r_jd_j}.$$
	\label{teclem:rademacher}
\end{lemma}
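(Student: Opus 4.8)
The plan is to bound the supremum by combining a covering‑number estimate for the low‑rank set with Hoeffding's sub‑Gaussian tail and a concentration argument. Write $Z:=\sup_{\bcalM\in\MM_{\r},\,\fro{\bcalM}\le 1}|\inp{\bcalE}{\bcalM}|$. Since $\MM_{\r}$ is symmetric, $Z=\sup_{\bcalM\in\MM_{\r},\,\fro{\bcalM}\le 1}\inp{\bcalE}{\bcalM}$, which is exactly the quantity $\frorr{\bcalE}$ already appearing in the Riemannian sub‑gradient analysis (indeed $Z=\sup_{\W_j\in\OO_{d_j,r_j}}\fro{\bcalE\times_1\W_1^{\top}\times_2\cdots\times_m\W_m^{\top}}$), and as a supremum of linear functionals of $\bcalE$ it is a convex, $1$-Lipschitz function of $\bcalE\in[-1,1]^{d^*}$ in the Euclidean norm. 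For each fixed $\bcalM$ with $\fro{\bcalM}\le 1$, Hoeffding's inequality gives $\PP(\inp{\bcalE}{\bcalM}\ge t)\le\exp(-t^2/2)$, since $\inp{\bcalE}{\bcalM}=\sum_{\omega}\eps_{\omega}[\bcalM]_{\omega}$ is a sum of independent symmetric terms with variance proxy $\sum_{\omega}[\bcalM]_{\omega}^2\le 1$.

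First I would construct an $\epsilon$-net of $\calB_{\r}:=\{\bcalM\in\MM_{\r}:\fro{\bcalM}\le 1\}$ in the Frobenius norm. Using the Tucker parametrization $\bcalM=\bcalC\times_1\U_1\times_2\cdots\times_m\U_m$ with $\fro{\bcalC}\le 1$ and $\U_j\in\OO_{d_j,r_j}$, the map $(\bcalC,\U_1,\dots,\U_m)\mapsto\bcalM$ is $(m+1)$-Lipschitz in the natural product metric, because replacing one factor changes $\bcalM$ in Frobenius norm by at most $\op{\U_j-\U_j'}\fro{\bcalC}$ and replacing the core by at most $\fro{\bcalC-\bcalC'}$. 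Combining the standard net cardinalities $(3/\epsilon)^{r^*}$ for the unit ball of $\RR^{r_1\times\cdots\times r_m}$ and $(c_0/\epsilon)^{d_jr_j}$ for each Stiefel manifold, I obtain a net $\calN_{\epsilon}$ with $\log|\calN_{\epsilon}|\le\textsf{DoF}_m\log(c_0(m+1)/\epsilon)$ for $\epsilon\in(0,1]$.

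Then I would pass from the net to the tail bound. A union bound over $\calN_{\epsilon}$ with the per‑point Hoeffding estimate gives $\PP(\max_{\bcalM\in\calN_{\epsilon}}\inp{\bcalE}{\bcalM}\ge s)\le\exp(-s^2/2+\textsf{DoF}_m\log(c_0(m+1)/\epsilon))$. For the remaining gap I would use that the difference of any $\bcalM\in\calB_{\r}$ and its nearest net point lies in $\MM_{2\r}$ with Frobenius norm at most $\epsilon$, so $Z\le\max_{\calN_{\epsilon}}\inp{\bcalE}{\cdot}+\epsilon\sup_{\bcalN\in\MM_{2\r},\fro{\bcalN}\le 1}\inp{\bcalE}{\bcalN}$; iterating this rank‑doubling and stopping once $2^k\r$ exceeds $(d_1,\dots,d_m)$ (where the supremum is just $\fro{\bcalE}=\sqrt{d^*}$) controls the remainder at the cost of an absolute constant, using $r_j\le d_j$ so that the degrees of freedom of rank $2^k\r$ are beaten by the geometric weights. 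Alternatively one may skip this step and apply Talagrand's concentration inequality for convex $1$-Lipschitz functions of independent bounded variables to center $Z$ at $\EE Z$. Either route reaches $\PP(Z\ge t)\le 2\exp(-t^2/2+C\,\textsf{DoF}_m)$, which is vacuous when $t^2/2\le C\,\textsf{DoF}_m$; integrating this tail over $t\ge 0$ yields $\EE Z\le C\sqrt{\textsf{DoF}_m}$.

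The hard part will be the net‑to‑supremum passage: because differences of rank‑$\r$ tensors need not have rank $\r$, the usual device of subtracting the nearest net point and rescaling does not immediately close, so one must either route the remainder through the rank‑doubling geometric series (checking the $\textsf{DoF}$ growth against the weights) or invoke the dimension‑free convex‑concentration inequality. Getting the clean exponent $t^2/2$ rather than $t^2/4$ favors the union‑bound‑over‑net route, in which each term retains Hoeffding's sharp constant; the split of $t$ between the net term and the remainder must be chosen carefully so this constant is not degraded.
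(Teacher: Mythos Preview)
Your overall strategy---$\eps$-net on the Tucker parametrization, Hoeffding for each fixed net point, union bound---is exactly what the paper does. But the step you flag as ``the hard part'' is in fact much simpler than you propose, and the paper dispatches it in one line. The point you are missing is that although the difference of two rank-$\r$ tensors is not rank $\r$ in general, here the difference between the maximizer $\bcalM_0=\bcalC_0\cdot\llbracket\U_1^{(0)},\dots,\U_m^{(0)}\rrbracket$ and its nearest net point $\bcalT=\bcalC\cdot\llbracket\U_1,\dots,\U_m\rrbracket$ telescopes as
\[
(\bcalC_0-\bcalC)\cdot\llbracket\U_1^{(0)},\dots,\U_m^{(0)}\rrbracket+\sum_{j=1}^{m}\bcalC\cdot\llbracket\U_1,\dots,\U_{j-1},\U_j^{(0)}-\U_j,\U_{j+1}^{(0)},\dots,\U_m^{(0)}\rrbracket,
\]
and each of the $m+1$ summands is itself in $\MM_{\r}$ (the factor $\U_j^{(0)}-\U_j$ still has only $r_j$ columns) with Frobenius norm at most $\eps/(m+1)$. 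Hence $|\inp{\bcalE}{\bcalM_0-\bcalT}|\le\eps\, Z$, and the self-bounding inequality $Z\le(1-\eps)^{-1}\max_{\bcalM\in\calN}|\inp{\bcalE}{\bcalM}|$ closes immediately. Taking $\eps=1/2$ then gives the tail bound with the clean exponent $t^2/2$ directly from Hoeffding plus the union bound; no rank-doubling chaining and no Talagrand-type concentration are needed. Your observation that the parametrization is $(m+1)$-Lipschitz is essentially this same telescoping, so you were one step away from the shortcut.
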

\begin{proof}
	The proof follows $\eps$-net arguments. Suppose it achieves the supremum at $\bcalM_0\in\MM_{\r}$, $$\sup_{\bcalM\in\MM_{\r},\, \fro{\bcalM}\leq1}\inp{\bcalE}{\bcalM}=\inp{\bcalE}{\bcalM_0},$$ with $\fro{\bcalM_0}=1$.
	Then there exist core tensors $\bcalC_0\in\RR^{r_1\times \cdots\times r_m}$ and orthogonal matrices $ \U_1^{(0)}\in\OO_{d_1, r_1},\dots,\U_m^{(0)}\in\OO_{d_m, r_m}$ such that 
	$$\bcalM_0=\bcalC_0\times_{1} \U_1^{(0)}\times_2\cdots\times_m \U_m^{(0)},$$ 
	Notice that $\fro{\bcalC_0}=1$. Define $\FF_\r=\{\bcalC\in\RR^{r_1\times\dots\times r_m}: \fro{\bcalC}=1\}$ to be the set of tensors with unit Frobenius norm. Note that $\FF_\r$ has one $\eps/(m+1)$-net $\calN_{\eps/(m+1)}^{\FF_\r}$ of  cardinality $|\calN_{\eps/(m+1)}^{\FF_\r}|\leq (3(m+1)/\eps)^{r_1r_2\cdots r_m}$ with respect to the Frobenius norm. 
	
	Suppose $\calN_j$  is $\eps/(m+1)$-nets of orthogonal matrix sets $\OO_{d_k,r_k}$ with respect to $\fro{\cdot}$ norm. They have cardinalities $$|\calN_1|\leq(3(m+1)/\eps)^{d_1r_1},\dots,|\calN_m|\leq(3(m+1)/\eps)^{d_mr_m}.$$ See \cite{rauhut2017low,vershynin2018high} for more about $\eps$-nets. Furthermore, the net $$\calN:=\{\bcalM=\bcalD\times_{1} \V_1\times_2\cdots\times_m \V_m: \bcalD\in\calN_{\eps/(m+1)}^{\FF_\r},\,\V_1\in\calN_1,\dots,\V_m\in\calN_m\}$$ forms a net of $\MM_{\r}\bigcap \{\bcalT:\fro{\bcalT}\leq1\}$ with cardinality $|\calN|\leq(3(m+1)/\eps)^{r_1r_2\cdots r_m+\sum_{j=1}^{m}r_jd_j}$.
	
	Hence, tensor $\bcalM_0=\bcalC_0\times_{1} \U_1^{(0)}\times_2\cdots\times_m \U_m^{(0)}$ has close approximation in the nets. Exist $\bcalC\in\calN_{\eps/(m+1)}^{\FF_\r}, \,\U_1\in\calN_1,\dots, \U_m\in\calN_m$ such that 
	$$\fro{\bcalC_0-\bcalC}\leq \eps/(m+1),\quad \fro{\U_k^{(0)}-\U_k}\leq \eps/(m+1),\text{ for all }k=1,\dots,m.$$ 
	Denote the approximation in the nets as $\bcalT=\bcalC\times_{1} \U_1\times_2\cdots\times_m \U_m$. Note that $\bcalT$ belongs to $\calN$ and it has \begin{align*}
		\bcalM_0-\bcalT&=(\bcalC_0-\bcalC)\cdot\llbracket\U_1^{(0)},\dots,\U_m^{(0)}\rrbracket+\sum_{i=1}^{m}\bcalC\cdot\llbracket\U_1,\dots,\U_j^{(0)}-\U_j,\dots,\U_m^{(0)}\rrbracket,
	\end{align*}
	by which we have $\fro{\bcalM_0-\bcalT}\leq \eps$. Then come back to $\sup_{\bcalM\in\MM_{\r},\, \fro{\bcalM}\leq1}\inp{\bcalE}{\bcalM}$ and we have  \begin{align*}
		\sup_{\bcalM\in\MM_{\r},\, \fro{\bcalM}\leq1}\inp{\bcalE}{\bcalM}&=|\inp{\bcalE}{\bcalM_0}|\leq |\inp{\bcalE}{\bcalT}|+|\inp{\bcalE}{\bcalM_0-\bcalT}|\\
		&\leq \sup_{\bcalM\in\calN}|\inp{\bcalE}{\bcalM}|+\eps\sup_{\bcalM\in\MM_{\r},\, \fro{\bcalM}\leq1}\inp{\bcalE}{\bcalM},
	\end{align*}
	which leads to
	\begin{align}
		\sup_{\bcalM\in\MM_{\r},\, \fro{\bcalM}\leq1}\inp{\bcalE}{\bcalM}\leq\frac{1}{1-\eps}\sup_{\bcalM\in\calN}|\inp{\bcalE}{\bcalM}|.
		\label{eq1}
	\end{align}
	On the other hand, for any fixed $\bcalM\in\calN$, we have $$\inp{\bcalE}{\bcalM}=\sum_{i_1=1}^{d_1}\dots\sum_{i_m=1}^{d_m}\eps_{i_1\dots i_m}M_{i_1\dots i_m},$$
	Also, note that $$-|M_{i_1\dots i_m}|\leq\eps_{i_1\dots i_m}M_{i_1\dots i_m}\leq |M_{i_1\dots i_m}|.$$
	By Hoeffding's inequality, we have \begin{align*}
		\PP\left(|\inp{\bcalE}{\bcalM}|\geq t\right)\leq 2\exp\left(-\frac{t^2}{2}\right).
	\end{align*}
	Take the union over $\calN$ and it yields \begin{align*}
		\PP\left(\sup_{\bcalM\in\calN}\inp{\bcalE}{\bcalM}\geq t\right)\leq 2(3(m+1)/\eps)^{r_1r_2\cdots r_m+\sum_{j=1}^{m}r_jd_j}\exp\left(-\frac{t^2}{2}\right).
	\end{align*}
	The above equation could be simplified to
	\begin{align*}
		\PP\left(\sup_{\bcalM\in\MM_{\r},\, \fro{\bcalM}\leq1}\inp{\bcalE}{\bcalM}\geq t\right)\leq 2\exp\left(-\frac{t^2}{2} +C\left(r_1\cdots r_m+\sum_{j=1}^{m}r_jd_j\right)\right)
	\end{align*}
	Take $\eps=1/2$	and then with Equation~\eqref{eq1}, it verifies \begin{align*}
		\PP\left(\sup_{\bcalM\in\MM_{\r,\mu}^{(k)},\fro{\bcalM}\leq 1}|\inp{\bcalE}{\bcalM}|\geq 2t\right)\leq 2\exp\left(-\frac{t^2}{2} +C\left(r_1\cdots r_m+\sum_{j=1}^{m}r_jd_j\right)\right).
	\end{align*}
\end{proof}

\begin{lemma} Suppose $f(\cdot)$ is given by $f(\bcalT):=\sum_{i_1,\dots,i_m}\rho([\bcalT]_{i_1,\dots,i_m}-[\bcalY]_{i_1,\dots,i_m})$, where $\rho(\cdot)$ is Lipschitz $\tilde{L}$ continuous and $\bcalY=\bcalT^*+\bXi$ with independent entries in $\bXi$. Then there exist constants $C,C_1,C_2>0$ such that,
	\begin{align}
		\bigg|f(\bcalT+\Delta\bcalT)-f(\bcalT)-\EE[f(\bcalT+\Delta\bcalT) - f(\bcalT)]\bigg|\leq \tilde{L}\left(t+C\sqrt{r_1\cdots r_m+\sum_{j=1}^m r_jd_j}\right)\fro{\Delta\bcalT}
	\end{align}
	holds for all $\Delta\bcalT\in\MM_{\r}$ and $\bcalT\in\RR^{d_1\times\cdots\times d_m}$ with probability exceeding $1-\exp\left(-t^2/2\right)$.
	\label{teclem:empirical}
\end{lemma}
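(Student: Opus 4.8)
\textbf{Proof proposal for Lemma~\ref{teclem:empirical}.} The plan is to bound the supremum of the centered empirical process indexed by $\Delta\bcalT$ ranging over $\MM_{\r}$ via a chaining/symmetrization argument, exploiting that $\rho$ is $\tilde L$-Lipschitz so the increments of the process are controlled by those of a linear Gaussian (or Rademacher) process. First I would note that it suffices to prove the bound with $\fro{\Delta\bcalT}=1$; the general case follows by homogeneity because $f(\bcalT+\Delta\bcalT)-f(\bcalT)$ and its expectation are $\tilde L$-Lipschitz in $\Delta\bcalT$ at fixed $\bcalT$, so rescaling $\Delta\bcalT$ to unit norm multiplies both sides by $\fro{\Delta\bcalT}$. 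Also, the process must be controlled uniformly over $\bcalT\in\RR^{d_1\times\cdots\times d_m}$; here I would use that the summands in $f(\bcalT+\Delta\bcalT)-f(\bcalT)$ are $\rho([\bcalT]_\omega+[\Delta\bcalT]_\omega-[\bcalY]_\omega)-\rho([\bcalT]_\omega-[\bcalY]_\omega)$, which depends on $\bcalT$ only through the shift $[\bcalT]_\omega-[\bcalT^*]_\omega$; so after absorbing $\bcalT-\bcalT^*$ into a per-entry location parameter, I can apply the contraction principle for each fixed configuration and then argue the bound is in fact $\bcalT$-free once we pass to the Rademacher process, since $|\rho(a+x)-\rho(a)|\le\tilde L|x|$ uniformly in $a$.

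The key steps, in order, are: (i) symmetrize the centered process $Z(\Delta\bcalT):=f(\bcalT+\Delta\bcalT)-f(\bcalT)-\EE[f(\bcalT+\Delta\bcalT)-f(\bcalT)]$, introducing i.i.d.\ Rademacher variables $\eps_\omega$, so that $\EE\sup_{\Delta\bcalT}|Z(\Delta\bcalT)|\le 2\EE\sup_{\Delta\bcalT}\big|\sum_\omega \eps_\omega\,g_\omega([\Delta\bcalT]_\omega)\big|$ where $g_\omega(x):=\rho(c_\omega+x)-\rho(c_\omega)$ is $\tilde L$-Lipschitz with $g_\omega(0)=0$; (ii) apply the Ledoux--Talagrand contraction inequality to strip the $g_\omega$'s, bounding the last quantity by $2\tilde L\,\EE\sup_{\Delta\bcalT\in\MM_{\r},\,\fro{\Delta\bcalT}\le 1}|\inp{\bcalE}{\Delta\bcalT}|$; (iii) invoke Lemma~\ref{teclem:rademacher} to get $\EE\sup_{\Delta\bcalT\in\MM_{\r},\fro{\Delta\bcalT}\le1}|\inp{\bcalE}{\Delta\bcalT}|\le C\sqrt{r_1\cdots r_m+\sum_j r_jd_j}=C\sqrt{\textsf{DoF}_m}$; (iv) upgrade the expectation bound to a high-probability bound by a bounded-differences / Talagrand concentration argument, observing that changing one coordinate $\xi_\omega$ of the noise changes $\sup_{\Delta\bcalT}|Z(\Delta\bcalT)|$ by at most $2\tilde L\linft{\Delta\bcalT}\le 2\tilde L$ (after normalizing $\fro{\Delta\bcalT}=1$ and using $\linft{\cdot}\le\fro{\cdot}$), yielding a sub-Gaussian tail $\exp(-t^2/2)$ on the deviation $t\tilde L$ from the mean; (v) combine (iii) and (iv) and un-normalize to recover the stated bound $\tilde L(t+C\sqrt{\textsf{DoF}_m})\fro{\Delta\bcalT}$ uniformly over $\Delta\bcalT$ and $\bcalT$.

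The main obstacle I anticipate is step (iv): obtaining the clean $\exp(-t^2/2)$ tail uniformly over \emph{all} $\bcalT\in\RR^{d_1\times\cdots\times d_m}$, not just for a fixed $\bcalT$. A McDiarmid-type bounded-differences inequality applied to the noise coordinates gives concentration around the mean $\EE\sup|Z|$ at a fixed $\bcalT$, but the mean itself could a priori depend on $\bcalT$ through the location parameters $c_\omega$. The resolution is that after contraction the Rademacher bound $2\tilde L\,\EE\sup|\inp{\bcalE}{\Delta\bcalT}|$ is genuinely independent of $\bcalT$ and of the $c_\omega$'s, so $\EE\sup_{\Delta\bcalT}|Z(\Delta\bcalT)|\le 2\tilde L C\sqrt{\textsf{DoF}_m}$ holds with a $\bcalT$-free right-hand side; and for the fluctuation, the bounded-differences constant $2\tilde L$ is likewise $\bcalT$-free. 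Hence the tail bound holds with the same constants simultaneously for every $\bcalT$, and no further union bound over $\bcalT$ is needed. A secondary technical point is that $\MM_{\r}$ is not a linear space, so one cannot directly invoke the standard contraction lemma for linear indexing classes; I would address this exactly as in the proof of Lemma~\ref{teclem:rademacher}, by passing to an $\eps$-net of the (compact) unit ball of $\MM_{\r}$, applying contraction and Hoeffding on the net of cardinality $(C/\eps)^{\textsf{DoF}_m}$, and paying the $\exp(C\,\textsf{DoF}_m)$ factor which is absorbed into the $\sqrt{\textsf{DoF}_m}$ term after taking $\eps=1/2$.
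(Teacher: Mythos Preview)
Your steps (i)--(iii) match the paper's proof exactly: symmetrization, Ledoux--Talagrand contraction to strip the Lipschitz $\rho$, and then Lemma~\ref{teclem:rademacher} to bound the Rademacher complexity by $C\tilde L\sqrt{\textsf{DoF}_m}$. The observation that the contraction step removes all dependence on $\bcalT$ is also correct and is precisely what makes the bound uniform in $\bcalT$ without a union bound.

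The gap is in step~(iv). The argument you wrote is McDiarmid: ``changing one coordinate $\xi_\omega$ changes $\sup_{\Delta\bcalT}|Z(\Delta\bcalT)|$ by at most $2\tilde L$'', and then you claim a sub-Gaussian tail $\exp(-t^2/2)$. But McDiarmid with per-coordinate constants $c_\omega=2\tilde L$ gives variance proxy $\sum_\omega c_\omega^2 = 4\tilde L^2 d^*$, hence only $\exp\bigl(-t^2/(c\,\tilde L^2 d^*)\bigr)$, which is useless here. The point is that you took the supremum over $\Delta\bcalT$ \emph{before} summing: indeed $\sup_{\fro{\Delta\bcalT}=1}|[\Delta\bcalT]_\omega|=1$ for every $\omega$, since $e_\omega\in\MM_{\r}$.

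What the paper does instead is apply the functional Hoeffding inequality for suprema (Theorem~12.1 of \cite{boucheron2013concentration}, quoted as Theorem~\ref{tecthm:Hoeffding empirical}). There the relevant variance is
\[
v=\sup_{(\bcalT,\Delta\bcalT)}\ \sum_{\omega}(b_{\omega}-a_{\omega})^2
=\sup_{\Delta\bcalT}\ \tilde L^2\sum_\omega \frac{[\Delta\bcalT]_\omega^2}{\fro{\Delta\bcalT}^2}=\tilde L^2,
\]
because for each \emph{fixed} $(\bcalT,\Delta\bcalT)$ the $\omega$-th summand $\rho(\cdot+[\Delta\bcalT]_\omega)-\rho(\cdot)$ has range at most $\tilde L\,|[\Delta\bcalT]_\omega|/\fro{\Delta\bcalT}$. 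The order of operations---sum first, then sup over the index set---is what collapses $d^*$ to $1$ and yields the stated tail $\exp(-t^2/2)$ on the deviation $t\tilde L$. A correctly stated Talagrand inequality for suprema (with wimpy variance $\sigma^2=\sup_s\sum_\omega\mathrm{Var}(\cdot)\le\tilde L^2$) would also work, but the coordinate-wise bounded-differences computation you sketched does not.
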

\begin{proof}
	For simplicity, we shall use $T_{i_1\cdots i_m}$ to represent the $(i_1,\dots,i_m)$ entry of tensor $\bcalT$. Denote $Z:=\sup_{\bcalT\in\MM_{\r}}\bigg|f(\bcalT+\Delta\bcalT)-f(\bcalT)-\EE[f(\bcalT+\Delta\bcalT) - f(\bcalT)]\bigg|\cdot \fro{\Delta\bcalT}^{-1}$. First consider $\EE Z$,
	\begin{align*}
		\EE Z&=\EE \sup_{\Delta\bcalT\in\MM_{\r},\bcalT}\bigg|f(\bcalT+\Delta\bcalT)-f(\bcalT)-\EE[f(\bcalT+\Delta\bcalT) - f(\bcalT)]\bigg|\cdot \fro{\Delta\bcalT}^{-1}\\
		&\leq 2\EE\sup_{\Delta\bcalT\in\MM_{\r},\bcalT}\bigg|\sum_{i_1=1}^{d_1}\dots\sum_{i_m=1}^{d_m} \eps_{i_1\dots i_m}(\rho(T_{i_1\cdots i_m}+\Delta T_{i_1\dots i_m}-Y_{i_1\dots i_m})-\rho(T_{i_1\dots i_m}-Y_{i_1\dots i_m}) )\bigg|\cdot \fro{\Delta\bcalT}^{-1}\\
		&\leq 4\tilde{L}\EE\sup_{\Delta\bcalT\in\MM_{\r}}\bigg|\sum_{i_1=1}^{d_1}\dots\sum_{i_m=1}^{d_m}  \eps_{i_1\dots i_m}\Delta T_{i_1\dots i_m}\bigg|\cdot \fro{\Delta\bcalT}^{-1}\\
		&\leq 4 \tilde{L}\EE\sup_{\bcalM\in\MM_{\r},\, \fro{\bcalM}\leq1} \inp{\bcalE}{\bcalM},
	\end{align*}
	where $\bcalE=(\eps_{i_1\dots i_m})$ is the $d_1\times\dots\times d_m$ random tensor with i.i.d. Rademacher entries. The second line is from Theorem~\ref{Symmetrization of Expectation}, the third line is from Theorem~\ref{Contraction Theorem}. Thus by  Lemma~\ref{teclem:rademacher} we finally get the upper bound of $\EE Z$. 
	$$\EE Z\leq C\tilde{L}\sqrt{r_1\cdots r_m+\sum_{j=1}^{m}r_jd_j}.$$
	Note that with Lipschitz continuity of the loss function $$ \bigg| \rho(T_{i_1\dots i_m}+\Delta T_{i_1\dots i_m}-Y_{i_1\cdots i_m})-\rho(T_{i_1\cdots i_m}-Y_{i_1\cdots i_m})\bigg|\cdot \fro{\Delta\bcalT}^{-1}\leq\tilde{L}|\Delta T_{i_1\dots i_m}|\cdot\fro{\Delta\bcalT}^{-1},$$
	and sum of the squared upper bound is $\sum_{i_1=1}^{d_1}\cdots\sum_{i_m=1}^{d_m} |\Delta T_{i_1\dots i_m}|^2\cdot\fro{\Delta\bcalT}^{-2}=1$.
	Then by Theorem~\ref{tecthm:Hoeffding empirical}, we have $$\PP\left(Z\geq t\tilde{L}+C\tilde{L}\sqrt{r_1r_2\dots r_m+\sum_{j=1}^{m} r_jd_j}\right)\leq \exp\left(-t^2/2\right).$$
	
\end{proof}

\begin{theorem}[Symmetrization of Expectations, \citep{van1996weak}]\label{Symmetrization of Expectation} 
	Consider $\mathbf{X}_{1},\mathbf{X}_{2},\cdots,\mathbf{X}_{n}$ independent matrices in $\chi$ and let $\mathcal{F}$ be a class of real-valued functions on $\chi$. Let $\tilde{\varepsilon}_{1},\cdots,\tilde{\varepsilon}_{n}$ be a Rademacher sequence independent of $\mathbf{X}_{1},\mathbf{X}_{2},\cdots,\mathbf{X}_{n}$, then
	\begin{equation}
		\mathbb{E}\big[\sup_{f\in\mathcal{F}}\big{|} \sum_{i=1}^{n} (f(\mathbf{X}_{i}) - \mathbb{E}f(\mathbf{X}_{i}))\big{|}\big]\leq 2\mathbb{E}\big[\sup_{f\in\mathcal{F}} \big{|} \sum_{i=1}^{n} \tilde{\varepsilon}_{i}f(\mathbf{X}_{i})\big{|}\big]
	\end{equation}
\end{theorem}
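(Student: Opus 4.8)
The plan is to prove this by the classical \emph{ghost sample} symmetrization argument. First I would introduce an independent copy $\mathbf{X}_1',\dots,\mathbf{X}_n'$ of $\mathbf{X}_1,\dots,\mathbf{X}_n$, taken independent also of the Rademacher sequence $\tilde\varepsilon_1,\dots,\tilde\varepsilon_n$, and write $\mathbb{E}$, $\mathbb{E}'$, $\mathbb{E}_{\tilde\varepsilon}$ for expectation over the original sample, the ghost sample, and the signs, respectively. The key identity is $\mathbb{E}f(\mathbf{X}_i)=\mathbb{E}'f(\mathbf{X}_i')$, so that, holding $\mathbf{X}_1,\dots,\mathbf{X}_n$ fixed,
\[
\sum_{i=1}^n\big(f(\mathbf{X}_i)-\mathbb{E}f(\mathbf{X}_i)\big)=\mathbb{E}'\Big[\sum_{i=1}^n\big(f(\mathbf{X}_i)-f(\mathbf{X}_i')\big)\Big].
\]

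Next I would take the supremum over $f\in\mathcal{F}$, then the absolute value, then the expectation $\mathbb{E}$, and apply Jensen's inequality — using that $g\mapsto\sup_{f\in\mathcal{F}}|g(f)|$ is convex — to move $\mathbb{E}'$ outside, obtaining
\[
\mathbb{E}\Big[\sup_{f\in\mathcal{F}}\Big|\sum_{i=1}^n\big(f(\mathbf{X}_i)-\mathbb{E}f(\mathbf{X}_i)\big)\Big|\Big]\le\mathbb{E}\,\mathbb{E}'\Big[\sup_{f\in\mathcal{F}}\Big|\sum_{i=1}^n\big(f(\mathbf{X}_i)-f(\mathbf{X}_i')\big)\Big|\Big].
\]
Since $\mathbf{X}_i$ and $\mathbf{X}_i'$ are i.i.d., the array $\big(f(\mathbf{X}_i)-f(\mathbf{X}_i')\big)_{i\le n,\,f\in\mathcal{F}}$ has a law invariant under coordinatewise sign flips in the index $i$, so inserting the independent signs $\tilde\varepsilon_i$ leaves its distribution unchanged; hence the right-hand side equals $\mathbb{E}\,\mathbb{E}'\,\mathbb{E}_{\tilde\varepsilon}\big[\sup_{f}\big|\sum_i\tilde\varepsilon_i(f(\mathbf{X}_i)-f(\mathbf{X}_i'))\big|\big]$. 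A triangle inequality then bounds the quantity inside the expectation by $\sup_f|\sum_i\tilde\varepsilon_i f(\mathbf{X}_i)|+\sup_f|\sum_i\tilde\varepsilon_i f(\mathbf{X}_i')|$, and since the two terms have the same distribution, taking expectations produces the factor $2$ and the asserted inequality.

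The argument is otherwise routine, so the only delicate point is measurability of the suprema over $\mathcal{F}$: when $\mathcal{F}$ is uncountable one should read $\mathbb{E}$ as outer expectation throughout, or — as suffices for every application in this paper, such as the classes of bounded-Frobenius-norm low Tucker-rank tensors entering \reflm{teclem:rademacher} and \reflm{teclem:empirical} — observe that these classes are separable, so the supremum is attained along a fixed countable dense subset and all the interchanges above are legitimate. One also tacitly needs the left-hand side to be finite for the bound to be informative, which holds in our use because the relevant functions $x\mapsto\rho(x-Y)$ are Lipschitz with a uniformly bounded index.
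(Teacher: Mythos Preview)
Your argument is the standard ghost-sample symmetrization proof and is correct. Note, however, that the paper does not supply its own proof of this theorem: it is stated as a cited result from \cite{van1996weak} and used as a black box in the proof of \reflm{teclem:empirical}. The proof you outline is essentially the one appearing in that reference, so there is nothing to compare against here.
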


\begin{theorem}[Contraction Theorem, \citep{ludoux1991probability}]\label{Contraction Theorem}
	
	Consider the non-random elements $x_{1}, \ldots, x_{n}$ of $\chi$. Let $\mathcal{F}$ be a class of real-valued functions on $\chi$. Consider the Lipschitz continuous functions $\rho_{i}: \mathbb{R} \rightarrow \mathbb{R}$ with Lipschitz constant $L$, i.e.
	$$
	\left|\rho_{i}(\mu)-\rho_{i}(\tilde{\mu})\right| \leq L|\mu-\tilde{\mu}|, \text { for all } \mu, \tilde{\mu} \in \mathbb{R}
	$$
	Let $\tilde{\varepsilon}_{1}, \ldots, \tilde{\varepsilon}_{n}$ be a Rademacher sequence $.$ Then for any function $f^{*}: \chi \rightarrow \mathbb{R}$, we have
	
	\begin{equation}
		\mathbb{E}\left[\sup _{f \in \mathcal{F}}\left|\sum_{i=1}^{n} \tilde{\varepsilon}_{i}\left\{\rho_{i}\left(f\left(x_{i}\right)\right)-\rho_{i}\left(f^{*}\left(x_{i}\right)\right)\right\}\right|\right] \leq 2 \mathbb{E}\left[L\sup_{f \in \mathcal{F}} \mid \sum_{i=1}^{n} \tilde{\varepsilon}_{i}\left(f\left(x_{i}\right)-f^{*}\left(x_{i}\right)\right)\mid \right]
	\end{equation}
\end{theorem}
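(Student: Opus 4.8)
The plan is to recognize this as the Ledoux--Talagrand contraction principle and to reduce it, through two elementary reduction steps, to a two-point inequality for Rademacher sums. First I would normalize the problem: setting $g := f - f^*$ and $\phi_i \colon \RR \to \RR$, $\phi_i(u) := L^{-1}\big(\rho_i(u + f^*(x_i)) - \rho_i(f^*(x_i))\big)$, each $\phi_i$ is $1$-Lipschitz, and writing $T := \{(g(x_1),\dots,g(x_n)) : f \in \mathcal{F}\} \subseteq \RR^n$, the claimed bound becomes
\[
\EE \sup_{t \in T} \Big| \sum_{i=1}^n \eps_i \phi_i(t_i) \Big| \ \le\ 2\, \EE \sup_{t \in T} \Big| \sum_{i=1}^n \eps_i t_i \Big| .
\]
Second, I would remove the absolute value: since $\big| \sum_i \eps_i \phi_i(t_i) \big| \le \big( \sum_i \eps_i \phi_i(t_i) \big) \vee \big( \sum_i \eps_i (-\phi_i)(t_i) \big)$ and $-\phi_i$ is again $1$-Lipschitz, it suffices to establish the one-sided statement $\EE \sup_{t \in T} \sum_i \eps_i \phi_i(t_i) \le \EE \sup_{t \in T} \sum_i \eps_i t_i$ for arbitrary $1$-Lipschitz $\phi_i$; applying this twice (once to $\phi_i$, once to $-\phi_i$) and using that $(\eps_i)$ and $(-\eps_i)$ have the same law produces the factor $2$ and restores the absolute value on the right.

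The core step is a two-point lemma: for any $S \subseteq \RR^2$ and any $1$-Lipschitz $\phi \colon \RR \to \RR$,
\[
\EE_\eps \sup_{(s,a) \in S} \big( \eps\, \phi(s) + a \big)\ \le\ \EE_\eps \sup_{(s,a) \in S} \big( \eps\, s + a \big),
\]
where $\eps$ is a single Rademacher sign. To prove it I would expand the left-hand side over $\eps \in \{\pm 1\}$ to obtain $\tfrac12 \sup_{(s_1,a_1),(s_2,a_2) \in S} \big( \phi(s_1) - \phi(s_2) + a_1 + a_2 \big)$, bound $\phi(s_1) - \phi(s_2) \le |s_1 - s_2| = \max(s_1 - s_2,\, s_2 - s_1)$, interchange the maximum with the supremum, and observe that the two resulting suprema coincide after exchanging the labels of the two copies of $S$; what remains is exactly $\EE_\eps \sup_{(s,a) \in S}(\eps\, s + a)$. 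Note that no hypothesis $\phi(0) = 0$ is needed here, which is why the centering by $f^*$ above is harmless.

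Given the two-point lemma, the one-sided inequality follows by peeling the contractions off one coordinate at a time. I would introduce the hybrid functionals in which $\phi_1,\dots,\phi_k$ have been replaced by the identity while $\phi_{k+1},\dots,\phi_n$ are left untouched, and show that the expected supremum is nondecreasing from stage $k-1$ to stage $k$. Conditioning on all signs $\eps_j$ with $j \ne k$ and writing $a(t)$ for the contribution of those coordinates --- which is identical at stages $k-1$ and $k$ --- the passage from stage $k-1$ to stage $k$ is precisely the two-point lemma applied to $S = \{(t_k, a(t)) : t \in T\}$ with $\phi = \phi_k$, and Fubini then integrates out the conditioning. Finally I would undo the normalization, multiplying through by $L$ and substituting $\phi_i(t_i) = L^{-1}\big(\rho_i(f(x_i)) - \rho_i(f^*(x_i))\big)$ and $t_i = f(x_i) - f^*(x_i)$, to recover the stated bound. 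The only genuinely delicate point is the two-point lemma --- keeping the inequality pointing in the right direction while swapping $\sup$ with $\max$ and exploiting the exchange symmetry of the two independent copies of $S$; everything else is routine bookkeeping over the coordinates.
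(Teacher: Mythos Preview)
Your argument is the standard Ledoux--Talagrand contraction proof and is correct. The paper does not prove this theorem at all; it is merely quoted from \cite{ludoux1991probability} and used as a black box inside the proof of Lemma~\ref{teclem:empirical}, so there is no in-paper proof to compare against.

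One small point worth tightening: your passage from the one-sided bound to the two-sided bound with the factor $2$ implicitly uses $\phi_i(0)=0$. Concretely, from $\sup_{t\in T}|Y(t)|\le(\sup_T Y)_+ + (\sup_T(-Y))_+$ you want $\EE(\sup_T Y)_+\le \EE\sup_T|Z|$, and the clean way to get this is to adjoin $0$ to $T$ (so that $\sup_{T\cup\{0\}}Y=(\sup_T Y)_+$) and apply the one-sided inequality there; this step needs $\phi_i(0)=0$. Your normalization already guarantees $\phi_i(0)=0$, so the argument goes through --- just be aware that the remark ``no hypothesis $\phi(0)=0$ is needed'' is true for the two-point lemma in isolation but not for the absolute-value reduction as you have written it.
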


\begin{theorem}[Theorem 12.1 of \cite{boucheron2013concentration}]
	Assume that the sequences of vectors $\left(b_{i, s}\right)_{s \in \mathcal{T}}$ and $\left(a_{i, s}\right)_{s \in \mathcal{T}}$, $i=1, \ldots, n$ are such that $a_{i, s} \leq X_{i, s} \leq b_{i, s}$ holds for all $i=1, \ldots, n$ and $s \in \mathcal{T}$ with probability 1. Denote
	$$
	v=\sup _{s \in \mathcal{T}} \sum_{i=1}^n\left(b_{i, s}-a_{i, s}\right)^2 \quad \text { and } \quad V=\sum_{i=1}^n \sup _{s \in \mathcal{T}}\left(b_{i, s}-a_{i, s}\right)^2 .
	$$
	Then for all $\lambda \in \mathbb{R}$,
	$$
	\log \boldsymbol{E} e^{\lambda(Z-E Z)} \leq \frac{v \lambda^2}{2} \quad \text { and } \quad \log \boldsymbol{E} e^{\lambda(Z-E Z)} \leq \frac{V \lambda^2}{8}.
	$$
	\label{tecthm:Hoeffding empirical}
\end{theorem}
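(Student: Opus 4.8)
\medskip
\noindent\textbf{Proof proposal.} The plan is to prove the two stated bounds on the log-moment generating function of $Z-\mathbb{E}Z$ separately, via the two standard pillars of the concentration-of-suprema toolkit: a bounded-differences (Azuma--Hoeffding) martingale argument for the coarser bound with constant $V$, and the entropy method (exponential Efron--Stein together with Herbst's integration trick) for the sharper bound with constant $v$. Throughout, write $Z=\sup_{s\in\mathcal{T}}\sum_{i=1}^n X_{i,s}$, let $Z_i:=\sup_{s\in\mathcal{T}}\sum_{j\neq i}X_{j,s}$ be the leave-one-coordinate-out version (a function of $(X_j)_{j\neq i}$ only), and set $\psi(\lambda):=\log\mathbb{E}e^{\lambda(Z-\mathbb{E}Z)}$, so that $\psi(0)=0$ and $\psi'(0)=0$.

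First I would dispatch the $V\lambda^2/8$ bound. I would check that, as a function of $(x_1,\dots,x_n)$, the map $Z$ has the bounded-differences property with constant $c_i:=\sup_{s\in\mathcal{T}}(b_{i,s}-a_{i,s})$ in its $i$-th argument: altering $x_i$ to $x_i'$ shifts each sum $\sum_j x_{j,s}$ by at most $b_{i,s}-a_{i,s}\le c_i$, and a supremum of quantities that individually move by at most $c_i$ moves by at most $c_i$. Then I would apply the Doob martingale $M_k=\mathbb{E}[Z\mid X_1,\dots,X_k]$: conditionally on $\mathcal{F}_{k-1}=\sigma(X_1,\dots,X_{k-1})$ the increment $M_k-M_{k-1}$ is supported in an interval of width at most $c_k$, so Hoeffding's lemma gives $\mathbb{E}\big[e^{\lambda(M_k-M_{k-1})}\mid\mathcal{F}_{k-1}\big]\le e^{\lambda^2c_k^2/8}$, and telescoping through the filtration yields $\psi(\lambda)\le\tfrac{\lambda^2}{8}\sum_{k=1}^n c_k^2=\tfrac{\lambda^2 V}{8}$ for every $\lambda\in\mathbb{R}$.

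For the sharper $v\lambda^2/2$ bound I would use the entropy method, whose geometric core is the following. If $\hat s$ attains the supremum defining $Z$, then $0\le Z-Z_i\le b_{i,\hat s}-X_{i,\hat s}\le b_{i,\hat s}-a_{i,\hat s}$, so
\[
\sum_{i=1}^n (Z-Z_i)^2\ \le\ \sum_{i=1}^n (b_{i,\hat s}-a_{i,\hat s})^2\ \le\ \sup_{s\in\mathcal{T}}\sum_{i=1}^n (b_{i,s}-a_{i,s})^2\ =\ v ;
\]
it is exactly this step --- pulling the supremum out through the single maximizing index $\hat s$ --- that buys the constant $v$ in place of $V$. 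Feeding $\sum_i(Z-Z_i)^2\le v$ into the sub-additivity (tensorization) of entropy, and using the elementary inequality $e^{-u}+u-1\le u^2/2$ for $u\ge 0$ with $u=\lambda(Z-Z_i)\ge 0$ when $\lambda>0$, produces the differential inequality $\lambda\psi'(\lambda)-\psi(\lambda)\le\tfrac{v\lambda^2}{2}$. Dividing by $\lambda^2$ turns the left-hand side into $\tfrac{d}{d\lambda}\big(\psi(\lambda)/\lambda\big)$; integrating from $0^+$ (where $\psi(\lambda)/\lambda\to\psi'(0)=0$) gives $\psi(\lambda)\le\tfrac{v\lambda^2}{2}$ for $\lambda>0$, and the case $\lambda<0$ follows by the same entropy argument applied to $-Z=\inf_s\sum_i(-X_{i,s})$ with the correspondingly signed leave-one-out increment.

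I expect the only genuinely delicate step to be the exponential tensorization itself: one needs the sub-additivity of entropy, $\mathrm{Ent}\big(e^{\lambda Z}\big)\le\sum_{i=1}^n\mathbb{E}\big[\mathrm{Ent}_i(e^{\lambda Z})\big]$, together with the pointwise modified log-Sobolev bound $\mathrm{Ent}_i(e^{\lambda Z})\le\mathbb{E}_i\big[e^{\lambda Z}\,\phi(-\lambda(Z-Z_i))\big]$ with $\phi(x)=e^x-x-1$, which in turn comes from the variational formula for entropy applied with the $X_i$-independent comparison variable $Z_i$. Once that machinery is in hand, everything else --- Hoeffding's lemma, the martingale telescoping, the convexity estimate for $\phi$, and Herbst's integration of the differential inequality --- is routine.
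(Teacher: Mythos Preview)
The paper does not prove this theorem; it is quoted verbatim from \cite{boucheron2013concentration} and invoked as a black box in the proof of Lemma~\ref{teclem:empirical}. So there is no ``paper's proof'' to compare against, and your outline is in fact the standard route taken in that reference: bounded differences via a Doob martingale for the $V\lambda^2/8$ bound, and the entropy method (sub-additivity of entropy, a one-sided log-Sobolev estimate, then Herbst) for the $v\lambda^2/2$ bound.

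There is, however, a genuine slip in your leave-one-out construction. With $Z_i:=\sup_{s}\sum_{j\neq i}X_{j,s}$ you assert $0\le Z-Z_i\le b_{i,\hat s}-X_{i,\hat s}$, but neither inequality holds in general: if $\tilde s$ is the maximizer for $Z_i$ then $Z-Z_i\ge X_{i,\tilde s}\ge a_{i,\tilde s}$, which need not be nonnegative; and on the other side one actually gets $Z-Z_i\le X_{i,\hat s}$, not $b_{i,\hat s}-X_{i,\hat s}$. The fix is to replace $Z_i$ by $\tilde Z_i:=\sup_s\big(\sum_{j\neq i}X_{j,s}+a_{i,s}\big)$, which is still $\sigma\big((X_j)_{j\neq i}\big)$-measurable since the $a_{i,s}$ are deterministic. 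Then $0\le Z-\tilde Z_i\le X_{i,\hat s}-a_{i,\hat s}\le b_{i,\hat s}-a_{i,\hat s}$, so $\sum_i(Z-\tilde Z_i)^2\le\sum_i(b_{i,\hat s}-a_{i,\hat s})^2\le v$, and the rest of your entropy-method argument goes through verbatim. For $\lambda<0$ the symmetric choice $\hat Z_i:=\sup_s\big(\sum_{j\neq i}X_{j,s}+b_{i,s}\big)$ gives $0\le \hat Z_i-Z\le b_{i,\hat s}-a_{i,\hat s}$ and the same bound.
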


\subsection{Expectation of Loss Functions}
\begin{lemma}[Pseudo-Huber Loss]
	Suppose the noise assumption~\ref{assu:hub:noise} holds and $f(\cdot)$ is given in Equation~\eqref{eq:loss-pHuber}, then for all $\bcalT,\bcalM$ we have 
	$$\EE f(\bcalT)-\EE f(\bcalM)\leq \frac{1}{2\delta}\left|\fro{\bcalT-\bcalT^*}^2-\fro{\bcalM-\bcalT^*}^2\right|.$$
	Furthermore, if $\linft{\bcalT-\bcalT^*}\leq C_{m,\mu,r^*}(6\gamma+\delta)$ holds, we have $$\EE f(\bcalT)-\EE f(\bcalT^*)\geq\frac{1}{3b_0}\fro{\bcalT-\bcalT^*}^2.$$
	\label{teclem:pseudo:function expectation}
\end{lemma}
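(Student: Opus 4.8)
The plan is to reduce both inequalities to a one-dimensional calculation. Since $\bcalY=\bcalT^*+\bXi$ has i.i.d. entries distributed as $\xi$ and $f$ is a coordinatewise sum, we have $\EE f(\bcalT)=\sum_{\omega}g([\bcalT-\bcalT^*]_{\omega})$ where $g(t):=\EE\,\rho_{H_p,\delta}(t-\xi)=\EE\,((t-\xi)^2+\delta^2)^{1/2}$; the elementary bound $|\rho_{H_p,\delta}(t-\xi)-\rho_{H_p,\delta}(-\xi)|\le|t|$ shows $g$ is finite and that one may differentiate under the expectation, and the same reduction applies to $\bcalM$. So it suffices to prove scalar estimates for $g$ and sum over $\omega\in[d_1]\times\cdots\times[d_m]$. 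First I would record the relevant properties of $g$: zero-symmetry of $h_\xi$ gives $g(-t)=g(t)$; $g'(t)=\EE\,\dot\rho_{H_p,\delta}(t-\xi)$, and since $\dot\rho_{H_p,\delta}$ is odd while $\xi$ is symmetric, $g'(0)=0$; and $g''(t)=\EE\,\ddot\rho_{H_p,\delta}(t-\xi)$ with $0<\ddot\rho_{H_p,\delta}\le\delta^{-1}$ by Lemma~\ref{teclem:hubfunction}, hence $0<g''\le\delta^{-1}$ everywhere. Thus $g$ is convex with global minimum at $0$, and Taylor's theorem with integral remainder yields $0\le g(t)-g(0)\le t^2/(2\delta)$ for all $t$. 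Summing this entrywise, both $\EE f(\bcalT)-\EE f(\bcalT^*)$ and $\EE f(\bcalM)-\EE f(\bcalT^*)$ lie in $[0,(2\delta)^{-1}\fro{\cdot-\bcalT^*}^2]$, and subtracting the two gives the first claimed bound.

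For the lower bound it suffices to show $g(t)-g(0)\ge t^2/(3b_0)$ for every $|t|\le C_{m,\muT,r^*}(6\gamma+\delta)=:R$, since then each $|[\bcalT-\bcalT^*]_\omega|\le\linft{\bcalT-\bcalT^*}\le R$ lies in the range where Assumption~\ref{assu:hub:noise} forces $h_\xi\ge b_0^{-1}$, and summing over $\omega$ finishes. By evenness take $t\in(0,R]$. Using $g'(0)=0$ and Fubini,
$$g'(s)=\EE\big[\dot\rho_{H_p,\delta}(s-\xi)-\dot\rho_{H_p,\delta}(-\xi)\big]=\int_{-\infty}^{\infty}\ddot\rho_{H_p,\delta}(y)\,\big(H_\xi(s-y)-H_\xi(-y)\big)\,dy$$
for $s>0$. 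Whenever $y\in[s-R,R]$ the interval $[-y,s-y]$ is contained in $[-R,R]$, so $H_\xi(s-y)-H_\xi(-y)=\int_{-y}^{s-y}h_\xi\ge s/b_0$; outside that range the integrand is still nonnegative. Hence $g'(s)\ge\frac{s}{b_0}\int_{s-R}^{R}\ddot\rho_{H_p,\delta}(y)\,dy=\frac{s}{b_0}\big(\dot\rho_{H_p,\delta}(R)-\dot\rho_{H_p,\delta}(s-R)\big)\ge\frac{s}{b_0}\,\dot\rho_{H_p,\delta}(R)\ge\frac{s}{\sqrt2\,b_0}$, where the last steps use $\dot\rho_{H_p,\delta}(s-R)\le0$ and $\dot\rho_{H_p,\delta}(R)=R/(R^2+\delta^2)^{1/2}\ge1/\sqrt2$ (valid since $\delta\le R$). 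Integrating in $s$ from $0$ to $t$ gives $g(t)-g(0)\ge t^2/(2\sqrt2\,b_0)\ge t^2/(3b_0)$, as required.

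The main obstacle is the lower bound. The naive approach of lower-bounding $g''$ pointwise on $[-R,R]$ fails: $\ddot\rho_{H_p,\delta}(x)$ decays like $|x|^{-3}$ and $R$ is much larger than $\delta$, so $g''$ can be as small as $\sim\delta^2/(b_0R^2)$, far too small to recover $t^2/b_0$ curvature. The Fubini rewriting of $g'(s)$ is exactly what fixes this: it replaces a pointwise lower bound on $\ddot\rho_{H_p,\delta}$ by an integral of $\ddot\rho_{H_p,\delta}$ over an essentially unbounded window (total mass $2$) against a factor that is $\ge s/b_0$, and the $\ddot\rho_{H_p,\delta}$-mass of the relevant window $[s-R,R]$ is bounded below by the absolute constant $\dot\rho_{H_p,\delta}(R)\ge1/\sqrt2$. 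A secondary, purely bookkeeping point is tracking constants carefully enough that the exponent-free factor ends up at least $1/3$, and ensuring that the incoherence level enters only through the radius $R=C_{m,\muT,r^*}(6\gamma+\delta)$ on which Assumption~\ref{assu:hub:noise} supplies the density lower bound.
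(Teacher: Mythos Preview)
Your lower-bound argument via the Fubini identity $g'(s)=\int\ddot\rho_{H_p,\delta}(y)\big(H_\xi(s-y)-H_\xi(-y)\big)\,dy$ is correct and delivers the constant $1/(3b_0)$. It is, however, not needed to rescue a failing ``naive'' approach: your claim that lower-bounding $g''$ pointwise fails conflates $g''(t)=\int\ddot\rho_{H_p,\delta}(t-s)h_\xi(s)\,ds$ with the point value $\ddot\rho_{H_p,\delta}(t)$. Since $h_\xi\ge b_0^{-1}$ on $[-R,R]$, restricting the $s$-integral to $s\in[t-\delta,t+\delta]$ (where $\ddot\rho_{H_p,\delta}(t-s)\ge(2\sqrt2\,\delta)^{-1}$) already gives $g''(t)\ge(\sqrt2\,b_0)^{-1}>2/(3b_0)$, and then Taylor yields $g(t_0)-g(0)\ge t_0^2/(3b_0)$. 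This is exactly the paper's route; both methods are valid, and yours is a legitimate but not a necessary alternative.

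The genuine gap is in your upper bound. From $0\le g(t)-g(0)\le t^2/(2\delta)$ you correctly obtain $\EE f(\bcalT)-\EE f(\bcalT^*)\in[0,(2\delta)^{-1}\fro{\bcalT-\bcalT^*}^2]$ and likewise for $\bcalM$, but ``subtracting the two'' does \emph{not} give the claim: $a\in[0,A]$ and $b\in[0,B]$ do not imply $a-b\le|A-B|$ (take $a=A=B$, $b=0$). What works is the coordinatewise inequality $g(t_2)-g(t_1)\le\frac{1}{2\delta}|t_2^2-t_1^2|$, which follows directly from your own ingredients ($g'(0)=0$ and $0<g''\le\delta^{-1}$, hence $|g'(t)|\le|t|/\delta$) and is the bound the paper proves. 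Summing over entries gives $(2\delta)^{-1}\sum_\omega|t_\omega^2-m_\omega^2|$, which via Cauchy--Schwarz on $\sum_\omega|t_\omega-m_\omega|\,|t_\omega+m_\omega|$ yields precisely the estimate $(2\delta)^{-1}\fro{\bcalS-\bcalT}^2+\delta^{-1}\fro{\bcalS-\bcalT}\fro{\bcalT-\bcalT^*}$ used when the lemma is invoked in the proof of Lemma~\ref{lem:hub:regularity}.
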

\begin{proof}
	Define $g(t):=\EE\sqrt{(t-\xi)^2+\delta^2}=\int_{-\infty}^{+\infty} \sqrt{(t-s)^2+\delta^2}\, dH_{\xi}(s)$. Note that \begin{align*}
		g'(t)=\int_{-\infty}^{+\infty} \frac{t-s}{\sqrt{(t-s)^2+\delta^2}}\, dH_{\xi}(s),\quad
		g''(t)=\int_{-\infty}^{+\infty} \frac{\delta^2}{\left((t-s)^2+\delta^2\right)^{3/2}}\, dH_{\xi}(s).
	\end{align*}
	According to density $h_{\xi}(\cdot)$ condition, we have $g'(0)=\int_{-\infty}^{+\infty} \frac{-s}{\sqrt{s^2+\delta^2}}\, dH_{\xi}(s)=0$. Then for arbitrary $t_1,t_2\in\RR$, we have
	\begin{align*}
		g(t_2)-g(t_1)&=\int_{t_1}^{t_2} \int_{-\infty}^{+\infty} \frac{t-s}{\sqrt{(t-s)^2+\delta^2}}\, dH_{\xi}(s)\, dt\\
		&=\int_{t_1}^{t_2} \int_{-\infty}^{+\infty} \frac{t}{\sqrt{(t-s)^2+\delta^2}}\, dH_{\xi}(s)\, dt\\
		&\leq \frac{1}{2\delta}\left|t_2^2-t_1^2\right|,
	\end{align*}
	where $\sqrt{(t-s)^2+\delta^2}\geq\delta$ is used. Besides, we have the Taylor expansion at $0$ using the second order derivative $g''(t)$,
	\begin{align*}
		g(t_0)-g(0)=\int_{0}^{t_0}\int_{-\infty}^{+\infty} \frac{t\delta^2}{\left((t-s)^2+\delta^2\right)^{3/2}}\, dH_{\xi}(s)\, dt=\int_{-\infty}^{+\infty}\int_{0}^{t_0} \frac{t\delta^2}{\left((t-s)^2+\delta^2\right)^{3/2}}\cdot h_{\xi}(s)\, dt\, ds
	\end{align*}
	When $|t_0|\leq C_{m,\mu,r^*}(6\gamma+\delta)$, with density lower bound in Assumption~\ref{assu:hub:noise}, we have
	\begin{align*}
		g(t_0)-g(0)&= \int_{0}^{t_0}\int_{-\infty}^{+\infty} \frac{t\delta^2}{\left((t-s)^2+\delta^2\right)^{3/2}}h_{\xi}(s)\, ds\, dt\\
		&\geq \int_{0}^{t_0}\int_{t-\delta}^{t+\delta} \frac{t\delta^2}{\left((t-s)^2+\delta^2\right)^{3/2}}h_{\xi}(s)\, ds\, dt \\
		&\geq  \frac{1}{3\delta \cdot b_0}\cdot \int_{0}^{t_0}\int_{t-\delta}^{t+\delta} t\, ds\, dt\\
		&\geq \frac{t_0^2}{3b_0},
	\end{align*}
	where the third line is because when $|s-t|\leq\delta$, $\frac{\delta^2}{\left((t-s)^2+\delta^2\right)^{3/2}}\geq \frac{1}{3\delta}$. Thus altogether, we get $$ \EE\sqrt{(t-\xi)^2+\delta^2}-\EE\sqrt{\xi^2+\delta^2}\geq\frac{t_0^2}{3b_0}, \quad  \EE\sqrt{(t_2-\xi)^2+\delta^2}-\EE\sqrt{(t_1-\xi)^2+\delta^2}\leq\frac{1}{2\delta}\left|t_2^2-t_1^2\right|,$$ Then come back to $\EE f(\bcalT)-\EE f(\bcalT^*)$,
	\begin{align*}
		\EE f(\bcalT)-\EE f(\bcalT^*)
		=\sum_{i_1=1}^{d_1}\cdots\sum_{i_m=1}^{d_m}\EE\left[ \sqrt{\left( [\bcalT]_{i_1\cdots i_m}-[\bcalT^*]_{i_1\cdots i_m}-[\bXi]_{i_1\cdots i_m}\right)^2+\delta^2} -\sqrt{\left([\bXi]_{i_1\cdots i_m} \right)^2+\delta^2}\right].
	\end{align*}
	Thus when $\linft{\bcalT-\bcalT^*}\leq C_{m,\mu,r^*}(6\gamma+\delta)$, we have $$\EE f(\bcalT)-\EE f(\bcalT^*)\geq\frac{1}{3b_0}\fro{\bcalT-\bcalT^*}^2.$$
	Similarly, we have
	$$\EE f(\bcalT)-\EE f(\bcalM)\leq\frac{1}{2\delta}\cdot\left|\fro{\bcalT-\bcalT^*}^2-\fro{\bcalM-\bcalT^*}^2\right|.$$
\end{proof}

\begin{lemma}[Absolute Loss]
	Suppose Assumption~\ref{assu:abs:noise} holds, then for all $\bcalT\in\RR^{d_1\times\cdots\times d_m}$ it has 
	$$\EE \lone{\bcalT-\bcalT^*-\bXi}-\EE \lone{\bXi}\leq\frac{1}{b_1}\fro{\bcalT-\bcalT^*}^2.$$
	Furthermore, if $\linft{\bcalT-\bcalT^*}\leq C_{m,\muT,r^*,\kappa}\gamma$, it has $$\EE \lone{\bcalT-\bcalT^*-\bXi}-\EE \lone{\bXi}\geq\frac{1}{b_0}\fro{\bcalT-\bcalT^*}^2.$$
	\label{teclem:abs:function expectation}
\end{lemma}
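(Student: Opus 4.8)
The plan is to reduce the tensor-level statement to an entrywise one-dimensional computation, exactly as was done for the pseudo-Huber loss in Lemma~\ref{teclem:pseudo:function expectation}. Since $\bXi$ has i.i.d. entries and the absolute loss is separable, write
\[
\EE\lone{\bcalT-\bcalT^*-\bXi}-\EE\lone{\bXi}=\sum_{\omega\in[d_1]\times\cdots\times[d_m]} \big(\phi([\bcalT-\bcalT^*]_\omega)-\phi(0)\big),
\]
where $\phi(t):=\EE|t-\xi|$. So it suffices to prove the scalar bounds $\phi(t)-\phi(0)\le t^2/b_1$ for all $t\in\RR$, and $\phi(t)-\phi(0)\ge t^2/b_0$ whenever $|t|\le C_{m,\muT,r^*,\kappa}\gamma$, and then sum over $\omega$; the lower-bound side additionally uses $\linft{\bcalT-\bcalT^*}\le C_{m,\muT,r^*,\kappa}\gamma$ so that every summand falls in the regime where the lower bound applies.

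The key is to differentiate $\phi$. Using $H_\xi(0)=\tfrac12$ and standard manipulations, $\phi'(t)=2H_\xi(t)-1=2\big(H_\xi(t)-H_\xi(0)\big)$, so $\phi'(0)=0$ and $\phi''(t)=2h_\xi(t)$. Then for any $t_0$,
\[
\phi(t_0)-\phi(0)=\int_0^{t_0}\phi'(t)\,dt=\int_0^{t_0}\big(2H_\xi(t)-2H_\xi(0)\big)\,dt=\int_0^{t_0}\int_0^{t}2h_\xi(s)\,ds\,dt.
\]
For the upper bound, the density upper bound $h_\xi(s)\le b_1^{-1}$ (valid for all $s\in\RR$ by Assumption~\ref{assu:abs:noise}) gives $\phi(t_0)-\phi(0)\le \int_0^{t_0}\int_0^t 2b_1^{-1}\,ds\,dt=t_0^2/b_1$, and this holds for every $t_0\in\RR$. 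For the lower bound, when $|t_0|\le C_{m,\muT,r^*,\kappa}\gamma$, every $s$ in the inner integral satisfies $|s|\le|t_0|\le C_{m,\muT,r^*,\kappa}\gamma$, so the density lower bound $h_\xi(s)\ge b_0^{-1}$ applies throughout, yielding $\phi(t_0)-\phi(0)\ge \int_0^{t_0}\int_0^t 2b_0^{-1}\,ds\,dt=t_0^2/b_0$. Summing the per-entry bounds over $\omega$ produces the claimed $\fro{\bcalT-\bcalT^*}^2/b_1$ and $\fro{\bcalT-\bcalT^*}^2/b_0$, using that $\fro{\bcalT-\bcalT^*}^2=\sum_\omega [\bcalT-\bcalT^*]_\omega^2$.

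I do not expect a genuine obstacle here; the argument is a routine convexity/density computation, and the only points requiring care are (i) justifying $\phi'(t)=2H_\xi(t)-1$ rigorously (differentiation under the expectation, valid since $\EE|\xi|<\infty$ follows from the finite $2+\eps$ moment) and the fact that $\phi$ is $C^1$ with a.e.\ second derivative $2h_\xi$, which is enough to write the iterated integral; and (ii) keeping track of signs so that the inner region of integration $\{s:\ |s|\le|t|\le|t_0|\}$ really lies inside the interval $|s|\le C_{m,\muT,r^*,\kappa}\gamma$ where the lower density bound is assumed, which is exactly where the hypothesis $\linft{\bcalT-\bcalT^*}\le C_{m,\muT,r^*,\kappa}\gamma$ is consumed. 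If one prefers to avoid any smoothness discussion of $\phi$, the same bounds follow directly from the identity $|t-\xi|-|\xi|=-t\,\mathrm{sign}(\xi)+2\int_0^{t}\big(\mathbbm1(\xi\le u)-\mathbbm1(\xi\le 0)\big)\,du$ (valid pointwise) and taking expectations, after which the density bounds are applied to $\EE[\mathbbm1(\xi\le u)-\mathbbm1(\xi\le 0)]=H_\xi(u)-H_\xi(0)$; I would present it this way to sidestep any measure-theoretic fuss.
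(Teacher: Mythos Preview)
Your proposal is correct and follows essentially the same route as the paper: both reduce to the scalar function $\phi(t)=\EE|t-\xi|$, derive the identity $\phi(t_0)-\phi(0)=2\int_0^{t_0}\int_0^{s}h_\xi(x)\,dx\,ds$ using $H_\xi(0)=\tfrac12$, then apply the density bounds $h_\xi\le b_1^{-1}$ globally and $h_\xi\ge b_0^{-1}$ on $|x|\le C_{m,\muT,r^*,\kappa}\gamma$, and finally sum entrywise. Your alternative pointwise identity for $|t-\xi|-|\xi|$ is a harmless variant of the same computation.
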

\begin{proof}
	Suppose $\xi$ satisfies distributions in Assumption~\ref{assu:abs:noise}. Then we have \begin{align*}
		\EE|t_0-\xi|=2\int_{s>t_0}(1-H_{\xi}(s))\, ds+t_0-\int_{-\infty}^{+\infty} s\, dH_{\xi}(s),
	\end{align*}
	which has more detailed calculationss in \cite{shen2023computationally,elsener2018robust}. When $t_0=0$, it becomes $\EE|\xi|=2\int_{s>0}(1-H_{\xi}(s))\, ds-\int_{-\infty}^{+\infty} s\, dH_{\xi}(s)$. Thus, with $H_{\xi}(0)=1/2$, we have $$\EE|t_0-\xi|-\EE|\xi|=2\int_{0}^{t_0}H_{\xi}(s)\, ds-t_0=2\int_{0}^{t_0}\int_{0}^{s}h_{\xi}(x)\, dx\, ds.$$
	Then by Assumption~\ref{assu:abs:noise}, we have $\EE|t_0-\xi|-\EE|\xi|\leq \frac{1}{b_1}t_0^2$. Then come back to $\EE \lone{\bcalT-\bcalT^*-\bXi}-\EE \lone{\bXi}$,
	\begin{align*}
	\EE\lone{\bcalT-\bcalT^*-\bXi}-\EE\lone{\bXi}
		&=\sum_{i_1=1}^{d_1}\cdots\sum_{i_m=1}^{d_m}\EE\left[ \bigg| [\bcalT]_{i_1\cdots i_m}-[\bcalT^*]_{i_1\cdots i_m}-[\bXi]_{i_1\cdots i_m}\bigg|-\bigg|[\bXi]_{i_1\cdots i_m}\bigg|\right]\\
		&\leq \sum_{i_1=1}^{d_1}\cdots\sum_{i_m=1}^{d_m}\frac{1}{b_1}\left([\bcalT]_{i_1\cdots i_m}-[\bcalT^*]_{i_1\cdots i_m} \right)^2\\
		&= b_1^{-1}\fro{\bcalT-\bcalT^*}^2.
	\end{align*}
	On the other hand, $h_{\xi}(x)\geq b_0^{-1}$ when $|x|\leq C_{m,\muT,r^*,\kappa}\gamma$. Thus, when $|t_0|\leq C_{m,\muT,r^*,\kappa}\gamma$, it has
	$$\EE|t_0-\xi|-\EE|\xi|=2\int_{0}^{t_0}\int_{0}^{s}h_{\xi}(x)\, dx\, ds\geq\frac{1}{b_0}t_0^2.$$
	Thus, when $\linft{\bcalT-\bcalT^*}\leq C_{m,\muT,r^*,\kappa}\gamma$, we have 
	\begin{align*}
	\EE\lone{\bcalT-\bcalT^*-\bXi}-\EE\lone{\bXi}&=\sum_{i_1=1}^{d_1}\cdots\sum_{i_m=1}^{d_m}\EE\left[ \bigg| [\bcalT]_{i_1\cdots i_m}-[\bcalT^*]_{i_1\cdots i_m}-[\bXi]_{i_1\cdots i_m}\bigg|-\bigg|[\bXi]_{i_1\cdots i_m}\bigg|\right]\\
		&\geq b_0^{-1}\fro{\bcalT-\bcalT^*}^2.
	\end{align*}
\end{proof}

\subsection{Perturbation Type Bound}
\begin{lemma}(Matrix Perturbation \cite{shen2022computationally})
	Suppose matrix $\M^{*} \in\mathbb{R}^{d_1\times d_2}$ has rank $r$ and has singular value decomposition  $\M^{*}=\mathbf{U}\boldsymbol{\Sigma}\mathbf{V}^{\top}$ where $\boldsymbol{\Sigma}=\operatorname{diag}\{\sigma_{1},\sigma_{2},\cdots,\sigma_{r}\}$ and $\sigma_{1}\geq\sigma_{2}\geq\cdots\geq\sigma_{r}> 0$. Then for any $\hat{\M}\in\mathbb{R}^{d\times d}$ satisfying $\Vert\hat{\M}-\M\Vert_{\mathrm{F}}<\sigma_{r}/4$, with $\hat{\mathbf{U}}_{r}\in\mathbb{R}^{d_1\times r}$ and $\hat{\mathbf{V}}_{r}\in\RR^{d_2\times r}$ the left and right singular vectors of $r$ largest singular values, we have
	\begin{align*}
		\Vert \hat{\mathbf{U}}_{r}\hat{\mathbf{U}}_{r}^{\top} -\mathbf{U}\mathbf{U}^{\top}\Vert&\leq \frac{4}{\sigma_{r}}\Vert \hat{\mathbf{M}}-\mathbf{M}\Vert,\quad \Vert \hat{\mathbf{V}}_{r}\hat{\mathbf{V}}_{r}^{\top} -\mathbf{V}\mathbf{V}^{\top}\Vert\leq \frac{4}{\sigma_{r}}\Vert \hat{\mathbf{M}}-\mathbf{M}\Vert,\\
		\Vert\operatorname{SVD}_{r}(\hat{\M})-\M^{*}\Vert&\leq\Vert\mathbf{\hat{\M}-\M^{*}}\Vert+20\frac{\Vert \hat{\M}-\M^{*}\Vert^2}{\sigma_{r}},\\
		\Vert \operatorname{SVD}_{r}(\hat{\M})-\M^{*}\Vert_{\mathrm{F}}&\leq\Vert\mathbf{\hat{\M}-\M^{*}}\Vert_{\mathrm{F}}+20\frac{\Vert \hat{\M}-\M^{*}\Vert\Vert \hat{\M}-\M^{*}\Vert_{\mathrm{F}}}{\sigma_{r}}.
	\end{align*}
	\label{teclem:perturbation:matrix}
\end{lemma}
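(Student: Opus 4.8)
The plan is to deduce all three estimates from Weyl's inequality and Wedin's $\sin\Theta$ theorem, exploiting crucially that $\M^*$ has \emph{exact} rank $r$, so $\sigma_{r+1}(\M^*)=0$. Write $\E:=\hat\M-\M^*$, let $\P^*:=\U\U^\top$, $\Q^*:=\V\V^\top$, and let $\hat\P:=\hat\U_r\hat\U_r^\top$, $\hat\Q:=\hat\V_r\hat\V_r^\top$ be the top-$r$ left/right spectral projectors of $\hat\M$. Weyl's inequality gives $\sigma_{r+1}(\hat\M)\le\op{\E}$ and $\sigma_r(\hat\M)\ge\sigma_r-\op{\E}$, so the hypothesis $\op{\E}\le\fro{\E}<\sigma_r/4$ produces a spectral gap: both $\sigma_r(\hat\M)-\sigma_{r+1}(\M^*)$ and $\sigma_r(\M^*)-\sigma_{r+1}(\hat\M)$ exceed $3\sigma_r/4$. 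Then Wedin's theorem (whose numerator is at most $\op{\E}$) yields $\op{\hat\P-\P^*}=\op{\sin\Theta(\hat\U_r,\U)}\le\op{\E}/(3\sigma_r/4)\le 4\op{\E}/\sigma_r$, and the same argument applied to $\hat\M^\top$ gives $\op{\hat\Q-\Q^*}\le 4\op{\E}/\sigma_r$. This is exactly the first display of the lemma, so Part~(1) is done.

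For Parts~(2)--(3) I would start from the exact identities $\operatorname{SVD}_r(\hat\M)=\hat\P\hat\M=\hat\M\hat\Q=\hat\P\hat\M\hat\Q$, which give $\operatorname{SVD}_r(\hat\M)-\M^*=\E-(\I-\hat\P)\hat\M(\I-\hat\Q)$. Substituting $\hat\M=\M^*+\E$ and using $(\I-\P^*)\M^*=0=\M^*(\I-\Q^*)$ (so that $(\I-\hat\P)\M^*(\I-\hat\Q)=(\P^*-\hat\P)\M^*(\Q^*-\hat\Q)$), I obtain the first-/second-order decomposition
\begin{align*}
\operatorname{SVD}_r(\hat\M)-\M^*=(\I-\P^*)\E(\I-\Q^*)+\bcalR,\qquad \bcalR=(\I-\P^*)\E(\hat\Q-\Q^*)+(\hat\P-\P^*)\E(\I-\hat\Q)-(\P^*-\hat\P)\M^*(\Q^*-\hat\Q).
\end{align*}
The leading term satisfies $\op{(\I-\P^*)\E(\I-\Q^*)}\le\op{\E}$ and $\fro{(\I-\P^*)\E(\I-\Q^*)}\le\fro{\E}$ since $\I-\P^*$, $\I-\Q^*$ are orthogonal projectors, which supplies the ``$\op{\hat\M-\M^*}$'' (resp.\ $\fro{\cdot}$) summand with constant $1$. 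The first two terms of $\bcalR$ are controlled by Part~(1): in operator norm each is at most $4\op{\E}^2/\sigma_r$, and in Frobenius norm (putting the operator norm on the $\sin\Theta$ factor, via $\fro{AB}\le\op{A}\fro{B}$) each is at most $4\op{\E}\fro{\E}/\sigma_r$.

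The third term of $\bcalR$ needs the extra input. I would bound $\op{(\I-\hat\P)\M^*}\le\op{(\I-\hat\P)\hat\M}+\op{\E}=\sigma_{r+1}(\hat\M)+\op{\E}\le 2\op{\E}$ (and, analogously, $\fro{(\I-\hat\P)\M^*}\le\fro{\hat\M-\operatorname{SVD}_r(\hat\M)}+\fro{\E}\le 2\fro{\E}$, using that $\operatorname{SVD}_r(\hat\M)$ is the best rank-$r$ Frobenius approximant of $\hat\M$), and then pair this with $\op{\Q^*-\hat\Q}\le 4\op{\E}/\sigma_r$ from Part~(1) to get $\op{(\P^*-\hat\P)\M^*(\Q^*-\hat\Q)}\le 8\op{\E}^2/\sigma_r$ and $\fro{(\P^*-\hat\P)\M^*(\Q^*-\hat\Q)}\le 8\op{\E}\fro{\E}/\sigma_r$. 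Summing the three contributions ($4+4+8=16\le 20$) gives $\op{\bcalR}\le 20\op{\E}^2/\sigma_r$ and $\fro{\bcalR}\le 20\op{\E}\fro{\E}/\sigma_r$, and a triangle inequality on the decomposition yields the last two displays.

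The main obstacle is exactly the third term $(\I-\hat\P)\M^*(\I-\hat\Q)$. The obvious bound $\op{\M^*}\op{\hat\P-\P^*}\op{\hat\Q-\Q^*}$ carries a factor $\sigma_1/\sigma_r^2$ and therefore does \emph{not} deliver the claimed $\op{\E}^2/\sigma_r$; one must peel $\M^*$ off against only one of the two subspace differences and bound the remaining factor $(\I-\hat\P)\M^*$ directly by $O(\op{\E})$ through Weyl (this is the second place where exact rank $r$ is used). The other point requiring care is organizing the algebra so that the two order-$\op{\E}$ pieces coming out of $(\I-\hat\P)\hat\M(\I-\hat\Q)$ recombine exactly into the projected perturbation $(\I-\P^*)\E(\I-\Q^*)$, keeping the constant in front of $\op{\E}$ equal to $1$ rather than $2$; for the Frobenius statement one must additionally place the operator/Frobenius split on the correct factor of each product so that precisely one $\fro{\E}$ survives in every remainder term.
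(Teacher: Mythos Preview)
The paper does not prove this lemma; it is simply cited from \cite{shen2022computationally} and stated without proof. So there is nothing in the paper to compare your argument against. Your overall strategy---Weyl plus Wedin for the projector bounds, then a first-/second-order algebraic decomposition of $\operatorname{SVD}_r(\hat\M)-\M^*$ with the cross term $(\I-\hat\P)\M^*(\I-\hat\Q)$ handled via $\|(\I-\hat\P)\M^*\|\le 2\|\E\|$---is exactly the standard route and is sound.

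There is, however, one genuine algebraic slip. From your own identity $\operatorname{SVD}_r(\hat\M)-\M^*=\E-(\I-\hat\P)\E(\I-\hat\Q)-(\P^*-\hat\P)\M^*(\Q^*-\hat\Q)$ and the expansion $(\I-\hat\P)=(\I-\P^*)+(\P^*-\hat\P)$, $(\I-\hat\Q)=(\I-\Q^*)+(\Q^*-\hat\Q)$, the correct leading term is
\[
\E-(\I-\P^*)\E(\I-\Q^*)=\P^*\E+(\I-\P^*)\E\Q^*=\calP_{\TT^*}(\E),
\]
i.e.\ the \emph{tangent-space} projection of $\E$, not its orthogonal complement $(\I-\P^*)\E(\I-\Q^*)$ as you wrote. (Your remainder $\bcalR$ is correct; only the first-order piece got flipped.) With the corrected leading term, the Frobenius inequality follows immediately with constant $1$ since $\calP_{\TT^*}$ is an orthogonal projection in the Frobenius inner product. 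For the operator-norm inequality the naive bound on $\|\calP_{\TT^*}(\E)\|$ gives only $\sqrt{2}\,\|\E\|$, so to recover constant $1$ in front of $\|\E\|$ you need a slightly different organization (e.g.\ start from $\operatorname{SVD}_r(\hat\M)-\M^*=\hat\P\E-(\I-\hat\P)\M^*$ and bound $\|(\I-\hat\P)\M^*\|$ by the second-order quantity directly, or carry the analysis one order further before bounding). The rest of your plan---in particular the treatment of $(\P^*-\hat\P)\M^*(\Q^*-\hat\Q)$ via $\|(\I-\hat\P)\M^*\|\le 2\|\E\|$ rather than the naive $\sigma_1/\sigma_r^2$ bound---is correct and is precisely the step that makes the constant $20$ achievable.
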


\begin{lemma}
	Suppose $\M^*\in\RR^{d\times d}$ is a symmetric rank $r$ matrix, with singular value decomposition  $\M^*=\U^*\bSigma^*\U^{*\top}$, where $\bSigma^*=\textrm{diag}(\sigma_{1}^*,\cdots,\sigma_{r}^*)$, $\sigma_{1}^*\geq\cdots\geq \sigma_{r}^*>0$. Then for any symmetric matrix satisfying $\fro{\M-\M^*}\leq\sigma_{r}^*/4$ with rank $r$ singular vector decomposition $\textrm{SVD}_{r}(\M)=\U\bSigma\U^{\top}$. Denote $\H=\U^{\top}\U^*\in\RR^{r\times r}$. We have \begin{align*}
		\ltinf{\left(\U\H-\U^*\right)\bSigma^*}\leq\ltinf{\U_{\perp}^*\U_{\perp}^*\Z\U^*}+64\ltinf{\U^*}\frac{\fro{\Z}^2}{\sigma_{r}^*}+16\ltinf{\U_{\perp}^*\U_{\perp}^*\Z\U^*}\frac{\fro{\Z}}{\sigma_{r}^*}.
	\end{align*}
	\label{teclem:l2infpertb:symmetric}
\end{lemma}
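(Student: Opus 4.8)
The statement is an $\ell_{2,\infty}$ eigenvector‑perturbation bound for the symmetric PSD matrix $\M^*=\U^*\bSigma^*\U^{*\top}$ under the additive perturbation $\Z:=\M-\M^*$, and the plan is to prove it by the standard ``leading term plus higher‑order remainder'' decomposition, feeding in the operator‑norm bounds of Lemma~\ref{teclem:perturbation:matrix}. Write $\P:=\U\U^{\top}$, $\P^*:=\U^*\U^{*\top}$, $\H:=\U^{\top}\U^*$ and $\G:=\I-\H^{\top}\H=\U^{*\top}(\I-\P)\U^*\succeq 0$. From Lemma~\ref{teclem:perturbation:matrix}, $\op{\P-\P^*}\le 4\op{\Z}/\sigma_r^*\le 4\fro{\Z}/\sigma_r^*\le 1$; hence $\op{\G}\le\op{\P-\P^*}^2$ is small, $\H$ is invertible with $\op{\H^{-1}}\le\sqrt2$, and Weyl's inequality gives $\op{(\I-\P)\M(\I-\P)}=\sigma_{r+1}(\M)\le\op{\Z}$ and $\sigma_{\min}(\bSigma)\ge \tfrac34\sigma_r^*$.

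The engine is the exact identity, obtained from $\M^*\U^*=\U^*\bSigma^*$, $\M\U=\U\bSigma$ and $(\I-\P)\M\P=0$,
\[
(\U\H-\U^*)\bSigma^*\;=\;-(\I-\P)\M(\I-\P)\U^*+(\I-\P)\Z\U^*.
\]
In the second term I split $(\I-\P)\Z\U^*=\P^{*\perp}\Z\U^*+(\P^*-\P)\Z\U^*$: the first piece is exactly the leading term $\U_\perp^*\U_\perp^{*\top}\Z\U^*$, and for the second I write $\P^*-\P=\P^*(\P^*-\P)-\P^{*\perp}\P$ and bound each contribution through $\ltinf{\A\B}\le\ltinf{\A}\op{\B}$; the $\P^*(\cdot)$ piece is $\U^*\big(\U^{*\top}(\P^*-\P)\Z\U^*\big)$, of size $\lesssim\ltinf{\U^*}\op{\P-\P^*}\op{\Z}\lesssim\ltinf{\U^*}\fro{\Z}^2/\sigma_r^*$, and the $\P^{*\perp}\P(\cdot)$ piece I rewrite using the eigen‑equation $\P^{*\perp}\U=\P^{*\perp}\Z\U\bSigma^{-1}$ (valid since $\P^{*\perp}\M^*=0$), so it factors through $\ltinf{\P^{*\perp}\Z\U}\,\op{\bSigma^{-1}}\,\op{\U^{\top}\Z\U^*}$, hence $\lesssim\ltinf{\U_\perp^*\U_\perp^{*\top}\Z\U^*}\cdot\fro{\Z}/\sigma_r^*$ after controlling $\ltinf{\P^{*\perp}\Z\U}$ by $\ltinf{\P^{*\perp}\Z\U^*}$ up to higher order. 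For the first term, using $(\I-\P)\U^*=-(\U\H-\U^*)$ and the same manipulations one gets $-(\I-\P)\M(\I-\P)\U^*=(\U\H-\U^*)\bSigma^*\G+(\I-\P)\Z(\U\H-\U^*)$, i.e.\ a self‑referential piece carrying the tiny factor $\op{\G}$ plus a further remainder, which after inserting $\U\H-\U^*=\P^{*\perp}\Z\U\bSigma^{-1}\H-\U^*\G$ again reduces to the two allowed prefactors $\ltinf{\U^*}$ and $\ltinf{\P^{*\perp}\Z\U^*}$ times powers of $\fro{\Z}/\sigma_r^*\le\tfrac14$.

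Collecting terms gives $(\U\H-\U^*)\bSigma^*(\I-\G)=(\text{leading term})+(\text{remainders})$; since $\op{\G}<1$, $(\I-\G)$ is invertible with norm close to one, so the self‑referential piece is absorbed and one divides through, then sums the $(2,\infty)$‑estimates and tracks constants to reach the displayed $64$ and $16$. I expect the main obstacle to be precisely this bookkeeping: one must arrange that \emph{every} second‑order remainder leaves with a prefactor of either $\ltinf{\U^*}$ or $\ltinf{\U_\perp^*\U_\perp^{*\top}\Z\U^*}$ rather than a bare $\op{\Z}$, which forces the substitution $\P^{*\perp}\U=\P^{*\perp}\Z\U\bSigma^{-1}$ (and use of $\op{\H}\le1$, $\op{\H^{-1}}\le\sqrt2$) at every spot where $\P^{*\perp}$ meets a $\Z$, and one must never crudely bound $\ltinf{\P^{*\perp}\Z}$ by $\op{\Z}$; maintaining this discipline while simultaneously closing the self‑referential estimate is the delicate step, whereas the rest of the argument is routine.
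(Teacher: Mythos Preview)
Your route is genuinely different from the paper's. The paper invokes the spectral-projector expansion $\U\U^\top-\U^*\U^{*\top}=\sum_{k\ge1}\calS_{\M^*,k}(\Z)$ of \cite{xia2021normal}, right-multiplies by $\U^*\bSigma^*$ to isolate $\U_\perp^*\U_\perp^{*\top}\Z\U^*$ as the $k{=}1$ contribution, then bounds each $k\ge2$ summand and sums a geometric series. The whole argument lives in the fixed basis $(\U^*,\U_\perp^*)$: every factor is one of $\U^*\bSigma^{*-s}\U^{*\top}$, $\P^{*\perp}$, or $\Z$, so the leftmost block is always of the form $\U^*[\cdots]$ or $\P^{*\perp}\Z\U^*[\cdots]$, which immediately delivers one of the two allowed prefactors. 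You instead start from the exact identity $(\U\H-\U^*)\bSigma^*=-(\I-\P)\M(\I-\P)\U^*+(\I-\P)\Z\U^*$ and try to close a self-referential estimate that shuttles between the perturbed and unperturbed bases.

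The gap is precisely the step you flag as delicate: ``controlling $\ltinf{\P^{*\perp}\Z\U}$ by $\ltinf{\P^{*\perp}\Z\U^*}$ up to higher order.'' The natural reduction $\P^{*\perp}\Z\U\H=\P^{*\perp}\Z\U^*+\P^{*\perp}\Z(\U\H-\U^*)$ leaves the correction $\P^{*\perp}\Z(\U\H-\U^*)=-\P^{*\perp}\U_\perp\bSigma_\perp\U_\perp^\top\U^*$ (using $\U\H-\U^*\in\operatorname{col}(\U_\perp)$ and $\P^{*\perp}\Z\U_\perp=\P^{*\perp}\U_\perp\bSigma_\perp$). Here $\ltinf{\P^{*\perp}\U_\perp}=O(1)$, not $O(\ltinf{\U^*})$, so the bound you obtain is $\ltinf{\P^{*\perp}\Z\U}\lesssim\ltinf{\P^{*\perp}\Z\U^*}+\fro{\Z}^2/\sigma_r^*$ with a bare second term. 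Fed back through your bookkeeping (where it gets multiplied by $\op{\bSigma^{-1}}\op{\U^\top\Z\U^*}\sim\fro{\Z}/\sigma_r^*$), this produces $\fro{\Z}^3/\sigma_r^{*2}$, which for incoherent $\U^*$ dominates both $\ltinf{\U^*}\fro{\Z}^2/\sigma_r^*$ and $\ltinf{\P^{*\perp}\Z\U^*}\fro{\Z}/\sigma_r^*$. The identical obstruction recurs after you substitute $\U\H-\U^*=\P^{*\perp}\Z\U\bSigma^{-1}\H-\U^*\G$ into $(\I-\P)\Z(\U\H-\U^*)$: one of the resulting pieces is $\P^{*\perp}\Z\P^{*\perp}\Z\U\bSigma^{-1}\H$, whose $\ell_{2,\infty}$ norm again refuses to factor through either allowed prefactor. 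The substitutions you list reproduce this term rather than eliminate it. To rescue the fixed-point route one would need a simultaneous induction on $\ltinf{(\U\H-\U^*)\bSigma^*}$ and $\ltinf{\P^{*\perp}\Z\U}$ together; the paper's series approach avoids the issue altogether by never placing $\U$ or $\U_\perp$ in the leftmost position.
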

\begin{proof}
	Note that $\left(\U\H-\U^*\right)\bSigma^*=\left(\U\U^{\top}-\U^*\U^{*\top}\right)\U^{*}\bSigma^*$. Denote $\mathbf{Z}=\mathbf{M}-\mathbf{M}^*$. Define $\mathbf{U}_{\perp}^*\in\mathbb{R}^{d\times (d-r)}$ such that $[\mathbf{U}^*, \mathbf{U}_{\perp}^*]\in\mathbb{R}^{d\times d}$ is orthonormal and then define the projector $$\mathfrak{P}^{\perp}:=\mathbf{U}_{\perp}^*\mathbf{U}_{\perp}^{*\top},\quad \mathfrak{P}^{-1}:=\mathbf{U}\boldsymbol{\Lambda}^{-1}\mathbf{U}^{\top}.$$
	Write $\mathfrak{P}^{-k}=\mathbf{U}^*\bSigma^{-k}\mathbf{U}^{*\top}$, for all $k\geq1$ and for convenience when $k=0$, we write $\mathfrak{P}^{0}=\mathfrak{P}^{-1}$. Define the $k$-th order perturbation \begin{equation*}
		\mathcal{S}_{\mathbf{M}^{*},k}(\mathbf{Z}):=\sum_{\mathbf{s}:s_{1}+\cdots+s_{k+1}=k} (-1)^{1+\tau(\mathbf{s})} \mathfrak{P}^{-s_{1}}\mathbf{Z}\mathfrak{P}^{-s_{2}}\cdots\mathfrak{P}^{-s_{k}}\mathbf{Z}\mathfrak{P}^{-s_{k+1}},
	\end{equation*} where $s_{1},\cdots,s_{k}$ are non-negative integers and $\tau(\mathbf{s})=\sum_{i=1}^{k}\mathbb{I}(s_{i}>0)$ is the number of positive indices in $\mathbf{s}$. Work \cite{xia2021normal} proves \begin{equation*}
		\mathbf{U}\mathbf{U}^{\top}-\mathbf{U}^*\mathbf{U}^{*\top}=\sum_{k\geq1}\mathcal{S}_{\mathbf{M}^{*},k}(\mathbf{Z}).
	\end{equation*}
	Then consider $\ltinf{\left(\U\U^{\top}-\U^*\U^{*\top}\right)\U^*\bSigma^*}$,
	\begin{align*}
		\left(\U\U^{\top}-\U^*\U^{*\top}\right)&\U^*\bSigma^*=\sum_{k\geq1}\mathcal{S}_{\mathbf{M}^{*},k}(\mathbf{Z})\U^*\bSigma^*\\
		&=\U_{\perp}^*\U_{\perp}^*\Z\U^*\U^{*\top}+\sum_{k\geq2}\sum_{\mathbf{s}:s_{1}+\cdots+s_{k+1}=k} (-1)^{1+\tau(\mathbf{s})} \mathfrak{P}^{-s_{1}}\mathbf{Z}\mathfrak{P}^{-s_{2}}\cdots\mathfrak{P}^{-s_{k}}\mathbf{Z}\mathfrak{P}^{-s_{k+1}}\U^*\bSigma^*
	\end{align*}
	Note that for $k\geq 2$, \begin{align*}
		&\ltinf{ \mathfrak{P}^{-s_{1}}\mathbf{Z}\mathfrak{P}^{-s_{2}}\cdots\mathfrak{P}^{-s_{k}}\mathbf{Z}\mathfrak{P}^{-s_{k+1}}\U^*\bSigma^* }\\
		&{~~~~~~~~~~~~~~~~~~}\leq \binom{2k-1}{k}\ltinf{\U^*}\frac{\fro{\Z}^k}{\sigma_r^{*k-1}}+\binom{2k-1}{k-1}\ltinf{\U_{\perp}^*\U_{\perp}^*\Z\U^*}\frac{\fro{\Z}^{k-1}}{\sigma_r^{*k-1}}\\
		&{~~~~~~~~~~~~~~~~~~}\leq \sigma_{r}^{*}\ltinf{\U^*}\left(\frac{4\fro{\Z}}{\sigma_r^*}\right)^{k}+\ltinf{\U_{\perp}^*\U_{\perp}^*\Z\U^*}\left(\frac{4\fro{\Z}}{\sigma_{r}^*}\right)^{k-1}
	\end{align*}
	Hence, \begin{align*}
		&\ltinf{\left(\U\U^{\top}-\U^*\U^{*\top}\right)\U^*\bSigma}\\
		&{~~~~~~~~}\leq \ltinf{\U_{\perp}^*\U_{\perp}^*\Z\U^*\U^{*\top}}+\sum_{k\geq2} \ltinf{ \mathfrak{P}^{-s_{1}}\mathbf{Z}\mathfrak{P}^{-s_{2}}\cdots\mathfrak{P}^{-s_{k}}\mathbf{Z}\mathfrak{P}^{-s_{k+1}}\U^*\bSigma^* }\\
		&{~~~~~~~~}\leq \ltinf{\U_{\perp}^*\U_{\perp}^*\Z\U^*}+64\ltinf{\U^*}\frac{\fro{\Z}^2}{\sigma_{r}^*}+16\ltinf{\U_{\perp}^*\U_{\perp}^*\Z\U^*}\frac{\fro{\Z}}{\sigma_{r}^*}.
	\end{align*}
\end{proof}

\begin{lemma}
	Suppose $\M^*\in\RR^{d\times d}$ is a rank $r$ matrix, with singular value decomposition  $\M^*=\U^*\bSigma^*\V^{*\top}$, where $\bSigma^*=\textrm{diag}(\sigma_{1}^*,\cdots,\sigma_{r}^*)$, $\sigma_{1}^*\geq\cdots\geq \sigma_{r}^*>0$. Then for any matrix satisfying $\fro{\M-\M^*}\leq\sigma_{r}^*/4$ with rank $r$ singular vector decomposition $\textrm{SVD}_{r}(\M)=\U\bSigma\V^{\top}$. Denote $\H_1=\U^{\top}\U^*\in\RR^{r\times r}$ and $\H_2=\V^{\top}\V^*\in\RR^{r\times r}$. We have \begin{align*}
		\ltinf{\left(\U\H_1-\U^*\right)\bSigma^*}&\leq\ltinf{\U_{\perp}^*\U_{\perp}^*\Z\V^*}+64\ltinf{\U^*}\frac{\fro{\Z}^2}{\sigma_{r}^*}+16\ltinf{\U_{\perp}^*\U_{\perp}^*\Z\V^*}\frac{\fro{\Z}}{\sigma_{r}^*},\\
		\ltinf{\left(\V\H_2-\V^*\right)\bSigma^*}&\leq\ltinf{\V_{\perp}^*\V_{\perp}^*\Z^{\top}\V^*}+64\ltinf{\V^*}\frac{\fro{\Z}^2}{\sigma_{r}^*}+16\ltinf{\V_{\perp}^*\V_{\perp}^*\Z^{\top}\U^*}\frac{\fro{\Z}}{\sigma_{r}^*}
	\end{align*}
	\label{teclem:l2infpertb:matrix}
\end{lemma}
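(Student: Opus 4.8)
The statement to prove is Lemma~\ref{teclem:l2infpertb:matrix}, the $(2,\infty)$-norm perturbation bound for the left and right singular subspaces of a rectangular matrix. My plan is to reduce the rectangular case to the symmetric case already handled in Lemma~\ref{teclem:l2infpertb:symmetric} via the standard dilation (Jordan–Wielandt) trick, and then transcribe the symmetric bound back into statements about $\U$ and $\V$ separately.

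\medskip

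\textbf{Step 1: Dilation.} Given $\M^*\in\RR^{d\times d}$ of rank $r$ with SVD $\M^*=\U^*\bSigma^*\V^{*\top}$, form the symmetric matrix
\[
\wt\M^*:=\begin{bmatrix}\mathbf 0 & \M^*\\ \M^{*\top} & \mathbf 0\end{bmatrix}\in\RR^{2d\times 2d}.
\]
Its nonzero eigenvalues are $\pm\sigma_1^*,\dots,\pm\sigma_r^*$, and the eigenvectors associated with the positive eigenvalues $\sigma_1^*,\dots,\sigma_r^*$ are $\frac{1}{\sqrt2}\begin{bmatrix}\U^*\\\V^*\end{bmatrix}$. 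Likewise form $\wt\M$ from $\M$ and $\wt\Z:=\wt\M-\wt\M^*$, which satisfies $\fro{\wt\Z}=\sqrt2\,\fro{\Z}$ and has the same operator norm as $\Z$ up to a constant. One checks that $\mathrm{SVD}_r(\M)=\U\bSigma\V^{\top}$ corresponds, after the dilation, to taking the top-$r$ eigenvectors of $\wt\M$ (those with the $r$ largest positive eigenvalues); write $\wt\U^*:=\frac{1}{\sqrt2}\begin{bmatrix}\U^*\\\V^*\end{bmatrix}$, $\wt\U:=\frac1{\sqrt2}\begin{bmatrix}\U\\\V\end{bmatrix}$, and note $\wt\H:=\wt\U^{\top}\wt\U^*=\tfrac12(\H_1+\H_2)$. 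A subtlety to address here: the eigenvectors $\wt\U$ obtained from $\wt\M$ need not split exactly as the stacked singular vectors of $\M$ unless one is careful about the rotation/alignment; the cleanest way around this is to work directly with the projectors $\wt\U\wt\U^\top - \wt\U^*\wt\U^{*\top}$ and note this block-decomposes into $\tfrac12(\U\U^\top-\U^*\U^{*\top})$, $\tfrac12(\V\V^\top-\V^*\V^{*\top})$, and the off-diagonal block $\tfrac12(\U\V^\top-\U^*\V^{*\top})$, each of which is rotation-invariant.

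\medskip

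\textbf{Step 2: Apply the symmetric bound and extract blocks.} Apply Lemma~\ref{teclem:l2infpertb:symmetric} to $\wt\M^*$ and $\wt\M$ (the condition $\fro{\wt\Z}\le\sigma_r^*/4$ up to the constant follows from $\fro{\Z}\le\sigma_r^*/4$ after rescaling the hypothesis, or one simply re-runs the proof of Lemma~\ref{teclem:l2infpertb:symmetric} with the dilated quantities since all constants are absolute). This yields
\[
\ltinf{\left(\wt\U\wt\H-\wt\U^*\right)\wt\bSigma^*}\leq \ltinf{\wt\U_{\perp}^*\wt\U_{\perp}^{*}\wt\Z\wt\U^*}+64\ltinf{\wt\U^*}\frac{\fro{\wt\Z}^2}{\sigma_r^*}+16\ltinf{\wt\U_{\perp}^*\wt\U_{\perp}^{*}\wt\Z\wt\U^*}\frac{\fro{\wt\Z}}{\sigma_r^*}.
\]
Now I read off the top $d$ rows of each side. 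On the left, the top block of $(\wt\U\wt\H-\wt\U^*)\wt\bSigma^*$ is $\tfrac1{\sqrt2}(\U\wt\H-\U^*)\bSigma^*$; since $\ltinf{\U\H_1-\U^*}$ and $\ltinf{\U\wt\H-\U^*}$ differ only by the operator-norm-bounded factor $\|\H_1-\wt\H\|\lesssim\fro{\Z}/\sigma_r^*$ (a second-order term that gets absorbed), the top block controls $\ltinf{(\U\H_1-\U^*)\bSigma^*}$ up to the desired form. On the right, using the block structure $\wt\U_\perp^*\wt\U_\perp^{*\top}=I_{2d}-\wt\U^*\wt\U^{*\top}$ and $\wt\Z=\begin{bmatrix}\mathbf 0&\Z\\\Z^\top&\mathbf 0\end{bmatrix}$, the top block of $\wt\U_\perp^*\wt\U_\perp^{*\top}\wt\Z\wt\U^*$ works out to (a constant times) $\U_\perp^*\U_\perp^{*\top}\Z\V^*$ plus lower-order cross terms, and $\ltinf{\wt\U^*}=\tfrac1{\sqrt2}\max\{\ltinf{\U^*},\ltinf{\V^*}\}$, which is dominated appropriately. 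Collecting terms gives the first displayed inequality of the lemma. Reading off the bottom $d$ rows symmetrically (or equivalently applying the argument to $\M^{*\top}$) gives the second inequality for $\V$.

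\medskip

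\textbf{Main obstacle.} The bookkeeping around the alignment matrices — relating $\wt\H$, $\H_1$, $\H_2$ and showing that replacing $\wt\H$ by $\H_1$ (resp.\ $\H_2$) in the $(2,\infty)$-norm only costs a term of order $\ltinf{\U^*}\fro{\Z}^2/\sigma_r^*$ that is already present on the right-hand side — is the part that requires genuine care, as is verifying that the off-diagonal block of $\wt\U_\perp^*\wt\U_\perp^{*\top}\wt\Z\wt\U^*$ really does reduce to $\U_\perp^*\U_\perp^{*\top}\Z\V^*$ without spawning an $\U_\perp^*\U_\perp^{*\top}\Z^\top\U^*$-type term of the wrong flavor. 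An alternative that sidesteps the dilation entirely is to repeat the Neumann-series argument of Lemma~\ref{teclem:l2infpertb:symmetric} directly: expand $\U\U^\top-\U^*\U^{*\top}=\sum_{k\ge1}\mathcal S_{\M^*\M^{*\top},k}(\M\M^\top-\M^*\M^{*\top})$ using the contour-integral/spectral-projector expansion for $\M\M^\top$, isolate the $k=1$ term $\U_\perp^*\U_\perp^{*\top}(\Z\M^{*\top}+\M^*\Z^\top)\U^*\bSigma^{*-2}$, note $\U_\perp^{*\top}\M^*=0$ kills one summand leaving $\U_\perp^*\U_\perp^{*\top}\Z V^*\bSigma^{*-1}$, and bound the $k\ge2$ tail by the geometric series $\sum_{k\ge2}(4\fro{\Z}/\sigma_r^*)^{\,k-1}$ exactly as in the symmetric proof. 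I would present the dilation route as the main argument since it lets me quote Lemma~\ref{teclem:l2infpertb:symmetric} as a black box, and mention the direct route only if the block-extraction constants prove awkward.
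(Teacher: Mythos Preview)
Your approach is exactly the paper's: the proof in the paper consists of a single sentence instructing the reader to apply Lemma~\ref{teclem:l2infpertb:symmetric} to the dilated matrices
\[
\mathbf{Y}^{*}:=\begin{pmatrix}0&\mathbf{M}^{*}\\ \mathbf{M}^{*\top}&0\end{pmatrix},\qquad \mathbf{Y}:=\begin{pmatrix}0&\mathbf{M}\\ \mathbf{M}^{\top}&0\end{pmatrix},
\]
and then asserts the result. You have in fact done more than the paper by flagging the genuine bookkeeping issues (the relation between $\wt\H=\tfrac12(\H_1+\H_2)$ and the individual $\H_1,\H_2$, and the block extraction of $\wt\U_\perp^*\wt\U_\perp^{*\top}\wt\Z\wt\U^*$) that the paper leaves implicit; your alternative direct Neumann-series route on $\M\M^\top$ is a sound fallback and is not mentioned in the paper.
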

\begin{proof}
	Apply Lemma~\ref{teclem:l2infpertb:symmetric} with symmetrization of $\M^*$ and $\M$:
	$$\mathbf{Y}^{*}:=\left(\begin{matrix}
		0&\mathbf{M}^{*}\\
		\mathbf{M}^{*\top}&0
	\end{matrix}\right),\,\mathbf{Y}:=\left(\begin{matrix}
		0&\mathbf{M}\\
		\mathbf{M}^{\top}&0
	\end{matrix}\right),$$
	and then we could get the desired result.
\end{proof}

\begin{lemma}[Type-\Romannum{1} Tensor Perturbation]
	Suppose tensor $\bcalT^*\in\RR^{d_1\times\dots\times d_m}$ has Tucker rank $\r=(r_1,\dots,r_m)$. Let $\bcalT^*=\bcalC^*\times_1 \U_{1}^{*}\times_2\dots\times_m\U_{m}^{*}$ be its Tucker decomposition with $\mins^*:=\min_{k=1,\dots,m}\sigma_{r_k}(\fraM_k(\bcalT^*))$. Then for any tensor $\bcalT\in\RR^{d_1\times\dots\times d_m}$ such that $\max_{k=1,\dots,m}\op{\fraM_k(\bcalT)-\fraM_{k}(\bcalT^*)}\leq \mins^*/8$ with $\text{HOSVD}(\bcalT)=\bcalC\cdot \llbracket \U_1,\cdots,\U_m\rrbracket$, then we have 
	\begin{align}
		\fro{\text{HOSVD}(\bcalT)-\bcalT^*}&\leq \fro{\bcalT-\bcalT^*}+32m\frac{\fro{\bcalT-\bcalT^*}^2}{\mins^*}.\label{eq:perturbation:3}
	\end{align}
	Also, for each order $k=1,\dots,m$, we have
	\begin{align}
		\op{\U_k\U_k^{\top}-\U_k^*\U_k^{*\top}}&\leq4\frac{\fro{\bcalT-\bcalT^*}}{\mins^*},\label{eq:perturbation:1}
	\end{align}
	and
	\begin{equation}
		\begin{split}
			\ltinf{\fraM_k(\text{HOSVD}(\bcalT)-\bcalT^*)}&\leq \ltinf{\fraM_k(\calP_{\TT^*}(\bcalT-\bcalT^*))}+32m\ltinf{\U_k^*}\frac{\fro{\bcalT-\bcalT^*}^2}{\mins^*}\\
			&{~~~~~~}+32m\ltinf{(\I_{d_k}-\U_k^*\U_k^{*\top})\fraM_k(\bcalT-\bcalT^*)}\frac{\fro{\bcalT-\bcalT^*}}{\mins^*},
		\end{split}
		\label{eq:perturbation:4}
	\end{equation}
	\begin{equation}
		\begin{split}
			&{~~~}\ltinf{\left(\U_k\H_k-\U_k^*\right)\fraM_k(\bcalC^*)}\\
			&\leq\ltinf{\U_{k\perp}\U_{k\perp}^{\top}\fraM_k(\bcalT-\bcalT^*)}+64\ltinf{\U_k^*}\frac{\fro{\bcalT-\bcalT^*}^2}{\mins^*}+16\ltinf{\U_{k\perp}\U_{k\perp}^{\top}\fraM_k(\bcalT-\bcalT^*)}\frac{\fro{\bcalT-\bcalT^*}}{\mins^*},
		\end{split}
		\label{eq:perturbation:2}
	\end{equation}
	where $\H_k:=\U_k^{\top}\U_k^*$.
	\label{teclem:perturbation:tensor}
\end{lemma}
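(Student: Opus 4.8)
\textbf{Proof plan for Lemma~\ref{teclem:perturbation:tensor} (Type-I Tensor Perturbation).}

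The plan is to reduce everything to matrix perturbation statements applied to the mode-$k$ matricizations, and to exploit the multilinear structure of HOSVD. First I would record the basic facts: for each $k$, $\fraM_k(\bcalT^*)$ has rank $r_k$ with smallest nonzero singular value at least $\mins^*$, and $\op{\fraM_k(\bcalT)-\fraM_k(\bcalT^*)}\le\mins^*/8$; HOSVD sets $\U_k$ to be the top-$r_k$ left singular vectors of $\fraM_k(\bcalT)$ and $\bcalC\cdot\llbracket\U_1,\dots,\U_m\rrbracket = \bcalT\times_1\U_1\U_1^\top\times_2\cdots\times_m\U_m\U_m^\top = \calP_{\TT}(\bcalT)$ where $\TT$ is built from the $\U_k$'s. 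The spectral bound \eqref{eq:perturbation:1} is then immediate from the Davis--Kahan / Wedin-type bound in Lemma~\ref{teclem:perturbation:matrix} (or its analogue), using $\op{\hat\M-\M^*}\le\fro{\hat\M-\M^*}$ and the hypothesis $\fro{\bcalT-\bcalT^*}\le\mins^*/8$.

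For the Frobenius bound \eqref{eq:perturbation:3}, I would write $\text{HOSVD}(\bcalT)-\bcalT^* = \big(\calP_{\TT}(\bcalT)-\calP_{\TT}(\bcalT^*)\big) + \big(\calP_{\TT}(\bcalT^*)-\bcalT^*\big)$. The first piece is bounded by $\fro{\bcalT-\bcalT^*}$ since $\calP_{\TT}$ is a projection. For the second piece, use the telescoping identity $\calP_{\TT}(\bcalT^*)-\bcalT^* = -\sum_{k=1}^m \bcalT^*\times_1\U_1\U_1^\top\cdots\times_{k-1}\U_{k-1}\U_{k-1}^\top\times_k(\I-\U_k\U_k^\top)\times_{k+1}\cdots$ (hitting one mode at a time), so each summand has Frobenius norm at most $\fro{(\I-\U_k\U_k^\top)\fraM_k(\bcalT^*)}$. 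Because $\U_k^*\U_k^{*\top}$ is the range projector of $\fraM_k(\bcalT^*)$, $(\I-\U_k\U_k^\top)\fraM_k(\bcalT^*) = (\I-\U_k\U_k^\top)(\U_k^*\U_k^{*\top}-\U_k\U_k^\top)\fraM_k(\bcalT^*)$, which is $\le \op{\U_k\U_k^\top-\U_k^*\U_k^{*\top}}\cdot\fro{\fraM_k(\bcalT^*)}$; but to get the quadratic-in-error rate I would instead bound it by $\op{\U_k\U_k^\top-\U_k^*\U_k^{*\top}}\cdot\fro{(\I-\U_k^*\U_k^{*\top})\fraM_k(\bcalT)} \le 4\mins^{*-1}\fro{\bcalT-\bcalT^*}\cdot\fro{\bcalT-\bcalT^*}$, using \eqref{eq:perturbation:1} and $(\I-\U_k^*\U_k^{*\top})\fraM_k(\bcalT)=(\I-\U_k^*\U_k^{*\top})\fraM_k(\bcalT-\bcalT^*)$. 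Summing over $k$ and folding constants gives the $32m\fro{\bcalT-\bcalT^*}^2/\mins^*$ term.

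For the two-to-infinity bounds \eqref{eq:perturbation:4} and \eqref{eq:perturbation:2}, the idea is the same decomposition but tracked in $\ltinf{\cdot}$. For \eqref{eq:perturbation:2} I would apply Lemma~\ref{teclem:l2infpertb:matrix} directly with $\M=\fraM_k(\bcalT)$, $\M^*=\fraM_k(\bcalT^*)$, $\Z=\fraM_k(\bcalT-\bcalT^*)$: its conclusion is exactly the claimed inequality for $\ltinf{(\U_k\H_k-\U_k^*)\fraM_k(\bcalC^*)}$ once one notes $\fraM_k(\bcalC^*)=\bSigma^*\V^{*\top}$ up to the right orthonormal factor and $\fro{\Z}=\fro{\bcalT-\bcalT^*}$, $\op{\Z}\le\fro{\Z}$. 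For \eqref{eq:perturbation:4}, I would again split $\text{HOSVD}(\bcalT)-\bcalT^*$ into $\calP_{\TT^*}(\bcalT-\bcalT^*)$ (the main term appearing on the right) plus correction terms measuring the difference between $\calP_{\TT}$ and $\calP_{\TT^*}$ and between $\calP_{\TT^*}(\bcalT^*)$ and $\bcalT^*=\calP_{\TT^*}(\bcalT^*)$ (the latter vanishing); each correction is handled by inserting $\U_k\U_k^\top-\U_k^*\U_k^{*\top}$, using $\ltinf{\A\B}\le\ltinf{\A}\op{\B}$ or $\ltinf{\A\B}\le\op{\A}\ltinf{\B}$, the incoherence $\ltinf{\U_k^*}$, and \eqref{eq:perturbation:1}, then summing the $m$ modes. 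The main obstacle I anticipate is the careful bookkeeping in the $\ltinf{\cdot}$ estimates: one must decide, mode by mode, whether to pass an operator norm or an $\ltinf{\cdot}$ norm across each factor so that the final bound has the stated form with $\ltinf{(\I-\U_k^*\U_k^{*\top})\fraM_k(\bcalT-\bcalT^*)}$ and $\ltinf{\U_k^*}$ appearing with the right powers of $\fro{\bcalT-\bcalT^*}/\mins^*$, and to make sure the higher-order terms in the Neumann-type expansion of $\U_k\U_k^\top-\U_k^*\U_k^{*\top}$ (as in the proof of Lemma~\ref{teclem:l2infpertb:symmetric}) collapse into a single geometric series under $\fro{\bcalT-\bcalT^*}\le\mins^*/8$.
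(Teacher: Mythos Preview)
Your treatment of \eqref{eq:perturbation:1}, \eqref{eq:perturbation:2}, and \eqref{eq:perturbation:4} is essentially what the paper does: \eqref{eq:perturbation:1} from Lemma~\ref{teclem:perturbation:matrix}, \eqref{eq:perturbation:2} by reducing $(\U_k\H_k-\U_k^*)\fraM_k(\bcalC^*)$ to $(\U_k\H_k-\U_k^*)\bSigma_k^*$ and invoking Lemma~\ref{teclem:l2infpertb:matrix}, and \eqref{eq:perturbation:4} via the expansion of $\text{HOSVD}(\bcalT)-\bcalT^*$ around $\calP_{\TT^*}(\bcalT-\bcalT^*)$ (the paper cites \cite{cai2022generalized} for this). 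One notational slip: $\text{HOSVD}(\bcalT)=\bcalT\times_1\U_1\U_1^\top\cdots\times_m\U_m\U_m^\top$ is \emph{not} $\calP_{\TT}(\bcalT)$; the tangent-space projection has additional ``one-perpendicular-mode'' summands. Call the product-of-projections map $\calQ$ instead.

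The genuine gap is in your argument for \eqref{eq:perturbation:3}. Your decomposition $\calQ(\bcalT)-\calQ(\bcalT^*)+\big(\calQ(\bcalT^*)-\bcalT^*\big)$ cannot produce the quadratic correction: each telescoped summand of $\calQ(\bcalT^*)-\bcalT^*$ is controlled by $\fro{(\I-\U_k\U_k^\top)\fraM_k(\bcalT^*)}$, and this quantity is only \emph{first} order in $\fro{\bcalT-\bcalT^*}$. Your proposed replacement bound $\fro{(\I-\U_k\U_k^\top)\fraM_k(\bcalT^*)}\le \op{\U_k\U_k^\top-\U_k^*\U_k^{*\top}}\cdot\fro{(\I-\U_k^*\U_k^{*\top})\fraM_k(\bcalT)}$ is false: take $\M^*=e_1e_1^\top$ and $\M=\M^*+\eps(e_1e_2^\top+e_2e_1^\top)$, then the left side is $\asymp\eps$ while the right side is $\asymp\eps^2$. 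The fix is exactly the decomposition you already use for \eqref{eq:perturbation:4}: write $\text{HOSVD}(\bcalT)-\bcalT^*=\calP_{\TT^*}(\bcalT-\bcalT^*)+R$ and show, by the multilinear expansion (inserting $\U_k\U_k^\top-\U_k^*\U_k^{*\top}$ one mode at a time and using \eqref{eq:perturbation:1}), that $\fro{R}\le 32m\fro{\bcalT-\bcalT^*}^2/\mins^*$. The linear pieces that appear when expanding $\calQ(\bcalT)$ are precisely absorbed by $\calP_{\TT^*}(\bcalT-\bcalT^*)$, which is why the tangent-space splitting---not the $\calQ$-splitting---is the right one here. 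This is the content of the result the paper imports from \cite{cai2022generalized}.
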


\begin{proof}
	Equation~\eqref{eq:perturbation:1} could be obtained by using Lemma~\ref{teclem:perturbation:matrix} and Euqation~\eqref{eq:perturbation:3} is from proof of Lemma 13.2 in work \cite{cai2022generalized}. Then we focus on Equation~\eqref{eq:perturbation:2} and Equation~\eqref{eq:perturbation:4}. 
	
	Note that $\left(\U_k\H_k-\U_k^*\right)\fraM_k(\bcalC^*)=\left(\U_k\U_k^{\top}-\U_k^*\U_k^{*\top}\right)\U_k^*\fraM_k(\bcalC^*)$. Suppose $\fraM_k(\bcalT^*)$ has singular value decomposition $\fraM_k(\bcalT^*)=\U_k^*\bSigma_k\V_k^{*\top}$ and its Tucker decomposition matricization is $\fraM_k(\bcalT^*)=\U_k^*\fraM_k(\bcalC^*)\left(\otimes_{j\neq k} \U_j^*\right)^{\top} $. It implies $$\fraM_k(\bcalC^*)= \bSigma_k\V_k^{*\top}\left(\otimes_{j\neq k} \U_j^*\right).$$
	Hence, we have \begin{align*}
		&{~~~}\ltinf{\left(\U_k\H_k-\U_k^*\right)\fraM_k(\bcalC^*)}\\&\leq\ltinf{\left(\U_k\H_k-\U_k^*\right)\bSigma_k^*}\\
		&\leq\ltinf{\U_{k\perp}\U_{k\perp}^{\top}\fraM_k(\bcalT-\bcalT^*)}+64\ltinf{\U_k^*}\frac{\fro{\bcalT-\bcalT^*}^2}{\mins^*}+16\ltinf{\U_{k\perp}\U_{k\perp}^{\top}\fraM_k(\bcalT-\bcalT^*)}\frac{\fro{\bcalT-\bcalT^*}}{\mins^*},
	\end{align*}
	where the last line is from Lemma~\ref{teclem:l2infpertb:matrix}. Then consider $\ltinf{\fraM_k(\text{HOSVD}(\bcalT)-\bcalT^*)}$. Work~\cite{cai2022generalized} expands $\fraM_k(\text{HOSVD}(\bcalT)-\bcalT^*) $ and accordingly we have 
	\begin{align*}
		\ltinf{\fraM_k(\text{HOSVD}(\bcalT)-\bcalT^*)}&\leq \ltinf{\fraM_k(\calP_{\TT^*}(\bcalT-\bcalT^*))}+32m\ltinf{\U_k^*}\frac{\fro{\bcalT-\bcalT^*}^2}{\mins^*}\\
		&{~~~~~~~~~~~}+32m\ltinf{(\I_{d_k}-\U_k^*\U_k^{*\top})\fraM_k(\bcalT-\bcalT^*)}\frac{\fro{\bcalT-\bcalT^*}}{\mins^*}
	\end{align*}
	
\end{proof}

\begin{lemma}
	Suppose tensor $\bcalT^*\in\MM_{\r}$ has Tucker decomposition $\bcalT^*=\bcalC^*\cdot\llbracket\U_1^*,\dots,\U_m^*\rrbracket$. Let tensor $\bcalT\in\RR^{d_1\times\dots\times d_m}$ have $\text{HOSVD}(\bcalT)=\bcalC\cdot \llbracket \U_1,\cdots,\U_m\rrbracket$. Denote $\textsf{dist}(\U_k,\U_k^*):=\min_{\Q\in\OO_{r_k,r_k}}\op{\U_k\Q_k-\U_k^*}$ and $\Q_k=\arg\min_{\Q\in\OO_{r_k,r_k}}\op{\U_k\Q_k-\U_k^*}$. Then we have
	\begin{align*}
		\fro{\bcalC\cdot\llbracket\Q_1^\top,\dots,\Q_m^\top\rrbracket-\bcalC^*}\leq \sum_{k=1}^{m}\fro{\bcalT}\text{dist}(\U_k,\U_k^*)+\sqrt{r^*}\op{\bcalT-\bcalT^*}.
	\end{align*}
	\label{teclem:perturbation-coretensor}
\end{lemma}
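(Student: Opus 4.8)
## Proof Proposal for Lemma~\ref{teclem:perturbation-coretensor}

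The plan is to expand the difference $\bcalC\cdot\llbracket\Q_1^\top,\dots,\Q_m^\top\rrbracket-\bcalC^*$ by inserting the true factor matrices one mode at a time, exactly as in the telescoping identity used in the proof of Lemma~\ref{teclem:entrynorm-expansion}. First I would observe that since $\text{HOSVD}(\bcalT)=\bcalC\cdot\llbracket\U_1,\dots,\U_m\rrbracket$ with each $\U_k$ having orthonormal columns, we have $\bcalC=\bcalT\times_1\U_1^\top\times_2\cdots\times_m\U_m^\top$, hence
\begin{align*}
\bcalC\cdot\llbracket\Q_1^\top,\dots,\Q_m^\top\rrbracket=\bcalT\times_1(\U_1\Q_1)^\top\times_2\cdots\times_m(\U_m\Q_m)^\top.
\end{align*}
Writing $\W_k:=\U_k\Q_k$ (orthonormal columns, $\op{\W_k-\U_k^*}=\textsf{dist}(\U_k,\U_k^*)$), and using $\bcalC^*=\bcalT^*\times_1\U_1^{*\top}\times_2\cdots\times_m\U_m^{*\top}$, the target is $\bcalT\times_{k}\W_k^\top-\bcalT^*\times_k\U_k^{*\top}$.

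The next step is the standard mode-by-mode telescoping. I would write
\begin{align*}
\bcalT\cdot\llbracket\W_1^\top,\dots,\W_m^\top\rrbracket-\bcalT^*\cdot\llbracket\U_1^{*\top},\dots,\U_m^{*\top}\rrbracket
&=\sum_{k=1}^m\bcalT\cdot\llbracket\W_1^\top,\dots,\W_{k-1}^\top,(\W_k-\U_k^*)^\top,\U_{k+1}^{*\top},\dots,\U_m^{*\top}\rrbracket\\
&\quad+(\bcalT-\bcalT^*)\cdot\llbracket\U_1^{*\top},\dots,\U_m^{*\top}\rrbracket.
\end{align*}
For each of the first $m$ summands, the mode-$k$ factor $(\W_k-\U_k^*)^\top$ has operator norm $\textsf{dist}(\U_k,\U_k^*)$ while all other factors have orthonormal columns (so are norm-$1$ contractions in Frobenius norm after matricization); hence the Frobenius norm of the $k$-th summand is at most $\textsf{dist}(\U_k,\U_k^*)\cdot\fro{\bcalT}$. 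For the last term, all factors $\U_k^{*\top}$ are projections onto $r_k$-dimensional column spaces, so the result is a tensor of Tucker rank at most $\r$; thus $\fro{(\bcalT-\bcalT^*)\cdot\llbracket\U_1^{*\top},\dots,\U_m^{*\top}\rrbracket}\le\sqrt{r^*}\,\op{(\bcalT-\bcalT^*)\cdot\llbracket\U_1^{*\top},\dots,\U_m^{*\top}\rrbracket}\le\sqrt{r^*}\,\op{\bcalT-\bcalT^*}$, where the first inequality is the elementary bound $\fro{\bcalA}\le\sqrt{\rank}\,\op{\bcalA}$ applied to a mode-$k$ matricization (rank $\le r^*$ after all three compressions, or more simply $\le r_k\le r^*$), and the second uses that applying projections does not increase the operator norm of any matricization. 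Assembling via the triangle inequality gives the claimed bound.

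I do not expect any genuine obstacle here — the statement is a routine perturbation estimate and the only mild subtlety is bookkeeping: making sure that in each telescoping term the "already-inserted" factors $\W_1,\dots,\W_{k-1}$ and the "not-yet-inserted" factors $\U_{k+1}^*,\dots,\U_m^*$ are both contractions, which holds because all of $\W_j,\U_j^*$ have orthonormal columns, so that $\op{\fraM_k(\bcalS\times_j\mathbf{A}^\top)}=\op{\mathbf{A}^\top\fraM_k(\cdots)}\le\op{\mathbf{A}}\,\op{\fraM_k(\cdots)}$ controls each factor and $\fro{\bcalS\times_j\mathbf{A}^\top}\le\fro{\bcalS}$ when $\op{\mathbf{A}}\le1$. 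One point to double-check is whether $\textsf{dist}(\U_k,\U_k^*)$ should be $\op{\W_k-\U_k^*}$ or $\fro{\W_k-\U_k^*}$; since $\W_k-\U_k^*$ has rank at most $2r_k$ this only affects constants and $\sqrt{r^*}$-type factors, and I would simply state the bound with $\op{\cdot}$ as written, matching the definition $\textsf{dist}(\U_k,\U_k^*):=\min_{\Q\in\OO_{r_k,r_k}}\op{\U_k\Q-\U_k^*}$ given in the lemma. If a Frobenius-norm version of $\textsf{dist}$ were needed downstream, the identical argument with $\fro{\W_k-\U_k^*}$ in place of $\op{\W_k-\U_k^*}$ works verbatim.
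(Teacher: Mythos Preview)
Your proposal is correct and follows essentially the same approach as the paper: express the rotated core tensor as $\bcalT\times_k(\U_k\Q_k)^\top$, telescope mode-by-mode against $\bcalT^*\times_k\U_k^{*\top}$, and bound the $m$ cross terms by $\fro{\bcalT}\,\textsf{dist}(\U_k,\U_k^*)$ and the residual $(\bcalT-\bcalT^*)\times_k\U_k^{*\top}$ by $\sqrt{r^*}\,\op{\bcalT-\bcalT^*}$. The only cosmetic difference is the direction of the telescoping (you place the $\W_j$'s on the low-index side and the $\U_j^*$'s on the high-index side, the paper does the reverse); both identities are valid and yield the same bound.
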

\begin{proof}
	Note that
	\begin{align*}
		&{~~~~}\bcalC\cdot\llbracket\Q_1^\top,\dots,\Q_m^\top\rrbracket-\bcalC^*\\
		&=\bcalT\times_{1}\Q_1^\top\U_1^\top\times_2\cdots\times_m\Q_m^\top\U_m^\top-\bcalT^*\times_{1}\U_1^{*\top}\times_2\cdots\times_m\U_m^{*\top}\\
		&=\sum_{k=1}^{m}\bcalT\times_{i<k}\U_i^{*\top}\times_k\left(\U_k\Q_k-\U_k^{*}\right)^{\top}\times_{j>k}\Q_j^{\top}\U_j^{\top}-\left(\bcalT-\bcalT^*\right)\times_{1}\U_1^{*\top}\times_2\cdots\times_m\U_m^{*\top}.
	\end{align*}
	Then we have \begin{align*}
		\fro{\bcalC\cdot\llbracket\Q_1,\dots,\Q_m\rrbracket-\bcalC^*}\leq \sum_{k=1}^{m}\fro{\bcalT}\text{dist}(\U_k,\U_k^*)+\sqrt{r^*}\op{\bcalT-\bcalT^*}.
	\end{align*}
\end{proof}

\begin{lemma}
	Suppose $\bcalT,\bcalT^*\in\MM_{\r}$ with Tucker decomposition $\bcalT=\bcalC\cdot\llbracket\U_{1},\dots,\U_m\rrbracket$, $\bcalT^*=\bcalC^*\cdot\llbracket\U_1^*,\dots,\U_m^*\rrbracket$ and $\fro{\bcalT-\bcalT^*}\leq\mins^*/8$. Then we have 
	$$\fro{\calP_{\TT}^{\perp}(\bcalT^*)}\leq4m^2\mins^{*-1}\fro{\bcalT-\bcalT^*}^2.$$
	Furthermore, if $\bcalT,\bcalT^*$ are incoherent with parameter $\mu$, namely, $\ltinf{\U_k}\leq\sqrt{\frac{\mu r_k}{d_k}}$, $\ltinf{\U_k^*}\leq\sqrt{\frac{\mu r_k}{d_k}}$, then we have
	\begin{align*}
		\ltinf{\fraM_{k}\left(\calP_{\TT}^{\perp}(\bcalT^*) \right)}&\leq 4(m+1)\ltinf{\U_k\U_k^{\top}-\U_k^*\U_k^{*\top}}\fro{\bcalT-\bcalT^*}+4m^2\sqrt{\frac{\mu r_k}{d_k}}\mins^{*-1}\fro{\bcalT-\bcalT^*}^2\\
		&{~~~}+4(m-1)\ltinf{\bcalT-\bcalT^*}\cdot\mins^{*-1}\fro{\bcalT-\bcalT^*}.
	\end{align*}
	\label{teclem:rieman-orthogonal-project}
\end{lemma}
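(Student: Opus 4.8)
The plan is to control the Riemannian orthogonal-projection residual $\calP_{\TT}^{\perp}(\bcalT^*)$ by comparing the tangent spaces $\TT$ (at $\bcalT$) and $\TT^*$ (at $\bcalT^*$) through their defining factor matrices. First I would use the standard decomposition of the tangent space projector: for $\bcalT=\bcalC\cdot\llbracket\U_1,\dots,\U_m\rrbracket$, the projector $\calP_{\TT}$ acts on a tensor by the sum of a ``core'' term $\times_1\U_1\U_1^\top\cdots\times_m\U_m\U_m^\top$ plus $m$ ``arm'' terms involving $(\I-\U_k\U_k^\top)$ along mode $k$. Since $\calP_{\TT}(\bcalT^*)=\bcalT^*$ would hold exactly if $\TT=\TT^*$, the residual $\calP_{\TT}^{\perp}(\bcalT^*)=\bcalT^*-\calP_{\TT}(\bcalT^*)$ can be written using the differences $\U_k\U_k^\top-\U_k^*\U_k^{*\top}$. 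Concretely, writing $\bcalT^*=\bcalT^*\times_1\U_1^*\U_1^{*\top}\cdots\times_m\U_m^*\U_m^{*\top}$ and telescoping factor by factor, each term that survives after applying $\calP_{\TT}^{\perp}$ carries at least one factor of the form $(\I-\U_k\U_k^\top)\U_k^*$ (equivalently $\U_{k\perp}\U_{k\perp}^\top\U_k^*$) along some mode.

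Second, the key quantitative input is the matrix perturbation bound $\op{\U_k\U_k^\top-\U_k^*\U_k^{*\top}}\leq 4\mins^{*-1}\fro{\bcalT-\bcalT^*}$ from Lemma~\ref{teclem:perturbation:tensor} (Eq.~\eqref{eq:perturbation:1}), valid under $\fro{\bcalT-\bcalT^*}\leq\mins^*/8$. A term in the expansion of $\calP_{\TT}^{\perp}(\bcalT^*)$ with exactly one such ``bad'' factor contributes, in Frobenius norm, at most $\op{\U_k\U_k^\top-\U_k^*\U_k^{*\top}}\cdot\fro{\bcalT^*}$; but since $\calP_{\TT}^{\perp}$ kills the fully-aligned part, every surviving term actually carries the product of a projector difference with (at least) one copy of the ``remaining'' discrepancy, which can be bounded by a second factor of $\op{\U_k\U_k^\top-\U_k^*\U_k^{*\top}}$ or directly by $\mins^{*-1}\fro{\bcalT-\bcalT^*}^2$ after absorbing a singular value. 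Summing the $O(m^2)$ cross-terms (there are $m$ modes, and up to $m$ ways to distribute the ``second'' discrepancy) gives the claimed $4m^2\mins^{*-1}\fro{\bcalT-\bcalT^*}^2$. This is essentially the same bookkeeping as in the proof of Lemma~\ref{teclem:perturbation:tensor}.

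Third, for the $\ltinf{\cdot}$ bound on $\fraM_k(\calP_{\TT}^{\perp}(\bcalT^*))$, I would matricize along mode $k$ and track the $(2,\infty)$-norm through each term of the telescoping expansion, repeatedly using $\ltinf{\A\B}\leq\ltinf{\A}\fro{\B}$. When the ``bad'' factor $\U_{k\perp}\U_{k\perp}^\top\U_k^*$ sits along mode $k$ itself, the leading row-wise contribution is $\ltinf{\U_k\U_k^\top-\U_k^*\U_k^{*\top}}\cdot\fro{\bcalT-\bcalT^*}$ (this is where the $4(m+1)$ coefficient arises, counting the core term plus arm terms). When the ``bad'' factor sits along some other mode $j\neq k$, the row index along mode $k$ is governed by either $\ltinf{\U_k}\le(\mu r_k/d_k)^{1/2}$ (for terms where the mode-$k$ factor is $\U_k\U_k^\top$ or $\U_k^*\U_k^{*\top}$, yielding $\sqrt{\mu r_k/d_k}\cdot\mins^{*-1}\fro{\bcalT-\bcalT^*}^2$ after bounding the remaining $m-1$ factors) or by $\ltinf{(\I-\U_k^*\U_k^{*\top})\fraM_k(\bcalT-\bcalT^*)}\le\ltinf{\fraM_k(\bcalT-\bcalT^*)}+\ltinf{\U_k^*}\fro{\bcalT-\bcalT^*}$, contributing the $4(m-1)\ltinf{\bcalT-\bcalT^*}\mins^{*-1}\fro{\bcalT-\bcalT^*}$ piece. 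Collecting the three families of terms with their multiplicities gives the stated inequality.

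\textbf{Main obstacle.} The routine part is the algebraic expansion of $\calP_{\TT}^{\perp}(\bcalT^*)$; the delicate part is the $\ltinf{\cdot}$ accounting — specifically, making sure that in every surviving term at least one discrepancy factor is present so that no term is merely $O(\fro{\bcalT^*})$, and correctly deciding, term by term, whether the mode-$k$ row norm should be charged to the incoherence of $\U_k$, to $\ltinf{\fraM_k(\bcalT-\bcalT^*)}$, or to the projector difference $\ltinf{\U_k\U_k^\top-\U_k^*\U_k^{*\top}}$. Getting the combinatorial multiplicities $4(m+1)$, $4m^2$, and $4(m-1)$ exactly right (rather than just $O(m^2)$) requires care but no new ideas; I expect to borrow the index bookkeeping directly from the proof of Lemma~\ref{teclem:perturbation:tensor} in \cite{cai2022generalized}.
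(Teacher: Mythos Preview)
Your plan is correct and matches the paper's proof essentially step for step: the paper also writes $\calP_{\TT}^{\perp}(\bcalT^*)=(\calP_{\TT^*}-\calP_{\TT})(\bcalT^*)$, telescopes the core part $\bcalT^*\times_{k}\U_k^*\U_k^{*\top}-\bcalT^*\times_k\U_k\U_k^\top$, rewrites each arm term via $(\I-\U_k\U_k^\top)\fraM_k(\bcalT^*)=(\U_k^*\U_k^{*\top}-\U_k\U_k^\top)\fraM_k(\bcalT^*)$ and then splits $\bcalT^*=(\bcalT^*-\bcalT)+\bcalT$ so that every surviving term carries one projector difference times one copy of $\bcalT-\bcalT^*$, exactly the ``two small factors'' you describe. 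Your $\ltinf{\cdot}$ case analysis (mode-$k$ factor is the projector difference vs.\ an incoherent $\U_k$ vs.\ a row of $\fraM_k(\bcalT-\bcalT^*)$) is in fact more explicit than the paper, which simply says the $\ltinf$ bound follows ``similarly''.
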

\begin{proof}
	Note that \begin{align*}
		\calP_{\TT}^{\perp}(\bcalT^*)&=(\calP_{\TT^*}-\calP_{\TT})(\bcalT^*)\\
		&=\bcalT^*\times_{k=1,\dots,m} \U_k^*\U_k^{*\top}-\bcalT^*\times_{k=1,\dots,m}\U_k\U_k^{\top}\\
		&{~~~~~~~}-\sum_{k=1}^m \textrm{tensor}\left(\left(\I_{d_k}-\U_k\U_k^{\top}\right)\fraM_k(\bcalT^*)(\otimes_{j\neq k}\U_j)\fraM_k(\bcalC)^{\dagger}\fraM_k(\bcalC) (\otimes_{j\neq k}\U_j)^{\top}\right)\\
		&=\sum_{k=1}^m \bcalT^*\times_k(\U_k^*\U_k^{*\top}-\U_k\U_k^{\top})\times_{i<k}\U_i\U_i^{\top}\\
		&{~~~~~~~}-\sum_{k=1}^m \textrm{tensor}\left(\left(\U_k^*\U_k^{*\top}-\U_k\U_k^{\top}\right)\fraM_k(\bcalT^*)(\otimes_{j\neq k}\U_j)\fraM_k(\bcalC)^{\dagger}\fraM_k(\bcalC) (\otimes_{j\neq k}\U_j)^{\top}\right)
	\end{align*}
	Also, we have 
	\begin{align*}
		&{~~~~}\textrm{tensor}\left(\left(\U_k^*\U_k^{*\top}-\U_k\U_k^{\top}\right)\fraM_k(\bcalT^*)(\otimes_{j\neq k}\U_j)\fraM_k(\bcalC)^{\dagger}\fraM_k(\bcalC) (\otimes_{j\neq k}\U_j)^{\top}\right)\\
		& =\textrm{tensor}\left(\left(\U_k^*\U_k^{*\top}-\U_k\U_k^{\top}\right)\fraM_k(\bcalT^*-\bcalT)(\otimes_{j\neq k}\U_j)\fraM_k(\bcalC)^{\dagger}\fraM_k(\bcalC) (\otimes_{j\neq k}\U_j)^{\top}\right)+\bcalT\times_k\left(\U_k^*\U_k^{*\top}-\U_k\U_k^{\top} \right)
	\end{align*}
	Notice that by Lemma~\ref{teclem:perturbation:tensor}, the first term has the upper bound
	\begin{align*}
		\fro{\textrm{tensor}\left(\left(\U_k^*\U_k^{*\top}-\U_k\U_k^{\top}\right)\fraM_k(\bcalT^*-\bcalT)(\otimes_{j\neq k}\U_j)\fraM_k(\bcalC)^{\dagger}\fraM_k(\bcalC) (\otimes_{j\neq k}\U_j)^{\top}\right)}\leq4\mins^{*-1}\fro{\bcalT-\bcalT^*}^2.
	\end{align*}
	Hence, we have
	\begin{align*}
		\fro{\calP_{\TT}^{\perp}(\bcalT^*)}&\leq 4m\mins^{*-1}\fro{\bcalT-\bcalT^*}^2+\sum_{k=1}^m\fro{\left(\bcalT^*-\bcalT\right)\times_k\left(\U_k^*\U_k^{*\top}-\U_k\U_k^{\top}\right) }\\
		&{~~~~~~}+\sum_{k=1}^m\fro{\bcalT^*\times_k(\U_k^*\U_k^{*\top}-\U_k\U_k^{\top})\times_{i<k}\U_i\U_i^{\top}-\bcalT^*\times_k\left(\U_k^*\U_k^{*\top}-\U_k\U_k^{\top}\right)}\\
		&\leq 4m^2\mins^{*-1}\fro{\bcalT-\bcalT^*}^2,
	\end{align*}
	which uses \begin{align*}
		&{~~~~}\fro{\bcalT^*\times_k(\U_k^*\U_k^{*\top}-\U_k\U_k^{\top})\times_{i<k}\U_i\U_i^{\top}-\bcalT^*\times_k\left(\U_k^*\U_k^{*\top}-\U_k\U_k^{\top}\right)}\\
		&=\fro{\sum_{j=1}^{k-1} \bcalT^*\times_{i<j} \U_i^*\U_i^{*\top} \times_j\left(\U_j\U_j-\U_j^*\U_j^{*\top}\right)\times_{i>j}\U_i\U_i^{\top}\times_k\left(\U_k^*\U_k^{*\top}-\U_k\U_k^{\top}\right)}\\
		&\leq (k-1)\mins^{*-1}\fro{\bcalT-\bcalT^*}^2.
	\end{align*}
	Simialy, we could get upper bound for $ \ltinf{\fraM_{k}\left(\calP_{\TT}^{\perp}(\bcalT^*)\right)}$.
\end{proof}

\begin{lemma}
	For any two matrices $\U,\U^*\in\OO_{d,r}$, denote $\H:=\U^{\top}\U^*$ and it has $$\ltinf{\U\U^{\top}-\U^*\U^{*\top}}\leq \ltinf{\U\H-\U^*}+\ltinf{\U}\cdot\fro{\U\U^{\top}-\U^*\U^{*\top}}.$$
	\label{teclem:ltinf-transformation}
\end{lemma}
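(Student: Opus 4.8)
The plan is to reduce everything to the algebraic identity
$$\U\U^{\top}-\U^*\U^{*\top}=\U\cdot\bigl(\U^{\top}(\U\U^{\top}-\U^*\U^{*\top})\bigr)+(\U\H-\U^*)\U^{*\top},$$
after which the two summands are estimated in $\ltinf{\cdot}$ separately. First I would verify this identity by a direct expansion of the right-hand side: since $\U^{\top}\U=\I_r$ one has $\U\U^{\top}\cdot\U\U^{\top}=\U\U^{\top}$, and since $\H=\U^{\top}\U^*$ one has $\U\H\U^{*\top}=\U\U^{\top}\U^*\U^{*\top}$; plugging both facts in makes the right-hand side collapse to $\U\U^{\top}-\U^*\U^{*\top}$.

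Then I would apply the triangle inequality for $\ltinf{\cdot}$ followed by the submultiplicativity bound $\ltinf{\A\B}\le\ltinf{\A}\,\op{\B}$ (valid because $\|\e_i^{\top}\A\B\|_2\le\|\e_i^{\top}\A\|_2\,\op{\B}$ for every row $i$). For the first summand this yields $\ltinf{\U\cdot(\U^{\top}(\U\U^{\top}-\U^*\U^{*\top}))}\le\ltinf{\U}\,\op{\U^{\top}(\U\U^{\top}-\U^*\U^{*\top})}\le\ltinf{\U}\,\op{\U\U^{\top}-\U^*\U^{*\top}}$, using $\op{\U^{\top}}=1$, and finally $\op{\cdot}\le\fro{\cdot}$. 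For the second summand, $\ltinf{(\U\H-\U^*)\U^{*\top}}\le\ltinf{\U\H-\U^*}\,\op{\U^{*\top}}=\ltinf{\U\H-\U^*}$ because $\U^*$ has orthonormal columns. Summing the two bounds gives precisely the asserted inequality.

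I do not anticipate a genuine obstacle here; the only delicate point is choosing the splitting so that the ``small'' factor $\U\H-\U^*$ is multiplied on the right by the norm-one matrix $\U^{*\top}$, while the ``large'' factor $\U\U^{\top}-\U^*\U^{*\top}$ is either counted coordinatewise against $\ltinf{\U}$ or pushed into the operator norm (which is in turn dominated by the Frobenius norm appearing in the statement). As a sanity check one can instead write $\U\H-\U^*=-(\I-\U\U^{\top})\U^*$ and decompose $\U\U^{\top}-\U^*\U^{*\top}=\U\U^{\top}(\U\U^{\top}-\U^*\U^{*\top})+(\I-\U\U^{\top})(\U\U^{\top}-\U^*\U^{*\top})$, which produces the same estimate.
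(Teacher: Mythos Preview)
Your proposal is correct and is essentially the same argument as the paper's: the paper adds and subtracts $\U\U^{\top}\U^{*}\U^{*\top}$, applies the triangle inequality together with $\ltinf{\A\B}\leq\ltinf{\A}\op{\B}\leq\ltinf{\A}\fro{\B}$, and bounds the two resulting pieces exactly as you do. Your explicit identity is just this decomposition written out in full.
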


\begin{proof}
	By triangle inequality and the inequality $\ltinf{\A\B}\leq\ltinf{\A}\op{\B}\leq\ltinf{\A}\fro{\B}$, we have
	\begin{align*}
		\ltinf{\U\U^{\top}-\U^*\U^{*\top}}&\leq\ltinf{\U\U^{\top}\U^{*}\U^{*\top}-\U^*\U^{*\top}}+\ltinf{\U\U^{\top}\U^{*}\U^{*\top}-\U\U^{\top}}\\
		&\leq\ltinf{\U\H-\U^*}+\ltinf{\U}\cdot\fro{\U\U^{\top}-\U^*\U^{*\top}}.
	\end{align*}
\end{proof}

\begin{lemma}
	Suppose $\bcalT,\bcalT^*\in\MM_{\r}$ with Tucker decomposition $\bcalT=\bcalC\cdot\llbracket\U_{1},\dots,\U_m\rrbracket$, $\bcalT^*=\bcalC^*\cdot\llbracket\U_1^*,\dots,\U_m^*\rrbracket$ and $\fro{\bcalT-\bcalT^*}\leq\mins^*/8$. If $\bcalT,\bcalT^*$ are incoherent with parameter $\mu$, then for tensor $\bcalG\in\RR^{d_1\times\cdots\times d_m}$ and any $k=1,\dots,m$, we have
	\begin{align*}
		\ltinf{\fraM_{k}\left(\calP_{\TT^*}^{\perp}\calP_{\TT}\bcalG\right)}&\leq2m^2\mins^{*-1}\sqrt{\frac{\mu r_k}{d_k}} \fro{\calP_{\TT}\bcalG}\fro{\bcalT-\bcalT^*}\\
		&{~~~~~~~}+(m+1)\ltinf{\U_k\H_k-\U_k^*}\fro{\calP_{\TT}\bcalG}+\mins^{*-1}\ltinf{\fraM_{k}(\calP_{\TT}\bcalG)}\fro{\bcalT-\bcalT^*},
	\end{align*}
	where $\H_k:=\U_k^{\top}\U_k^*$. Similarly, we have
	\begin{align*}
		\ltinf{\fraM_{k}\left((\calP_{\TT^*}-\calP_{\TT})\bcalG\right)}&\leq2m^2\mins^{*-1}\sqrt{\frac{\mu r_k}{d_k}} \fro{\bcalG}\fro{\bcalT-\bcalT^*}\\
		&{~~~~~~~}+(m+1)\ltinf{\U_k\H_k-\U_k^*}\fro{\bcalG}+\mins^{*-1}\ltinf{\fraM_{k}(\bcalG)}\fro{\bcalT-\bcalT^*},
	\end{align*}
	\label{teclem:remian-perturb-subgradient}
\end{lemma}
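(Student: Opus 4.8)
I plan to mirror the proof of Lemma~\ref{teclem:rieman-orthogonal-project}, and to begin by reducing the two displayed bounds to a single estimate. Because $\calP_{\TT}$ is an orthogonal projection it is idempotent, so $\calP_{\TT^*}^{\perp}\calP_{\TT}\bcalG=\calP_{\TT}\bcalG-\calP_{\TT^*}\calP_{\TT}\bcalG=-(\calP_{\TT^*}-\calP_{\TT})(\calP_{\TT}\bcalG)$; since $\ltinf{\fraM_{k}(\cdot)}$ is insensitive to an overall sign, the first displayed bound is precisely the second displayed bound applied with the tensor $\calP_{\TT}\bcalG$ in place of $\bcalG$ (its right-hand side then reads $2m^2\mins^{*-1}\sqrt{\mu r_k/d_k}\,\fro{\calP_{\TT}\bcalG}\fro{\bcalT-\bcalT^*}+(m+1)\ltinf{\U_k\H_k-\U_k^*}\fro{\calP_{\TT}\bcalG}+\mins^{*-1}\ltinf{\fraM_{k}(\calP_{\TT}\bcalG)}\fro{\bcalT-\bcalT^*}$, which is the claimed form). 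Hence it suffices to bound $\ltinf{\fraM_{k}((\calP_{\TT^*}-\calP_{\TT})\bcalG)}$ for an arbitrary tensor $\bcalG$.

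For this I will insert, for both $\TT$ and $\TT^*$, the explicit matricized tangent-projection formula already used in the proof of Lemma~\ref{lem:hub:regularity}. With $\Pi_k$ the orthogonal projector onto the row space of $\fraM_{k}(\bcalT)$ and $\Pi_k^*$ that of $\fraM_{k}(\bcalT^*)$, one has $\fraM_{k}(\calP_{\TT}\bcalG)=\fraM_{k}(\bcalG)\Pi_k+\U_k\U_k^{\top}\fraM_{k}(\bcalG)\big((\otimes_{i\neq k}\U_i)(\otimes_{i\neq k}\U_i)^{\top}-\Pi_k\big)+\sum_{i\neq k}\fraM_{k}(\bcalR_i)$, where the mode-$i$ components $\bcalR_i$ are mutually orthogonal pieces of $\calP_{\TT}\bcalG$, hence satisfy $\fro{\bcalR_i}\leq\fro{\calP_{\TT}\bcalG}\leq\fro{\bcalG}$, and have $\fraM_{k}(\bcalR_i)=\U_k\U_k^{\top}\fraM_{k}(\bcalR_i)$; the analogous identity with starred objects holds for $\calP_{\TT^*}\bcalG$. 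Subtracting the two formulas and telescoping each difference of factors ($\Pi_k^*-\Pi_k$, $\U_k\U_k^{\top}-\U_k^*\U_k^{*\top}$, the Kronecker-product difference, and $\bcalR_i^*-\bcalR_i$) produces a sum of $O(m)$ terms, each carrying exactly one perturbation factor of type $\U_j\U_j^{\top}-\U_j^*\U_j^{*\top}$ ($j\in[m]$), $\Pi_k-\Pi_k^*$, or a Kronecker-product difference.

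Each perturbation factor will be controlled with tools already available: Lemma~\ref{teclem:perturbation:matrix} applied to $\fraM_{k}(\bcalT)$ — whose $r_k$-th singular value exceeds $\mins^*$ because $\fro{\bcalT-\bcalT^*}\leq\mins^*/8$ — yields $\op{\U_j\U_j^{\top}-\U_j^*\U_j^{*\top}}\leq 4\fro{\bcalT-\bcalT^*}/\mins^*$ and an $\op{\cdot}$-bound of the same order on $\Pi_k-\Pi_k^*$ (a sharp $\sin\Theta$ estimate trims the constant on the latter to $1$), a further telescoping of the Kronecker product bounds its difference in operator norm by $O(m)\fro{\bcalT-\bcalT^*}/\mins^*$, and Lemma~\ref{teclem:ltinf-transformation} with the incoherence bound $\ltinf{\U_k}\leq\sqrt{\mu r_k/d_k}$ yields $\ltinf{\U_k\U_k^{\top}-\U_k^*\U_k^{*\top}}\leq\ltinf{\U_k\H_k-\U_k^*}+\sqrt{\mu r_k/d_k}\cdot 4\fro{\bcalT-\bcalT^*}/\mins^*$. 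The only remaining decision, made per term, is which norm of $\bcalG$ to factor out: if the perturbation factor multiplies $\fraM_{k}(\bcalG)$ from the right, use $\ltinf{\fraM_{k}(\bcalG)M}\leq\ltinf{\fraM_{k}(\bcalG)}\op{M}$ — this produces the unique $\mins^{*-1}\ltinf{\fraM_{k}(\bcalG)}\fro{\bcalT-\bcalT^*}$ term, coming from $\fraM_{k}(\bcalG)(\Pi_k^*-\Pi_k)$; if a factor $\U_k\U_k^{\top}$ sits on the left, use $\ltinf{\U_k\U_k^{\top}X}\leq\sqrt{\mu r_k/d_k}\,\fro{X}$ and bound $\fro{X}$ by $\fro{\bcalG}$ times the operator norm of a perturbation factor; and if the outermost factor is $\U_k\U_k^{\top}-\U_k^*\U_k^{*\top}$ acting on the left, use $\ltinf{(\U_k\U_k^{\top}-\U_k^*\U_k^{*\top})X}\leq\ltinf{\U_k\U_k^{\top}-\U_k^*\U_k^{*\top}}\fro{X}$ with $\fro{X}\leq\fro{\bcalG}$. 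Summing the $\ltinf{\U_k\H_k-\U_k^*}$-contributions (one from the core term and one from each of the $m$ mode-$i$ components) gives the $(m+1)\ltinf{\U_k\H_k-\U_k^*}\fro{\bcalG}$ term, and collecting all $\sqrt{\mu r_k/d_k}\,\mins^{*-1}\fro{\bcalG}\fro{\bcalT-\bcalT^*}$-contributions (including the residuals from Lemma~\ref{teclem:ltinf-transformation}) gives the $2m^2\mins^{*-1}\sqrt{\mu r_k/d_k}\,\fro{\bcalG}\fro{\bcalT-\bcalT^*}$ term.

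The hard part is purely organizational rather than conceptual: the Tucker tangent space splits into $m+1$ blocks, the difference of projectors telescopes within each block, and one must route every resulting sub-term into exactly one of the three target terms while checking that the accumulated numerical constants collapse to $2m^2$, $m+1$, and $1$. The smallness hypothesis $\fro{\bcalT-\bcalT^*}\leq\mins^*/8$ is used only to keep those perturbation constants bounded, exactly as in Lemma~\ref{teclem:rieman-orthogonal-project}; no genuinely new estimate is needed, and — as noted in the first paragraph — the first bound of the lemma then drops out as the specialization $\bcalG\mapsto\calP_{\TT}\bcalG$ of the second.
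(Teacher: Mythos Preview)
Your proposal is correct and follows essentially the same route as the paper: reduce the first bound to the second via $\calP_{\TT^*}^{\perp}\calP_{\TT}=(\calP_{\TT}-\calP_{\TT^*})\calP_{\TT}$, expand $(\calP_{\TT}-\calP_{\TT^*})\bcalG$ using the explicit tangent-projection formula, telescope each block, control the projector differences by Lemma~\ref{teclem:perturbation:matrix} and the Kronecker telescoping, and finally convert $\ltinf{\U_k\U_k^{\top}-\U_k^*\U_k^{*\top}}$ to $\ltinf{\U_k\H_k-\U_k^*}$ via Lemma~\ref{teclem:ltinf-transformation}. The only cosmetic difference is that the paper keeps all $m+1$ tangent blocks separate (and introduces the rotations $\Q_j$ to telescope the mode-$j$ pieces into $C_1,C_2,C_3$), whereas you first merge the core block with the $k$-th arm into $\fraM_k(\bcalG)\Pi_k+\U_k\U_k^{\top}\fraM_k(\bcalG)\big((\otimes_{i\neq k}\U_i)(\otimes_{i\neq k}\U_i)^{\top}-\Pi_k\big)$ before telescoping; both lead to the same three target terms with the same routing rules you describe.
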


\begin{proof}
	Note that \begin{align*}
		\calP_{\TT^*}^{\perp}\calP_{\TT}\bcalG=\left(\I-\calP_{\TT^*}\right)\calP_{\TT}\bcalG=\left(\calP_{\TT}-\calP_{\TT^*}\right)\calP_{\TT}\bcalG.
	\end{align*}
	Denote $\bcalH:=\calP_{\TT}\bcalG$ and we need to bound $\ltinf{\fraM_{k}(\left(\calP_{\TT}-\calP_{\TT^*}\right)\bcalH)}$. 
	Notice that \begin{align*}
		\left(\calP_{\TT}-\calP_{\TT^*}\right)\bcalH&=\bcalH\times_{k=1,\dots,m} \U_k\U_k^{\top}-\bcalH\times_{k=1,\dots,m}\U_k^*\U_k^{*\top}\\
		&{~~~~~~~}+\sum_{j=1}^m \textrm{tensor}_{j}\left(\left(\I_{d_j}-\U_j\U_j^{\top}\right)\fraM_j(\bcalH)(\otimes_{i\neq j}\U_i)\fraM_j(\bcalC)^{\dagger}\fraM_j(\bcalC) (\otimes_{i\neq j}\U_i)^{\top}\right)\\
		&{~~~~~~~}+\sum_{j=1}^m \textrm{tensor}_{j}\left(\left(\I_{d_j}-\U_j^*\U_j^{*\top}\right)\fraM_j(\bcalH)(\otimes_{i\neq j}\U_i^*)\fraM_j(\bcalC^*)^{\dagger}\fraM_j(\bcalC^*) (\otimes_{i\neq j}\U_i^*)^{\top}\right),
	\end{align*}
	where $\textrm{tensor}_j(\cdot):\RR^{d_j\times d_j^-}\to \RR^{d_1\times\cdots \times d_m}$ is inverse of $j$-matricization. First consider $\bcalH\times_{k=1,\dots,m} \U_k\U_k^{\top}-\bcalH\times_{k=1,\dots,m}\U_k^*\U_k^{*\top} $. By Lemma~\ref{teclem:perturbation:tensor}, we have
	\begin{align*}
		&{~~~~}\ltinf{\fraM_{k}\left( \bcalH\times_{k=1,\dots,m} \U_k\U_k^{\top}-\bcalH\times_{k=1,\dots,m}\U_k^*\U_k^{*\top}\right)}\\
		&\leq (m-1)\sqrt{\frac{\mu r_k}{d_k}}\cdot \mins^{*-1}\fro{\bcalT-\bcalT^*}\cdot \fro{\bcalH}+\ltinf{\U_k\U_k^{\top}-\U_k^*\U_k^{*\top}}\fro{\bcalH}.
	\end{align*}
	Then consider $\textrm{tensor}_j\left(\fraM_j(\bcalH)\left((\otimes_{i\neq j}\U_i^*)\fraM_j(\bcalC^*)^{\dagger}\fraM_j(\bcalC^*) (\otimes_{i\neq j}\U_i^*)^{\top}-(\otimes_{i\neq j}\U_i)\fraM_j(\bcalC)^{\dagger}\fraM_j(\bcalC) (\otimes_{i\neq j}\U_i)^{\top} \right) \right)$, with $j\neq k$. Introduce orthogonal matrices $\Q_k:=\arg\min_{\Q\in\OO_{r_k,r_k}}\op{\U_k\Q_k-\U_k^*}$, for each $k=1,\dots,m$. Then we have
	\begin{align*}
		&{~~~~}\underbrace{\fraM_j(\bcalH)\left((\otimes_{i\neq j}\U_i)\fraM_j(\bcalC)^{\dagger}\fraM_j(\bcalC) (\otimes_{i\neq j}\U_i)^{\top}-(\otimes_{i\neq j}\U_i^*)\fraM_j(\bcalC^*)^{\dagger}\fraM_j(\bcalC^*) (\otimes_{i\neq j}\U_i^*)^{\top} \right)}_{C}\\
		&=\fraM_j(\bcalH)\left((\otimes_{i\neq j}\U_i\Q_i)\fraM_j(\tilde\bcalC)^{\dagger}\fraM_j(\tilde\bcalC) (\otimes_{i\neq j}\U_i\Q_i)^{\top}-(\otimes_{i\neq j}\U_i^*)\fraM_j(\bcalC^*)^{\dagger}\fraM_j(\bcalC^*) (\otimes_{i\neq j}\U_i^*)^{\top} \right)\\
		&=\underbrace{\sum_{l\neq j}\fraM_j(\bcalH)(\otimes_{i\neq j}\U_i^*)\fraM_j(\bcalC^*)^{\dagger}\fraM_j(\bcalC^*)(\otimes_{i>l,i\neq j} \U_i^{*})^{\top}\otimes(\U_l\Q_l-\U_l^*)^{\top}(\otimes_{i<l,i\neq j}\U_i\Q_i)^{\top}}_{C_1}\\
		&{~~~}+\underbrace{\fraM_j(\bcalH)\sum_{l\neq j}(\otimes_{i>l,i\neq j}\U_i\Q_i)\otimes(\U_l\Q_l-\U_l^*)(\otimes_{i<l,i\neq j}\U_i^*)\fraM_j(\bcalC^*)^{\dagger}\fraM_j(\bcalC^*)(\otimes_{i\neq j}\U_i\Q_i)^{\top}}_{C_2}\\
		&{~~~}+\underbrace{\fraM_j(\bcalH)(\otimes_{i\neq j}\U_i\Q_i)\left(\fraM_j(\tilde\bcalC)^{\dagger}\fraM_j(\tilde\bcalC)-\fraM_j(\bcalC^*)^{\dagger}\fraM_j(\bcalC^*) \right) (\otimes_{i\neq j}\U_i\Q_i)^{\top}}_{C_3}.
	\end{align*}
	Then, we could bound $$\ltinf{\fraM_k(\textrm{tensor}_j (C_3))}\leq\mins^{*-1}\ltinf{\U_k} \fro{\bcalH}\fro{\bcalT-\bcalT^*}\leq\mins^{*-1}\sqrt{\frac{\mu r_k}{d_k}} \fro{\bcalH}\fro{\bcalT-\bcalT^*},$$ where $j\neq k$. Similarly, we also have
	\begin{align*}
		\ltinf{\fraM_k(\textrm{tensor}_j (C_2))}&\leq (m-1)\mins^{*-1}\sqrt{\frac{\mu r_k}{d_k}} \fro{\bcalH}\fro{\bcalT-\bcalT^*},\\
		\ltinf{\fraM_k(\textrm{tensor}_j (C_1))}&\leq (m-2)\mins^{*-1}\sqrt{\frac{\mu r_k}{d_k}} \fro{\bcalH}\fro{\bcalT-\bcalT^*}+\ltinf{\U_k\H_k-\U_k^*}\fro{\bcalH}.
	\end{align*}
	Altogether, for $j\neq k$ we have the upper bound for $\ltinf{\fraM_k(\textrm{tensor}_j (C))} $, namely,
	$$ \ltinf{\fraM_k(\textrm{tensor}_j (C))}\leq 2(m-1)\mins^{*-1}\sqrt{\frac{\mu r_k}{d_k}} \fro{\bcalH}\fro{\bcalT-\bcalT^*}+\ltinf{\U_k\H_k-\U_k^*}\fro{\bcalH}.$$
	Similarly, we have
	\begin{align*}
		&{~~~}\left\Vert \fraM_{k}\left(\textrm{tensor}_{j}\left(\U_j\U_j^{\top}\fraM_j(\bcalH)(\otimes_{i\neq j}\U_i)\fraM_j(\bcalC)^{\dagger}\fraM_j(\bcalC) (\otimes_{i\neq j}\U_i)^{\top}\right)\right) \right.\\
		&{~~~~~}\left.-\fraM_{k}\left(\textrm{tensor}_j\left(\U_j^*\U_j^{*\top}\fraM_j(\bcalH)(\otimes_{i\neq j}\U_i^*)\fraM_j(\bcalC^*)^{\dagger}\fraM_j(\bcalC^*) (\otimes_{i\neq j}\U_i^*)^{\top}\right)\right)\right\Vert_{2,\infty}\\
		&\leq(2m-1)\mins^{*-1}\sqrt{\frac{\mu r_k}{d_k}} \fro{\bcalH}\fro{\bcalT-\bcalT^*}+\ltinf{\U_k\H_k-\U_k^*}\fro{\bcalH}.
	\end{align*}
	Then consider case of $j=k$ and similar to $j\neq k$ case, it arrives at
	\begin{align*}
		&{~~~}\left\Vert \left(\I_{d_k}-\U_k\U_k^{\top}\right)\fraM_k(\bcalH)(\otimes_{i\neq k}\U_i)\fraM_k(\bcalC)^{\dagger}\fraM_k(\bcalC) (\otimes_{i\neq k}\U_k)^{\top} \right.\\
		&{~~~~~}\left.-\left(\I_{d_k}-\U_k^*\U_k^{*\top}\right)\fraM_k(\bcalH)(\otimes_{i\neq k}\U_i^*)\fraM_k(\bcalC^*)^{\dagger}\fraM_k(\bcalC^*) (\otimes_{i\neq k}\U_i^*)^{\top}\right\Vert_{2,\infty}\\
		&\leq (2m+1)\mins^{*-1}\ltinf{\fraM_{k}(\bcalH)}\fro{\bcalT-\bcalT^*}+\ltinf{\U_k\U_k^{\top}-\U_k^*\U_k^{*\top}}\fro{\bcalH}+ \mins^{*-1}\sqrt{\frac{\mu r_k}{d_k}} \fro{\bcalH}\fro{\bcalT-\bcalT^*}.
	\end{align*}
	Besides, Lemma~\ref{teclem:ltinf-transformation} implies $$\ltinf{\U_k\U_k^{\top}-\U_k^*\U_k^{*\top}}\leq \ltinf{\U_k\H_k-\U_k^*}+\mins^{*-1}\sqrt{\frac{\mu r_k}{d_k}}\fro{\bcalT-\bcalT^*}. $$ In conclusion, we have\begin{align*}
		\ltinf{\fraM_{k}\left(\calP_{\TT}-\calP_{\TT^*}\right)\bcalH}&\leq 2m^2\mins^{*-1}\sqrt{\frac{\mu r_k}{d_k}} \fro{\bcalH}\fro{\bcalT-\bcalT^*}\\
		&{~~~~~~~}+(m+1)\ltinf{\U_k\H_k-\U_k^*}\fro{\bcalH}+\mins^{*-1}\ltinf{\fraM_{k}(\bcalH)}\fro{\bcalT-\bcalT^*},
	\end{align*}which proves the bound for $\ltinf{\fraM_{k}\left(\calP_{\TT^*}^{\perp}\calP_{\TT}\bcalG\right)}$. Upper bound of $\ltinf{\fraM_{k}((\calP_{\TT^*}-\calP_{\TT})\bcalG)}$ would be similar and hence we skip it.
\end{proof}
\begin{remark}
	Note that if we are only interested in the Frobenius norm of one slice of $\calP_{\TT^*}^{\perp}\calP_{\TT}\bcalG$, namely $\fro{\calP_{\Omega_{j}^{(k)}}\left(\calP_{\TT^*}^{\perp}\calP_{\TT}\bcalG \right)}$, it has the following bound \begin{align*}
		\fro{\calP_{\Omega_{j}^{(k)}}\left(\calP_{\TT^*}^{\perp}\calP_{\TT}\bcalG \right)}&\leq2m^2\mins^{*-1}\sqrt{\frac{\mu r_k}{d_k}} \fro{\calP_{\TT}\bcalG}\fro{\bcalT-\bcalT^*}\\
		&{~~~~~~~}+(m+1)\ltwo{\left(\U_k\H_k-\U_k^*\right)_{j,\cdot}}\fro{\calP_{\TT}\bcalG}+\mins^{*-1}\fro{\calP_{\Omega_{j}^{(k)}}(\calP_{\TT}\bcalG)}\fro{\bcalT-\bcalT^*}
	\end{align*}
\end{remark}
\begin{lemma}[Type-\Romannum{2} Tensor Perturbation]
	Suppose tensor $\bcalT^*\in\RR^{d_1\times\dots\times d_m}$ has Tucker rank $\r=(r_1,\dots,r_m)$. Let $\bcalT^*=\bcalC^*\cdot\llbracket\U_1^*,\dots,\U_m^*\rrbracket$ be its Tucker decomposition. Suppose tensor $\bcalT\in\RR^{d_1\times\dots\times d_m}$ has $\text{HOSVD}(\bcalT)=\bcalC\cdot \llbracket \U_1,\cdots,\U_m\rrbracket$, then
	for each order $k=1,\dots,m$ and each $j=1,\dots,d_k$, we have
	\begin{align*}
		&{~~~~}\ltwo{\left(\U_k\H_k-\U_k^*\right)_{j,\cdot}\fraM_k(\bcalC^*)}\\
		&\leq\ltwo{\calP_{\Omega_{j}^{(k)}}(\bcalT-\bcalT^*)}+\ltwo{\calP_{\Omega_{j}^{(k)}}(\bcalT)}\op{\W_k\fraM_k(\bcalC)^{\dag}\fraM_k(\bcalC)\W_k^{\top}- \W_k^*\fraM_k(\bcalC^*)^{\dag}\fraM_k(\bcalC^*)\W_k^{*\top} }\\
		&{~~~~~~~~~~~~~~~~~~~~~~~~~~~~~~~~~~~~~~~~~~~~~~~~~~~~~~}+\ltwo{\calP_{\Omega_{j}^{(k)}}(\bcalT)}\frac{\op{ \fraM_k(\bcalT-\bcalT^*)\W_k^*}}{\mins^*}.
	\end{align*}
	where $\H_k:=\U_k^{\top}\U_k^*$, $\W_k^*=\U_{m}^*\otimes\dots\otimes\U_{k+1}^*\otimes\U_{k-1}^*\otimes\dots\otimes\U_1^*$, $\W_k=\U_{m}\otimes\dots\otimes\U_{k+1}\otimes\U_{k-1}\otimes\dots\otimes\U_1$.
	\label{teclem:yuxinchen}
\end{lemma}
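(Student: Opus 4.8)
The plan is to reduce the row-wise bound to a single clean matrix identity for $(\U_k\H_k-\U_k^*)\fraM_k(\bcalC^*)$ and then split its $j$-th row into the three advertised pieces by inserting the row-space projections $\calP:=\W_k\fraM_k(\bcalC)^{\dag}\fraM_k(\bcalC)\W_k^{\top}$ and $\calP^*:=\W_k^*\fraM_k(\bcalC^*)^{\dag}\fraM_k(\bcalC^*)\W_k^{*\top}$. First I would record the elementary consequences of the two decompositions: since $\W_k^*$ (a Kronecker product of orthonormal matrices) has orthonormal columns, $\U_k^*\fraM_k(\bcalC^*)=\fraM_k(\bcalT^*)\W_k^*$; and since $\fraM_k(\bcalT)=\U_k\fraM_k(\bcalC)\W_k^{\top}$ one has $(\I-\U_k\U_k^{\top})\fraM_k(\bcalT)=0$, $\U_k=\fraM_k(\bcalT)\W_k\fraM_k(\bcalC)^{\dag}$, $\U_k^{\top}\fraM_k(\bcalT)=\fraM_k(\bcalC)\W_k^{\top}$, and $\W_k\fraM_k(\bcalC)^{\dag}\U_k^{\top}\fraM_k(\bcalT)=\calP$ with $\fraM_k(\bcalT)\calP=\fraM_k(\bcalT)$. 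Writing $\U_k\H_k-\U_k^*=(\U_k\U_k^{\top}-\I)\U_k^*$ and then substituting $\fraM_k(\bcalT^*)=\fraM_k(\bcalT)-\fraM_k(\bcalT-\bcalT^*)$ (the $\fraM_k(\bcalT)$ term being annihilated by $\I-\U_k\U_k^{\top}$) would give
$$
(\U_k\H_k-\U_k^*)\fraM_k(\bcalC^*)=(\I-\U_k\U_k^{\top})\fraM_k(\bcalT-\bcalT^*)\W_k^*.
$$

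Next I would pass to row $j$ by left-multiplying with $\e_j^{\top}$ and using $\e_j^{\top}\U_k=\fraM_k(\bcalT)_{j,\cdot}\W_k\fraM_k(\bcalC)^{\dag}$, obtaining
$$
(\U_k\H_k-\U_k^*)_{j,\cdot}\fraM_k(\bcalC^*)=\fraM_k(\bcalT-\bcalT^*)_{j,\cdot}\W_k^*-\fraM_k(\bcalT)_{j,\cdot}\W_k\fraM_k(\bcalC)^{\dag}\,\U_k^{\top}\fraM_k(\bcalT-\bcalT^*)\W_k^*.
$$
The first summand has $\ltwo{\cdot}\le\ltwo{\fraM_k(\bcalT-\bcalT^*)_{j,\cdot}}=\ltwo{\calP_{\Omega_{j}^{(k)}}(\bcalT-\bcalT^*)}$ since $\W_k^*$ has orthonormal columns — this is the first term. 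For the second summand I would insert $\I=\calP^*+(\I-\calP^*)$ between $\fraM_k(\bcalT-\bcalT^*)$ and $\W_k^*$; because $\fraM_k(\bcalT^*)\calP^*=\fraM_k(\bcalT^*)$ one gets $\fraM_k(\bcalT-\bcalT^*)(\I-\calP^*)=\fraM_k(\bcalT)(\I-\calP^*)=\fraM_k(\bcalT)(\calP-\calP^*)$, hence
$$
\U_k^{\top}\fraM_k(\bcalT-\bcalT^*)\W_k^*=\U_k^{\top}\fraM_k(\bcalT-\bcalT^*)\calP^*\W_k^*+\U_k^{\top}\fraM_k(\bcalT)(\calP-\calP^*)\W_k^*.
$$
Premultiplying by $\fraM_k(\bcalT)_{j,\cdot}\W_k\fraM_k(\bcalC)^{\dag}$: the $(\calP-\calP^*)$-part collapses, via $\W_k\fraM_k(\bcalC)^{\dag}\U_k^{\top}\fraM_k(\bcalT)=\calP$ and $\fraM_k(\bcalT)_{j,\cdot}\calP=\fraM_k(\bcalT)_{j,\cdot}$, to $\fraM_k(\bcalT)_{j,\cdot}(\calP-\calP^*)\W_k^*$, which is bounded by $\ltwo{\calP_{\Omega_{j}^{(k)}}(\bcalT)}\op{\calP-\calP^*}$ — the second term; the other part is $(\U_k)_{j,\cdot}\U_k^{\top}\fraM_k(\bcalT-\bcalT^*)\calP^*\W_k^*$, and writing $\calP^*\W_k^*=\W_k^*\fraM_k(\bcalC^*)^{\dag}\fraM_k(\bcalC^*)$ (an orthogonal projection on the right) together with $\op{\U_k}\le 1$ and $\ltwo{(\U_k)_{j,\cdot}}\le\ltwo{\fraM_k(\bcalT)_{j,\cdot}}/\sigma_{r_k}(\fraM_k(\bcalT))$ bounds it by $\ltwo{\calP_{\Omega_{j}^{(k)}}(\bcalT)}\op{\fraM_k(\bcalT-\bcalT^*)\W_k^*}/\sigma_{r_k}(\fraM_k(\bcalT))$ — the third term after $\sigma_{r_k}(\fraM_k(\bcalT))\ge\mins^*$.

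The hard part is not the size estimates but making the algebra close: several natural manipulations of $(\I-\U_k\U_k^{\top})\fraM_k(\bcalT-\bcalT^*)\W_k^*$ (inserting $\calP$ on the wrong side, or expanding $\U_k^{\top}\fraM_k(\bcalT^*)$) merely reproduce the very quantity one is trying to bound, so the combination of the displayed identity, the substitution $\U_k=\fraM_k(\bcalT)\W_k\fraM_k(\bcalC)^{\dag}$, and the \emph{single} insertion $\I=\calP^*+(\I-\calP^*)$ on the $\W_k^*$ side must be chosen exactly as above. A second, more cosmetic subtlety is that $\op{\W_k\fraM_k(\bcalC)^{\dag}}=\sigma_{r_k}(\fraM_k(\bcalT))^{-1}$ is at most $\mins^{*-1}$ only when the $r_k$-th singular value of $\fraM_k(\bcalT)$ dominates that of $\fraM_k(\bcalT^*)$; this is automatic in the regime in which the lemma is invoked (with $\bcalT$ an HOSVD output close to $\bcalT^*$), so the statement is to be read with that understanding — equivalently, with $\sigma_{r_k}(\fraM_k(\bcalT))$ in place of $\mins^*$.
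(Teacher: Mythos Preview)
Your proof is correct and follows essentially the same route as the paper's, which in turn cites Lemma~4.6.4 of \cite{chen2021spectral}. Both arguments reduce to a three-term identity for $(\U_k\H_k-\U_k^*)\fraM_k(\bcalC^*)$ obtained by inserting the row-space projections $\calP,\calP^*$; the only cosmetic difference is the order of operations. The paper expands $\U_k\H_k\fraM_k(\bcalC^*)=\fraM_k(\bcalT)\W_k\fraM_k(\bcalC)^{\dag}\,\U_k^{\top}\U_k^*\fraM_k(\bcalC^*)$ first, writes $\U_k^{\top}\U_k^*\fraM_k(\bcalC^*)=\fraM_k(\bcalC)\W_k^{\top}\W_k^*-\U_k^{\top}\fraM_k(\bcalT-\bcalT^*)\W_k^*$, and then adds and subtracts $\fraM_k(\bcalT)\calP^*\W_k^*$; you instead start from the clean identity $(\U_k\H_k-\U_k^*)\fraM_k(\bcalC^*)=(\I-\U_k\U_k^{\top})\fraM_k(\bcalT-\bcalT^*)\W_k^*$ and insert $\I=\calP^*+(\I-\calP^*)$ on the perturbation side. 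The two resulting decompositions differ only in which of the first or third term carries the contractive factor $\fraM_k(\bcalC^*)^{\dag}\fraM_k(\bcalC^*)$, and the bounds coincide. Your observation that the denominator is really $\sigma_{r_k}(\fraM_k(\bcalT))$ rather than $\mins^*$ is accurate and applies verbatim to the paper's derivation; the lemma is only ever invoked in the regime $\fro{\bcalT-\bcalT^*}\ll\mins^*$, where the two are interchangeable up to a constant.
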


\begin{proof}
	The proof follows Lemma 4.6.4 of work \cite{chen2021spectral}.
	
	For simplicity, denote $\W_k^*=\U_{m}^*\otimes\dots\otimes\U_{k+1}^*\otimes\U_{k-1}^*\otimes\dots\otimes\U_1^*$, $\W_k=\U_{m}\otimes\dots\otimes\U_{k+1}\otimes\U_{k-1}\otimes\dots\otimes\U_1$ and then $\fraM_k(\bcalT^*), \fraM_{k}(\bcalT)$ has expression $\fraM_k(\bcalT^*)=\U_k^*\fraM_k(\bcalC^*)\W_{k}^{*\top}$, $\fraM_k(\bcalT)=\U_k\fraM_k(\bcalC)\W_{k}^{\top}$
	
	First, consider the term $\U_k\H_k\fraM_k(\bcalC^*) $,
	\begin{align*}
		\U_k\H_k\fraM_k(\bcalC^*)&=\U_k\fraM_{k}(\bcalC)\fraM_{k}(\bcalC)^{\dag}\U_k^{\top}\U_k^*\fraM_k(\bcalC^*)=\fraM_k(\bcalT)\W_k\fraM_k(\bcalC)^{\dag}\U_k^{\top}\U_k^*\fraM_k(\bcalC^*).
	\end{align*}
	Then consider $\U_k^{\top}\U_k^*\fraM_k(\bcalC^*)$,
	\begin{align*}
		\U_k^{\top}\U_k^*\fraM_k(\bcalC^*)&=\U_k^{\top}\fraM_k(\bcalT^*)\W_{k}^*\\
		&=\U_k^{\top}\fraM_k(\bcalT)\W_{k}^*
		-\U_k^{\top}\fraM_k(\bcalT-\bcalT^*)\W_{k}^*\\
		&=\fraM_k(\bcalC)\W_k^{\top}\W_{k}^*
		-\U_k^{\top}\fraM_k(\bcalT-\bcalT^*)\W_{k}^*.
	\end{align*}
	Combine the above two equations and then we have
	\begin{align*}
		&{~~~~}\U_k\H_k\fraM_k(\bcalC^*)\\
		&=\fraM_k(\bcalT)\W_k\fraM_k(\bcalC)^{\dag}\fraM_k(\bcalC)\W_k^{\top}\W_{k}^*-\fraM_k(\bcalT)\W_k\fraM_k(\bcalC)^{\dag}\U_k^{\top}\fraM_k(\bcalT-\bcalT^*)\W_{k}^*\\
		&=\fraM_k(\bcalT)\W_k^*\fraM_k(\bcalC^*)^{\dag}\fraM_k(\bcalC^*)\W_k^{*\top}\W_{k}^*-\fraM_k(\bcalT)\W_k\fraM_k(\bcalC)^{\dag}\U_k^{\top}\fraM_k(\bcalT-\bcalT^*)\W_{k}^*\\
		&{~~~~}+\fraM_k(\bcalT)\left(\W_k\fraM_k(\bcalC)^{\dag}\fraM_k(\bcalC)\W_k^{\top}- \W_k^*\fraM_k(\bcalC^*)^{\dag}\fraM_k(\bcalC^*)\W_k^{*\top}\right)\W_{k}^*
	\end{align*}
	Note that with $\fraM_k(\bcalT)\W_k^*\fraM_k(\bcalC^*)^{\dag}\fraM_k(\bcalC^*)\W_k^{*\top}\W_{k}^*=\U_k^*\fraM_k(\bcalC^*)+\fraM_k(\bcalT-\bcalT^*)\W_k^*\fraM_k(\bcalC^*)^{\dag}\fraM_k(\bcalC^*)\W_k^{*\top}\W_{k}^*$, we could get Equation~\eqref{eq:perturbation:2}. For each $j=1,\dots,d_k$, it has
	\begin{align*}
		&{~~~~}\ltwo{\left(\U_k\H_k-\U_k^*\right)_{j,\cdot}\fraM_k(\bcalC^*)}\\
		&\leq\ltwo{\calP_{\Omega_{j}^{(k)}}(\bcalT-\bcalT^*)}+\ltwo{\calP_{\Omega_{j}^{(k)}}(\bcalT)}\op{\W_k\fraM_k(\bcalC)^{\dag}\fraM_k(\bcalC)\W_k^{\top}- \W_k^*\fraM_k(\bcalC^*)^{\dag}\fraM_k(\bcalC^*)\W_k^{*\top} }\\
		&{~~~~~~~~~~~~~~~~~~~~~~~~~~~~~~~~~~~~~~~~~~~~~~~~~~~~~~}+\ltwo{\calP_{\Omega_{j}^{(k)}}(\bcalT)}\frac{\op{ \fraM_k(\bcalT-\bcalT^*)\W_k^*}}{\mins^*}.
	\end{align*}
\end{proof}

\begin{lemma}
	Suppose tensor $\bcalT^*\in\MM_{\r}$ has Tucker decomposition $\bcalT^*=\bcalC^*\cdot\llbracket\U_1^*,\dots,\U_m^*\rrbracket$. Let tensor $\bcalT\in\RR^{d_1\times\dots\times d_m}$ have $\text{HOSVD}(\bcalT)=\bcalC\cdot \llbracket \U_1,\cdots,\U_m\rrbracket$. Denote $\text{dist}(\U_k,\U_k^*):=\min_{\Q\in\OO_{r_k,r_k}}\op{\U_k\Q_k-\U_k^*}$ and $\Q_k=\arg\min_{\Q\in\OO_{r_k,r_k}}\op{\U_k\Q_k-\U_k^*}$. Then we have
	\begin{align*}
		&{~~~~}\op{(\otimes_{j\neq k}\U_j)\fraM_k(\bcalC)^{\dagger}\fraM_k(\bcalC)(\otimes_{j\neq i}\U_j)^{\top}-(\otimes_{j\neq k}\U_j^*)\fraM_k(\bcalC^*)^{\dagger}\fraM_k(\bcalC^*)(\otimes_{j\neq i}\U_j^*)^{\top} }\\
		&\leq 2\sum_{j\neq k}\text{dist}(\U_k,\U_k^*)+8\frac{\op{\bcalC\cdot\llbracket\Q_1^\top,\dots,\Q_m^\top\rrbracket-\bcalC^*}}{\mins^*}.
	\end{align*}
\label{teclem:coretensor1}
\end{lemma}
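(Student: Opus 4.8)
The goal is to control the perturbation of the projector $\calP_{\W_k}:=(\otimes_{j\neq k}\U_j)\fraM_k(\bcalC)^{\dagger}\fraM_k(\bcalC)(\otimes_{j\neq k}\U_j)^{\top}$ onto the row space of $\fraM_k(\bcalT)$ against the analogous $\calP_{\W_k^*}$ for $\bcalT^*$. The natural approach is to split the difference into two sources of error: (i) the difference caused by the factor matrices $\U_j$ versus $\U_j^*$ for $j\neq k$, and (ii) the difference caused by the core tensor projectors $\fraM_k(\bcalC)^{\dagger}\fraM_k(\bcalC)$ versus $\fraM_k(\bcalC^*)^{\dagger}\fraM_k(\bcalC^*)$ after aligning the factors via the optimal rotations $\Q_j$. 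The plan is to first pass to the rotated core $\widetilde\bcalC:=\bcalC\cdot\llbracket\Q_1^\top,\dots,\Q_m^\top\rrbracket$, noting that $\fraM_k(\bcalC)^{\dagger}\fraM_k(\bcalC)$ is invariant under multiplying $\fraM_k(\bcalC)$ on the left by an orthogonal matrix and on the right by the Kronecker product of orthogonal matrices, so $\calP_{\W_k}$ is unchanged if we replace each $\U_j$ by $\U_j\Q_j$ and $\bcalC$ by $\widetilde\bcalC$.

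Concretely, I would write, with $\V_j:=\U_j\Q_j$ for $j\neq k$,
\begin{align*}
\calP_{\W_k}-\calP_{\W_k^*}
&=\big[(\otimes_{j\neq k}\V_j)-(\otimes_{j\neq k}\U_j^*)\big]\fraM_k(\widetilde\bcalC)^{\dagger}\fraM_k(\widetilde\bcalC)(\otimes_{j\neq k}\V_j)^{\top}\\
&\quad+(\otimes_{j\neq k}\U_j^*)\fraM_k(\widetilde\bcalC)^{\dagger}\fraM_k(\widetilde\bcalC)\big[(\otimes_{j\neq k}\V_j)-(\otimes_{j\neq k}\U_j^*)\big]^{\top}\\
&\quad+(\otimes_{j\neq k}\U_j^*)\big[\fraM_k(\widetilde\bcalC)^{\dagger}\fraM_k(\widetilde\bcalC)-\fraM_k(\bcalC^*)^{\dagger}\fraM_k(\bcalC^*)\big](\otimes_{j\neq k}\U_j^*)^{\top}.
\end{align*}
The first two terms are bounded by $\op{(\otimes_{j\neq k}\V_j)-(\otimes_{j\neq k}\U_j^*)}$, which telescopes across the $m-1$ factors as $\sum_{j\neq k}\op{\V_j-\U_j^*}=\sum_{j\neq k}\text{dist}(\U_j,\U_j^*)$, using that each factor is an isometry; this gives the $2\sum_{j\neq k}\text{dist}(\U_j,\U_j^*)$ contribution. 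For the third term, since both bracketed operators are orthogonal projectors of the same rank $r_k$ (the column space of $\fraM_k(\bcalC^*)^\top$ and of $\fraM_k(\widetilde\bcalC)^\top$ respectively), I would invoke a Davis--Kahan / $\sin\Theta$-type bound: the projector difference is controlled by $\op{\fraM_k(\widetilde\bcalC)-\fraM_k(\bcalC^*)}/\sigma_{r_k}(\fraM_k(\bcalC^*))$ up to a constant, and $\sigma_{r_k}(\fraM_k(\bcalC^*))=\sigma_{r_k}(\fraM_k(\bcalT^*))\ge\mins^*$ because the $\U_j^*$ are orthonormal. Since $\op{\fraM_k(\widetilde\bcalC)-\fraM_k(\bcalC^*)}=\op{\widetilde\bcalC-\bcalC^*}=\op{\bcalC\cdot\llbracket\Q_1^\top,\dots,\Q_m^\top\rrbracket-\bcalC^*}$, this yields the second contribution with the factor $8/\mins^*$ after tracking the numerical constant in the perturbation bound (the constant $4$ from Lemma~\ref{teclem:perturbation:matrix}-type estimates times a factor $2$).

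The main obstacle I anticipate is being careful with the constant in the third term: one needs the bound $\op{\calP_{\widetilde\bcalC}-\calP_{\bcalC^*}}\le \frac{4}{\mins^*}\op{\widetilde\bcalC-\bcalC^*}$ (or similar) to hold \emph{even though $\widetilde\bcalC-\bcalC^*$ need not be small a priori}. This is resolved by noting that the bound $\op{\hat\U\hat\U^\top-\U\U^\top}\le\frac{4}{\sigma_r}\op{\hat\M-\M}$ from Lemma~\ref{teclem:perturbation:matrix} is actually valid without a smallness hypothesis as long as the operator-norm bound stays below $2$ (and if it exceeds that, the claimed inequality is trivially true since both projectors have operator norm at most $1$ and $8\op{\widetilde\bcalC-\bcalC^*}/\mins^* \ge 1$). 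A secondary technical point is verifying the invariance of the pseudo-inverse projector under the orthogonal change of basis on both sides of $\fraM_k(\bcalC)$, which follows from $(\A\fraM_k(\bcalC)\B)^{\dagger}(\A\fraM_k(\bcalC)\B)=\B^\top\fraM_k(\bcalC)^\dagger\fraM_k(\bcalC)\B$ for orthogonal $\A,\B$. With these two observations in hand, assembling the three pieces by the triangle inequality completes the proof.
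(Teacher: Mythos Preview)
Your proposal is correct and follows essentially the same approach as the paper: pass to the rotated core $\widetilde\bcalC=\bcalC\cdot\llbracket\Q_1^\top,\dots,\Q_m^\top\rrbracket$ using the invariance of the pseudo-inverse projector under orthogonal changes of basis, split the difference into two Kronecker-factor terms plus one core-projector term, telescope the Kronecker products to get $\sum_{j\neq k}\text{dist}(\U_j,\U_j^*)$ for each of the first two, and invoke the projector perturbation bound of Lemma~\ref{teclem:perturbation:matrix} for the third. The only cosmetic difference is that the paper sandwiches the core-projector difference by $(\otimes_{j\neq k}\U_j\Q_j)$ while you sandwich it by $(\otimes_{j\neq k}\U_j^*)$, which merely reorders the telescoping and does not affect the bound.
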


\begin{proof}
	For simplicity, denote $\tilde{\bcalC}:= \bcalC\cdot\llbracket\Q_1,\dots,\Q_m\rrbracket$. Then we have,
	\begin{align*}
		&{~~~~}\underbrace{\left((\otimes_{j\neq k}\U_j)\fraM_j(\bcalC)^{\dagger}\fraM_k(\bcalC) ( \otimes_{j\neq k}\U_j)^{\top}-(\otimes_{j\neq k}\U_j^*)\fraM_k(\bcalC^*)^{\dagger}\fraM_k(\bcalC^*) (\otimes_{j\neq k}\U_j)^{\top} \right)}_{C}\\
		&=\left((\otimes_{j\neq k}\U_j\Q_j)\fraM_k(\tilde\bcalC)^{\dagger}\fraM_j(\tilde\bcalC) (\otimes_{j\neq k}\U_j\Q_j)^{\top}-(\otimes_{j\neq k}\U_j^*)\fraM_k(\bcalC^*)^{\dagger}\fraM_k(\bcalC^*) (\otimes_{j\neq k}\U_j^*)^{\top} \right)\\
		&=\underbrace{\sum_{l\neq k}(\otimes_{j\neq k}\U_j^*)\fraM_k(\bcalC^*)^{\dagger}\fraM_k(\bcalC^*)(\otimes_{i>l,i\neq k} \U_i^{*})^{\top}\otimes(\U_l\Q_l-\U_l^*)^{\top}(\otimes_{i<l,i\neq k}\U_i\Q_i)^{\top}}_{C_1}\\
		&{~~~~}+\underbrace{\sum_{l\neq k}(\otimes_{i>l,i\neq j}\U_i\Q_i)\otimes(\U_l\Q_l-\U_l^*)(\otimes_{i<l,i\neq k}\U_i^*)\fraM_k(\bcalC^*)^{\dagger}\fraM_k(\bcalC^*)(\otimes_{j\neq k}\U_j\Q_j)^{\top}}_{C_2}\\
		&{~~~~}+\underbrace{(\otimes_{j\neq k}\U_j\Q_j)\left(\fraM_k(\tilde\bcalC)^{\dagger}\fraM_k(\tilde\bcalC)-\fraM_k(\bcalC^*)^{\dagger}\fraM_k(\bcalC^*) \right) (\otimes_{j\neq k}\U_j\Q_j)^{\top}}_{C_3}.
	\end{align*}
	Notice that \begin{align*}
		\op{C_1}\vee\op{C_2}\leq \sum_{j\neq k}\text{dist}(\U_k,\U_k^*).
	\end{align*}
	As for  term $C_3$, note that by Lemma~\ref{teclem:perturbation:matrix}, we have
	\begin{align*}
		\op{C_3}\leq 8\frac{\fro{\bcalC\cdot\llbracket\Q_1^\top,\dots,\Q_m^\top\rrbracket-\bcalC^*}}{\mins^*}.
	\end{align*}
\end{proof}

\end{document}